\title{Equivariant degenerations of spherical modules: part II}
\author{Stavros Argyrios Papadakis}
\address{Department of Mathematics, University of Ioannina, 45110 Ioannina, Greece}
\email{spapadak@cc.uoi.gr}
\author{Bart Van Steirteghem}
\address{Department of Mathematics, Medgar Evers College - City University of New York, 1650 Bedford Ave., Brooklyn, NY 11225, USA}
\email{bartvs@mec.cuny.edu}
\numberwithin{equation}{section}
\newcommand{\inn}{\subseteq}
\newcommand{\isom}{\simeq}
\newcommand{\onto}{\twoheadrightarrow}
\newcommand{\into}{\hookrightarrow}
\renewcommand{\>}{\rangle}
\newcommand{\<}{\langle}
\renewcommand{\epsilon}{\varepsilon}
\newcommand{\eps}{\varepsilon}
\DeclareMathOperator{\Spec}{Spec}
\DeclareMathOperator{\ad}{ad}
\DeclareMathOperator{\Hom}{Hom}
\DeclareMathOperator{\Sym}{Sym}
\newcommand{\mf}{\mathfrak}
\newcommand{\fa}{\mf{a}}
\newcommand{\fg}{\mf{g}}
\newcommand{\ft}{\mf{t}}
\newcommand{\fu}{\mf{u}}
\newcommand{\fz}{\mf{z}}
\newcommand{\gl}{\mf{gl}}
\renewcommand{\sp}{\mf{sp}}
\newcommand{\msf}{\mathsf}
\newcommand{\ssA}{\msf{A}}
\newcommand{\ssD}{\msf{D}}
\newcommand{\ssE}{\msf{E}}
\newcommand{\ssG}{\msf{G}}
\newcommand{\GL}{\mathrm{GL}}
\newcommand{\SL}{\mathrm{SL}}
\newcommand{\SO}{\mathrm{SO}}
\newcommand{\Spin}{\mathrm{Spin}}
\newcommand{\shN}{\mathcal{N}}
\renewcommand{\AA}{{\mathbb A}}
\newcommand{\CC}{{\mathbb C}}
\newcommand{\C}{{\mathbb C}}
\newcommand{\NN}{{\mathbb N}}
\newcommand{\RR}{{\mathbb R}}
\newcommand{\ZZ}{{\mathbb Z}}
\newcommand{\Z}{{\mathbb Z}}
\newcommand{\om}{\omega}
\newcommand{\tHilb}{\mathrm{Hilb}}
\newcommand{\tM}{\mathrm{M}}
\newcommand{\wm}{\mathcal{S}}
\newcommand{\Tad}{T_{\ad}}
\newcommand{\Vgg}{(V/\fg\cdot x_0)^{G_{x_0}}}
\newcommand{\Vggp}{(V/\fg\cdot x_0)^{G'_{x_0}}}
\newcommand{\Vpggp}{(V'/\fg'\cdot x'_0)^{G'_{x_0}}}
\newcommand{\Vg}{V/\fg \cdot x_0}
\newcommand{\sr}{\Pi}
\newcommand{\pr}{R^+}
\newcommand{\wl}{\Lambda}
\newcommand{\dw}{\Lambda^+}
\newcommand{\ms}{\tM_{\wm}}
\newcommand{\tg}{T_{X_0}}
\theoremstyle{plain}
\newtheorem{theorem}{Theorem}[section]
\newtheorem{lemma}[theorem]{Lemma}
\newtheorem{prop}[theorem]{Proposition}
\newtheorem{cor}[theorem]{Corollary}
\theoremstyle{definition}
\newtheorem{defn}[theorem]{Definition}
\newtheorem{remark}[theorem]{Remark}
\newtheorem{example}[theorem]{Example}
\newtheorem{listabc}[theorem]{List}
\newcommand{\loccit}{\emph{loc.cit.}}
\newcommand{\Vbeta}{V^{\beta}}
\newcommand{\VE}{\mathcal{E}}
\newcommand{\Sp}{\mathrm{Sp}}
\newcommand{\Aset}{\mathcal{A}}
\newcommand{\Vggpr}{(V'/\fg\cdot x_0)^{G_{x_0}}}
\newcommand{\rhoprime}{\rho'}
\begin{document}

\begin{abstract}
We determine, under a certain assumption, the Alexeev--Brion moduli scheme $\ms$
of affine spherical $G$-varieties with a prescribed weight monoid $\wm$.
In \cite{degensphermod}  we showed that if $G$ is a
connected  complex reductive group  of type $\ssA$ and
$\wm$ is the weight monoid of
a spherical $G$-module, then $\ms$ is an affine space. Here we prove
that this remains true without any restriction on the type of $G$.
\end{abstract}

\maketitle

\section{Introduction and statement of results} \label{sec:intro}

A natural invariant of an affine variety $X$ equipped with an action of a
complex connected reductive group $G$ is its \textbf{weight monoid}
$\wm(X)$. It is the set of (isomorphism classes of) irreducible
representations of $G$ that occur in the ring of regular functions
$\C[X]$. If every irreducible representation occurs at most once in
this ring,
then $X$ is
called \textbf{multiplicity-free}. If, in addition, $X$ is normal, then it is an \textbf{affine spherical variety}. For multiplicity-free varieties, the weight monoid completely describes the
structure of $\C[X]$ as a representation of $G$. Knop's Conjecture, proved by Losev in \cite{losev-knopconj}, asserts that if $X$ is smooth and multiplicity-free, then $\wm(X)$ uniquely determines $X$. This is no longer true without the smoothness assumption.  A moduli scheme introduced by V.~Alexeev and M.~Brion
\cite{alexeev&brion-modaff} brings geometry to the natural question, ``to what extent does
$\wm(X)$ determine $X$ as a variety?''

To describe the moduli scheme, following \cite[Section
4.3]{brion-ihs}, we introduce
some more notation. Let $B$ be a Borel subgroup of $G$. Then $B=TU$
where $T$ is a maximal torus of $G$ and $U$ is the unipotent radical
of $B$. Let $\dw$ be the monoid of dominant weights in the weight
lattice $\wl$. Recall that by highest weight theory, the elements of $\dw$ are in bijection with
the isomorphism classes of irreducible representations of $G$. Under this identification, the weight monoid $\wm(X)$ of a multiplicity-free affine $G$-variety $X$ is a finitely generated submonoid of $\dw$. Now, given a finitely generated
submonoid $\wm$ of $\dw$, define the following $G$-module: 
\(V(\wm) := \oplus_{\lambda \in \wm} V(\lambda).\) 
By identifying it with the semigroup algebra $\C[\wm]$, we equip the space of highest weight vectors $V(\wm)^U$ with a
$T$-multiplication law. The moduli scheme $\ms$ introduced in \cite{alexeev&brion-modaff}
parametrizes the $G$-multiplication laws on $V(\wm)$ that extend the
chosen $T$-multiplication law on the subspace $V(\wm)^U$. We will sometimes write $\ms^G$ instead of $\ms$ when we need to specify the group under consideration.  Alexeev and
Brion showed that $\ms$ is an affine scheme of finite type over $\C$. 

In more geometric language, the moduli scheme $\ms$ parametrizes
pairs $(X, \varphi)$ where $X$ is a multiplicity-free affine $G$-variety with weight
monoid $\wm$ and $\varphi$ is a $T$-equivariant map $\Spec(\C[X]^U)
\to \Spec(\C[\wm])$.  Alexeev and Brion equipped $\ms$ with a natural action of
the `adjoint torus' $\Tad:=T/Z(G)$, where $Z(G)$ is the center of $G$. They proved that the orbits correspond to isomorphism classes of
multiplicity-free affine varieties with weight monoid $\wm$, and that there
is a unique closed $\Tad$-orbit, which is a fixed point denoted
$X_0$. Finally, they showed that if $X$ is an affine multiplicity-free
variety with weight monoid $\wm$, and we think of $X$ as a closed
point on $\ms$, then the closure of the orbit
$\Tad\cdot X \inn \ms$ has coordinate ring $\C[\Sigma_X]$, where $\Sigma_X$ is
the so-called \textbf{root monoid} of $X$:
\[\Sigma_X :=\<\lambda + \mu - \nu \colon \lambda,\mu,\nu \in \dw
\text{ such that } \<\C[X]_{(\lambda)} \cdot \C[X]_{(\mu)}\>_{\C} \cap
  \C[X]_{(\nu)} \neq 0\>_{\NN}.\]
Here $\C[X]_{(\lambda)}$ is the isotypic component of $\C[X]$ of type
$\lambda \in \dw$. 

\subsection{Main results}
In \cite{degensphermod} we proved that if $\wm$ is the weight monoid of a spherical $G$-module,
where $G$ is of type $\ssA$, then $\ms$ is an affine space. Here we
extend this result to weight monoids
of spherical $G$-modules for  arbitrary connected reductive groups $G$.  That is, we here prove the following.

\begin{theorem} \label{thm:main} 
Assume $W$ is a spherical $G$-module, where $G$ is a
connected reductive algebraic group.  Let
$\wm$ be the weight monoid of  $W$ and let $d_W$ be the rank of the
group $\Z\Sigma_W$ generated by the root monoid $\Sigma_W$ of $W$. Then
\begin{enumerate}[(a)]
\item $\Sigma_W$ is a freely generated monoid; and
\item the $\Tad$-scheme $\tM_{\wm}$ 
is $\Tad$-equivariantly isomorphic to the $\Tad$-module with weight monoid $\Sigma_W$. In particular, the scheme $\tM_{\wm}$ is
isomorphic to  the affine space $\AA^{d_W}$, hence it is irreducible and smooth. 
\end{enumerate} 
\end{theorem}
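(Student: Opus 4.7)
The plan is to exploit the contracting $\Tad$-action on $\ms$, which has the unique fixed point $X_0$. Since the $\Tad$-weights on the tangent space $T_{X_0}\ms$ lie in the same open half-space as $\Sigma_W$ (they are sums of positive roots, coming from the horospherical nature of $X_0$), this action retracts $\ms$ to $X_0$ and, by lifting a $\Tad$-eigenbasis of the cotangent space to the maximal ideal of $X_0$, produces a $\Tad$-equivariant closed embedding $\iota\colon \ms\hookrightarrow T_{X_0}\ms$. It then suffices to show that the $\Tad$-weights on $T_{X_0}\ms$, with multiplicity, form a free generating set of $\Sigma_W$ and that $\iota$ is surjective.

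First, I would compute $T_{X_0}\ms$ using Alexeev--Brion's cohomological description of the tangent space as a space of first-order invariant deformations of the horospherical contraction $X_0$ of $W$, which translates the problem into the $G$-module structure of $\C[W]$. To identify $T_{X_0}\ms$ with the $\Tad$-module whose weight monoid is $\Sigma_W$, and to establish (a) simultaneously, I would reduce to indecomposable spherical modules (a general spherical $G$-module is a direct sum of such, and both $\ms$ and $\Sigma_W$ should respect the decomposition) and then verify the statement case-by-case using the Kac--Benson--Ratcliff--Leahy classification of indecomposable spherical modules. The indecomposable summands acted on by groups of type $\ssA$ are already handled by \cite{degensphermod}; for each of the remaining families I would compute $\Sigma_W$ explicitly from the $B$-weights of a generating set of $\C[W]^U$, exhibit a free generating set, and verify that each generator appears with multiplicity one in $T_{X_0}\ms$.

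Once $T_{X_0}\ms$ is identified as the $\Tad$-module of weight monoid $\Sigma_W$, which by (a) is the affine space $\AA^{d_W}$, the embedding $\iota$ becomes a closed embedding into $\AA^{d_W}$. To conclude it is an isomorphism, I would match dimensions from below by constructing enough spherical degenerations of $W$ realized as closed points of $\ms$, giving $\dim\ms\geq d_W$ and forcing $\iota$ to be surjective.

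The main obstacle I anticipate is the case-by-case verification for non-type-$\ssA$ indecomposable spherical modules. The argument of \cite{degensphermod} presumably relies on special features of type $\ssA$ (the simple description of fundamental weights and of tensor products $V(\lambda)\otimes V(\mu)$ in terms of Young tableaux), which do not generalize uniformly; each remaining family must therefore be treated with a dedicated computation of $\C[W]$ as a $G$-module and a direct verification of both the freeness of $\Sigma_W$ and the multiplicity-one claim in $T_{X_0}\ms$.
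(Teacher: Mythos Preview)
Your proposal is correct and follows essentially the same strategy as the paper: the reduction via the contracting $\Tad$-action to the equality $\dim T_{X_0}\ms = d_W$, together with the lower bound $\dim\ms\ge d_W$, is exactly what Corollary~2.6 of \cite{degensphermod} packages, and the reduction to a finite list of building blocks followed by case-by-case verification of the tangent-space dimension is precisely Theorem~\ref{thm:cbc} and Section~\ref{sec:cases}. One small refinement: the reduction in \cite{degensphermod} (Corollary~4.17) is not merely to indecomposable summands but to the \emph{saturated} indecomposable spherical modules of Knop's List~\ref{KnopL}, and it must handle all intermediate groups $\overline{G}'\subseteq G\subseteq\overline{G}$ for which $W$ remains spherical; your sketch glosses over this step, but it is already established in \cite{degensphermod} and carries no type-$\ssA$ restriction.
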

We recall from \cite[Lemma 2.7]{degensphermod} that for a given spherical $G$-module $W$, the invariant $d_W$ is easy to calculate from the rank of the free abelian group $\<\wm(W)\>_{\Z}$: it is the difference between the rank of $\<\wm(W)\>_{\Z}$ and the number of irreducible components of $W$. 
Thanks to the reduction in Section 4 of \cite{degensphermod}, which is
independent of the type of the group $G$, the proof of
Theorem~\ref{thm:main}, which is formally given in  Section~\ref{sec:formal-proof-theorem}, reduces to the following theorem.

\begin{theorem}\label{thm:cbc}
Suppose $(\overline{G},W)$ is an entry in Knop's  List of saturated indecomposable spherical modules (see List~\ref{KnopL} on page~\pageref{KnopL}).
If $G$ is a connected reductive group such that
\begin{enumerate}
\item $\overline{G}' \inn G \inn \overline{G}$; and
\item $W$ is spherical as a $G$-module
\end{enumerate} 
then $$\dim T_{X_0}\tM^{G}_{\wm} = d_W,$$ where $\wm$ is the weight monoid of $(G,W)$.  
\end{theorem}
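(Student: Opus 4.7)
The plan is to prove Theorem~\ref{thm:cbc} by a case-by-case analysis. Since the hypothesis places $(\overline{G},W)$ on Knop's List of saturated indecomposable spherical modules, which consists of finitely many entries, and since for each entry the intermediate groups $G$ with $\overline{G}' \inn G \inn \overline{G}$ for which $W$ remains spherical form a finite collection (controlled by subtori of the center $Z(\overline{G})$ and identifications up to $G$-module isomorphism), the theorem reduces to a finite set of numerical verifications. For each pair $(G,W)$ that arises, the goal is to compute the number $\dim T_{X_0}\tM^G_{\wm}$ and match it with the invariant $d_W$, which is read off directly from $\wm$ and Knop's tables.

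For the actual computations I would use the tangent-space description developed in \cite{degensphermod}: choose a generic point $x_0 \in X_0$ embedded in a suitable ambient $G$-module $V$, and exploit the fact that $T_{X_0}\tM^G_{\wm}$ is controlled by the $G_{x_0}$-invariant part $\Vgg$ of the normal space $V/\fg\cdot x_0$, restricted to the $\Tad$-weights lying in $-\Sigma$, where $\Sigma$ denotes the set of spherical roots of $(G,W)$. For every entry of Knop's list the ambient module $V$, the isotypic decomposition of $V/\fg\cdot x_0$, the generic stabilizer $G_{x_0}$, and the spherical root data are all explicit, so the verification reduces to counting isotypic components and weights. A matching lower bound $\dim T_{X_0}\tM^G_{\wm} \geq d_W$ comes from the Alexeev--Brion description of orbit closures in $\ms$: taking $X = W$ itself, the orbit closure $\overline{\Tad\cdot W}$ has coordinate ring $\C[\Sigma_W]$ and contributes a subspace of dimension at least $d_W$ to $T_{X_0}\tM^G_{\wm}$, so all the substantive work lies in the upper bound.

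The principal obstacle is the volume and variety of cases. Entries where $Z(\overline{G})$ has larger rank force one to handle several intermediate groups $G$ simultaneously, and within some entries the first, naive upper bound on the count of admissible $\Tad$-weights in $\Vgg$ strictly exceeds $d_W$; in those cases one must refine the count by arguing that the surplus weight vectors do not lift to honest first-order deformations of the $G$-multiplication law on $V(\wm)$. A good organizing principle is to compute first for $G = \overline{G}$ and then to track how the $T$-character of $V/\fg\cdot x_0$ splits more finely as one passes to a smaller $G$, so that the verification for a general intermediate $G$ follows from the one for $\overline{G}$ together with explicit data about the subtorus of $Z(\overline{G})$ quotiented out. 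Once the upper and lower bounds meet for every pair $(G,W)$, Theorem~\ref{thm:cbc} follows, and its combination with the type-independent reduction of \cite[Section~4]{degensphermod} yields Theorem~\ref{thm:main}.
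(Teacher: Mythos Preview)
Your overall plan---a case-by-case run through Knop's List, a lower bound $\dim T_{X_0}\tM^G_{\wm}\ge d_W$ coming from the $\Tad$-orbit closure $\overline{\Tad\cdot W}$, and an upper bound obtained by analyzing $\Tad$-eigenvectors in $\Vgg$---is exactly the strategy the paper follows, and your lower-bound argument agrees with the paper's (it is \cite[Proposition~2.5(1)]{degensphermod}).

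The gap is in the upper bound. You write that when the naive count of admissible $\Tad$-weights in $\Vgg$ exceeds $d_W$ ``one must refine the count by arguing that the surplus weight vectors do not lift to honest first-order deformations.'' That sentence is where essentially all of the paper's work lies, and your proposal does not supply a mechanism for it. The inclusion $T_{X_0}\tM^G_{\wm}\hookrightarrow \Vgg$ is strict in several of the remaining cases (e.g.\ Luna's example in Section~\ref{sec:crit-extens}), and the surplus eigenvectors in $\Vgg$ are genuine $G_{x_0}$-invariants; they are excluded not by any isotypic-decomposition count but because the corresponding $G$-invariant section of $\shN_{X_0|V}$ over the open orbit fails to extend across a codimension-one orbit of $X_0$. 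The paper's new contribution is precisely the extension criterion Theorem~\ref{thm:extension}/Corollary~\ref{cor:extending-sections} and its consequence Proposition~\ref{prop:simprootweight}: if some $\lambda\in E$ of codimension one appears with positive coefficient in the $\Tad$-weight $\beta$, then $\beta$ must be a simple root and the weight space is one-dimensional. This single proposition disposes of most of the case analysis in Section~\ref{sec:cases}; the residual work is to control the few $\lambda\in E$ not of codimension one, which is done with ad hoc root-operator calculations. Without that criterion (or an equivalent tool), the ``refine the count'' step is not a step but a wish.

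Two smaller points. First, restricting attention to $\Tad$-weights in $-\Sigma$ (the spherical roots) is a \emph{conclusion} of the analysis, not an input you may assume; indeed, the paper remarks that the fact that only spherical roots occur is one of the outcomes. Second, your proposed organizing principle---compute for $G=\overline{G}$ first and then track what happens for smaller $G$---is not quite how the paper proceeds: because of Proposition~\ref{prop:simprootweight} the analysis is carried out directly for each admissible $G$ (often reducing to $G'$), and the dependence on the intermediate group enters mainly through the lattice $\langle E\rangle_{\ZZ}\cap\Lambda_R$, as in the proof of Proposition~\ref{prop:case8}.
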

In \cite[Section 5]{degensphermod}, we proved Theorem~\ref{thm:cbc}
for groups $\overline{G}$ of type $\ssA$. In Section~\ref{sec:cases}
below, we prove it for the remaining modules in Knop's List,
i.e. those where the acting group contains a component that is not of
type $\ssA$. As in our previous paper, we do this by determining for each entry in Knop's List the structure of $\tg\ms$ as a $\Tad$-module: we determine the $\Tad$-weights occuring in $\tg\ms$ and show that each weight has multiplicity one. It follows from our descriptions that only certain `special' elements of the root lattice of $G$ occur as $\Tad$-weights in $\tg\ms$: every $\Tad$-weight in $\tg\ms$ is a so-called ``spherical root'' of $G$ (cf.\ \cite[Section 1.2]{luna-typeA} for the definition of this notion).   

Section~\ref{sec:crit-extens}, which may be of independent interest,
contains some auxilary results about the tangent space $T_{X_0}\ms$ to
$\ms$ at the point $X_0$.  Corollary~\ref{cor:extending-sections} is a sharpening of the
extension criterion \cite[Proposition 3.4]{degensphermod} for
invariant sections of the normal sheaf of $X_0$ in $V$. 

The Appendix presents the details, in a specific case,  of a 
different technique which explicitly computes the $\Tad$-eigenvectors in $\Vgg$.

After most of the work on this paper had been completed, the preprints  \cite{avdeev&cupit-irrcomps-arxivv2} and \cite{msfwm} were posted on the arXiv. 
We do not use the results contained in these papers, and for the 
weight monoids $\wm$ under consideration in the present paper
our main result is stronger. More precisely, while \cite{avdeev&cupit-irrcomps-arxivv2,msfwm} also prove (and in much greater generality than in the present paper) that the $\Tad$-weights in $\tg\ms$ are spherical roots of $G$ and have multiplicity one, in the present paper we additionally prove that $\ms$ is irreducible for the monoids $\wm$ under consideration. 

\subsection{Formal proof of Theorem~\ref{thm:main}.} \label{sec:formal-proof-theorem}
We now give the proof of Theorem~\ref{thm:main}. Corollary 2.6 and
Corollary 4.17 of \cite{degensphermod} reduce the proof to
Theorem~\ref{thm:cbc}, which we prove by a case-by-case verification
in Section~\ref{sec:cases}.

\subsection{Notations}
We will follow the conventions and notations of
\cite{degensphermod}. In particular, by a  variety we mean a reduced,
irreducible and separated scheme of finite type over $\C$. We will use $\wl$ for the weight
lattice of $G$, i.e. the group of characters of a fixed maximal torus
$T$, which is identified with the group of characters of a chosen
Borel subgroup $B$ of $G$ which contains $T$. Then $\dw$ will denote
the monoid of dominant weights in $\wl$, and we will use $V(\lambda)$
for the irreducible representation of $G$ corresponding to $\lambda
\in \dw$, and $v_{\lambda}$ for a highest weight vector in $V(\lambda)$. We will use $\fg$ for the Lie algebra of $G$.  If $\alpha$ is a root, then $\alpha^{\vee} \in \Hom_{\Z}(\wl,\Z)$ will be
its coroot (in the sense of \cite{bourbaki-geadl47}), $\fg^{\alpha}$ its root space and $X_{\alpha} \in \fg^{\alpha} \setminus \{0\}$ a root operator. We will
use $\sr$ for the set of simple roots (relative to $T$ and $B$) and $\wl_R$ for the root lattice: $\wl_R = \<\sr\>_{\ZZ} \inn \wl$.

We will number the fundamental weights and the simple roots of the simple Lie algebras as in \cite{bourbaki-geadl47}. When $G = \GL(n)$ and $i \in \{1,\ldots, n\}$, the highest weight of the module $\bigwedge^i \CC^n$ will  be denoted by $\omega_i$. Moreover, we put $\omega_0= 0$. It is well-known that the simple roots of $\GL(n)$ have the following expressions in terms of the $\omega_i$:
\begin{equation} 
 \alpha_i = -\omega_{i-1} + 2\omega_i - \omega_{i+1} \quad \text{ for } i\in\{1,2,\ldots,n-1\}.
\end{equation} 

We will use $E^*$ for the basis of a free
monoid $\wm$ of dominant weights and $E:= \{\lambda^* \colon \lambda\in
E^*\}$. Here $\lambda^*$ is the highest weight of the
representation $V(\lambda)^*$ which is dual to $V(\lambda)$; that is:
$V(\lambda^*) \isom V(\lambda)^*$.

\subsection{Acknowledgment}
The authors thank Michel Brion for suggesting, during a 2011 visit of the second-named author to the University of Grenoble, the general strategy for the extension criterion in Section~\ref{sec:crit-extens}. They also thank an anonymous referee for pointing out a mistake in the proof of Proposition~\ref{prop:case10}, for her/his very careful reading and for the numerous and detailed suggestions which improved the paper.

S.~P.\ benefited from experiments with
the computer algebra program Macaulay2~\cite{M2}.
For different parts of the project he was financially supported by
the Portuguese Funda\c{c}\~ao para a Ci\^encia e a Tecno\-lo\-gia through grant
SFRH/BPD/22846/2005 of POCI2010/FEDER and by RIMS, Kyoto University, Japan.

B.~V.~S.\ received support from The City University of New York PSC-CUNY Research Award Program and from the National Science Foundation through grant number DMS-1407394. He also thanks Michel Brion and the Institut Fourier for their hospitality in the Summer of 2011.

\section{Criterion for extension of sections} \label{sec:crit-extens}
In this section, $E^*$ is a set of linearly independent dominant weights of a complex connected reductive group $G$, and $\wm$ is the submonoid of $\dw$ generated by $E^*$. 
We do not assume that $\wm$ is the weight monoid of a spherical module. Like before,
$E=\{\lambda\colon \lambda^* \in E^*\}$. As in \cite{degensphermod}, we put
\begin{align*}
V&:=\oplus_{\lambda \in E} V(\lambda);\\
x_0&:= \sum_{\lambda \in
  E} v_{\lambda} \in V; \\
  X_0&:= \overline{G\cdot x_0}\inn V
  \end{align*}
and we denote by $\shN_{X_0|V}$ the normal sheaf of $X_0$ in $V$. 

\begin{remark} \label{rem:propsX0}
We record some well-known facts about $x_0$ and $X_0$ that will be of use later in the paper.
\begin{enumerate}[(a)]
\item Since $E$ is linearly independent, $\ft \cdot x_0 = \<v_{\lambda} \colon \lambda \in E\>_{\CC}$. \label{item:t0x0} 
\item $X_0$ is a spherical $G$-variety with weight monoid $\wm$; cf.~\cite[Theorem 6]{vin&pop-quasi}
\item By \cite[Theorem 8]{vin&pop-quasi}, the following map is a one-to-one correspondence between the set of subsets of $E$ and the set of $G$-orbits in $X_0$:
\[(D\inn E) \mapsto G\cdot v_D \quad \text{where }v_D:=\sum_{\lambda \in D} v_{\lambda}.\]
\label{item:X0orbits} 
\end{enumerate}
\end{remark}

In \cite{alexeev&brion-modaff} Alexeev and Brion equipped $\ms$ with an action of $\Tad$ and showed that \emph{$X_0$, viewed as a point of $\ms$, is a fixed point and the unique closed orbit for this action.} As in \cite{degensphermod} we will work with a `twist' of the action in \cite{alexeev&brion-modaff}. It is obtained by composing Alexeev and Brion's action with the automorphism of $\Tad$ induced by the automorphism $\gamma \mapsto w_0(\gamma)$ of the root lattice $\Lambda_R$, which is the group of characters of $\Tad$. Here $w_0$ is the longest element of the Weyl group of $(G,T)$. We will call our action on $\ms$ and its induced action on $\tg\ms$ ``the $\Tad$-action.''
As shown in \cite{alexeev&brion-modaff} and reviewed in \cite[\S 2.2]{degensphermod}, we have a sequence of $\Tad$-equivariant linear maps
\begin{equation} \label{eq:14}
\tg\ms \stackrel{\isom}{\longrightarrow} H^0(X_0, \shN_{X_0|V})^G \into H^0(G \cdot x_0,
\shN_{X_0|V})^G \stackrel{\isom}{\longrightarrow} \Vgg,
\end{equation} 
where the first and the third map are isomorphisms, and the second one is an inclusion.  

Because they will play a role later on, we recall from \cite[\S 2.2]{degensphermod} explicit descriptions of our $\Tad$-actions on $\Vgg$ and on $H^0(X_0,\shN_{X_0|V})^G$.  For the former, we begin by equipping $V$ with the same action $\alpha$ of $\Tad$ as in \cite[Definition 2.11]{degensphermod}:
if $t \in T, \lambda \in E$ and $v \in V(\lambda) \inn V$ then
\begin{equation}
\alpha(t,v) = \lambda(t)t^{-1}v. \label{eq:alphadef}
\end{equation}
It follows from highest weight theory that the center $Z(G)$ of $G$ belongs to the kernel of $\alpha$, and therefore $\alpha$ induces an action of $\Tad=T/Z(G)$ on $V$. Let $G \rtimes \Tad$ be the semidirect product of $G$ and $\Tad$, where $\Tad$ acts on $G$ by conjugation. As explained in \cite[p.~102]{alexeev&brion-modaff} the $\Tad$-action $\alpha$ and the linear $G$-action on $V$ can be extended together to a linear action of $G\rtimes \Tad$ on $V$. Since the $\Tad$-action fixes $x_0$, this yields an action of $G_{x_0} \rtimes \Tad$ on $\Vg$, see e.g. \cite[p.~1780]{degensphermod}. It follows that the subspace $\Vgg$ of $\Vg$ is preserved by the action of $\Tad$.  This induced action on $\Vgg$ is what we call ``the $\Tad$-action'' on $\Vgg$. By slight abuse of notation, we also denote it by $\alpha$. 

To describe the $\Tad$-action on $H^0(G\cdot x_0,\shN_{X_0|V})^G$ and on $H^0(X_0,\shN_{X_0|V})^G$, let $\GL(V)^G$ be the group of linear automorphisms of $V$ that commute with the action of $G$. Since the elements of $E$ are distinct, $\GL(V)^G$ is isomorphic to the product of $|E|$ copies of $\C^{\times}$. The natural action of $\GL(V)^G$ on $V$ stabilizes $G\cdot x_0$ and $X_0$ and the embedding $H^0(X_0, \shN_{X_0|V})^G \into H^0(G\cdot x_0, \shN_{X_0|V})^G$ is $\GL(V)^G$-equivariant for the induced actions. Composing the action of $\GL(V)^G$ with the homomorphism
\begin{equation} \label{eq:homomf}
f \colon T \to \GL(V)^G, t \mapsto (\lambda(t))_{\lambda \in E}
\end{equation}
yields an action of $T$ on $V$. We denote the induced $T$-action on $H^0(G\cdot x_0, \shN_{X_0|V})^G$ and on $H^0(X_0, \shN_{X_0|V})^G$ by $\widehat{\psi}$. Proposition 2.13 of \cite{degensphermod} shows that $Z(G)$ is in the kernel of $\widehat{\psi}$ and that the isomorphism 
\begin{equation}
H^0(G \cdot x_0,\shN_{X_0|V})^G \to \Vgg, s \mapsto s(x_0)
\end{equation} in \eqref{eq:14} above is indeed $\Tad$-equivariant if $\Vgg$ is equipped with the $\Tad$-action $\alpha$ and $H^0(G \cdot x_0,\shN_{X_0|V})^G$ is equipped with the $\Tad$-action $\widehat{\psi}$.

In Section~\ref{sec:extending-sections} we strengthen
\cite[Proposition 3.4]{degensphermod} and obtain necessary and
sufficient conditions for a section $s \in  H^0(G \cdot x_0,
\shN_{X_0|V})^G$ to extend to $X_0$: see
Corollary~\ref{cor:extending-sections}. The proof is given in Section~\ref{subsubs_pf_of_extension_thm}, after we review some generalities about extending sections of a vector bundle over a normal variety in Section~\ref{subsubs!generalities_about_extending}.
In Section~\ref{sec:few-more-facts-tgms} we gather a few more results
on $\tg\ms$. 

\subsection{Extending sections} \label{sec:extending-sections}

We denote by  $X_0^{\leq 1}  \subset X_0$ 
the union of $G \cdot x_0$ with all $G$-orbits of $X_0$ that have
codimension $1$.  By \cite [Lemma 1.14] {brion-cirmactions} 
$X_0^{\leq 1}$ is an open subset of $X_0$, and because $X_0$ is
normal, it is a subset of the smooth locus of $X_0$ (see, e.g., the argument
in the proof of \cite  [Lemma 3.3]{degensphermod} for details). 

\begin{defn}
We say the $\lambda \in E$ \textbf{has codimension one} if 
\[\dim G \cdot (x_0 - v_{\lambda}) = (\dim G\cdot x_0) - 1.\]   
\end{defn}

As an immediate consequence of, e.g., \cite[Proposition 3.1]{degensphermod}
one has the following simple criterion to determine whether an
element of $E$ has codimension one.
\begin{prop} \label{prop:lambdacodim1}
For $\lambda \in E$ the following are equivalent
\begin{enumerate}
\item $\lambda$ has codimension one;
\item for every $\alpha \in \sr$ such that $\<\alpha^\vee, \lambda\>
  \neq 0$, there exists $\mu \in E\setminus \{\lambda\}$ such that $\<\alpha^\vee, \mu\>
  \neq 0$;
\item for every positive root $\alpha$ such that $\<\alpha^\vee, \lambda\>
  \neq 0$, there exists $\mu \in E\setminus \{\lambda\}$ such that $\<\alpha^\vee, \mu\>
  \neq 0$.
\end{enumerate}  
\end{prop}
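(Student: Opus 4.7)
The plan is to derive the proposition from the dimension formula for the orbits $G\cdot v_D$, $D\subseteq E$, that underlies \cite[Proposition 3.1]{degensphermod}, combined with Remark~\ref{rem:propsX0}, which identifies $G\cdot(x_0-v_\lambda)$ with $G\cdot v_{E\setminus\{\lambda\}}$. Concretely, I would establish that for every $D\subseteq E$,
\[
\dim G\cdot v_D\;=\;|D|\;+\;\bigl|\{\beta\in \pr : \langle\beta^\vee,\mu\rangle\neq 0\text{ for some }\mu\in D\}\bigr|,
\]
which follows by decomposing $\fg\cdot v_D = \ft\cdot v_D+\sum_{\beta\in \pr}\fg^{-\beta}\cdot v_D$ (the positive-root part of $\fg$ annihilates $v_D$ since each $v_\mu$ is a highest weight vector), noting $\dim\ft\cdot v_D=|D|$ by linear independence of $E$, and observing that $\fg^{-\beta}\cdot v_D$ is one-dimensional exactly when some $\mu\in D$ has $\langle\beta^\vee,\mu\rangle\neq 0$, with distinct $\beta$'s contributing vectors in distinct weight spaces of $V$.

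Specialising this to $D=E$ and $D=E\setminus\{\lambda\}$ and subtracting, I would obtain that the codimension of $G\cdot(x_0-v_\lambda)$ in $G\cdot x_0$ equals $1+N_\lambda$, where $N_\lambda$ counts positive roots $\beta$ with $\langle\beta^\vee,\lambda\rangle\neq 0$ but $\langle\beta^\vee,\mu\rangle=0$ for every $\mu\in E\setminus\{\lambda\}$; the equivalence $(1)\Leftrightarrow(3)$ then falls out immediately. The implication $(3)\Rightarrow(2)$ is trivial since $\sr\subseteq \pr$. For $(2)\Rightarrow(3)$, I would use dominance: writing $\beta=\sum_{\alpha\in \sr}c_\alpha\alpha\in \pr$ with $\langle\beta^\vee,\lambda\rangle\neq 0$, dominance of $\lambda$ makes $\langle\beta,\lambda\rangle=\sum_\alpha c_\alpha\langle\alpha,\lambda\rangle$ a sum of non-negative terms, so its non-vanishing forces some $\alpha\in\sr$ with $c_\alpha>0$ and $\langle\alpha^\vee,\lambda\rangle\neq 0$. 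Hypothesis~(2) then provides $\mu\in E\setminus\{\lambda\}$ with $\langle\alpha^\vee,\mu\rangle\neq 0$, and applying the analogous non-negativity to the dominant weight $\mu$ yields $\langle\beta,\mu\rangle\geq c_\alpha\langle\alpha,\mu\rangle>0$, whence $\langle\beta^\vee,\mu\rangle\neq 0$.

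The main obstacle is verifying that $\fg^{-\beta}\cdot v_\mu\neq 0$ iff $\langle\beta^\vee,\mu\rangle\neq 0$ for \emph{every} positive root $\beta$, including non-simple ones; this is a standard $\sl_2$-triple argument (since $v_\mu$ is $X_\beta$-annihilated with $H_\beta$-eigenvalue $\langle\beta^\vee,\mu\rangle\in\ZZ_{\geq 0}$), but it must be made explicit if it is not already packaged into the cited proposition. Once the dimension formula is in place, the remaining steps are elementary non-negativity calculations in the positive-root system.
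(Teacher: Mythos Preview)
Your proposal is correct and is exactly what the paper intends: the paper offers no proof beyond the sentence ``an immediate consequence of, e.g., \cite[Proposition~3.1]{degensphermod},'' and your argument unpacks precisely that dimension formula for $G\cdot v_D$ together with the elementary dominance argument for $(2)\Leftrightarrow(3)$. The one cosmetic point to clean up is that in your $(2)\Rightarrow(3)$ step you silently pass from $\langle\beta^\vee,\lambda\rangle$ to ``$\langle\beta,\lambda\rangle$''; this is harmless once you work with the Weyl-invariant inner product $(\cdot\mid\cdot)$ (as in the proof of Proposition~\ref{prop:Tadweightnotantidom}), since $\langle\gamma^\vee,\nu\rangle$ and $(\gamma\mid\nu)$ have the same sign, but you should make the transition explicit.
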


The following is an immediate consequence of \cite[Lemma 3.9]{brion-ihs}.
\begin{prop} \label{prop:extendcodim1}
If $s \in  H^0(G \cdot x_0,
\shN_{X_0|V})^G$, then the following are equivalent:
\begin{enumerate}[(1)]
\item  $s$ extends to $X_0$; \label{item:1}
\item $s$ extends to $X_0^{\leq 1}$; \label{item:2}
\item $s$ extends to  $G\cdot x_0 \cup G \cdot (x_0-v_{\lambda})$ for every $\lambda \in E$ of
codimension $1$.  \label{item:3}
\end{enumerate}
\end{prop}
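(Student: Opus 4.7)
The plan is to establish the cycle $(1)\Rightarrow(2)\Rightarrow(3)\Rightarrow(2)\Rightarrow(1)$. The first two implications are merely restrictions of sections, essentially tautological. Indeed $(1)\Rightarrow(2)$ is restriction from $X_0$ to the open subset $X_0^{\leq 1}$; and for $(2)\Rightarrow(3)$, observe that whenever $\lambda\in E$ has codimension one the union $G\cdot x_0\cup G\cdot(x_0-v_\lambda)$ is, by definition, contained in $X_0^{\leq 1}$, so one simply restricts.

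For $(3)\Rightarrow(2)$, I would first identify the codimension-one $G$-orbits of $X_0$ using Remark~\ref{rem:propsX0}\eqref{item:X0orbits}. Since $G$-orbits are indexed by subsets $D\subseteq E$ via $D\mapsto G\cdot v_D$, and $G\cdot x_0=G\cdot v_E$ is the dense orbit, an orbit $G\cdot v_D$ of codimension one cannot have $D\subsetneq E\setminus\{\lambda\}$ for any $\lambda$, because then $G\cdot v_D$ would lie in the proper closed subset $\overline{G\cdot v_{E\setminus\{\lambda\}}}$ and have strictly smaller dimension. Hence $D=E\setminus\{\lambda\}$ for some $\lambda$, and by definition this $\lambda\in E$ is of codimension one. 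Consequently
\[
X_0^{\leq 1} \;=\; G\cdot x_0 \;\cup\;\bigcup_{\substack{\lambda\in E\\ \lambda\text{ of codim.\ }1}} G\cdot(x_0-v_\lambda).
\]
Because $G\cdot x_0$ is open and dense in each piece on the right, any extension of $s$ is unique. So the hypothesised local extensions from (3) automatically agree on overlaps and glue, via the sheaf axiom for $\shN_{X_0|V}$, to a section over $X_0^{\leq 1}$.

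The step $(2)\Rightarrow(1)$ is precisely the content of \cite[Lemma~3.9]{brion-ihs}: $X_0$ is normal, the open set $X_0^{\leq 1}$ is contained in the smooth locus of $X_0$, and its complement $X_0\setminus X_0^{\leq 1}$ is a union of $G$-orbits of codimension at least two. The restriction of $\shN_{X_0|V}$ to the smooth locus is locally free (hence reflexive), and sections of a reflexive coherent sheaf on a normal variety extend uniquely across a closed subset of codimension $\geq 2$; the $G$-invariance is preserved by uniqueness of the extension.

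The only step with genuine content is the identification of codimension-one orbits in $(3)\Rightarrow(2)$, which I expect to be the main (minor) obstacle; once that is established, the remaining implications are either restriction or the black-box application of the cited lemma from \cite{brion-ihs}.
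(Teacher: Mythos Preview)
Your argument follows essentially the same route as the paper's proof: $(1)\Leftrightarrow(2)$ via \cite[Lemma~3.9]{brion-ihs}, and $(2)\Leftrightarrow(3)$ by recognising that the sets $G\cdot x_0\cup G\cdot(x_0-v_\lambda)$ cover $X_0^{\leq 1}$ and invoking the sheaf axiom.

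There is, however, one point you gloss over that the paper makes explicit. For the sheaf gluing in $(3)\Rightarrow(2)$ to work, you need the sets $G\cdot x_0\cup G\cdot(x_0-v_\lambda)$ to form an \emph{open} cover of $X_0^{\leq 1}$, not merely a set-theoretic cover. You correctly establish that every codimension-one orbit is of the form $G\cdot(x_0-v_\lambda)$, so the union is all of $X_0^{\leq 1}$; but you never check that each individual set is open. The paper handles this by observing that the complement of $G\cdot x_0\cup G\cdot(x_0-v_\lambda)$ in $X_0^{\leq 1}$ is a finite union of codimension-one orbits, each of which is closed in $X_0^{\leq 1}$ because it has minimal dimension there. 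This is an easy fix, but without it the appeal to the sheaf axiom is not justified.

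A second, more minor point: your argument that $D\subsetneq E\setminus\{\lambda\}$ forces $\dim G\cdot v_D<\dim G\cdot v_{E\setminus\{\lambda\}}$ tacitly uses the orbit-closure relation $G\cdot v_D\subseteq\overline{G\cdot v_{D'}}$ whenever $D\subseteq D'$. This is true and standard, but worth citing (the paper refers to \cite[Lemma~2.16]{degensphermod}).
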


\begin{proof}
The equivalence of (\ref{item:1}) and (\ref{item:2}) is a special case of
\cite[Lemma 3.9]{brion-ihs}. The equivalence of (\ref{item:2}) and
(\ref{item:3}) is a consequence of the definition of a
sheaf, once we prove that the collection of sets 
\begin{equation}
\{G\cdot x_0 \cup G \cdot
(x_0-v_{\lambda}) \colon \lambda \in E\text{ of codimension $1$}\} \label{eq:X0opencover}
\end{equation} 
forms an
open cover of $X_0^{\leq 1}$. 
We first show that each set $G\cdot x_0 \cup G \cdot
(x_0-v_{\lambda})$ in the collection~\eqref{eq:X0opencover} is open. Indeed, the complement of $G\cdot x_0 \cup G \cdot (x_0-v_{\lambda})$ in $X_0^{\leq 1}$ is a finite (by Remark~\ref{rem:propsX0}(\ref{item:X0orbits})) union of orbits in $X_0^{\leq 1}$ that are all closed because they are of minimal dimension. Secondly, the union of the sets in the collection~\eqref{eq:X0opencover} is all of $X_0^{\leq 1}$ because, by Remark~\ref{rem:propsX0}(\ref{item:X0orbits}) and \cite[Lemma 2.16]{degensphermod} every orbit of codimension $1$ in $X_0$ is of the form 
$G \cdot (x_0-v_{\lambda})$ for some $\lambda \in E$ of codimension $1$. 
\end{proof}

We recall some well-known facts about $\Tad$-weights and $\Tad$-eigenvectors in $\Vgg$.
\begin{lemma}
Let $\beta \in \wl$. If $v\in V$ such that $[v] \in \Vgg$ is a nonzero $\Tad$-eigenvector of $\Tad$-weight $\beta$, then there exists a $\Tad$-eigenvector $\widehat{v}\in V$ of $\Tad$-weight $\beta$ such that $[v] = [\widehat{v}] \in \Vgg$.
\end{lemma}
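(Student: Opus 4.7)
The plan is to exploit the fact that $V$ decomposes as a direct sum of $\Tad$-weight spaces and that $\fg\cdot x_0$ is a $\Tad$-stable subspace compatible with this decomposition, so that lifting the $\Tad$-eigenvector $[v]$ reduces to taking the weight-$\beta$ component of any preimage $v\in V$.

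The key preliminary observation is that $x_0$ is fixed by $\Tad$: by the definition \eqref{eq:alphadef} of the action $\alpha$, each highest weight vector $v_\lambda$ is $\Tad$-fixed, so $x_0 = \sum_{\lambda \in E} v_\lambda$ is $\Tad$-fixed as well. Since $V$ carries the linear action of the semidirect product $G\rtimes\Tad$ described in the text, the identity $t\cdot(X\cdot x_0) = (\mathrm{Ad}(t)X)\cdot x_0$ for $t\in\Tad$ and $X\in\fg$ then shows that $\fg\cdot x_0\subset V$ is a $\Tad$-stable subspace.

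With that in hand, I would decompose $V = \bigoplus_\gamma V[\gamma]$ into $\Tad$-weight spaces and use the $\Tad$-stability of $\fg\cdot x_0$ to obtain a compatible decomposition $\fg\cdot x_0 = \bigoplus_\gamma \bigl(\fg\cdot x_0 \cap V[\gamma]\bigr)$, hence a $\Tad$-weight decomposition of the quotient $V/\fg\cdot x_0$. Writing $v = \sum_\gamma v_\gamma$ with $v_\gamma\in V[\gamma]$ and passing to the quotient, only the weight-$\beta$ piece of $[v]$ can be nonzero, so $[v] = [v_\beta]$, and $\widehat{v} := v_\beta$ is a $\Tad$-eigenvector in $V$ of weight $\beta$ producing the desired lift. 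There is no real obstacle here: the statement is the standard fact that every weight vector in the quotient of a torus module by a stable submodule lifts to a weight vector of the same weight, and the only ingredient specific to this setting is the $\Tad$-stability of $\fg\cdot x_0$, which is immediate from the $\Tad$-invariance of $x_0$.
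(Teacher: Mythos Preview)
Your proof is correct and follows essentially the same idea as the paper: both arguments rest on the $\Tad$-stability of $\fg\cdot x_0$ (which you justify via $\Tad$-invariance of $x_0$) and the diagonalizability of the $\Tad$-action. The only cosmetic difference is that the paper chooses a $\Tad$-stable complement $L$ to $\fg\cdot x_0$ and takes $\widehat{v}$ to be the unique preimage of $[v]$ in $L$, whereas you take $\widehat{v}=v_\beta$, the weight-$\beta$ component of the given $v$; both constructions yield a weight-$\beta$ lift for the same reason.
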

\begin{proof}
This follows from the following standard argument. Note that $\Vgg$ is a $\Tad$-stable subspace of $\Vg$. Moreover, since the subspace $\fg\cdot x_0$ of $V$ is $\Tad$-stable, there exists another $\Tad$-stable subspace $L$ of $V$ such that $V = \fg\cdot x_0 \oplus L$. The restriction of the quotient map $V \to \Vg$ to $L$ is an isomorphism $L \to \Vg$ of $\Tad$-modules. We can take $\widehat{v}$ to be the inverse image in $L$ of $[v]$ under this isomorphism.  
\end{proof}

\begin{prop} \label{prop:propsVggweights}
Let $\beta \in \wl$ and assume that $v$ is a $\Tad$-eigenvector in $V$ of weight $\beta$ such that $0 \neq [v] \in \Vgg$. Then
\begin{enumerate}[(a)]
\item there exists $\alpha \in \sr$ such that $X_{\alpha}\cdot v \neq 0$ and $\beta -\alpha \in R^+ \cup\{0\}$; \label{item:betasum}
\item $\beta \in \<E\>_{\ZZ}$; \label{item:betalattice}
\item $X_{\alpha}\cdot v \in \<X_{-(\beta-\alpha)}\cdot x_0\>_{\CC}$ for all $\alpha \in \sr$ such that $\beta-\alpha \in R^{+}$; \label{item:betaminalpharoot}
\item $X_{\alpha}\cdot v=0$ for all $\alpha\in \sr$ such that $\beta - \alpha \notin R^+ \cup\{0\}$.
\label{item:betaminalphanotroot}
\end{enumerate}
\end{prop}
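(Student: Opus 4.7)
My plan is to derive all four statements from two subgroups of $G_{x_0}$: the unipotent radical $U$, which fixes every $v_\lambda$ and hence $x_0$, and the subtorus $T_{x_0} = T \cap G_{x_0} = \bigcap_{\lambda \in E} \ker \lambda$. Throughout I would write $v = \sum_{\lambda \in E} v^{(\lambda)}$ with each nonzero $v^{(\lambda)} \in V(\lambda)$ a $T$-weight vector of weight $\lambda - \beta$; this decomposition follows from $v$ being a $\Tad$-eigenvector of weight $\beta$ together with formula~\eqref{eq:alphadef}. The hypothesis $[v] \neq 0$ translates to $v \notin \fg \cdot x_0$ and will be used repeatedly.

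The key technical input is a $\Tad$-weight analysis of $\fg \cdot x_0$. Since $X_\gamma \cdot x_0 = 0$ for every positive root $\gamma$, we have $\fg \cdot x_0 = \ft \cdot x_0 + \sum_{\mu \in R^+} \CC\, X_{-\mu} x_0$. By Remark~\ref{rem:propsX0}(\ref{item:t0x0}), $\ft \cdot x_0 = \<v_\lambda \colon \lambda \in E\>_{\CC}$, which is the $\Tad$-fixed subspace; and for each $\mu \in R^+$ the vector $X_{-\mu} x_0$ (if nonzero) is a $\Tad$-eigenvector of weight $\mu$ because $x_0$ is $\Tad$-fixed. Consequently the $\Tad$-weights of $\fg \cdot x_0$ lie in $\{0\} \cup R^+$, and for $\mu \in R^+$ the $\mu$-weight space of $\fg \cdot x_0$ is contained in the line $\CC\, X_{-\mu} x_0$.

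Items (c) and (d) are then immediate: because $U \subseteq G_{x_0}$, the $G_{x_0}$-invariance of $[v]$ forces $X_\alpha \cdot v \in \fg \cdot x_0$ for every $\alpha \in R^+$; since $X_\alpha \cdot v$ is a $\Tad$-eigenvector of weight $\beta - \alpha$, comparing with the decomposition above gives $X_\alpha \cdot v \in \CC\, X_{-(\beta-\alpha)} x_0$ when $\beta - \alpha \in R^+$, and $X_\alpha \cdot v = 0$ when $\beta - \alpha \notin R^+ \cup \{0\}$. Item (a) follows by combining (d) with the identity $V^U = \<v_\lambda \colon \lambda \in E\>_{\CC} = \ft \cdot x_0 \subseteq \fg \cdot x_0$: since $v \notin \fg \cdot x_0$, we have $v \notin V^U$, so $X_\alpha \cdot v \neq 0$ for some $\alpha \in \sr$, and the contrapositive of (d) gives $\beta - \alpha \in R^+ \cup \{0\}$.

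For (b) I would exploit $T_{x_0} \subseteq G_{x_0}$: for $t \in T_{x_0}$, each $v^{(\lambda)}$ is scaled by $(\lambda - \beta)(t) = \beta(t)^{-1}$ (using $\lambda(t) = 1$), so $t \cdot v - v = (\beta(t)^{-1} - 1) v$, which must lie in $\fg \cdot x_0$. Since $v \notin \fg \cdot x_0$, this forces $\beta(t) = 1$ for every $t \in T_{x_0}$. The main (and minor) obstacle is the last step: to upgrade ``$\beta$ is trivial on $T_{x_0}$'' to $\beta \in \<E\>_{\ZZ}$, I would apply the standard duality for tori to the morphism $T \to (\CC^\times)^{|E|}$ whose components are the $\lambda \in E$; its image is a subtorus (thanks to the linear independence of $E$), so the characters of $T$ trivial on the kernel $T_{x_0}$ are exactly the $\ZZ$-span of $E$.
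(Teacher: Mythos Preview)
Your proof is correct and follows essentially the same route as the paper, which simply cites Lemmas~2.17(c) and~2.18 of \cite{degensphermod} for parts (a), (b), (d) and gives the identical $\Tad$-weight analysis of $\fg\cdot x_0$ for part (c); you have effectively unpacked those cited lemmas. One minor imprecision in your argument for (b): the image of $T \to (\CC^\times)^{|E|}$ is always a subtorus, and the linear independence of $E$ is not what is doing the work---the relevant fact is the standard bijection between closed subgroups of $T$ and subgroups of $X(T)$, under which $T_{x_0}=\bigcap_{\lambda\in E}\ker\lambda$ corresponds exactly to $\langle E\rangle_{\ZZ}$, so that the characters trivial on $T_{x_0}$ are precisely $\langle E\rangle_{\ZZ}$ (no saturation issue arises over $\CC$).
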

\begin{proof}
Assertion (\ref{item:betalattice}) is a consequence in Lemma 2.17(c) of \cite{degensphermod}. Let $\alpha \in \sr$. 
Recalling that if $X_{\alpha}\cdot v$ is nonzero, then it has $\Tad$-weight $\beta - \alpha$,
assertions (\ref{item:betasum}) and (\ref{item:betaminalphanotroot}) follow from Lemma 2.18 in \loccit\
Since the root operator $X_{\alpha}$ belongs to the Lie algebra of $G_{x_0}$, we have that $X_{\alpha}\cdot v \in \fg \cdot x_0$. Assertion (\ref{item:betaminalpharoot}) now follows from the fact that if the $\Tad$-weight $\beta-\alpha$ occurs in $\fg\cdot x_0$ then the corresponding weight space is $\<X_{-(\beta-\alpha)}\cdot x_0\>_{\CC}$.         
\end{proof}

\begin{remark}
Since we will frequently make use of it later, we note the following consequence of (\ref{item:betasum}) and (\ref{item:betalattice}) in Proposition~\ref{prop:propsVggweights}: if $\beta$ is a $\Tad$-weight in $\Vgg$ then
\begin{equation}
0 \neq \beta \in \<\sr\>_{\NN} \cap \<E\>_{\ZZ}.  \label{eq:betasumm}
\end{equation}
\end{remark}

\begin{theorem} \label{thm:extension}
Assume that $v$ is a $\Tad$-eigenvector in $V$ of $\Tad$-weight $\beta$  such that 
 \[
                 0 \not=   [v] \in  \Vgg.
\]  
Denote by   $s \in  H^0 ( G \cdot x_0, \shN_{X_0|V} )^G$   the $G$-equivariant
section defined by $s(x_0) = [v]$. Let $\lambda$ be an element of $E$
which has codimension $1$ and put $Z = G\cdot x_0 \cup G \cdot (x_0 -
v_{\lambda})$. Let $a$ be the coefficient of $\lambda$ in the unique
expression of $\beta$ as a linear combination of elements of $E$.
\begin{enumerate}[A)]  
\item If  $a \leq 0$, then  $s$ extends to an element of $H^0 (Z,
  \shN_{X_0|V} )^G$.  \label{item:extA}
\item If  $a > 1$,   then $s$  does not extend  to an element of $H^0
  (Z, \shN_{X_0|V} )^G$. \label{item:extB}
\item Assume   $a = 1$.  Then the following are equivalent: \label{item:extC}
\begin{enumerate}[i)]
\item $s$  extends to an element of $H^0 (Z, \shN_{X_0|V} )^G$;
 \item There exists  $\hat{v}  \in V(\lambda)$  such that  
                         $[v] = [\hat{v}]$  as elements of  $\Vg$.
\end{enumerate}
\end{enumerate}
\end{theorem}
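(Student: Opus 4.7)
My plan is to combine the $\widehat\psi$-equivariance of the evaluation $s \mapsto s(x_0)$ with a Laurent-type analysis along an explicit curve through $x_1 := x_0 - v_\lambda$. Since $E$ is linearly independent, there exist a cocharacter $\rho \in \Hom(\GGm, T)$ and an integer $N > 0$ with $\<\rho, \lambda\> = N$ and $\<\rho, \mu\> = 0$ for all $\mu \in E \setminus \{\lambda\}$. Define $\gamma \colon \AA^1 \to V$ by $\gamma(\tau) := f(\rho(\tau))\cdot x_0 = \tau^N v_\lambda + \sum_{\mu \neq \lambda} v_\mu$. Then $\gamma(0) = x_1$, and $\gamma(\tau) = \rho(\tau)\cdot x_0 \in G\cdot x_0$ for $\tau \neq 0$ (using that $f(t)\cdot x_0 = t\cdot x_0$ on $T$). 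Since $v_\lambda \in V(\lambda)$ is not contained in $\fg\cdot x_1 \subseteq \oplus_{\mu\neq\lambda}V(\mu)$, the curve $\gamma$ is transverse at $x_1$ to the codimension-one orbit closure $\overline{G\cdot x_1}$.

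Decomposing $v = \sum_{\mu\in E} v^{(\mu)}$ with $v^{(\mu)}\in V(\mu)$, and using that $s$ is a $\widehat\psi$-eigenvector of weight $\beta$ together with the identity $(\widehat\psi(t) s)(y) = f(t)\cdot s(f(t)^{-1}y)$, one computes
\[
s(\gamma(\tau)) \;\equiv\; \tau^{-aN}\bigl(v-v^{(\lambda)}\bigr) \;+\; \tau^{(1-a)N}\,v^{(\lambda)} \pmod{\fg\cdot\gamma(\tau)},
\]
where $\beta(\rho(\tau)) = \tau^{aN}$ and $f(\rho(\tau))\cdot v = (v - v^{(\lambda)}) + \tau^N v^{(\lambda)}$. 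In Case \ref{item:extA}, $a \leq 0$, so both $-aN$ and $(1-a)N$ are non-negative: the right-hand side already lies in $V[\tau]$ and restricts at $\tau = 0$ to a well-defined element of $V/\fg\cdot x_1$, yielding the desired extension to $Z$. Extension across the codimension-one orbit $G\cdot x_1$ then propagates to the whole orbit by $G$-equivariance.

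Cases \ref{item:extB} and \ref{item:extC} amount to asking whether the pole at $\tau = 0$ can be absorbed by a Laurent modification $-\eta(\tau)\cdot \gamma(\tau)$ with $\eta(\tau) \in \fg \otimes \CC[\tau, \tau^{-1}]$. Writing $\gamma(\tau) = x_1 + \tau^N v_\lambda$ and projecting onto the irreducible components, cancellation of the $\tau^{-aN}$ contribution in $\oplus_{\mu \neq \lambda} V(\mu)$ forces $\eta_{-aN}\cdot v_\mu = v^{(\mu)}$ for each $\mu \neq \lambda$, i.e. $\eta_{-aN}\cdot x_1 = v - v^{(\lambda)}$. In Case \ref{item:extC} ($a = 1$), this is the only obstruction; a direct check (using $\fg\cdot v_\mu \subseteq V(\mu)$) shows it is equivalent to the existence of $\hat v \in V(\lambda)$ with $v - \hat v \in \fg\cdot x_0$, which is condition (ii). In Case \ref{item:extB} ($a > 1$), the $V(\lambda)$-component contributes its own pole $\tau^{(1-a)N}v^{(\lambda)}$, which at order $-aN$ requires additionally $\eta_{-aN}\cdot v_\lambda = v^{(\lambda)}$; summing the two conditions yields $\eta_{-aN}\cdot x_0 = v$, contradicting $[v] \neq 0$ in $\Vg$.

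The main obstacle is the valuative reduction underlying the first paragraph: one must check that $G\cdot x_0 \cup G\cdot x_1$ lies in the smooth locus of $X_0$ (so that $\shN_{X_0|V}$ is locally free there), that extendability of the $G$-equivariant section $s$ to $Z$ is equivalent to extendability of the pullback $\gamma^*s$ across $\tau = 0$ (this uses normality of $X_0$, that $\overline{G\cdot x_1}$ is a prime divisor, and the transversality of $\gamma$), and that the resulting extension is automatically $G$-equivariant. Once this scaffolding is in place, the case analysis reduces to the elementary Laurent calculation sketched above.
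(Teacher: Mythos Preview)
Your approach is correct and follows the same overall strategy as the paper: reduce to a one-parameter curve through $x_0 - v_\lambda$, use the $\widehat\psi$-equivariance to compute $s(\gamma(\tau))$, and then analyze regularity at $\tau=0$. The valuative scaffolding you flag in your last paragraph is exactly what the paper establishes in its Section~2.2 (Propositions~\ref{prop!about_poles_of_sections} and~\ref{prop!section_extension_for_G_bundles} and Proposition~\ref{prop!sextendsifandonlyifsadoes}): $Z$ lies in the smooth locus by normality, extension to $Z$ is equivalent to extension along the single curve because the boundary is one $G$-orbit, and the extension is automatically $G$-equivariant.

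Where your argument genuinely differs is in the Laurent analysis for parts \ref{item:extB}) and \ref{item:extC}). The paper fixes a basis $y_2,\ldots,y_n$ of $V^\beta$ adapted to $V(\lambda)\oplus\bigl(\oplus_{\mu\neq\lambda}V(\mu)\bigr)$, then splits into five cases according to whether $\beta$ is a root and how $X_{-\beta}$ acts on the various $v_\mu$, in order to identify exactly which $s_{y_i}$ are fiberwise linearly independent along the curve. You bypass this entirely by observing that $\fg$ preserves each summand $V(\mu)$, so the condition ``$w(\tau)-\tilde w(\tau)\in\fg\cdot\gamma(\tau)$ for all $\tau\neq 0$'' translates, after applying $f(\rho(\tau))^{-1}\in\GL(V)^G$, into a coefficientwise membership condition in the \emph{fixed} subspace $\fg\cdot x_0$. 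Reading off the coefficient at $\tau^{-aN}$ then gives $\xi\cdot x_1 = v-v^{(\lambda)}$ and (when $a>1$) simultaneously $\xi\cdot v_\lambda = v^{(\lambda)}$ for a single $\xi\in\fg$, whence $\xi\cdot x_0=v$. This is cleaner than the paper's treatment and handles all cases uniformly; the paper's case split is doing by hand what the $\GL(V)^G$-action does for you automatically. One small point worth making explicit in a full write-up: the equivalence between ``$s^*$ extends'' and ``the pole can be absorbed by some Laurent $\eta(\tau)\cdot\gamma(\tau)$'' uses that $C_0$ is affine (so any regular section of $\shN|_{C_0}$ lifts to a regular $V$-valued function) together with the straightening just described to pass from a pointwise condition on $\fg\cdot\gamma(\tau)$ to a coefficientwise condition on $\fg\cdot x_0$.
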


The proof of Theorem~\ref{thm:extension} will be given in Section~\ref{subsubs_pf_of_extension_thm}
which starts on page~\pageref{subsubs_pf_of_extension_thm}. Before
that, we 
gather some general results on extending sections in Section~\ref{subsubs!generalities_about_extending}. The following is a synthesis of Proposition~\ref{prop:extendcodim1}
and Theorem~\ref{thm:extension}.
\begin{cor} \label{cor:extending-sections}
Assume that $v$ is a $\Tad$-eigenvector in $V$ of $\Tad$-weight $\beta$  such that 
 \[
                 0 \not=   [v] \in  \Vgg.
\]  
Denote by   $s \in  H^0 ( G \cdot x_0, \shN_{X_0|V} )^G$   the $G$-equivariant
section defined by $s(x_0) = [v]$. Let 
\begin{equation}
\beta  = \sum_{\lambda \in E} a_{\lambda} \lambda  \label{eq:beta}
\end{equation}
be the unique expression of $\beta$ as a $\ZZ$-linear combination of
the elements of $E$. 

The section $s$  extends to an element of $H^0 (X_0, \shN_{X_0|V} )^G$
if and only if for all $\lambda \in E$ of codimension $1$ we have
\begin{itemize}
\item[-] $a_{\lambda} \leq 1$; and
\item[-] if $a_{\lambda} = 1$ then there exists  $\hat{v}  \in V(\lambda)$  such that  
                         $[v] = [\hat{v}]$  as elements of  $\Vg$.
\end{itemize}
\end{cor}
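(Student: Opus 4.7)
The plan is to derive this corollary by directly combining the two results it is advertised to synthesize, namely Proposition~\ref{prop:extendcodim1} and Theorem~\ref{thm:extension}. The advantage of having assembled both tools in the preceding subsection is that no new geometric input is required; the argument is essentially bookkeeping on the coefficients $a_{\lambda}$.

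First I would invoke Proposition~\ref{prop:extendcodim1}, which reduces the question of extending $s$ from $G\cdot x_0$ to all of $X_0$ to the question of extending $s$ to each of the open subsets $Z_{\lambda} := G\cdot x_0 \cup G\cdot(x_0 - v_{\lambda})$, as $\lambda$ ranges over the elements of $E$ of codimension $1$. Thus $s$ extends to $H^0(X_0,\shN_{X_0|V})^G$ if and only if for every such $\lambda$ it extends to an element of $H^0(Z_{\lambda},\shN_{X_0|V})^G$.

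Next, for each codimension-one $\lambda\in E$, I would apply Theorem~\ref{thm:extension} to the pair $(v,\lambda)$, using the coefficient $a_{\lambda}$ from the given expansion~\eqref{eq:beta} of $\beta$. The three cases of that theorem exactly stratify the possibilities: case A) ($a_{\lambda} \leq 0$) guarantees extension; case B) ($a_{\lambda} > 1$) forbids it; case C) ($a_{\lambda} = 1$) makes extension equivalent to the existence of $\widehat{v} \in V(\lambda)$ with $[v] = [\widehat{v}]$ in $\Vg$. Therefore extension to $Z_{\lambda}$ holds if and only if either $a_{\lambda} \leq 0$, or $a_{\lambda}=1$ together with the existence of such a $\widehat{v}$. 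This is in turn equivalent to the two bulleted conditions in the statement, namely $a_{\lambda}\leq 1$ and, if $a_{\lambda}=1$, the existence of $\widehat{v}\in V(\lambda)$ with $[v]=[\widehat{v}]$ in $\Vg$.

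Finally I would assemble: combining the reduction from Proposition~\ref{prop:extendcodim1} with the codimension-one analysis above yields the stated equivalence. There is no real obstacle here beyond ensuring the conditions are quantified correctly over all $\lambda\in E$ of codimension one and that the uniqueness of the decomposition~\eqref{eq:beta} (which holds because $E$ is linearly independent by the standing hypothesis of Section~\ref{sec:crit-extens}) is invoked so that $a_{\lambda}$ is unambiguous. The only subtlety worth a brief remark is that the $\Tad$-eigenvector $v$ and the section $s$ are used consistently in each application of Theorem~\ref{thm:extension}, which is immediate since $s$ is defined uniquely by the single datum $s(x_0)=[v]$ and the $G$-equivariance requirement.
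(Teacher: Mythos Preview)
Your proposal is correct and is exactly the argument the paper has in mind: the corollary is stated in the paper as ``a synthesis of Proposition~\ref{prop:extendcodim1} and Theorem~\ref{thm:extension}'' with no further proof given, and your reduction to the open sets $Z_\lambda$ followed by the case-by-case application of Theorem~\ref{thm:extension} is precisely that synthesis.
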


\begin{remark}
It follows from Proposition~\ref{prop:Kostant} below that if $s$ extends, then at most two of the $a_{\lambda}$ in equation~\ref{eq:beta} are positive, irrespective of whether $\lambda$ is of codimension $1$ or not. 
\end{remark}

\begin{example}[Luna]   \label {ex!lunas_example}  
 Let $G = \SL_2 \times \SL_2$, and $E = \{\lambda_1, \lambda_2\}$ with  
 $ \lambda_1 = 2\omega,  \lambda_2    = 4\omega + 2 \omega'$. Here $\omega$ is the fundamental weight of the first component of $G$, and $\omega'$ is that of the second component. Similarly, we will use $\alpha$ and $\alpha'$ for the simple root of the first and second component of $G$, respectively. 
Using Proposition~\ref{prop:lambdacodim1},  it follows that  $v_{\lambda_2}$ has a $G$-orbit of codimension $1$ in $X_0$, 
while $v_{\lambda_1}$ has a $G$-orbit of codimension $\geq 2$. Hence by Proposition~\ref{prop:extendcodim1} 
 for $[w] \in \Vgg$, 
the equivariant section of $\shN_{X_0|V}$ on $G \cdot x_0$ induced by $[w]$ extends to $X_0$ if and
only if it extends over $G \cdot x_0 \cup G \cdot v_{\lambda_2}$. 

Denote by $e_1,e_2$  (resp.\ $g_1,g_2$)  a basis
of $\CC^2$ where the first (resp.\ second) $\SL_2$ acts in the standard fashion.
A small calculation
gives that  the vector space $ \Vgg$  is $3$-dimensional with basis
the classes in  $V / \fg \cdot x_0$ of 
 \[
     w_1=  e_1e_2 ,  \quad   \quad  w_2 = e_1^4 \otimes g_2^2 ,\quad    \quad
            w_3 = e_2^2 + e_1^2e_2^2 \otimes g_1^2. 
\]
The vector $w_1$ has $\Tad$ weight
$\alpha = \lambda_1$, and since $w_1 \in V(\lambda_1)$
part \ref{item:extC}) of Theorem~\ref{thm:extension} 
implies that the induced
equivariant section extends to $G \cdot x_0 \cup  G \cdot v_{\lambda_2}$, hence
to the whole of $X_0$.  
The vector $w_2$ has $\Tad$ weight
$2\alpha' =  - 4\lambda_1 + 2\lambda_2 $,
and part \ref{item:extA}) of Theorem~\ref{thm:extension} 
implies that the induced
equivariant section extends to $X_0$.  
The vector $w_3$ has $\Tad$ weight $2\alpha = 2\lambda_1$, 
hence part \ref{item:extB}) of Theorem~\ref{thm:extension} 
implies that the induced
equivariant section does not extend to $G \cdot x_0 \cup  G \cdot v_{\lambda_2}$.
We have shown that
\[\tg\ms = V(\alpha) \oplus V(2\alpha')\]
as a $\Tad$-module.
We remark that to exclude the section induced by $w_3$ from $\tg\ms$ we could not have used 
\cite[Proposition 3.4]{degensphermod}, since condition (ES2) of that proposition
is not satisfied for $w_3$.   

We also remark that Luna has shown in an unpublished note from 2005 that this moduli scheme $\ms$, equipped with its reduced scheme structure, is a union of two affine lines meeting in a point. It was the first example of a non-irreducible scheme $\ms$. 
\end {example}

\subsection{Generalities about extending sections} \label{subsubs!generalities_about_extending}

In this section  $X$  denotes a variety, in particular it is reduced,
irreducible and separated. Let $\VE \to X$ be an algebraic vector bundle.
For the proof of Theorem~\ref{thm:extension} in
Section~\ref{subsubs_pf_of_extension_thm}
we need the following general propositions. They are well known, but 
for completeness we provide proofs.

\begin {prop}  \label{prop!about_poles_of_rational_functions}   
Assume that $X$ is normal,
 that $U \inn X$ is a nonempty Zariski open subset, and that $f : U \to \CC$ a morphism. If $f$
does not extend to a morphism $X \to \CC$, then there exists $p \in X \setminus U$
such that for every irreducible algebraic curve $C \inn X$ with $p \in C$ and $U \cap C \not= \emptyset$ 
the morphism $f$ restricted to $U \cap C$ does not extend to a morphism $C \to \CC$.
\end{prop}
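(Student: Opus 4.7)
The plan is to realize $f$ as a rational function on $X$ and use the normality of $X$ to extract the polar divisor of $f$, then take $p$ to be a sufficiently general point of one of its irreducible components. Since $X$ is normal, for every $x \in X$ the local ring $\Oh_{X,x}$ equals the intersection of its localizations at height-one primes. Consequently the set $\mathrm{dom}(f)$ of points where $f$ is regular is an open subset of $X$ whose complement $P$ is either empty or pure of codimension one; by the hypothesis that $f$ does not extend to $X$, $P$ is nonempty, and $P \inn X \setminus U$ since $f$ is regular on $U$.

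Fix an irreducible component $D$ of $P$, let $v_D$ denote the discrete valuation on $K(X)$ associated to the DVR $\Oh_{X, \eta_D}$ at the generic point of $D$, and put $n = -v_D(f) > 0$. The candidate point $p \in D$ is chosen to be generic in the following precise sense: $p$ is a smooth point of $X$ lying on no other irreducible component of $P$, and the rational function $g := f\pi^{n}$ (where $\pi \in \Oh_{X,p}$ is a local equation for $D$ near $p$, which exists by smoothness of $X$ at $p$) is regular at $p$ with $g(p) \not= 0$. Each of these requirements cuts out a proper closed subset of $D$, so a suitable $p$ exists; automatically $p \in D \inn X \setminus U$.

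Let $C \inn X$ be any irreducible curve with $p \in C$ and $C \cap U \not= \emptyset$. Since $D \cap U = \emptyset$ and $C$ is irreducible, $C \not\inn D$. Let $\nu \colon \tilde{C} \to C$ be the normalization, pick $\tilde{p} \in \nu^{-1}(p)$, and let $t$ be a uniformizer of the DVR $\Oh_{\tilde{C}, \tilde{p}}$. Near $p$ we have the factorization $f = \pi^{-n} g$; pulling back, $\nu^{*}\pi$ is a nonzero, non-unit element of $\Oh_{\tilde{C}, \tilde{p}}$ (nonzero because $C \not\inn D$ and non-unit because $\pi(p) = 0$), so $\nu^{*}\pi = t^{m} u$ for some integer $m \geq 1$ and some unit $u$. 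Combined with $\nu^{*}g(\tilde{p}) = g(p) \not= 0$ this yields
\[
\nu^{*} f \;=\; t^{-mn}\, u^{-n} \cdot \nu^{*} g,
\]
which has a pole of order $mn \geq 1$ at $\tilde{p}$. If $f|_{U \cap C}$ were to extend to a morphism $C \to \CC$, composing with $\nu$ would extend $\nu^{*} f$ to a regular function on $\tilde{C}$, contradicting the pole just computed. Hence $f|_{U \cap C}$ does not extend to $C$, as desired.

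I expect the main bookkeeping step to be the simultaneous verification of the genericity conditions on $p$ (avoiding the other components of $P$, landing in the smooth locus of $X$, and ensuring both regularity and nonvanishing of $g$ at $p$); each of these is individually a standard open condition on $D$, so their intersection is a nonempty Zariski open subset of $D$. Once $p$ is in hand, the remainder of the argument is a direct local computation in the DVR $\Oh_{\tilde{C}, \tilde{p}}$ and does not depend on the particular curve $C$.
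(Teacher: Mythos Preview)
Your argument is correct. The approach, however, differs in its mechanics from the paper's proof, and it is worth recording the contrast.

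Both proofs begin the same way: view $f$ as a rational function and use normality of $X$ so that the non-regular locus of $f$ is a divisor; then choose $p$ on that divisor, generic enough that the ``pole'' is visible on every curve through $p$. The difference is in how the pole is made visible. The paper simply takes $p$ in the support of the pole divisor of $f$ but outside the support of the zero divisor, and works with $g=1/f$, which is then regular near $p$ and satisfies $g(p)=0$. If $f|_{U\cap C}$ extended to $\bar f\colon C\to\CC$, then on the dense open $W\cap U\cap C$ one has $\bar f\cdot g|_C=1$, hence $\bar f(p)\cdot g(p)=1$, contradicting $g(p)=0$. No normalization of $C$ and no local uniformizer are needed.

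Your argument instead fixes a single component $D$ of the polar locus, writes $f=\pi^{-n}g$ near a smooth point with $g$ a local unit, normalizes $C$, and computes the exact pole order $mn\ge 1$ of $\nu^{*}f$ at a preimage of $p$. This is more explicit and in fact yields quantitative information (a lower bound on the pole order along any curve through $p$), at the cost of a longer bookkeeping step and the passage to $\tilde C$. One small point worth making explicit in your write-up: the ``genericity'' conditions on $p$ depend on a local equation $\pi$ that itself depends on a neighborhood of $p$; to avoid circularity, first pick any smooth point of $D$, fix $\pi$ on an open $W$ around it, and then observe that the locus in $D\cap W$ where $f\pi^{n}$ is regular and nonvanishing is open and contains the generic point of $D$ (since $v_D(f\pi^{n})=0$). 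With that tweak the argument is complete.
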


\begin {proof} We consider  $f$ as a rational function on $X$.
Since $f$ does not extend to a morphism $X \to \CC$ it follows
that $f$ is not the constant function with value $0$. 
Using that $X$ is normal, there is a well defined divisor of poles of $f$ and
a well defined divisor of zeroes of $f$, see, e.g. \cite[Section 4.0]{cox_little_schenck-toricbook}.   Since $f$ does not
extend to a morphism $X \to \CC$  the divisor of poles of $f$ is
nonzero, see, e.g. \cite[Propopsition 4.0.16]{cox_little_schenck-toricbook}.

We fix a point  $p \in X$ which is in
the support of the divisor of poles of $f$ but not in the support of the  divisor of zeroes.
To obtain a contradiction, we assume that there exists an  irreducible algebraic curve $C \subset X$ with $p \in C$ and 
$U \cap C \not= \emptyset$  such that 
the morphism $f$ restricted to $U \cap C$ extends to a morphism 
$\bar{f} : C \to \CC$. 

Denote by $g$ the rational function  $1/f$. 
The divisor of zeroes of $g$ is the divisor of poles of $f$
and the divisor of poles of $g$ is the divisor of zeroes of $f$.
Hence $p$ is in the support of the divisor of zeroes of $g$
and is not in the support on the divisor of poles of $g$.
It follows that there exists a Zariski 
open subset $W \subset X$ with $p \in W$ such that $g$
defines a morphism $g : W \to \CC$ with the property $g(p) = 0$.
Denote by $\bar {g} : W \cap C \to \CC$ the restriction of $g$ to
$W \cap C$.  We have that $W \cap U $ is
a nonempty Zariski open subset of $X$. Since
$W \cap C \not= \emptyset$ and $U \cap C \not= \emptyset$ 
we get that $W \cap U \cap C \not= \emptyset$. For 
$q \in W \cap U \cap C$ we have $(\bar{f} \bar{g}) (q)= (fg) (q) = 1 $.
Since $W \cap U \cap C$ is dense in $C$ it follows that 
$(\bar{f} \bar{g}) (p) = 1$
which  contradicts $\bar{g} (p) = 0$.  \end {proof}

\begin {example}  If $X = \CC^2$ and $f = x/y$ we can choose $p = (a,0)$
for any $a \in \C \setminus \{0\}$.
\end {example}

\begin {prop}  \label{prop!about_poles_of_sections}   
Assume that $X$ is normal, that $U \subset X$ is a nonempty Zariski
open subset, and that  $s \in H^0(U,\VE)$  is a section of the
vector  bundle $\VE$.
If $s$ does not extend to a section $X \to \VE$, then there exists $p \in X \setminus U$
such that for every algebraic curve $C \subset X$ with $p \in C$ and $U \cap C \not= \emptyset$ 
the section $s$ restricted to $U \cap C$ does not extend to a section  $C \to \VE$.
\end{prop}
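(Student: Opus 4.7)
My plan is to reduce to the scalar case just established in Proposition~\ref{prop!about_poles_of_rational_functions}. The idea is that a section of a vector bundle fails to extend if and only if, in some local trivialization, one of its component functions fails to extend, and then Proposition~\ref{prop!about_poles_of_rational_functions} applied to that component produces the point $p$ that we want.

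The first step is to localize the problem. Since $s$ does not extend to a section over $X$ and extendability is a local property, there must be a point $q \in X \setminus U$ such that $s|_{U \cap W'}$ does not extend to any open neighborhood $W'$ of $q$. I would choose an affine open $W \subseteq X$ containing $q$ and small enough that $\VE|_{W} \isom W \times \CC^r$ is trivial. Fixing such a trivialization, the section $s|_{U \cap W}$ corresponds to an $r$-tuple $(f_1, \ldots, f_r)$ of regular functions on the nonempty open subset $U \cap W$ (nonempty because $U$ is dense in the irreducible variety $X$). By the choice of $q$, at least one $f_i$ fails to extend to a regular function on $W$. Note that $W$ is normal, being an open subset of the normal variety $X$.

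The second step is to apply Proposition~\ref{prop!about_poles_of_rational_functions} to $f_i$ on the normal variety $W$ with open subset $U \cap W$. This produces a point $p \in W \setminus (U \cap W) \subseteq X \setminus U$ such that for every irreducible algebraic curve $C' \subseteq W$ with $p \in C'$ and $(U \cap W) \cap C' \neq \emptyset$, the function $f_i$ restricted to $(U \cap W) \cap C'$ does not extend to a morphism $C' \to \CC$.

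The final step is to verify that the same $p$ works for every irreducible curve in all of $X$. Let $C \subseteq X$ be an irreducible curve with $p \in C$ and $U \cap C \neq \emptyset$, and assume for contradiction that $s|_{U \cap C}$ extends to a section $\bar{s} \in H^0(C, \VE|_C)$. Since $p \in C \cap W$, the set $C \cap W$ is a nonempty, hence dense, open subset of $C$. Through the fixed trivialization over $W$, the restriction $\bar{s}|_{C \cap W}$ becomes an $r$-tuple of regular functions on $C \cap W$, each component of which agrees with $f_j$ on the dense open subset $(U \cap W) \cap C$ of $C \cap W$. In particular, $f_i|_{(U \cap W) \cap C}$ extends to a regular function $\overline{f_i}$ on $C \cap W$. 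I would then reproduce, locally near $p$, the rational-function argument from the proof of Proposition~\ref{prop!about_poles_of_rational_functions}: the function $g := 1/f_i$ is regular on some neighborhood $W^\circ \subseteq W$ of $p$ and vanishes at $p$, the identity $\overline{f_i} \cdot g = 1$ holds on the dense open subset $(U \cap W^\circ) \cap C$ of $C \cap W^\circ$, so by continuity $\overline{f_i}(p)\, g(p) = 1$, contradicting $g(p) = 0$.

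The main obstacle is conceptual rather than technical: the previous proposition yields conclusions only for curves entirely contained in $W$, whereas here $C$ is merely a curve in $X$ passing through $p$. The resolution is that the argument of Proposition~\ref{prop!about_poles_of_rational_functions} is in fact purely local near $p$, and the presence of a regular extension of $f_i$ to an open neighborhood of $p$ in $C$ is exactly what a hypothetical extension $\bar s$ would supply. Once this is made explicit, no further global input is needed.
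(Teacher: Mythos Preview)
Your argument is correct and follows essentially the same route as the paper: localize to a trivializing open set, write $s$ in components, apply Proposition~\ref{prop!about_poles_of_rational_functions} to a non-extending component $f_i$, and deduce the result. The paper simply declares ``without loss of generality $\VE$ is trivial'' and works on that open set, glossing over the passage from curves in $W$ back to curves in $X$ that you handle explicitly.

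One simplification: your final step, where you re-run the $g=1/f_i$ argument on $C\cap W$, is unnecessary. Since $C\cap W$ is a closed irreducible curve in $W$ containing $p$ with $(U\cap W)\cap(C\cap W)\neq\emptyset$, Proposition~\ref{prop!about_poles_of_rational_functions} (applied on $W$) already tells you that $f_i|_{(U\cap W)\cap(C\cap W)}$ does not extend to $C\cap W$, giving the contradiction directly.
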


\begin {proof}  By the defining gluing property of sections of
  sheaves, there exists a nonempty Zariski open $V \subset X$ such that 
$\VE$ restricted to $V$ is trivial and $s$ restricted to $V \cap U$
does not extend to a section  over $V$. Hence we can assume, without loss of generality,
that $\VE$ is 
the trivial vector bundle.  So assume  $\VE = X \times \CC^n \to X$ is the first
projection. Let $e_1, \dots ,e_n$ be a basis of $\CC^n$ and define
$s_i  \in H^0 (X,\VE)$ by  $s_i (x) = (x,e_i)$ for all $x \in X$. There exist
(unique) morphisms $f_i : U \to \CC$ such that 
\[
      s(u) =  \sum_{i=1}^n  f_i(u) s_i(u)
\]
for all $u \in U$.   If each $f_i$ extended to a morphism $X \to \CC$,  it would then
follow that $s$ extends to a section $X \to \VE$ which contradicts the assumptions.
Hence at least one of the $f_i$ does not extend to a morphism. Using
Proposition~\ref{prop!about_poles_of_rational_functions} 
there exists $p \in X \setminus U$
such that for every algebraic curve $C \subset X$ with $p \in C$ and $U \cap C \not= \emptyset$,
the morphism $f_i$ restricted to $U \cap C$ does not extend to a morphism  $C \to \CC$. 
As a consequence,  $s$ restricted to $U \cap C$ does not extend to 
a section $C \to \VE$.
\end {proof}

Assume now in addition that $G$ is a connected linear algebraic group over $\CC$, $X$ is a  $G$-variety
and $\pi : \VE \to X$ is a $G$-vector bundle over $X$.  This means that we are given an algebraic action 
$\rho : G \times \VE \to\VE$ such that  
\[ 
     \rho ( g,v) \in \pi^{-1}  ( g \cdot (\pi(v))) 
\]
for all $g \in G, v \in V$ and that for fixed $g \in G$ and $x \in X$ the induced map
\[
     \pi^{-1} (x)  \to \pi^{-1} ( g \cdot x),  \quad \quad   v \mapsto  \rho (g , v)
\]
is an isomorphism of vector spaces.

While its proof is elementary, the following proposition implies that the
section $s$ of Theorem~\ref{thm:extension} extends to $Z$ if and
only if it extends along just one curve; see
Proposition~\ref{prop!sextendsifandonlyifsadoes}.

\begin {prop}  \label{prop!section_extension_for_G_bundles}   Assume that $X$ is normal,
 that $U \subset X$ is a nonempty  G-stable Zariski open subset such that $X \setminus U$
is a single $G$-orbit, and that $s \in H^0(U,\VE)^G$  is a $G$-equivariant
section  $U \to \VE$.   Assume that there exists $p_0 \in X \setminus U$ and 
 an algebraic curve  $C_0 \subset X$   with $p_0 \in C_0$ and $U \cap C _0\not= \emptyset$ 
such that $s$ restricted to $U \cap C_0$ extends to a section $s_0 : C_0 \to \VE$.
Then  $s$  extends to an element of $H^0(X,\VE)^G$.
\end{prop}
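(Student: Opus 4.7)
The plan is to argue by contradiction using Proposition~\ref{prop!about_poles_of_sections}, and to exploit the fact that $X \setminus U$ is a single $G$-orbit so that the hypothesis ``$s$ extends along one curve through $p_0$'' can be transported, via the $G$-action, to ``$s$ extends along a curve through every point of $X \setminus U$.''

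More precisely, suppose $s$ does not extend to a section $X \to \VE$. By Proposition~\ref{prop!about_poles_of_sections} there exists $p \in X \setminus U$ such that for \emph{every} algebraic curve $C \subset X$ with $p \in C$ and $U \cap C \neq \emptyset$, the restriction $s|_{U \cap C}$ fails to extend to a section $C \to \VE$. Since $X \setminus U = G \cdot p_0$, we can pick $g \in G$ with $g \cdot p_0 = p$. Set $C := g \cdot C_0$; this is an algebraic curve containing $p$, and because $U$ is $G$-stable we have $g \cdot (U \cap C_0) = U \cap C$, which is nonempty.

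Next, transport the extension $s_0 : C_0 \to \VE$ via $g$ to obtain a section $s_0' : C \to \VE$ defined by $s_0'(x) := \rho(g, s_0(g^{-1} \cdot x))$. The $G$-bundle axioms guarantee that $s_0'$ is a well-defined morphism with $\pi \circ s_0' = \Id_{C}$. For $y \in U \cap C$, writing $y = g \cdot x$ with $x \in U \cap C_0$, the $G$-equivariance of $s$ on $U$ gives
\[
s_0'(y) = \rho(g, s_0(x)) = \rho(g, s(x)) = s(g \cdot x) = s(y),
\]
so $s_0'$ extends $s|_{U \cap C}$. This contradicts the choice of $p$, so $s$ must extend to some section $\tilde{s} \in H^0(X, \VE)$.

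Finally, to see that $\tilde{s}$ is $G$-equivariant, observe that for each $g \in G$ the section $x \mapsto \rho(g^{-1}, \tilde{s}(g \cdot x))$ is another extension of $s$ from $U$ to $X$: it agrees with $\tilde s$ on $U$ by the $G$-equivariance of $s$. Since $U$ is dense in $X$ and $\VE \to X$ is separated, extensions are unique, so the two sections coincide on all of $X$, proving $\tilde{s} \in H^0(X, \VE)^G$. The main technical point is the compatibility check that the transported section $s_0'$ agrees with $s$ on $U \cap C$; this is where the $G$-equivariance of both $s$ and $\rho$ is essential, and once it is in place the rest is a direct application of Proposition~\ref{prop!about_poles_of_sections}.
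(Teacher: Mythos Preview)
Your proof is correct and follows essentially the same approach as the paper: argue by contradiction via Proposition~\ref{prop!about_poles_of_sections}, transport the curve $C_0$ and the extended section $s_0$ by an element $g\in G$ sending $p_0$ to the ``bad'' point $p$, and then use density of $U$ and separatedness to conclude that the resulting extension is $G$-equivariant. The only differences are cosmetic (you write $\rho(g,\cdot)$ where the paper writes $g\cdot$, and you invoke separatedness explicitly for the uniqueness of the extension).
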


\begin{proof}  We first show that $s$  extends to an element of $H^0(X,\VE)$, and then that 
the extension is $G$-equivariant.
 
 We  assume that $s$ does not extend to an element of $H^0(X,\VE)$, and we will get a contradiction.
By  Proposition~ \ref{prop!about_poles_of_sections}  there exists 
$p \in X \setminus U$ such that for every algebraic curve $C \subset X$ with $p \in C$ and $U \cap C \not= \emptyset$ 
the section $s$ restricted to $U \cap C$ does not extend to a section  $C \to \VE$.   Since
$X \setminus U$ is a single $G$-orbit, there exists $g \in G$ with $g \cdot p_0 = p$. 

 Set $  C = \{ g \cdot v :   v \in C_0 \} $  and define $t : C \to \VE$ by  
\[
     t(v)  =  g \cdot s_0 (g^{-1} \cdot v)
\]
for $v \in C$.  Since $s$ is $G$-equivariant we have $t|_{U \cap C} = s|_{U \cap C}$, hence $t$
is a section of $\VE$ over $C$ which extends $s|_{U \cap C}$, contradicting the choice of $p$.

We have shown that $s$ extends to a section $s_1 : X \to \VE$. We claim that $s_1$ is $G$-equivariant.
Indeed, define $s_2 :  X \to \VE$ by   $ s_2 (v)  = g \cdot s_1 (g^{-1} \cdot v)$.   Since $s$ is $G$-equivariant on $U$,
we have that   $s_2(u) = s(u) = s_1(u)$ for all $u \in U$.  As a consequence $s_2 = s_1$, which
implies that $s_1$ is $G$-equivariant.
\end{proof}

\subsection {Proof of Theorem~\ref{thm:extension}}  \label{subsubs_pf_of_extension_thm}

We start the proof of Theorem~\ref{thm:extension}. Let $\lambda \in E$
be of codimension $1$. For $t \in \C$, we put
\begin{equation}
z_t := t\cdot v_{\lambda} + \sum_{\mu \in E \setminus \{\lambda\}} v_{\mu}.
\end{equation}
Note that, because $E$ is linearly independent, $z_t \in T \cdot x_0
\inn G\cdot x_0$ for $t \in \C\setminus\{0\}$. Moreover,
\begin{equation}
z_0 = \sum_{\mu \in E \setminus \{\lambda\}} v_{\mu} = x_0 -
v_{\lambda};
\end{equation}
and $G \cdot z_0$ has codimension $1$ in $X_0$.

Since $Z=G\cdot x_0 \cup G \cdot z_0$ is smooth, the restriction  of
the sheaf  $\shN_{X_0|V}$ to $Z$ is locally free. We denote by
$ \VE  \to Z$ the total space of the corresponding vector bundle. In
particular the sections of the restriction of $\shN_{X_0|V}$ to $Z$
are naturally identified with those of $\VE$. 

Recall that   $s \in  H^0 ( G \cdot x_0, \VE )^G$  denotes the
equivariant section induced by $v$; that is $s(x_0) = [v] \in \Vgg$. 
Set $C_0 = \{ z_t :  t \in \CC \}$, and denote by $ s^{*} $ the section
of $\VE$ over  $G \cdot x_0  \cap C_0$ defined by $ s^* (z_t) = s(z_t)$.

\begin {prop}  \label {prop!sextendsifandonlyifsadoes} 
 The section  $s$ extends to an element of $H^0 (Z, \VE)^G$ if and only if
 $s^*$ extends to a section of $\VE$ over $C_0$.
\end {prop}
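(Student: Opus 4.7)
The plan is to deduce the proposition directly from Proposition~\ref{prop!section_extension_for_G_bundles}, which was set up with precisely this application in mind. The forward implication is essentially by definition: if $s$ extends to $\widetilde{s}\in H^0(Z,\VE)^G$, then $\widetilde{s}|_{C_0}$ is a section of $\VE$ over $C_0$, and its restriction to $G\cdot x_0\cap C_0$ equals $s^*$ by construction. So the content of the proposition is the backward direction.

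For the backward direction, I would apply Proposition~\ref{prop!section_extension_for_G_bundles} with the variety $X:=Z$, the open subset $U:=G\cdot x_0$, the point $p_0:=z_0$, and the curve $C_0=\{z_t:t\in\CC\}$ introduced just above the statement. I then need to verify each of the hypotheses of that proposition in the present setting:
\begin{enumerate}[(i)]
\item $Z$ is normal, since $Z\subseteq X_0^{\leq 1}$ lies in the smooth locus of $X_0$ (as recalled at the beginning of Section~\ref{sec:extending-sections}), so $Z$ is smooth.
\item $U=G\cdot x_0$ is $G$-stable and open in $X_0$, hence also open in $Z$.
\item $Z\setminus U = G\cdot z_0$ is a single $G$-orbit. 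This is where the hypothesis that $\lambda$ has codimension one enters: by Remark~\ref{rem:propsX0}\eqref{item:X0orbits}, the only $G$-orbit of codimension one in $X_0$ that is contained in $Z$ is $G\cdot(x_0-v_\lambda) = G\cdot z_0$.
\item $C_0$ contains $p_0=z_0$, and $C_0\cap U\supseteq\{z_t:t\neq 0\}$ is nonempty because, by linear independence of $E$, one has $z_t\in T\cdot x_0\subseteq G\cdot x_0$ for $t\neq 0$.
\item By the standing assumption of the backward direction, $s|_{U\cap C_0}=s^*$ extends to a section $C_0\to\VE$.
\end{enumerate}
With these hypotheses in place, Proposition~\ref{prop!section_extension_for_G_bundles} produces a $G$-equivariant extension of $s$ to all of $Z$, which is exactly what we want.

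There is no serious obstacle here; the substantive work has already been done in the preceding section. The only point that really requires the specific geometry of $X_0$ (as opposed to purely bundle-theoretic generalities) is item (iii) above, namely that removing the open orbit from $Z$ leaves a single $G$-orbit. This is what ensures that the single curve $C_0$ meets the unique boundary orbit, so that $G$-translates of the extension $s_0$ along $C_0$ cover all missing points, allowing Proposition~\ref{prop!section_extension_for_G_bundles} to be applied.
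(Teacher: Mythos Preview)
Your proposal is correct and follows exactly the same approach as the paper's proof, which simply reads: ``If $s^*$ extends, then so does $s$ by Proposition~\ref{prop!section_extension_for_G_bundles}. The converse is obvious.'' You have merely spelled out the verification of the hypotheses of Proposition~\ref{prop!section_extension_for_G_bundles} that the paper leaves implicit.
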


\begin {proof}  If $s^*$ extends, then so does $s$ by Proposition~\ref{prop!section_extension_for_G_bundles}.
   The converse is obvious.
\end {proof}

For $w \in V$ we denote by $s_w \in H^0( X_0, \shN_{X_0|V} )$ the global
section defined by
\[
     s_w (x) = [w] \in V/T_{x} X_0 
\]
for all $x \in X_0$. We will use $V^{\beta}$ for the $\Tad$-weight space in $V$ of weight
$\beta$.  Recall that, by Proposition~\ref{prop:propsVggweights}, $\beta$ is a nonzero
element of $\<\sr\>_{\NN}$ and that $v \in V^\beta$ such that $0 \neq
[v] \in \Vgg$. 

The idea of the proof of Theorem~\ref{thm:extension} is to find elements
$\{y_i\}$ in $\Vbeta$ such that their images in $V/T_{z_t}X_0$
form a basis of the $\Tad$-weight space of weight $\beta$ in $V/T_{z_t}X_0$ for \emph{all} $t \in \C$. It then follows that 
(the restriction to $C_0$ of) the sections $s_{y_i}$ form a linearly independent subset of
$H^0(C_0, \VE)$, and that there exist $f_i \in \C(t)$
such that for all $t \in \C \setminus \{0\}$ we have
\begin{equation}
s^*(z_t) = \sum_i f_i(t) s_{y_i}(z_t).
\end{equation}
 The section $s^*$ extends to all of $C_0$ if and only if each $f_i(t)$
 belongs to the polynomial ring $\C[t]$. With the appropriate
 choice of the vectors $\{y_i\}$ the rational functions
 $f_i(t)$ are very simple; see Proposition~\ref{prop!s_interms_of_syi}.

By \cite[Lemma 3.3]{degensphermod}  $T_{z_0} X_0$ is the linear span 
of $\fg \cdot z_0 \cup \{ v_{\lambda} \}$.  If $t$ is nonzero, then $G
\cdot z_t$ is open in $X_0$, whence  $T_{z_t} X_0 = \fg \cdot z_t$, and 
$ v_{\lambda} \in T_{z_t} X_0$, by Remark~\ref{rem:propsX0}(\ref{item:t0x0}).  The image  of $\Vbeta$  under the projection $V \to V/T_{z_t} X_0$
can naturally be identified with $\Vbeta/ (\Vbeta \cap T_{z_t} X_0)$.  

\begin {lemma}  \label{lem:image_of_Vbeta}
Assume $t \in \CC$.
If $\beta$ is not a root, then   $\Vbeta \cap T_{z_t} X_0 = 0$.
If $\beta$ is a root, then $\Vbeta \cap T_{z_t} X_0$ is equal to $\<X_{-\beta} \cdot z_t\>_{\CC}$, so it is either $0$ or $1$-dimensional.
\end {lemma}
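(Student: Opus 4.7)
The plan is to reduce the statement to a routine computation of the $\Tad$-weight $\beta$ component of $\fg \cdot z_t$, after which the lemma will follow directly from the root space decomposition of $\fg$. First, observe that for $t\neq 0$ we have $z_t \in T\cdot x_0 \inn G \cdot x_0$, so $z_t$ lies in the smooth locus of $X_0$ and $T_{z_t}X_0 = \fg\cdot z_t$. For $t=0$, \cite[Lemma 3.3]{degensphermod} gives $T_{z_0}X_0 = \fg\cdot z_0 + \<v_{\lambda}\>_{\CC}$; however, $v_{\lambda}$ has $\Tad$-weight $0$ (its $T$-weight equals $\lambda$, so \eqref{eq:alphadef} gives $\Tad$-weight $\lambda - \lambda = 0$), while $\beta\neq 0$ by Proposition~\ref{prop:propsVggweights}. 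Hence for every $t \in \CC$ we have
\[ V^{\beta} \cap T_{z_t}X_0 \;=\; V^{\beta} \cap (\fg\cdot z_t), \]
which is exactly the $\Tad$-weight $\beta$ subspace of $\fg\cdot z_t$.

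Next, I would describe the $\Tad$-weights appearing in $\fg\cdot z_t$ by decomposing $\fg = \ft \oplus \bigoplus_{\gamma \in R} \fg^{\gamma}$ and using that $z_t$ is a linear combination of highest weight vectors, each of $\Tad$-weight $0$. For $H\in \ft$, $H\cdot v_{\mu} = \mu(H)v_{\mu}$, so $\ft\cdot z_t$ consists of $\Tad$-fixed vectors. For a root $\gamma$ and a nonzero $X_{\gamma} \in \fg^{\gamma}$, the vector $X_{\gamma}\cdot v_{\mu}$ lies in $V(\mu)_{\mu+\gamma}$ and therefore has $\Tad$-weight $\mu - (\mu + \gamma) = -\gamma$ by~\eqref{eq:alphadef}. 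Since each root space $\fg^{\gamma}$ is one-dimensional, the only contribution to the $\beta$-weight subspace of $\fg\cdot z_t$ comes from $\fg^{-\beta}$, so this subspace is zero unless $\beta$ is a root, in which case it equals $\<X_{-\beta}\cdot z_t\>_{\CC}$ (which is automatically at most one-dimensional).

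Combining these two steps yields the lemma: if $\beta$ is not a root then by \eqref{eq:betasumm} we have $\beta\in \<\sr\>_{\NN}\setminus\{0\}$, in particular $\beta \notin R\cup\{0\}$, so $V^{\beta}\cap T_{z_t}X_0 = 0$; if $\beta$ is a root then $\beta\in R^+$ and $V^{\beta}\cap T_{z_t}X_0 = \<X_{-\beta}\cdot z_t\>_{\CC}$. There is no real obstacle; the only subtlety is handling the extra summand $\CC v_{\lambda}$ that enlarges the tangent space at $t=0$, and this is taken care of by the $\Tad$-weight $0$ observation in the first paragraph.
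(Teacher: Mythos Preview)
Your proof is correct and follows essentially the same approach as the paper: both arguments reduce to identifying the $\Tad$-weight $\beta$ component of $T_{z_t}X_0 = \fg\cdot z_t + \langle v_\lambda\rangle_{\CC}$ by noting that $v_\lambda$ has $\Tad$-weight $0$ and then using the root space decomposition of $\fg$ acting on the highest weight vectors. The only cosmetic difference is that the paper writes $\fg\cdot z_t = \ft\cdot z_t \oplus \fu^{-}\cdot z_t$ (implicitly using $\fu^{+}\cdot z_t = 0$), whereas you run over all root spaces and observe that the contribution of $\fg^{\gamma}$ has $\Tad$-weight $-\gamma$.
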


\begin {proof}
Recall that  $T_{z_t} X_0$ is equal to the
linear span of $\fg \cdot z_t \cup \{ v_\lambda \}$.
Using that $\fu^{-}$ is spanned by the set $\{X_{-\gamma} \colon \gamma
\text{ positive root of } G\}$, the lemma follows from the fact that $v_\lambda$ has $\Tad$-weight
zero (and so not equal to $\beta$) and that
\(\fg \cdot z_t = \ft \cdot z_t \oplus \fu^{-} \cdot z_t.\)
\end {proof}

Before giving the details of the remaining arguments for the proof
of Theorem~\ref{thm:extension}, we introduce some more notation for
the remainder of this section. Put 
\[V_1:=V(\lambda); \quad \quad \quad V_2:= \oplus_{\mu \in
  E\setminus\{\lambda\}} V(\mu).\]
Note that $V=V_1 \oplus V_2$.   
Set $n = \dim \Vbeta+ 1 $ and $m = n - \dim \Vbeta \cap V_1$.

Because the summands $V_1$ and $V_2$ of $V$ are stable under
the $\Tad$-action, there exists a basis $y_2, \dots , y_n$ of $\Vbeta$, 
such that $y_i \in  \Vbeta \cap V_2$
for $2 \leq i \leq m$  and $y_i \in   \Vbeta \cap V_1$ for $m+1 \leq i \leq n$.
Since $v \in \Vbeta$, there exist $b_i \in \CC$ such that
\begin {equation} \label{eqn!dvintermsofyi}
     v = \sum_{i=2}^n b_i y_i.
\end {equation} 

Recall that $a$ is the coefficient of $\lambda$ in the unique expression
of $\beta$ as a linear combination of the elements of $E$. 
\begin {prop}  \label{prop!s_interms_of_syi}
Let $\{y_2, y_3, \ldots, y_n\}$ be a basis of $V^\beta$ as above, and
let $b_i$ be elements of $\C$ such that the
equality~(\ref{eqn!dvintermsofyi}) holds.
For all $t \in \CC^*$ we have 
\[
     s (z_t) = t^{-a} (\sum_{i=2}^m b_i s_{y_i}(z_t))  +
                 t^{-a+1}  (\sum_{i=m+1}^n b_i s_{y_i}(z_t)).
\]
\end {prop}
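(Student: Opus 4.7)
The plan is to realize $z_t$ as $\tau(t)\cdot x_0$ for a suitable element $\tau(t)\in T$, and then exploit the $G$-equivariance of $s$ to reduce the computation of $s(z_t)$ to that of $\tau(t)\cdot [v]$, which can be evaluated via $T$-weight considerations. Because $E$ is $\ZZ$-linearly independent in $\wl$, the homomorphism of complex tori $T \to (\CC^*)^{|E|}$ sending $\tau$ to $(\mu(\tau))_{\mu \in E}$ is surjective, so for each $t\in \CC^*$ one can choose $\tau(t)\in T$ with $\lambda(\tau(t))=t$ and $\mu(\tau(t))=1$ for all $\mu \in E\setminus\{\lambda\}$. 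Then $\tau(t)\cdot x_0 = z_t$, and since the linear automorphism $\tau(t)$ of $V$ carries $T_{x_0}X_0$ isomorphically onto $T_{z_t}X_0$, the $G$-equivariance of $s$ yields $s(z_t) = [\tau(t)\cdot v]\in V/T_{z_t}X_0$.

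The second step is to compute $\tau(t)\cdot y_i$ for each $i$. From the definition of the $\Tad$-action $\alpha$ in \eqref{eq:alphadef}, a vector $w\in V(\mu)$ of $\Tad$-weight $\gamma$ has ordinary $T$-weight $\mu-\gamma$, so $\tau\cdot w = \mu(\tau)\gamma(\tau)^{-1}w$ for every $\tau \in T$. Every $y_i$ lies in $V^\beta$, and because the decomposition $V = V_1\oplus V_2$ is $\Tad$-stable, on each $V(\mu)$-component of $y_i$ the element $\tau(t)$ acts as multiplication by $\mu(\tau(t))\,\beta(\tau(t))^{-1}$. The key identity is $\beta(\tau(t)) = t^a$: by Proposition~\ref{prop:propsVggweights}(\ref{item:betalattice}), $\beta$ belongs to $\<E\>_{\ZZ}$, and in the unique expansion $\beta = \sum_{\mu\in E}a_\mu \mu$ (with $a_\lambda = a$) the choice of $\tau(t)$ forces every factor $\mu(\tau(t))^{a_\mu}$ to be $1$ except the one at $\mu=\lambda$, which contributes $t^a$.

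Putting the pieces together, for $2\le i\le m$ every $V(\mu)$-component of $y_i$ has $\mu\in E\setminus\{\lambda\}$, so $\mu(\tau(t)) = 1$ and hence $\tau(t)\cdot y_i = t^{-a}y_i$; for $m+1\le i\le n$ one has $y_i\in V(\lambda)$, giving $\tau(t)\cdot y_i = t\cdot t^{-a}y_i = t^{1-a}y_i$. Substituting these values into \eqref{eqn!dvintermsofyi}, passing to the class in $V/T_{z_t}X_0$, and noting that $s_{y_i}(z_t)=[y_i]$ produces the claimed formula. I do not foresee a real obstacle here: once $\tau(t)$ has been exhibited, the argument is a short bookkeeping exercise, and the only delicate point is converting correctly between the $\Tad$-weight $\beta$ and the ordinary $T$-weights of the $V(\mu)$-components of the $y_i$—which is exactly what is responsible for the two distinct powers of $t$ appearing in the two groups of summands.
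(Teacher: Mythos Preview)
Your proof is correct. It differs from the paper's argument in a pleasant way worth noting: the paper realizes $z_t$ as $\sigma_D(t)\cdot x_0$ for the one-parameter subgroup $\sigma_D:\CC^\times\to\GL(V)^G$ acting by $t$ on $V_1$ and trivially on $V_2$, and then uses that $s$ is a $\GL(V)^G$-eigenvector (equivalently, a $\Tad$-eigenvector for $\widehat\psi$) of weight $\delta$ to get $s(z_t)=\delta(\sigma_D(t))^{-1}[\sigma_D(t)\cdot v]=t^{-a}[\sigma_D(t)\cdot v]$. You instead lift $\sigma_D(t)$ to an element $\tau(t)\in T\subset G$ via the surjection $f:T\to\GL(V)^G$ and use the plain $G$-equivariance of $s$ to get $s(z_t)=[\tau(t)\cdot v]$ directly, with the factor $t^{-a}$ now emerging from the ordinary $T$-weight $\mu-\beta$ of the components of $v$ rather than from the eigenvalue $\delta$. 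The two computations are equivalent (indeed $f(\tau(t))=\sigma_D(t)$), but your route avoids invoking the $\GL(V)^G$-action on sections and the bookkeeping of $\widehat\psi$ from \cite[\S2.2]{degensphermod}; the paper's route, on the other hand, keeps the $\Tad$-eigenvector property front and center and does not require translating between $\Tad$-weights and $T$-weights on each $V(\mu)$-component.
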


\begin {proof}  
By assumption, $s$ is an eigenvector of weight $\beta$ for the $\Tad$-action $\widehat{\psi}$ on $H^0(G\cdot x_0, \VE)^G \cong \Vgg$. Equivalently, $s$ is an eigenvector (of weight described below) for the natural action of $\GL(V)^G$ on $H^0(G\cdot x_0, \VE)^G$ described in \cite[p.~1777]{degensphermod} and recalled at the start of Section~\ref{sec:crit-extens} above.

Set $D=E\setminus \{\lambda\}$ and recall the map  $\sigma_D : \CC^{\times} \to \GL(V)^G$
of \cite[p.~1784]{degensphermod}: for $t \in \CC^{\times}$, the element $\sigma_D(t)$ of $\GL(V)^G$ is defined by  $\sigma_D (t) \cdot (w_1 + w_2) = tw_1 + w_2$, for all
$w_1 \in V_1$ and $w_2 \in V_2$. 

We now argue as in the proof of Part i) of  \cite [Proposition 3.4]{degensphermod}. The homomorphism $f:T \to \GL(V)^G$ of \eqref{eq:homomf} is surjective (because $E$ is linearly independent), and therefore the homomorphism $f^*: X(\GL(V)^G) \to X(T), \delta \mapsto \delta \circ f$ of character groups is injective. Then $\delta:=(f^*)^{-1}(\beta)$ is the $\GL(V)^G$-weight of $s$. 
Consequently
\begin{align*}
s(z_t) = s(\sigma_D(t)\cdot x_0) &= \delta(\sigma_D(t))^{-1}\sigma_D(t)\cdot s(x_0)\\
&= \delta(\sigma_D(t))^{-1} [\sigma_D(t)\cdot v] \in V/\fg\cdot z_t.
\end{align*}
Since, by definition, $a$ is the coefficient of $\lambda$ in the expression of $\beta$ as a $\ZZ$-linear combination of the elements of $E$, we have that $\delta(\sigma_D(t)) = t^a$. Consequently
\[s(z_t) =  t^{-a}[\sigma_D(t) \cdot v].\]
Taking into account 
      that  
      \[
          \sigma_D (t)\cdot v =  \sum_{i=2}^m b_i y_i  + t \sum_{i=m+1}^n b_i y_i. 
       \] 
the proposition follows.
\end {proof}

We prove part \ref{item:extA}) of Theorem~\ref{thm:extension}.
Assume $a \leq 0$.  
Since the  $s_{y_i}$ are sections defined over the whole $X_0$, 
Proposition~\ref {prop!s_interms_of_syi} implies that $s^*$ extends to
a section over $C_0$. Hence $s$ extends to $Z$ by 
Proposition~\ref {prop!sextendsifandonlyifsadoes}.

For the rest of the proof we separate into five cases.
Recall that $x_0 = \sum_{\lambda \in E} v_{\lambda}$.

\begin{itemize}
\item[Case 1:] $\beta$ is a root,  $X_{-\beta} \cdot v_{\lambda} \not= 0$ 
       and there exists $\mu \in E\setminus \{\lambda\}$  with $X_{-\beta} \cdot v_{\mu} \not= 0$.
\item[Case 2:]  $\beta$ is a root,  $X_{-\beta} \cdot v_{\lambda} = 0$ 
       and there exists $\mu \in E\setminus \{\lambda\}$  with $X_{-\beta} \cdot v_{\mu} \not= 0$. 
\item[Case 3:]  $\beta$ is a root, $X_{-\beta} \cdot v_{\lambda} \not= 0$ 
            and $X_{-\beta} \cdot v_\mu = 0$  for all  $\mu \in E\setminus \{\lambda\}$.  
\item[Case 4:]   $\beta$ is a root, $X_{-\beta} \cdot v_{\lambda} = 0$ 
            and $X_{-\beta} \cdot v_\mu = 0$  for all  $\mu \in E\setminus \{\lambda\}$.  
\item[Case 5:]  $\beta$ is not a root. 
\end{itemize}

We first show that Case 3 and Case 4 cannot happen. 
Case 3 cannot occur, because it contradicts the assumption that
$\lambda$ has codimension $1$, by Proposition~\ref{prop:lambdacodim1}. 
Case 4 cannot occur, because $\beta
\in \<E\>_{\ZZ}$ by Proposition~\ref{prop:propsVggweights}. Indeed, if $\beta$ is a root in $\<E\>_{\ZZ}$, then $\{\mu \in
E \colon\<\beta^{\vee}, \mu\> \neq 0\}$ is nonempty and so
$X_{-\beta} \cdot x_0 \neq 0$. 

We now prove \ref{item:extB}) and \ref{item:extC})  of Theorem~\ref{thm:extension} in
Case 1. We begin by choosing an appropriate basis of $\Vbeta$. Put $y_n :=
X_{-\beta}\cdot v_{\lambda}$, and let $y_2, y_3, \ldots, y_q$ be the
elements of 
\[\{ X_{-\beta} \cdot v_{\mu} \colon \mu \in E\setminus\{\lambda\},
\<\beta^\vee, \mu\> \neq 0\}\]
in some order. Finally, extend $y_2, y_3, \ldots, y_q, y_n$ to a basis
$y_2, y_3, \ldots, y_n$ of $\Vbeta$ such that  $y_i \in V_2$ when $q+1 \leq i \leq m$ and
$y_i \in   V_1$ when $m+1 \leq i \leq n-1$.

Assume $t \in \CC$. Since
$\fg \cdot z_t \subset T_{z_t} X_0$, we have 
$X_{-\beta} \cdot z_t \in T_{z_t} X_0$.  Hence there is, in $V/T_{z_t} X_0$,
the following equality 
\begin{equation}
    [y_2] =  - \sum_{i=3}^q [y_i] - t [y_n].  \label{eq:15}
\end{equation}

Using Lemma~\ref {lem:image_of_Vbeta}   it follows that  the classes,
in $V/T_{z_t} X_0$,  of $y_3, \dots ,y_n$ are a basis
for the image of $\Vbeta$ in $V/T_{z_t} X_0$. 
In other words,   the elements
$s_{y_i}(z_t)$, for $3 \leq i \leq n$,  are linearly independent for
every $t \in \C$. 

Combining the relation~(\ref{eq:15}) with Proposition~\ref
{prop!s_interms_of_syi} we get for all nonzero $t$ that
\begin {multline}  \label {eqn_key_lin_independent_formula}
          s (z_t) = t^{-a} (\sum_{i=3}^q (b_i-b_2) s_{y_i}(z_t))   +
                    t^{-a} (\sum_{i=q+1}^m b_i s_{y_i}(z_t))   + \\ 
                 t^{-a+1}  (\sum_{i=m+1}^{n-1} b_i s_{y_i}(z_t))  +
                 t^{-a+1}  (b_n-b_2) s_{y_n}(z_t).
\end {multline}

We now prove part \ref{item:extB}) in Case 1. Assume 
 $a > 1$. We assume that $s$ extends and we will get
a contradiction. Since $s$ extends we have that $s^*$ also extends.
Since the set $\{s_{y_i}(z_t)\colon 3 \leq i \leq n\}$ is linearly independent for
all $t  \in \CC$,  and $-a +1$ and $-a$ are negative,
Equation~(\ref {eqn_key_lin_independent_formula})
implies that  $b_i = b_2$ for  $3 \leq i \leq q$ and for $i = n$, and that
$b_i = 0$ for  $ q+1 \leq i \leq n-1$. Hence $v = b_2(X_{-\beta} \cdot x_0)$,
contradicting the assumption $ v \notin \fg\cdot x_0$.
This proves part \ref{item:extB}) in Case 1.

We now prove part \ref{item:extC}) in Case 1.
Assume that $a = 1$.  Since $-a+1 =0$ and $-a < 0$, 
arguing similarly to the case $a > 1$   we get
 that $s^*$ extends 
over $C_0$ if and only if $b_i = b_2$ for  $3 \leq i \leq q$ 
and $b_i = 0$ for  $ q+1 \leq i \leq m$. If these conditions
hold then  $v - \hat{v} \in \fg \cdot x_0$, where
\[
    \hat{v}  =  \sum_{i=m+1}^{n-1} b_i y_i + (b_n-b_2)y_n,
\]
which is an element of $V_1$. Conversely, assume there exists $\hat{v} \in V_1$
with  $v - \hat{v} \in \fg \cdot x_0$. Then $v$ and $\hat{v}$ define the same
equivariant section $s$ of $\VE$ over $G \cdot x_0$. Hence we can
assume in Equation (\ref {eqn!dvintermsofyi}) that $b_i = 0 $ for
$ 2 \leq i \leq m$. As a consequence, Proposition~\ref {prop!s_interms_of_syi}
implies that $s^*$ extends, hence by  
Proposition~\ref{prop!sextendsifandonlyifsadoes}  $s$ also extends.
This finishes the proof  of part  \ref{item:extC})  of
Theorem~\ref{thm:extension} in Case 1.

The arguments for the proof  of \ref{item:extB}) and \ref{item:extC})  of Theorem~\ref{thm:extension} for 
Cases 2 and 5 are very similar to those  of Case 1 so we only sketch them. 

In Case 2, we can choose a basis  $y_2, y_3, \ldots , y_n$ of $\Vbeta$ with the
following properties: 
\begin{itemize}
\item[-] $\{y_2, y_3, \ldots y_q\} = \{X_{-\beta} \cdot v_{\mu} \colon
  \mu \in E, \<\beta^{\vee}, \mu\> \neq 0\}$;
\item[-] $y_i \in V_2$ for $q+1 \leq i \leq m$; and
\item[-] $y_i \in  V_1$ for $m+1 \leq i \leq n$.
\end{itemize}
Then, for all $t \in \CC$  we have the
following equality in   $V/T_{z_t} (X_0)$:
\[
    [y_2] =  - \sum_{i=3}^q [y_i] 
\]
and, similarly to Case 1,
the elements
$s_{y_i}(z_t)$, for $3 \leq i \leq n$,  are linearly independent.  
Continuing as in the proof of Case 1 the result follows.

In Case 5,  we choose a basis   $y_2, \dots , y_n$ of $\Vbeta$ with
the following properties:
\begin{itemize}
\item[-]$y_i \in V_2$ for $2 \leq i \leq m$; and
\item[-]$y_i \in  V_1$ for $m+1 \leq i \leq n$. 
\end{itemize}
 Then for all $t  \in \CC$ the elements
$s_{y_i}(z_t)$, for $2 \leq i \leq n$,  are linearly independent.  
Continuing as in the proof of  Case 1 the result follows.
This finishes the proof of  Theorem~\ref{thm:extension}.

\subsection{A few more facts about $\tg\ms$} \label{sec:few-more-facts-tgms}
In this subsection we prove three more facts about $\Tad$-weights in $\tg\ms$. Let $\beta$ be such a weight, and let $\beta = \sum_{\lambda \in E}a_{\lambda}\lambda$ be the unique expression of $\beta$ as a $\Z$-linear combination of the elements of $E$. Proposition~\ref{prop:Tadweightnotantidom} guarantees that at least one of the $a_{\lambda}$ is positive. Proposition~\ref{prop:Kostant}, which is a consequence of a classical result attributed to Kostant, bounds the number and size of positive coefficients $a_{\lambda}$. Finally, Proposition~\ref{prop:simprootweight} gives a sufficient condition for $\beta$ to be a simple root and describes the $\Tad$-weight space of weight $\beta$ when the condition is met. The first two propositions do not use our extension criterion (Theorem~\ref{thm:extension}), while the third one does.

\begin{prop} \label{prop:Tadweightnotantidom}
Let $\beta$ be a $\Tad$-weight in $\Vgg$. Then there exists a simple
root $\alpha$ such that $\<\alpha^{\vee}, \beta\> >0$. Consequently, if $\beta = \sum_{\lambda \in E}a_{\lambda}\lambda$ is the unique expression of $\beta$ as a $\Z$-linear combination of the elements of $E$, then there exists $\lambda \in E$ with $\<\alpha^{\vee},\lambda\>>0$ and $a_{\lambda}>0$. 
\end{prop}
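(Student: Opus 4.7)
The plan is a two-step argument based on Proposition~\ref{prop:propsVggweights} (specifically equation~\eqref{eq:betasumm}) and on the standard fact that the invariant bilinear form on $\wl \otimes_{\ZZ} \RR$ is positive definite on the real span of the simple roots. First I would recall that by \eqref{eq:betasumm} we can write
\[
\beta = \sum_{\alpha \in \sr} c_{\alpha}\, \alpha \quad\text{with } c_{\alpha} \in \NN, \text{ not all zero}.
\]
In particular $\beta$ is a nonzero element of the real span of $\sr$.

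Next I would use a $W$-invariant inner product $(\,,\,)$ on $\wl \otimes_{\ZZ} \RR$ such that $(\alpha,\alpha) > 0$ for every simple root $\alpha$. Since $\beta \neq 0$ lies in the real span of $\sr$, we have $(\beta,\beta) > 0$. Expanding,
\[
0 < (\beta,\beta) = \sum_{\alpha \in \sr} c_{\alpha}\, (\alpha,\beta),
\]
so at least one summand $(\alpha,\beta)$ must be strictly positive. Since $\<\alpha^{\vee},\beta\> = 2(\alpha,\beta)/(\alpha,\alpha)$, this gives a simple root $\alpha$ with $\<\alpha^{\vee},\beta\> > 0$, proving the first assertion.

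For the consequence, write $\beta = \sum_{\lambda \in E} a_{\lambda}\, \lambda$. Because the elements of $E$ are dominant weights, $\<\alpha^{\vee},\lambda\> \geq 0$ for every $\lambda \in E$. Therefore
\[
0 < \<\alpha^{\vee},\beta\> = \sum_{\lambda \in E} a_{\lambda}\, \<\alpha^{\vee},\lambda\>,
\]
and every summand with $a_{\lambda} \leq 0$ is nonpositive. Hence at least one summand must be strictly positive, which forces some $\lambda \in E$ to satisfy simultaneously $a_{\lambda} > 0$ and $\<\alpha^{\vee},\lambda\> > 0$.

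I do not anticipate a real obstacle here: the argument is entirely formal once one invokes equation~\eqref{eq:betasumm}. The only point that requires a small amount of care is isolating the correct simple root $\alpha$ for the second conclusion, namely insisting on using the same $\alpha$ produced in the first part (so that both inequalities $\<\alpha^{\vee},\lambda\>>0$ and $a_{\lambda}>0$ are achieved for a \emph{single} $\lambda$ and a \emph{single} $\alpha$); the dominance of $E$ makes this automatic.
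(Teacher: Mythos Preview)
Your argument is correct and follows essentially the same route as the paper: both invoke \eqref{eq:betasumm} to write $\beta$ as a nonzero $\NN$-combination of simple roots and then use the positive-definiteness of the invariant form to produce a simple root $\alpha$ with $\<\alpha^{\vee},\beta\> > 0$. You also spell out the ``Consequently'' part via dominance of $E$, which the paper leaves implicit.
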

\begin{proof}
This follows by a standard argument from the fact, recalled in Proposition~\ref{prop:propsVggweights}, that $\beta$ is a nonzero element of $\<\sr\>_{\NN}$. For completeness, we include the details. Recall that we can equip the vector space $\Lambda_R \otimes_{\ZZ} \RR$, where $\Lambda_R = \<\sr\>_{\ZZ}$ is the root lattice, with a positive definite inner product $(\cdot \mid \cdot)$ such that for all $\alpha \in \sr$ and all $\gamma \in \Lambda_R$, we have that $\<\alpha^{\vee}, \gamma\>$ is a positive multiple of $(\alpha\mid\gamma)$, see e.g.~\cite[\S 18.3 and \S 18.4]{tauvel-yu-laag}. 

Since $\beta$ is an element of $\<\sr\>_{\NN}$ there exists, for every $\alpha \in \sr$, a nonnegative integer $n_{\alpha}$ such that $\beta = \sum_{\alpha \in \sr} n_{\alpha} \alpha$. Since $\beta \neq 0$, we know by the positive definiteness and the bilinearity of $(\cdot\mid\cdot)$ that  
\begin{equation}
0<(\beta\mid\beta) = \sum_{\alpha \in \sr} n_{\alpha} (\alpha\mid\beta). 
\end{equation}
It follows that there is some $\alpha \in \sr$ for which $(\alpha\mid\beta)>0$, whence $\<\alpha^{\vee},\beta\> >0$, which is what we had to prove.
\end{proof}

\begin{prop} \label{prop:Kostant}
Let $\beta$ be a $\Tad$-weight in $\tg\ms$. If $\beta = \sum_{\lambda \in E}a_{\lambda}\lambda$ is the unique expression of $\beta$ as a $\Z$-linear combination of the elements of $E$, then the sum of the elements of the set
\[\{a_{\lambda} \colon \lambda \in E \text{ and } a_{\lambda} > 0\}\]
is at most $2$. 
\end{prop}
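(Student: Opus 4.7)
The plan is to use the Alexeev--Brion identification of $\tg\ms$ with the space of first-order $G$-equivariant deformations of the multiplication on the $G$-algebra $V(\wm) = \oplus_{\lambda \in \wm}V(\lambda)$. Because $\wm$ is generated as a monoid by $E^*$, the algebra $V(\wm)$ is generated, as a $G$-algebra, by its submodules $V(\kappa)$ with $\kappa \in E^*$, and associativity then forces any first-order equivariant deformation to be determined by its restrictions to the tensor products $V(\kappa_1) \otimes V(\kappa_2)$ with $\kappa_1, \kappa_2 \in E^*$. This ``determination by pairs of generators''---a consequence of the classical Kostant-attributed fact that the algebra structure of a multiplicity-free $G$-algebra is governed by the Cartan projections on products of generators---is the result I would invoke.

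Given a $\Tad$-eigenvector in $\tg\ms$ of $\Tad$-weight $\beta$, it must therefore admit some nonzero restriction $V(\kappa_1) \otimes V(\kappa_2) \to V(\nu)$ with $\kappa_1,\kappa_2 \in E^*$ and $\nu \in \wm$ such that $V(\nu) \subset V(\kappa_1)\otimes V(\kappa_2)$. In the paper's $w_0$-twisted $\Tad$-action this yields the identity
\[\beta = \kappa_1^* + \kappa_2^* - \nu^*,\]
with $\kappa_1^*, \kappa_2^* \in E$ and $\nu^* \in \<E\>_{\NN}$, the latter because the involution $-w_0$ sends $\wm = \<E^*\>_{\NN}$ bijectively onto $\<E\>_{\NN}$. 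Writing $\nu^* = \sum_{\lambda \in E} c_\lambda \lambda$ with $c_\lambda \in \NN$, one reads off
\[a_\lambda = \delta_{\lambda,\kappa_1^*} + \delta_{\lambda,\kappa_2^*} - c_\lambda,\]
so that only the two indices $\lambda \in \{\kappa_1^*,\kappa_2^*\}$ can contribute positive terms. When $\kappa_1^* \neq \kappa_2^*$, each contributes at most $1$; when $\kappa_1^* = \kappa_2^*$, the single contribution is $2-c_{\kappa_1^*}\leq 2$. Either way, $\sum_{a_\lambda > 0} a_\lambda \leq 2$, which is the desired bound.

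The hard part will be making the first step rigorous: one must verify that every $\Tad$-eigenvector in $\tg\ms$ really does arise from a nonzero restriction to some $V(\kappa_1)\otimes V(\kappa_2)$ with $\kappa_1,\kappa_2 \in E^*$, i.e.\ that a first-order equivariant deformation vanishing on all such products of generators must already be trivial modulo the canonical $T$-rescalings of the highest weight vectors. Once that reduction is in place and the $-w_0$-twist between $\wm$ and $\<E\>_{\NN}$ is carefully tracked, the bound on $\sum_{a_\lambda>0} a_\lambda$ follows by the elementary bookkeeping in the basis $E$ displayed above.
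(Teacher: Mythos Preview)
Your proposal is correct and follows essentially the same approach as the paper: both rest on the Kostant-attributed fact that the structure is determined at the level of pairs of generators, and both extract from it the identity $\beta=\lambda_i+\lambda_j-\sum_k b_k\lambda_k$ with $\lambda_i,\lambda_j\in E$ and $b_k\in\NN$, which immediately gives the bound. The paper makes precise the step you flagged as ``the hard part'' by passing to the ideal-theoretic realization $\tg\ms\cong\Hom^G_{\C[V]}(I,\C[V]/I)$ and invoking Kostant's theorem that $I=I(X_0)$ is generated by $I_2\subset\C[V]_2$, so that any nonzero $\Tad$-eigenvector $\rho$ is already nonzero on some irreducible $M\subset I_2\subset\bigoplus_{i\le j}V(\lambda_i)^*\otimes V(\lambda_j)^*$; tracking the $\GL(V)^G$-weights of $M$ and of $\rho(M)\subset\C[X_0]$ then yields the displayed identity.
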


\begin{proof}
This is a consequence of the following fact, attributed to Kostant (see, e.g., \cite[Proposition 28.6]{timashev-embbook}): the ideal $I=I(X_0)$ of the subvariety $X_0$ of $V$ is generated by the intersection, which we denote by $I_2$, of $I$ with the subspace $\C[V]_2$ of polynomials of degree $2$ in $\C[V]$. 
If we number the elements of $E$ as $\{\lambda_1, \lambda_2, \dots, \lambda_p\}$, then
\begin{equation} \label{eq:khoiu}
\C[V]_2 \cong [\oplus_{i=1}^p S^2 V(\lambda_i)] \oplus [\oplus_{1\leq i<j \leq p} V(\lambda_i) \otimes V(\lambda_j)]
\end{equation}  
as $G$-modules. 

It is shown in \cite{alexeev&brion-modaff} (and reviewed in \cite[Section 2.1]{degensphermod}) that because $E$ is linearly independent $\ms$ can be identified with an open subscheme of the invariant Hilbert scheme $\tHilb^G_{\wm}(V)$. It therefore follows from \cite[Proposition 1.13]{alexeev&brion-modaff} (and its proof) that we have natural isomorphisms
\begin{equation} \label{eq:iug}
\tg\ms \cong\Hom^G_{\C[X_0]}(I/I^2,\C[X_0]) \cong \Hom^G_{\C[V]}(I,\C[V]/I). 
\end{equation}
Recall from \cite[Section 2.2]{degensphermod} that, as reviewed at the start of Section~\ref{sec:crit-extens} above, the $\Tad$-action on $\tg\ms \cong \Hom^G_{\C[V]}(I,\C[V]/I)$ is induced by the action of $\GL(V)^G$ on $V$, using the homomorphism $f: T\to \GL(V)^G$ of \eqref{eq:homomf} on page~\pageref{eq:homomf}. The $\GL(V)^G$-action on $\Hom^G_{\C[V]}(I,\C[V]/I)$ induced by the $\GL(V)^G$-action on $V$ is given by
\[(t \cdot \rho)(h) = t\cdot \rho(t^{-1}\cdot h)\]
for $\rho \in  \Hom^G_{\C[V]}(I,\C[V]/I), t \in \GL(V)^G$ and $h \in I$.
Clearly, $\rho$ is completely determined by its restriction to $I_2$, since $I_2$ generates $I$ as an ideal. Being $G$-equivariant, $\rho$ sends each irreducible $G$-submodule $M$ of $I_2$ to $0$ or to a $G$-submodule of $\C[V]/I$ isomorphic to $M$. 

Since $\C[V]/I = \C[X_0]$, the $G$-module structure of this algebra is given by
\[\C[V]/I \cong \oplus_{\mu \in \wm} V(\mu) = \oplus_{(b_i) \in \NN^p} V(\sum_{i=1}^p b_i \lambda^*_i).\]
Moreover, as stated in the first paragraph of the proof of \cite[Proposition 28.6]{timashev-embbook}, 
$V(\sum_i b_i \lambda^*_i) \inn \C[V]/I$ is the image of $S^{b_1}V(\lambda_1^*) \otimes S^{b_2}V(\lambda_2^*) \otimes \ldots \otimes S^{b_p}V(\lambda_p^*) \inn \C[V]$ under the quotient map $\C[V] \onto C[V]/I$.  
It follows that $t=(t_1,t_2, \ldots, t_p)\in \GL(V)^G$ acts on $x \in V(\sum_i b_i \lambda^*_i) \inn  \C[V]/I$ by
\begin{equation} \label{eq:kjhkjh}
t \cdot x = t_1^{-b_1}t_2^{-b_2} \ldots t_p^{-b_p}x,
\end{equation}
since $t \mapsto t_1^{-b_1}t_2^{-b_2} \ldots t_p^{-b_p}$ is the character by which $\GL(V)^G$ acts on $S^{b_1}V(\lambda_1^*) \otimes S^{b_2}V(\lambda_2^*) \otimes \ldots \otimes S^{b_p}V(\lambda_p^*)$.

Now, suppose that $\rho$ is a $\Tad$-eigenvector in $\Hom^G_{\C[V]}(I,\C[V]/I)$ of weight $\beta$, and let $\beta = \sum_{i=1}^p a_i \lambda_i$ be the expression of $\beta$ as a $\Z$-linear combination of the elements of $E$.
The homomorphism $f:T \to \GL(V)^G$ in \eqref{eq:homomf} on page~\pageref{eq:homomf} relates the $\Tad$-action to the action of $\GL(V)^G$. Since $\beta \in \<E\>_{\Z}$ and $E$ is linearly independent there exists a unique character $\delta$ of $\GL(V)^G$ such that $\delta \circ f = \beta$. If $t=(t_1,\ldots,t_p) \in \GL(V)^G$, then $\delta(t) = t_1^{a_1}t_2^{a_2}\ldots t_p^{a_p}$. Moreover, $\delta$ is the $\GL(V)^G$-weight of $\rho$, which means that for $t=(t_1,t_2, \ldots, t_p)\in \GL(V)^G$, we have 
\begin{equation} \label{eq:ghdfh3}
t \cdot \rho = t_1^{a_1}t_2^{a_2} \ldots t_p^{a_p} \rho.
\end{equation}
Since $\rho \neq 0$ there exists an irreducible submodule $M$ of $I_2$ and an element $h$ of $M$ such that $\rho(h) \neq 0$. Because $I_2 \inn \C[V]_2$, it follows from the decomposition~(\ref{eq:khoiu}) that there exist $i,j \in \{1,2,\ldots p\}$, not necessarily distinct, such that for  $t=(t_1,t_2, \ldots, t_p)\in \GL(V)^G$ we have 
\begin{equation} \label{eq:kjhhk}
t \cdot h = t_i^{-1}t_j^{-1}h,
\end{equation}
since this is the action of $\GL(V)^G$ on $\C[V]_2$. 
Since $\rho(M) \neq 0$, there exists $(b_i) \in \NN^p$ such that $\rho(M) = V(\sum_{i=1}^p b_i \lambda^*_i) \inn \C[V]/I$. 

We then have for all $t=(t_1, \ldots, t_p) \in \GL(V)^G$ that
\begin{align}
(t \cdot \rho)(h) &= t \cdot \rho (t^{-1}\cdot h)\\
& = t_1^{-b_1}t_2^{-b_2} \ldots t_p^{-b_p}(\rho(t_i t_j h)) \\
& = t_i t_j t_1^{-b_1}t_2^{-b_2} \ldots t_p^{-b_p}(\rho(h)) \label{eq:890}
\end{align}
where the second equality uses equations (\ref{eq:kjhkjh}) and (\ref{eq:kjhhk}) and the third equality uses the $\C$-linearity of $\rho$. The proposition now follows from comparing (\ref{eq:890}) with equation (\ref{eq:ghdfh3}). 
\end{proof}

\begin{prop} \label{prop:simprootweight}
Let $\beta$ be a $\Tad$-weight in $\tg\ms$. 
If there exists $\lambda \in E$ satisfying the following two
properties:
\begin{itemize}
\item[-] $\lambda$ is of codimension one;
\item[-] $\lambda$ has a positive coefficient in the unique expression of
  $\beta$ as a linear combination of the elements of $E$;
\end{itemize}
then $\beta$ is a simple root. Moreover $\<\beta^{\vee}, \lambda\>
\neq 0$ and the $\Tad$-weight space in  $H^0(X_0, \shN_{X_0|V})^G
\isom T_{X_0}\tM^G_{\wm}$ of weight $\beta$ is spanned by the
section induced by $[X_{-\beta} v_{\lambda}] \in \Vgg$.  
\end{prop}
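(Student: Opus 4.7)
The plan is to use the extension criterion (Corollary~\ref{cor:extending-sections}) to reduce to the case $v\in V(\lambda)$, then combine Proposition~\ref{prop:propsVggweights} with the codimension-one characterization in Proposition~\ref{prop:lambdacodim1} to force $\beta$ to be a simple root; assertions (ii) and (iii) will then fall out from the standard structure of weight spaces of $V(\lambda)$.

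Concretely, I would start with a $\Tad$-eigenvector $v\in V$ of weight $\beta$ with $0\neq [v]\in\Vgg$ representing the given element of $\tg\ms$. Writing $\beta=\sum_{\mu\in E} a_\mu\mu$, the hypothesis $a_\lambda>0$ combined with Corollary~\ref{cor:extending-sections} applied to the codimension-one weight $\lambda$ forces $a_\lambda=1$ and supplies $\hat v\in V(\lambda)$ with $[v]=[\hat v]$. Decomposing $\hat v$ into $\Tad$-weight components and using the $\Tad$-stability of both $V(\lambda)$ and $\fg\cdot x_0$, the weight-$\beta$ component $\hat v_\beta\in V(\lambda)$ still satisfies $[v]=[\hat v_\beta]$, so after this replacement we may assume $v\in V(\lambda)$.

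The crux of the argument is to show $\beta$ must be simple. By Proposition~\ref{prop:propsVggweights}(\ref{item:betasum}) I would choose a simple root $\alpha$ with $X_\alpha\cdot v\neq 0$ and $\gamma:=\beta-\alpha\in R^+\cup\{0\}$, and argue by contradiction that $\gamma\in R^+$. Then part (\ref{item:betaminalpharoot}) of the same proposition provides $c\in\CC^{\times}$ with
\[
X_\alpha\cdot v = c\, X_{-\gamma}\cdot x_0 = c\sum_{\mu\in E} X_{-\gamma} v_\mu.
\]
Since $v\in V(\lambda)$ forces the left-hand side to lie in $V(\lambda)$, comparing summands in $V=\bigoplus_{\mu\in E}V(\mu)$ yields $X_{-\gamma}v_\mu=0$ for every $\mu\in E\setminus\{\lambda\}$ and $X_{-\gamma}v_\lambda\neq 0$. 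Applying the $\sl_2$-triple attached to the positive root $\gamma$ to a highest-weight vector $v_\nu$ (which is killed by $X_\gamma$ and on which $H_\gamma$ acts by $\<\gamma^{\vee},\nu\>$) shows that $X_{-\gamma}v_\nu\neq 0$ if and only if $\<\gamma^{\vee},\nu\>\neq 0$, for every dominant $\nu$. Hence $\<\gamma^{\vee},\lambda\>\neq 0$ while $\<\gamma^{\vee},\mu\>=0$ for all $\mu\in E\setminus\{\lambda\}$, contradicting characterization (3) in Proposition~\ref{prop:lambdacodim1} applied to the positive root $\gamma$. Therefore $\beta=\alpha$ is simple.

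Assertions (ii) and (iii) then follow immediately: since $v$ is a nonzero $T$-weight vector of weight $\lambda-\beta$ in $V(\lambda)$ and $\beta$ is simple, this weight occurs in $V(\lambda)$ precisely when $\<\beta^{\vee},\lambda\>\neq 0$ (giving (ii)), and in that case the weight space is one-dimensional and spanned by $X_{-\beta}v_\lambda$ (the height of $\beta$ being $1$, the only lowering sequence summing to $\beta$ is the single step $X_{-\beta}$), so $v\in\CC^{\times} X_{-\beta} v_\lambda$ (giving (iii)). The main obstacle is the auxiliary fact $X_{-\gamma}v_\nu\neq 0\iff \<\gamma^{\vee},\nu\>\neq 0$ for a not-necessarily-simple positive root $\gamma$, together with the weight-by-weight bookkeeping in the reduction to $v\in V(\lambda)$; both are routine but must be in place before Proposition~\ref{prop:lambdacodim1}(3) can be applied.
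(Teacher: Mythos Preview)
Your proof is correct and follows essentially the same route as the paper's: reduce to $v\in V(\lambda)$ via the extension criterion, then use Proposition~\ref{prop:propsVggweights} to produce a simple root $\alpha$ with $X_\alpha\cdot v$ a nonzero multiple of $X_{-(\beta-\alpha)}\cdot x_0$, and derive a contradiction from Proposition~\ref{prop:lambdacodim1} when $\beta-\alpha\in R^+$. The paper cites Theorem~\ref{thm:extension}(\ref{item:extC}) directly rather than going through Corollary~\ref{cor:extending-sections}, and compresses the step you spell out (that the projection of $X_{-(\beta-\alpha)}\cdot x_0$ onto $V(\lambda')$ is nonzero for some $\lambda'\neq\lambda$) into a single sentence; your explicit $\mathfrak{sl}_2$-argument and the weight-component bookkeeping for $\hat v$ fill in details the paper leaves implicit, but the logic is identical.
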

\begin{proof}
Since $\beta$ is a $\Tad$-weight in $\tg\ms \isom H^0(X_0, \shN_{X_0|V})^G$, there exists $v \in V$ of $\Tad$-weight $\beta$ such that $[v]$ is a nonzero element of $\Vgg$ and such that the corresponding section in $H^0(G\cdot x_0, \shN_{X_0|V})^G$ extends to $Z = G\cdot x_0 \cup G\cdot (x_0 - v_{\lambda})$. 
By Theorem~\ref{thm:extension}(\ref{item:extC}), we may assume that $v \in V(\lambda) \inn V$. To get a
contradiction, we assume that $\beta$
is not a simple root. Then, by Proposition~\ref{prop:propsVggweights}, there exists a simple root $\alpha$ such
that $\beta-\alpha$ is a positive root and 
\begin{equation}
0\neq X_{\alpha} \cdot v \in \<X_{-(\beta - \alpha)}
x_0\>_{\CC}. \label{eq:12}
\end{equation} 
Because $v \in V(\lambda)$, we have that
\begin{equation}X_{\alpha} \cdot v \in V(\lambda). \label{eq:12plus}
\end{equation} 
On the
other hand, because $\lambda$ is of codimension one, there exists
$\lambda' \in E \setminus \{\lambda\}$ such that $\<(\beta -
\alpha)^{\vee}, \lambda'\> \neq 0$. Consequently, the line $\<X_{-(\beta - \alpha)}
x_0\>_{\CC}$ has nonzero projection on $V(\lambda')$. We have
shown that \eqref{eq:12} and \eqref{eq:12plus} are in contradiction. That is, we
have shown that $\beta$ is a simple root.

By elementary highest weight theory, the $\Tad$-weight space of weight
$\beta$ in
$V(\lambda)$ is $\<X_{-\beta} v_{\lambda}\>_{\CC}$. This implies the
second assertion.  
\end{proof}

\section{Proof of Theorem~\ref{thm:cbc}} \label{sec:cases}

In this section, we prove Theorem~\ref{thm:cbc} through case-by-case verification: we verify that the theorem holds for each saturated indecomposable spherical module in  List~\ref{KnopL} below. The definition of `saturated' and `indecomposable' can be found in \cite[Section 5]{knop-rmks} or in \cite[Definition 4.1]{degensphermod}. The eight families (K1), (K2), (K3),
(K15), (K16), (K17), (K18) and (K21) were the subject of \cite[Section 5]{degensphermod}. 
Each subsection of this section corresponds to one of the remaining
families in the list: the proposition in each subsection asserts that Theorem~\ref{thm:cbc} holds for the family under consideration.

\begin{listabc}[Knop's List {\cite[Section 5]{knop-rmks}}] \label{KnopL}
The saturated indecomposable spherical modules $(\overline{G},W)$ are
\begin{itemize}
\item[(K1)] $(\GL(m)\times \GL(n), \CC^m \otimes \CC^n)$ with $1 \leq m\leq n$; 
\item[(K2)] $(\GL(n), \Sym^2\CC^n)$ with $1 \leq n$; 
\item[(K3)] $(\GL(n), \bigwedge^2\CC^n)$ with $2 \leq n$;  
\item[(K4)] $(\Sp(2n) \times \C^{\times}, \CC^{2n})$ with $1 \leq n$; 
\item[(K5)] $(\Sp(2n)\times \GL(2), \CC^{2n}\otimes \CC^2)$ with
  $2\leq n$;
\item[(K6)] $(\Sp(2n)\times \GL(3), \CC^{2n}\otimes \CC^3)$ with
  $3\leq n$;
\item[(K7)] $(\Sp(4)\times \GL(3), \CC^4 \otimes \CC^3)$;
\item[(K8)] $(\Sp(4) \times \GL(n), \CC^4 \otimes \CC^n)$ with $4
  \leq n$;  
\item[(K9)] $(\SO(n) \times \CC^{\times}, \CC^n)$ with $3 \leq n$;
\item[(K10)] $(\Spin(10) \times \CC^{\times}, \CC^{16})$;
\item[(K11)] $(\Spin(7) \times \CC^{\times}, \CC^{8})$;
\item[(K12)] $(\Spin(9) \times \CC^{\times}, \CC^{16})$;
\item[(K13)] $(\ssG_2 \times \CC^{\times}, \CC^7)$;
\item[(K14)] $(\ssE_6 \times \CC^{\times}, \CC^{27})$; 
\item[(K15)] $(\GL(n) \times \CC^{\times}, \bigwedge^2\CC^n \oplus \CC^n)$ with $4 \leq n$; 
\item[(K16)] $(\GL(n) \times \CC^{\times}, \bigwedge^2\CC^n \oplus (\CC^n)^*)$ with  $4 \leq n$; 
\item[(K17)] $(\GL(m) \times \GL(n), (\CC^m \otimes \CC^n) \oplus \CC^n)$ with $1\leq m, 2\leq n$; 
\item[(K18)] $(\GL(m) \times \GL(n), (\CC^m \otimes \CC^n) \oplus
  (\CC^n)^*)$ with $1\leq m, 2\leq n$;
\item[(K19)] $(\Sp(2n) \times \CC^{\times} \times \CC^{\times}, \CC^{2n} \oplus \CC^{2n})$
  with $2\leq n$;
\item[(K20)] $((\Sp(2n) \times \CC^{\times}) \times \GL(2), (\CC^{2n} \otimes
  \CC^2) \oplus \CC^2)$ with $2\leq n$;
\item[(K21)] $(\GL(m) \times \SL(2) \times \GL(n), (\CC^m \otimes \CC^2)\oplus (\CC^2 \otimes \CC^n))$ with 
$2\leq m \leq n$; 
\item[(K22)] $((\Sp(2m) \times \CC^{\times}) \times \SL(2) \times \GL(n),
  (\CC^{2m} \otimes \CC^2) \oplus (\CC^2 \otimes \CC^n))$ with $2 \leq
  m,n$;
\item[(K23)] $((\Sp(2m) \times \CC^{\times}) \times \SL(2) \times (\Sp(2n)
  \times \CC^{\times}),
  (\CC^{2m} \otimes \CC^2) \oplus (\CC^2 \otimes \CC^{2n}))$ with $2 \leq
  m,n$;
\item[(K24)] $(\Spin(8) \times \CC^{\times} \times \CC^{\times}, \CC^8_+ \oplus \CC^8_-)$.
\end{itemize}
\end{listabc}

\begin{remark}
The  indices $m$ and $n$ in family (K17) and family (K18) run through a larger set than that given in Knop's List in \cite{knop-rmks}. Knop communicated  the revised range of indices for these families to the second author. We remark that these cases do appear in the lists of \cite{leahy} and \cite{benson-ratcliff-mf}. In the family (K9) we suppose that $n \ge 3$, whereas in \cite{knop-rmks} it is required that $n\ge 2$. This correction to (K9) was already made in the revised version of \cite{knop-rmks} available on Knop's website.
\end{remark}

In the rest of the present section, we will use the same notations as in \cite[Section 5]{degensphermod}: in each subsection, $(\overline{G},W)$ will denote a member of the family from List~\ref{KnopL} under consideration.   
Given such a spherical module $(\overline{G},W)$,
\begin{itemize}
\item[-] $E$ denotes the basis of the weight monoid of $(\overline{G},W^*)$ (the elements of $E$ are called the `basic weights' in \cite{knop-rmks});
\item[-] $V= \oplus_{\lambda \in E} V(\lambda)$;
\item[-] $x_0 =  \sum_{\lambda \in E} v_{\lambda}$.
\end{itemize}
Except if stated otherwise, $G$ will denote  a connected subgroup of $\overline{G}$ containing $\overline{G}'$ such that $(G,W)$ is spherical. Recall that such a group $G$ is necessarily reductive.  To lighten notation, we will use $G'$ for the derived subgroup $\overline{G}'$ of $\overline{G}$. This should not cause confusion since $(\overline{G},\overline{G}) = (G,G) = G'$. We will use $p$ for the projection from the weight lattice of $G$ to the weight lattice of $G'$ (where we fix the maximal torus $T \cap G'$ of $G'$). We will use $\om, \om', \om''$ for weights of the first, second and third non-abelian factor of $G$, while $\varepsilon$ will refer to the character $\C^{\times} \to \C^{\times}, z \mapsto z$ of $\C^{\times}$.

\begin{remark} \label{rem:sphericalforsubgroup}
Let $(\overline{G},W)$ be a spherical module in Knop's List and let $G$ be a connected subgroup of $\overline{G}$ containing $\overline{G}'$. Theorem 5.1 in \cite{knop-rmks} gives a criterion which characterizes, in terms of the center of $G$, whether $(G,W)$ is a spherical module: $(G,W)$ is spherical if and only if the center of $G$ separates the weights in a certain subspace $\fa^* \cap \fz^*$ of the dual of the Lie algebra of the maximal torus of $\overline{G}$. The tables in \cite{knop-rmks}  give an explicit basis of $\fa^* \cap \fz^*$ for every spherical module $(\overline{G},W)$ in Knop's List. 
\end{remark}

\begin{remark}
\begin{enumerate}[(a)]
\item We recall from \cite[Remark 5.4]{degensphermod} that the $\Tad$-weight set we obtain below for each $\tg\ms^G$ is a basis of the monoid $-w_0\Sigma_W$, where $w_0$ is the longest element in the Weyl group of $G$ (instead of $-\Sigma_W$ as in Theorem~\ref{thm:main} where the $\Tad$-action from \cite{alexeev&brion-modaff} was used). 
\item As explained in \cite[Remark 5.6]{degensphermod}, the computations of the $\Tad$-weight sets of $\tg\ms^G$ we perform in this section confirm Knop's computation in \cite[Section 5]{knop-rmks} of the ``simple reflections'' of the little Weyl group of the
spherical modules under consideration.
\end{enumerate}
\end{remark}

\subsection{(K4) The modules $(\Sp(2n) \times\CC^{\times}, \CC^{2n})$ with $1 \leq n$} \label{subsec:case4}

Here 
\begin{align*}
&E =  \{\om_1 + \epsilon\};\\
&d_W = 0.
\end{align*}

We will make use of the following general lemma to treat this case, as well as the cases (K9), (K11) and (K13) below. 
\begin{lemma}\label{lem:dimx0}
Let $G$ be a connected reductive group and let $W$ be a spherical $G$-module. If $E^*$ is the basis of the weight monoid $\wm(W)$ of $W$ and $x_0 = \sum_{\lambda \in E} v_{\lambda} \in \oplus_{\lambda \in E} V(\lambda)$, then $\dim W = \dim \fg \cdot x_0$. 
\end{lemma}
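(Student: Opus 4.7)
The plan is to reduce the claim to the equality $\dim W = \dim X_0$, where $X_0 = \overline{G \cdot x_0}$ is the spherical variety of Remark~\ref{rem:propsX0}, and then to establish that equality via the horospherical degeneration of $W$ to $X_0$.

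First, I would observe that $G \cdot x_0$ is a $G$-orbit, hence a smooth homogeneous space, so its tangent space at $x_0$ is $\fg \cdot x_0$. This gives $\dim \fg \cdot x_0 = \dim G \cdot x_0$. Since $X_0 = \overline{G \cdot x_0}$ by definition, the orbit is dense in $X_0$, so $\dim G \cdot x_0 = \dim X_0$. The lemma therefore reduces to proving $\dim W = \dim X_0$.

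For this reduced claim, note that $W$ is a spherical $G$-module by hypothesis and $X_0$ is an affine spherical $G$-variety by Remark~\ref{rem:propsX0}(b), both with weight monoid $\wm(W)$. Multiplicity-freeness then yields isomorphisms of $G$-modules
\[
\C[W] \;\cong\; \bigoplus_{\mu \in \wm(W)} V(\mu) \;\cong\; \C[X_0].
\]
To pass from this isomorphism of $G$-module structures to an equality of Krull dimensions, I would invoke the horospherical contraction: $W$ admits a flat $\Tad$-equivariant degeneration to $X_0$ over $\AA^1$, realized as the Rees algebra of a $G$-stable filtration on $\C[W]$ (induced by a $\ZZ$-linear form on $\wl$ that is strictly positive on the simple roots and on $\wm(W) \setminus \{0\}$) whose associated graded is isomorphic to $\C[X_0]$ as a $G$-algebra. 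In the language of the paper, this is the same as the closure of the $\Tad$-orbit of $W$ in $\ms$, whose unique closed point is $X_0$. Since flat morphisms preserve fiber dimension, $\dim W = \dim X_0$, and combining with the first step gives the lemma.

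The main subtle point is the last paragraph: passing from identical $G$-module structures on the two coordinate rings to the equality $\dim W = \dim X_0$. This is handled by the horospherical contraction, a standard construction in the theory of spherical varieties that is built into the Alexeev--Brion moduli scheme $\ms$, where $X_0$ appears by construction as the unique closed point of the $\Tad$-action and hence as the special fiber of a flat $\Tad$-equivariant family degenerating every other point of $\ms$.
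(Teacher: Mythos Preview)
Your argument is correct and follows the same line as the paper's: both reduce to the fact that $W$ and $X_0$ share the weight monoid $\wm(W)$ and hence have equal dimension. The paper simply cites \cite[Proposition~1.1]{jansou-ressayre} for that step, whereas you supply a self-contained justification via the horospherical contraction (the flat Rees degeneration of $W$ to $X_0$), which is precisely the mechanism underlying the cited result and the $\Tad$-orbit closures in $\ms$.
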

\begin{proof}
This follows from \cite[Proposition 1.1]{jansou-ressayre} using the fact that $X_0 = \overline{G \cdot x_0}$ and $W$
have the same weight monoid.
\end{proof}

Applying this lemma to the modules $W$ in the family (K4) yields the following proposition.
\begin{prop}
The vector space $\Vg$ is zero-dimensional. 
In particular, $\dim \Vggp = d_W$. Consequently,  $\dim T_{X_0}\tM^G_{\wm} = d_W.$
\end{prop}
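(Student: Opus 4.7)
The plan is a direct application of Lemma~\ref{lem:dimx0}. Since $E = \{\om_1 + \eps\}$ consists of a single weight and $V(\om_1+\eps) \isom \CC^{2n}$ as an $\overline{G}$-module, we have $V = V(\om_1+\eps)$ and hence $\dim V = 2n = \dim W$.

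Because $(G,W)$ is spherical by hypothesis, Lemma~\ref{lem:dimx0} applies and yields
\[
\dim \fg \cdot x_0 \;=\; \dim W \;=\; 2n.
\]
Combined with the inclusion $\fg \cdot x_0 \inn V$ and the equality $\dim V = 2n$, this forces $\fg\cdot x_0 = V$, so the quotient $\Vg$ is zero. In particular,
\[
\Vggp \;=\; (\Vg)^{G'_{x_0}} \;=\; 0,
\]
and since $d_W = 0$ as recorded at the start of the subsection, the equality $\dim \Vggp = d_W$ follows.

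The final assertion then follows from the chain of $\Tad$-equivariant maps in~\eqref{eq:14}, which provides in particular an injection $T_{X_0}\tM^G_{\wm} \into \Vgg$. Since $\Vgg \inn \Vg = 0$, we obtain $\dim T_{X_0}\tM^G_{\wm} = 0 = d_W$, as required.

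There is essentially no obstacle here: the whole case reduces to a one-line dimension count once Lemma~\ref{lem:dimx0} is available. The point is that the weight monoid of $W$ has a single generator, so $V$ coincides with $W$ as a vector space, and sphericity then forces the tangent orbit $\fg\cdot x_0$ to fill out $V$, collapsing the tangent space of $\ms$ at $X_0$ to zero.
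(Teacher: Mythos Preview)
Your proof is correct and takes essentially the same approach as the paper: identify $\dim V = \dim W$ from $|E|=1$, apply Lemma~\ref{lem:dimx0} to get $\dim \fg\cdot x_0 = \dim W$, and conclude $\Vg = 0$. You supply a bit more detail on the final implication via the injection in~\eqref{eq:14}, but the argument is the same.
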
 
\begin{proof}
Since $V\isom W$ and, by Lemma~\ref{lem:dimx0}, $\dim \fg \cdot x_0 = \dim W$, we have that $\Vg = \{0\}$. 
\end{proof}

\subsection{(K5) The modules $(\Sp(2n) \times \GL(2), \CC^{2n} \otimes \CC^2)$ with $2 \leq n$} \label{subsec:case5}

Here 
\begin{align*}
&E =  \{\om_1 + \om'_1, \om_2 + \om_2', \om_2'\};\\
&d_W = 2.
\end{align*}

\begin{prop}
The $\Tad$-module $\Vggp$ is multiplicity-free and has $\Tad$-weight set 
\[\{\alpha_1 + \alpha', \alpha_1 + 2\delta + \alpha_n\},\]
where $\delta = 0$ if $n=2$ and  $\delta = \alpha_2 + \alpha_3 + \ldots + \alpha_{n-1}$ if $n>2$. 
In particular, $\dim \Vggp = d_W$. Consequently,  $\dim T_{X_0}\tM^G_{\wm} = d_W$
\end{prop}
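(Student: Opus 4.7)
The plan is to prove $\dim\Vggp = 2 = d_W$; combined with the inclusions $T_{X_0}\tM^G_{\wm}\hookrightarrow\Vgg\hookrightarrow\Vggp$ (the second holding because $G'_{x_0}\subseteq G_{x_0}$) and the general lower bound $d_W\le\dim T_{X_0}\tM^G_{\wm}$ established in \cite{degensphermod}, this forces all of these to equal $d_W$ and simultaneously identifies $\Vggp$ with the claimed $\Tad$-module. I will decompose $\Vggp$ into $\Tad$-weight spaces and show that exactly the two weights $\beta_1 = \alpha_1+\alpha'$ and $\beta_2 = \alpha_1+2\delta+\alpha_n$ occur, each with multiplicity one.

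I first enumerate the candidate $\Tad$-weights. By Proposition~\ref{prop:propsVggweights}(b), any such $\beta$ satisfies $\beta\in\<\sr\>_\NN\cap\<E\>_\ZZ$. Using the Cartan matrix of $\Sp(2n)$ together with $\alpha'=2\omega'_1-\omega'_2$, and equating $\beta = \sum n_i\alpha_i+m\alpha'$ (with $n_i,m\ge 0$) with a $\ZZ$-combination $a\lambda_1+b\lambda_2+c\lambda_3$, one obtains a linear system on $(n_i,m,a,b,c)$. Applying the Kostant bound of Proposition~\ref{prop:Kostant} (sum of positive $a_\lambda$ at most $2$) reduces the possibilities to a short list. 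On this list, direct calculation verifies $\beta_1 = 2\lambda_1-\lambda_2$ (with $n_1=m=1$, other $n_i=0$) and $\beta_2 = \omega_2 = \lambda_2-\lambda_3$ (with $n_1=n_n=1$, $n_2=\ldots=n_{n-1}=2$, $m=0$). Spurious candidates such as $2\alpha_1+\alpha_2+\alpha'=2\lambda_1-\lambda_3$ (for $n=2$), which pass all the preliminary tests, will be excluded via Proposition~\ref{prop:propsVggweights}(c): for a hypothetical weight vector $v$ of such a $\beta$, the constraint $X_\alpha\cdot v\in\<X_{-(\beta-\alpha)}\cdot x_0\>_\CC$ for every simple root $\alpha$ (combined with part (d)) turns out to force $v\in\fg\cdot x_0$.

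For each of the two surviving weights I then construct an explicit $G'_{x_0}$-invariant representative and verify one-dimensionality of the weight space. For $\beta_1$ I take $v = X_{-\alpha'}X_{-\alpha_1}\cdot v_{\lambda_1}\in V(\lambda_1)$: this vector has $T$-weight $\lambda_1-\beta_1$ and does not lie in $\fg\cdot v_{\lambda_1}$ (hence not in $\fg\cdot x_0$, since the isotypic projections decouple), because $\fg\cdot v_{\lambda_1}$ only carries weights of the form $\lambda_1-\gamma$ for a single root $\gamma$, and $\beta_1=\alpha_1+\alpha'$ is not a root of $G$. For $\beta_2 = \omega_2$ I use a vector in $V(\lambda_2)$ in the $T$-weight space $\omega'_2 = \lambda_2-\beta_2$, constructed from the $(n-1)$-dimensional zero-weight subspace of $V_{\Sp(2n)}(\omega_2)$ tensored with a generator of $\det = V(\omega'_2)$, and I show that exactly one line of this space survives modulo $\fg\cdot x_0$ and is $G'_{x_0}$-invariant (the latter using the explicit description of the generic stabilizer of $(G',W)$).

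The main obstacle, as anticipated, is the systematic exclusion of extraneous candidate weights in the enumeration step: combining Proposition~\ref{prop:propsVggweights}(a), (c), (d) with a case-by-case analysis of the weight spaces $V^\beta$ and their images in $V/\fg\cdot x_0$ requires care, particularly for $n>2$ where the sum $\beta_2 = \alpha_1+2\delta+\alpha_n$ involves coefficients up to $2$ and several positive roots interact nontrivially with the basic weights.
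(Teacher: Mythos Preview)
Your overall strategy is sound and will work, but it diverges from the paper's main proof, and one step is misapplied.

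The paper's proof is much shorter than yours because it outsources the two hardest parts. After stripping off the one-dimensional factor $V(\omega'_2)$ and passing to $G'=\Sp(2n)\times\SL(2)$, the monoid $\<\omega_1+\omega_1',\omega_2\>_{\NN}$ is free and $G'$-saturated, so \cite[Theorems 3.1 and 3.10]{bravi&cupit} apply directly: they give multiplicity-freeness of $\Vpggp$ for free and restrict the $\Tad$-weights to the explicit list in \cite[Table~1]{bravi&cupit}. Intersecting that table with $\<\beta_1,\beta_2\>_{\NN}$ (where $\beta_1=\alpha_1+\alpha'$ and $\beta_2=\alpha_1+2\delta+\alpha_n$) leaves only $\beta_1,\beta_2$ for $n>2$, and one extra candidate $2\alpha_1+2\alpha_2$ for $n=2$, which the paper excludes by the same Proposition~\ref{prop:propsVggweights}(c) argument you propose. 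Your hands-on weight enumeration and multiplicity check are closer in spirit to the paper's Appendix, which carries out exactly this kind of explicit computation for the K5 family; what your route buys is independence from the Bravi--Cupit machinery, at the cost of a longer argument.

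There is one genuine gap. You invoke Proposition~\ref{prop:Kostant} to bound the positive coefficients $a_\lambda$, but that proposition is stated and proved only for $\Tad$-weights in $T_{X_0}\tM_{\wm}$, not in the larger space $\Vggp$ that you are computing. In this particular case the Kostant bound is actually redundant: once you know $\beta\in\<\beta_1,\beta_2\>_{\NN}$, the condition $\beta\in\sr+(R^+\cup\{0\})$ from Proposition~\ref{prop:propsVggweights}(a) alone already cuts the list down to $\beta_1,\beta_2,\beta_1+\beta_2$, and (for $n=2$) $2\beta_2$. So your enumeration survives, but you should drop the appeal to Proposition~\ref{prop:Kostant} and rely on part~(a) instead.

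Finally, your multiplicity-one argument for $\beta_2$ is under-specified. The $\Tad$-weight space $V^{\beta_2}$ has dimension $n$ (one line from $V(\lambda_1)$ and $n-1$ from the zero-weight space of $V_{\Sp(2n)}(\omega_2)$), and since $\beta_2$ is a root, $\fg\cdot x_0$ absorbs only the one-dimensional $\<X_{-\beta_2}\cdot x_0\>$; so $(V/\fg\cdot x_0)^{\beta_2}$ is $(n-1)$-dimensional, not one-dimensional. Cutting this down to one line genuinely requires imposing $G'_{x_0}$-invariance (equivalently, the conditions in Proposition~\ref{prop:propsVggweights}(c),(d) for all simple roots), and you should make that computation explicit. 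The Appendix (Propositions~\ref{prop!invariants_for_general_n_ForK5} and \ref{prop!invariants_inV_for_general_n_ForK5}) shows one way to organize it.
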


\begin{proof}
Note that $G' = \Sp(2n) \times \SL(2)$. Consider the $G'$-module $V' := V(\om_1 + \om'_1) \oplus V(\om_2)$ and its element $x_0' = v_{\om_1+\om'_1} + v_{\om_2}$. Observe that $G'_{x_0} = G'_{x'_0}$. 
Since $V(\om'_2)$ is one-dimensional, we have that
$\Vpggp \isom \Vggp$ as $\Tad$-modules. 

Recall that $p$ is the projection from the weight lattice $\wl$ of $G$ to the weight lattice of $G'$. The monoid $p(\<E\>_{\NN}) = \<\om_1+\om_1', \om_2\>_{\NN}$ is free and $G'$-saturated. By \cite[Theorems 3.1 and 3.10]{bravi&cupit}, $\Vpggp$ is multiplicity-free and its $\Tad$-weights belong to Table 1 in \cite[page 2810]{bravi&cupit}. By Proposition~\ref{prop:propsVggweights} the $\Tad$-weights of $\Vpggp$ also belong to $\Lambda_R \cap \<\om_1+\om_1', \om_2\>_{\ZZ}$.
A straightforward computation shows that
\begin{equation} \label{eq:3}
\Lambda_R \cap \<\om_1+\om_1', \om_2\>_{\ZZ} = 
\<\alpha_1 + 2\delta + \alpha_n, \alpha_1 + \alpha'\>_{\ZZ}
\end{equation}
Observe that the support of each of the two generators of $\Lambda_R \cap
\<\om_1+\om_1', \om_2\>_{\ZZ}$ in equation~(\ref{eq:3}) contains a simple
root not in the support of the other generator. Because the
$\Tad$-weights of $\Vpggp$ are linear combinations of the simple roots
with positive coefficents, it follows that they belong to $\<\alpha_1
+ 2\delta + \alpha_n, \alpha_1 + \alpha'\>_{\NN}$. One checks that for
$n>2$, the only $\Tad$-weights in \cite[Table 1]{bravi&cupit}
satisfying this requirement are $\alpha_1 + \alpha'$ and $\alpha_1 +
2\delta + \alpha_n$. 

For $n=2$ there is a third $\Tad$-weight that satisfies it, namely
$\gamma:=2\alpha_1 + 2\alpha_2$. The weight $\gamma$  occurs in $V'$,
with multiplicity one. More precisely, this $\Tad$-weight space is a
line $\CC v$ in $V(\omega_2) \inn V'$. We claim
that $[v] \in \frac{V'}{\fg' \cdot x'_0}$ does not belong to
$\Bigl(\frac{V'}{\fg' \cdot x'_0}\Bigr)^{G'_{x_0}}$. We prove the claim
by contradiction. Indeed, since
$\alpha_2$ is the only simple root such that $\gamma - \alpha_2 \in
R^+\cup \{0\}$, we have that  $0\neq [v] \in \Bigl(\frac{V'}{\fg' \cdot
  x_0'}\Bigr)^{G'_{x_0}}$ would, by Proposition~\ref{prop:propsVggweights}, imply
that $X_{\alpha_2}\cdot v$ is a nonzero element of $\<X_{-(\gamma -
  \alpha_2)}\cdot x_0'\>_{\C}$. This is absurd since $X_{-(\gamma -
  \alpha_2)}\cdot x_0'$ has nonzero projection onto the component
$V(\omega_1+\omega_1')$ of $V'$ and the claim is proved. 

We have shown that $\Vggp$ is multiplicity-free and that its
$\Tad$-weight set is a subset of the one in the statement of the
proposition. Since Proposition 2.5(1) of \cite{degensphermod} tells us that $\dim \tg\ms^G \ge d_W$ and Corollary 2.14 of \loccit\ says that $\tg\ms^G \inn \Vgg$, the
proposition follows. 
\end{proof}

\subsection{(K6) The modules $(\Sp(2n) \times \GL(3), \CC^{2n} \otimes \CC^3)$ with $3 \leq n$} \label{subsec:case6}

For these modules,
\begin{align*}
&E =  \{\lambda_1, \lambda_2, \lambda_3, \lambda_4, \lambda_5, \lambda_6\};\\
&d_W = 5,
\end{align*}
where
\begin{align*}\lambda_1& := \om_1 + \om_1';& \lambda_2&:=\om_2 + \om_2';& \lambda_3&:=\om_3+\om_3';\\ \lambda_4&:=\om_2';& \lambda_5&:=\om_1+\om_3';& \lambda_6&:=\om_2+\om_1'+\om_3'.
\end{align*}
We remark that $G = \overline{G}$ is the only connected group between $\overline{G}'$ and $\overline{G}$ for which these modules are spherical, cf.\ Remark~\ref{rem:sphericalforsubgroup}. Therefore, we assume that $G = \overline{G} = \Sp(2n) \times \GL(3)$ throughout this section.

In this section we will prove the following proposition.
\begin{prop} \label{prop:case6}
The $\Tad$-module  $T_{X_0}\tM^G_{\wm}$ is multiplicity-free. Its $\Tad$-weight set is
\begin{equation}
\{\alpha_1, \alpha_2, \alpha_2+\gamma, \alpha_1', \alpha_2'\}, \label{eq:case6weights}
\end{equation}
where $\gamma= \alpha_3$ if $n=3$ and $\gamma = 2(\alpha_3+\alpha_4+\ldots+\alpha_{n-1}) + \alpha_n$ if $n>3$. 
In particular, 
$\dim T_{X_0}\tM^G_{\wm} = d_W$.
\end{prop}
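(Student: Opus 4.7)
The plan is to follow the template of Section 3.1 (case K5). Since the reduction in \cite{degensphermod} and Corollary 2.5 of \loccit\ give the lower bound $\dim T_{X_0}\tM^G_{\wm} \geq d_W = 5$, and since $T_{X_0}\tM^G_{\wm}$ embeds in $\Vggp$ via \eqref{eq:14}, it suffices to show that $\Vggp$ is at most $5$-dimensional with the claimed weight set, each of multiplicity one.

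First, I would enumerate the candidate $\Tad$-weights in $\Vggp$. By Proposition~\ref{prop:propsVggweights}, every such weight $\beta$ lies in $\<\sr\>_{\NN}\cap\<E\>_{\ZZ}$, where $\sr$ is the set of simple roots of $\Sp(2n)\times\GL(3)$. A direct computation of this lattice intersection in terms of the six basic weights $\lambda_1,\dots,\lambda_6$ using the expansions $\alpha_i=-\omega_{i-1}+2\omega_i-\omega_{i+1}$ (and the analogous formulas for type $\ssC_n$) produces a finite $\ZZ$-basis. Combining this with Proposition~\ref{prop:Kostant}, which constrains the positive coefficients in the expansion $\beta=\sum a_\lambda \lambda$ to have sum at most $2$, and with Proposition~\ref{prop:Tadweightnotantidom}, should cut the list of candidates down to the five weights in \eqref{eq:case6weights} together with a short list of excluded candidates.

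Second, for each of the five weights in \eqref{eq:case6weights} I would exhibit an explicit $\Tad$-eigenvector in $V$ whose class in $\Vgg$ is nonzero and $G_{x_0}$-invariant. The four simple-root weights $\alpha_1,\alpha_2,\alpha_1',\alpha_2'$ are handled uniformly by Proposition~\ref{prop:simprootweight}: for each I identify a codimension-one $\lambda_i \in E$ (via Proposition~\ref{prop:lambdacodim1}) with $\<\beta^\vee,\lambda_i\>\neq 0$, and take the section induced by $X_{-\beta}\cdot v_{\lambda_i}$. For the spherical root $\alpha_2+\gamma$ (of type $\ssD_n$ when $n>3$, type $\ssB_2$ when $n=3$), I would exhibit a $\Tad$-eigenvector lying in $V(\lambda_2)\oplus V(\lambda_3)$ (and possibly involving $V(\lambda_6)$) constructed from the $\Sp(2n)$-structure of $V(\omega_2)$ and $V(\omega_3)$, then verify via Corollary~\ref{cor:extending-sections} that the resulting section extends across every codimension-one orbit.

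Third, for every remaining candidate weight I would apply Corollary~\ref{cor:extending-sections} to rule it out: either the expansion has some coefficient $a_{\lambda_i}\geq 2$ (contradicting part \ref{item:extB}) of Theorem~\ref{thm:extension}), or the unique weight-$\beta$ subspace of $V$ fails the $\widehat{v}\in V(\lambda_i)$ condition when $a_{\lambda_i}=1$. The multiplicity-one claim is then a consequence of the uniqueness (mod $\fg\cdot x_0$) of the eigenvectors constructed in the previous step. The main obstacle I anticipate is the spherical-root weight $\alpha_2+\gamma$: producing the correct eigenvector, checking that it lies in $\Vggp$ rather than merely in $\Vg$, and verifying the extension conditions uniformly in $n$ will be the most computation-heavy step, since it involves the tensor product structure of $V(\omega_2)\otimes V(\omega_2')$ and potentially a correction term in $V(\lambda_3)$.
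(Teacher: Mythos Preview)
Your overall strategy—lower bound from \cite[Proposition 2.5(1)]{degensphermod}, enumerate candidate $\Tad$-weights, then prune—is the paper's as well, but two points of your plan diverge from the paper in ways that matter.

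First, your stated target is $\Vggp$, but the tools you invoke (Proposition~\ref{prop:Kostant}, Proposition~\ref{prop:simprootweight}, Corollary~\ref{cor:extending-sections}) only constrain $\tg\ms^G$ inside $\Vgg$: they are extension criteria, and say nothing about whether a class in $\Vgg$ lies in the potentially larger $\Vggp$, nor do they bound $\Vggp$ from above. The paper works with $\tg\ms^G$ throughout and never computes $\Vgg$ or $\Vggp$ for this case. Your remark that you will ``follow the template of case K5'' is also off: the K5 argument bounds $\Vggp$ by quoting \cite{bravi&cupit}, which needs the projected monoid to be $G'$-saturated; the paper does not check or use this for K6.

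Second, the paper's organization is tighter and avoids your ``Second'' step entirely. Rather than first enumerating all candidates and then constructing eigenvectors for the five weights in \eqref{eq:case6weights}, the paper observes that every $\lambda_i\in E$ except $\lambda_3$ has codimension one (Proposition~\ref{prop:lambdacodim1}), and then applies Proposition~\ref{prop:simprootweight} as a \emph{restriction}: any $\Tad$-weight $\beta$ of $\tg\ms^G$ for which some $\lambda_i\neq\lambda_3$ has positive coefficient is forced to be a simple root in \eqref{eq:case6weights} with multiplicity one. This reduces everything to the single case where $\lambda_3$ alone has positive coefficient; the lattice computation you describe then pins $\beta$ down to $\alpha_2+\gamma$ (plus $2(\alpha_2+\gamma)$ when $n=3$, excluded separately), and the multiplicity-$\leq 1$ claim for $\alpha_2+\gamma$ is handled not by producing an eigenvector but by bounding $\dim Z\leq 2$ for the subspace $Z\subset V^{\beta}$ cut out by the conditions of Proposition~\ref{prop:propsVggweights}. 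Once the $\Tad$-weight set of $\tg\ms^G$ is shown to lie in a five-element set with each multiplicity $\leq 1$, the lower bound $\dim\tg\ms^G\geq d_W=5$ forces equality everywhere. No eigenvector for $\alpha_2+\gamma$ ever needs to be written down or extended, so the step you flag as ``most computation-heavy'' is bypassed.
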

\begin{proof}
Let $\beta$ be a $\Tad$-weight in $\tg\ms^G$. 
It follows from Proposition~\ref{prop:Tadweightnotantidom} that at
least one $\lambda \in E$ has a positive coefficient in the
expression of $\beta$ as a $\ZZ$-linear combination of elements of
$E$. Note that all elements of $E$ except $\lambda_3$ have codimension
$1$. In particular, if $\lambda \neq \lambda_3$, then it follows
from Proposition~\ref{prop:simprootweight} that $\beta$ is a simple
root belonging to the set~\eqref{eq:case6weights}, and that its weight space has
dimension one. 

Since $\dim \tg\ms^G \ge d_W$ by \cite[Proposition 2.5(1)]{degensphermod}, what remains to prove the proposition is to show the following three claims:
\begin{description}
\item[Claim A] if $\lambda_3$ is the only element of $E$ which has a
positive coefficient in the expression of $\beta$, then $\beta= \alpha_2 + \gamma$ if $n>3$ and $\beta \in \{\alpha_2 + \gamma,2\alpha_2 + 2\gamma\}$ if $n=3$.  
\item[Claim B] suppose $n=3$; the $\Tad$-weight $\beta = 2\alpha_2 + 2\gamma$ does not occur in $\Vgg$, and therefore also not in the subspace $\tg\ms^G$.
\item[Claim C] the $\Tad$-weight $\beta= \alpha_2 + \gamma$ has multiplicity at most one in $\tg\ms^G$. 
\end{description}

We begin with Claim A. Recall from Proposition~\ref{prop:propsVggweights} that $\beta \in \<E\>_{\ZZ} \cap \<\Pi\>_{\NN}$. Straightforward computations show that
\begin{equation} \label{eq:case6lat}
\<E\>_{\ZZ} \cap \Lambda_R =
\<\alpha_1, \alpha_2, \alpha_1', \alpha_2',\gamma\>_{\ZZ}
\end{equation}
and that
\begin{align*}
\alpha_1 &= \lambda_1 + \lambda_5 - \lambda_6; \\
\alpha_2 &= \lambda_2 + \lambda_6 - \lambda_1 - \lambda_3 - \lambda_4;\\
\alpha'_1 &= \lambda_1 + \lambda_6 - \lambda_5 - \lambda_2;\\
\alpha'_2 &= \lambda_2 + \lambda_4 - \lambda_6;\\
\gamma &= 2\lambda_3 + \lambda_1 + \lambda_4 - \lambda_5 - \lambda_2 - \lambda_6.
\end{align*}
Let $K$ be the basis of $\<E\>_{\ZZ} \cap \Lambda_R$ given in equation~\eqref{eq:case6lat}. 
Since $\beta \in \<\sr\>_{\NN}$ and all elements of $K$ contain a simple root in their support, that is not in the support of any other element of $K$, it follows that $\beta \in \<K\>_{\NN}$. Therefore, there exist $A_1, A_2, \ldots, A_5 \in \NN$ such that 
\begin{equation}
\beta = A_1 \alpha_1 + A_2 \alpha_2 + A_3 \alpha'_1 + A_4 \alpha'_2 + A_5 \gamma. 
\end{equation}
From the hypothesis of Claim A, it follows that
\begin{equation}
\begin{cases}
A_1 - A_2 + A_3 + A_5\leq 0;\\
A_2-A_3+A_4- A_5 \leq 0;\\ 
 -A_2 + A_4 + A_5\leq 0; \\
 A_1 - A_3 - A_5\leq 0;\\ 
 -A_1 + A_2 + A_3 - A_4 - A_5\leq 0. \label{eq:c6ineqs}
\end{cases}
\end{equation}
Adding the first two inequalities in~\eqref{eq:c6ineqs} yields that $A_1 = A_4 = 0$. Then adding the first and the last gives that $A_3 = 0$. After substituting these values into the first and last inequalities, we deduce that $A_2 = A_5$. It follows that $\beta \in \<\alpha_2 + \gamma\>_{\NN}$. Using that $\beta$ is the sum of a simple root and an element of $\pr \cup \{0\}$ (see Proposition~\ref{prop:propsVggweights}) it follows that $\beta = \alpha_2 + \gamma$ if $n>3$ and that $\beta \in \{\alpha_2 + \gamma,2\alpha_2 + 2\gamma\}$ if $n=3$, This proves Claim A.

We proceed to Claim B. Let $n=3$ and fix $\beta=2\alpha_2 + 2\gamma = 2\alpha_2 + 2\alpha_3$. One deduces from the well-known decompositions into $T$-weight spaces of $V(\om_1)$, $V(\om_2)$ and $V(\om_3)$ that the $\Tad$-weight space in $V$ of $\Tad$-weight $\beta$ is a
line $\CC v$ in $V(\lambda_3) \inn V$. We prove Claim B
by contradiction. Indeed, since
$\alpha_3$ is the only simple root such that $\gamma - \alpha_3 \in
R^+\cup \{0\}$, we have that  $0\neq [v] \in \Vgg$ would, by Proposition~\ref{prop:propsVggweights}, imply
that $X_{\alpha_3}\cdot v$ is a nonzero element of $\<X_{-(\gamma -
  \alpha_3)}\cdot x_0\>_{\CC}$. This is absurd since $X_{-(\gamma -
  \alpha_3)}\cdot x_0$ has nonzero projection onto the components $V(\lambda_2)$ and $V(\lambda_6)$ of $V$. Claim B is proved.

Finally, we show Claim C. We fix $\beta = \alpha_2 + \gamma$. We will show that the $\Tad$-weight $\beta$ has multiplicity at most one in $\Vgg$. Since $\tg\ms^G \inn \Vgg$, this implies Claim C. First off, we claim that the $\Tad$-weight $\beta$ only occurs in $V(\lambda_2)$, $V(\lambda_3)$ and in $V(\lambda_6)$. Indeed, $\beta$ belongs to the root lattice of $\Sp(2n)$, and does not occur as a $\Tad$-weight in $V(\om_1)$. Let $Z$ be the subspace of $V(\lambda_2) \oplus V(\lambda_3) \oplus V(\lambda_6)$ consisting of $\Tad$-eigenvectors $v$ of $\Tad$-weight $\beta$ that satisfy the following three conditions:
\begin{align}
&X_{\alpha_2} \cdot v \in \<X_{-(\beta - \alpha_2)}x_0\>_{\CC}; \label{eq:7c1} \\
&X_{\alpha_3} \cdot v \in \<X_{-(\beta - \alpha_3)}x_0\>_{\CC}; \text{ and}\label{eq:7c2} \\
&X_{\alpha_k} \cdot v = 0 \text{ for all $k \in \{1,2,\ldots,n\} \setminus \{2,3\}$}. \label{eq:7c3}
\end{align}
Since $\alpha=\alpha_2$ and $\alpha=\alpha_3$ are the only simple roots such that $\beta - \alpha \in R^+\cup\{0\}$ it follows from Proposition~\ref{prop:propsVggweights} that every $v \in V^{\beta}$ such that $0\neq [v] \in \Vgg$ satisfies \eqref{eq:7c1}, \eqref{eq:7c2} and \eqref{eq:7c3}.
To show Claim C it is therefore enough to prove that 
\begin{equation}
\dim Z \leq 2, \label{eq:dimZleq2}
\end{equation}
since the nonzero vector $X_{-\beta}\cdot x_0$ belongs to $\fg\cdot x_0 \cap Z$. 

\label{sp2ndata}
To prove the inequality (\ref{eq:dimZleq2}) we will make use of the explicit description of $\sp(2n)$ and its root operators given in the proof of \cite[Theorem 2.4.1]{goodman-wallach-gtm}, as well as the notations therein. In particular, we have a basis $\{e_1,e_2,\ldots,e_n, e_{-1},e_{-2}, \ldots, e_{-n}\}$ of $\CC^{2n}$ and a $\ZZ$-basis $\{\eps_1, \eps_2, \ldots, \eps_n\}$ of the weight lattice of $\Sp(2n)$ such that $e_i$ has weight $\eps_i$ and $e_{-k}$ has weight $-\eps_k$ in the defining representation of $\Sp(2n)$ on $\CC^{2n}$. In terms of the basis $\{\eps_1, \eps_2, \ldots, \eps_n\}$ of the weight lattice, the simple roots of $\Sp(2n)$ are $\alpha_i = \eps_i - \eps_{i+1}$ for $i \in \{1,2,\ldots,n-1\}$ and $\alpha_{n} = 2\eps_n$. Moreover, for each root $\delta$ we have a root operator $X_{\delta} \in \sp(2n)^{\delta}$. In view of conditions (\ref{eq:7c1}), (\ref{eq:7c2}) and (\ref{eq:7c3}) we will make use of the root operators associated to the simple roots and to the negative roots $-(\beta-\alpha_2)= -2\epsilon_3$ and $-(\beta - \alpha_3)=-\epsilon_2 - \epsilon_4$. The action of these operators on the given basis of the defining representation $\CC^{2n}$ of $\Sp(2n)$ is as follows:
\begin{align}
X_{\alpha_i} \cdot e_k &= \begin{cases}
e_i &\text{if $k = i+1$}; \\
-e_{-(i+1)} &\text{if $k = -i$};\\
0 &\text{if $k \notin \{i,-i\}$} 
\end{cases} &&\text{for $i \in \{1,2,\ldots,n-1\}$;} \\
X_{\alpha_n} \cdot e_k  &= \begin{cases}
e_n &\text{if $k=-n$};\\
0 &\text{if $k \neq -n$};
\end{cases} \\
X_{-2\epsilon_i}\cdot e_k&=
\begin{cases}
e_{-i} &\text{if $k = i$};\\
0 &\text{if $k \neq i$}
\end{cases} &&\text{for $i \in \{1,2,\ldots,n\}$;}\\
X_{-\epsilon_i - \epsilon_j}\cdot e_k&=
\begin{cases}
e_{-j} &\text{if $k = i$};\\
e_{-i} &\text{if $k = j$};\\
0 &\text{if $k \notin \{i,j\}$}
\end{cases}&&\text{where $1\leq i < j \leq n$;}
\end{align}

Note that 
$$
\beta = \alpha_2 + \gamma = -\om_1 + \om_3 = \eps_2 + \eps_3.
$$
We now identify the weight spaces of this $\Tad$-weight in the representations $V(\om_2)$ and $V(\om_3)$ of $\Sp(2n)$. 
A vector in $V(\om_2)$ has $\Tad$-weight $\beta$ if and only if it has $T$-weight $\om_2 - \beta = \eps_1 - \eps_3$. We identify $V(\om_2)$ with the sub-$\Sp(2n)$-representation of $\wedge^2 \CC^{2n}$ with highest weight vector $e_1 \wedge e_2$. Then the $T$-weight space in $V(\om_2)$ of weight $\eps_1 - \eps_3$ is the line spanned by
\begin{equation} \label{eq:Tweightspom2}
e_1 \wedge e_{-3}.
\end{equation}  
A vector in $V(\om_3)$ has $\Tad$-weight $\beta$ if and only if it has $T$-weight $\om_3 - \beta = \eps_1$. As is well-known, $V(\om_3)$ is the irreducible component of the $\Sp(2n)$-module $\wedge^3 \CC^{2n}$ generated by the highest weight vector $e_1\wedge e_2 \wedge e_3$. In the larger module $\wedge^3 \CC^{2n}$ the $T$-weight space of weight $\eps_1$ is spanned by the following vectors
\begin{equation} \label{eq:Tweightspom3}
e_1 \wedge e_2 \wedge e_{-2}, e_1 \wedge e_3 \wedge e_{-3}, \ldots , e_1 \wedge e_n \wedge e_{-n}.
\end{equation} 

It follows from the previous paragraph that if $v\in Z$, then there exist $A_1, A_2 \in \CC$ and  $B_2, B_3, \ldots , B_n \in \CC$ such that
\begin{equation}
v = A_1(e_1 \wedge e_{-3} \otimes v_{\om_2'}) + A_2(e_1 \wedge e_{-3} \otimes v_{\om_1' + \om_3'})  
+ \sum_{k=2}^n B_k (e_1 \wedge e_k \wedge e_{-k} \otimes v_{\om_3'}). 
\end{equation}   
Straightforward computations using the root operators show that conditions (\ref{eq:7c1}), (\ref{eq:7c2}) and (\ref{eq:7c3}) imply that 
\begin{align}
&A_1 = A_2 = -B_3 + B_4; \label{eq:cvinZ1}\\
&B_4 = B_5 = \ldots = B_n.\label{eq:cvinZ2}
\end{align}
This implies that $\dim Z\leq 3$. Note that the vector $v_1 \otimes v_{\om_3'} \in \wedge^3\CC^{2n} \otimes V(\om_3')$, where 
$$v_1 = \sum_{k=2}^ne_1 \wedge e_k \wedge e_{-k},$$ satisfies the equations (\ref{eq:cvinZ1}) and (\ref{eq:cvinZ2}). It is straightforward to check that $v_1$ is a highest weight vector. It follows that  $v_1$ is an element of the $\Sp(2n)$-stable complement to $V(\om_3)$ in $\wedge^3 \CC^{2n}$. Consequently, the line spanned by $v_1 \otimes v_{\om_3'}$ is not contained in $Z$, and $\dim Z \leq 2$. This proves Claim C, and the proposition.
\end{proof}

\subsection{(K7) The module $(\Sp(4) \times \GL(3), \CC^4 \otimes \CC^3)$} \label{subsec:case7}
For this module we have
\begin{align*}
E &= \{\om_1 + \om_1', \om_2 + \om_2', \om_2', \om_1 + \om_3', \om_2 + \om_1' + \om_3'\} \\
d_W &= 4.
\end{align*}

\begin{prop} \label{prop:case7}
The $\Tad$-module $T_{X_0}\tM^G_{\wm}$ is multiplicity-free. Its
$\Tad$-weight set is
\begin{equation} \label{eq:case7wts}
\{\alpha_1, \alpha_2, \alpha_1', \alpha_2'\}.
\end{equation}
In particular, $\dim T_{X_0}\tM^G_{\wm} = d_W$.
\end{prop}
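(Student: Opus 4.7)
The plan is to follow the strategy of Proposition~\ref{prop:case6}, which simplifies considerably in the present setting because $E$ contains no analogue of the weight $\om_3+\om_3'$ that was responsible in (K6) for the exceptional $\Tad$-weight $\alpha_2+\gamma$. As a consequence, I expect that in contrast with (K6), every element of $E$ will have codimension one.

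First I would verify, using Proposition~\ref{prop:lambdacodim1}, that every element of $E$ has codimension one. This is a routine check: for each $\lambda \in E$ and each simple root $\alpha$ with $\<\alpha^{\vee},\lambda\> \neq 0$ one must exhibit a $\mu \in E\setminus\{\lambda\}$ with $\<\alpha^{\vee},\mu\> \neq 0$. For instance, $\lambda = \om_1+\om_1'$ pairs nontrivially only with $\alpha_1^{\vee}$ and $\alpha_1'^{\vee}$, and these pair nontrivially respectively with $\om_1+\om_3'$ and with $\om_2+\om_1'+\om_3'$; the four remaining weights are handled analogously.

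Next let $\beta$ be any $\Tad$-weight in $\tg\ms^G$. By Proposition~\ref{prop:Tadweightnotantidom}, some $\lambda \in E$ has a positive coefficient in the unique expression of $\beta$ as a $\ZZ$-linear combination of the elements of $E$. Since $\lambda$ has codimension one, Proposition~\ref{prop:simprootweight} forces $\beta$ to be a simple root and the $\Tad$-weight space of weight $\beta$ in $\tg\ms^G$ to have dimension at most one. Combined with Proposition~\ref{prop:propsVggweights}(b), which guarantees $\beta \in \<E\>_{\ZZ}$, the problem reduces to determining which simple roots of $G$ lie in the sublattice $\<E\>_{\ZZ}$ of $\wl$.

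Finally I would display explicit $\ZZ$-expressions showing that all four simple roots $\alpha_1,\alpha_2,\alpha_1',\alpha_2'$ lie in $\<E\>_{\ZZ}$, for example
\[\alpha_1 = (\om_1+\om_1') + (\om_1+\om_3') - (\om_2+\om_1'+\om_3'),\]
together with analogous identities for $\alpha_2$, $\alpha_1'$ and $\alpha_2'$. This yields $\dim \tg\ms^G \leq 4$, which combined with the lower bound $\dim \tg\ms^G \geq d_W = 4$ from \cite[Proposition~2.5(1)]{degensphermod} forces each of the four simple roots to occur with multiplicity exactly one, proving the proposition. There is no real obstacle to this argument: compared with (K6), the hardest step of that earlier case---the analysis inside $\wedge^3\CC^{2n}$ used to bound the multiplicity of $\alpha_2+\gamma$ and to exclude $2\alpha_2+2\gamma$---disappears entirely here, since $\lambda_3$ has been removed from $E$.
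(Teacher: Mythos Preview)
Your proposal is correct and follows essentially the same approach as the paper. The only minor difference is that the paper bypasses your verification that the four simple roots lie in $\<E\>_{\ZZ}$ by simply observing that $\{\alpha_1,\alpha_2,\alpha_1',\alpha_2'\}$ is already the complete set of simple roots of $G=\Sp(4)\times\GL(3)$, so once Proposition~\ref{prop:simprootweight} forces $\beta$ to be a simple root the upper bound $\dim\tg\ms^G\leq 4$ is immediate; your extra check is harmless but unnecessary.
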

\begin{proof}
Let $\beta$ be a $\Tad$-weight in  $\tg\ms^G$.
It follows from Proposition~\ref{prop:Tadweightnotantidom} that at
least one $\lambda \in E$ has a positive coefficient in the
expression of $\beta$ as a $\ZZ$-linear combination of elements of
$E$. Note that all elements of $E$ have codimension
$1$. It follows
from Proposition~\ref{prop:simprootweight} that $\beta$ is a simple
root and that its weight space has
dimension one. Since the set (\ref{eq:case7wts}) contains all simple roots of $G$, we can conclude that $\beta$ belongs to this set and that $\dim \tg\ms^G \leq d_W$. The proposition now follows from the a priori estimate $\dim \tg\ms^G \ge d_W$, see \cite[Proposition 2.5(1)]{degensphermod}.  
\end{proof}

\subsection{(K8) The modules $(\Sp(4) \times \GL(n), \CC^4 \otimes \CC^n)$
  with $4\leq n$} \label{subsec:case8}
We put $\lambda_1 = \om_1 + \om_1', \lambda_2 = \om_2 + \om_2',
\lambda_3 = \om_2', \lambda_4 = \om_1 + \om_3', \lambda_5 = \om_2 +
\om_1' + \om_3', \lambda_6 = \om_4'$.
Then
\begin{align*}
&E =  \{\lambda_1, \lambda_2, \lambda_3, \lambda_4, \lambda_5, \lambda_6\};\\
&d_W = 5.
\end{align*}

\begin{prop} \label{prop:case8}
The $\Tad$-module $T_{X_0}\tM^G_{\wm}$ is multiplicity-free. Its
$\Tad$-weight set is
\begin{equation} \label{eq:13}
\{\alpha_1, \alpha_2, \alpha_1', \alpha_2', \alpha_3'\}.
\end{equation}
In particular, $\dim T_{X_0}\tM^G_{\wm} = d_W$.
\end{prop}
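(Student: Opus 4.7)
The plan is to mimic the strategy of Propositions~\ref{prop:case6} and~\ref{prop:case7}: combine Proposition~\ref{prop:Tadweightnotantidom} with Proposition~\ref{prop:simprootweight} to force every $\Tad$-weight $\beta$ in $\tg\ms^G$ to be a simple root with a one-dimensional weight space, and then invoke the lower bound $\dim \tg\ms^G \ge d_W = 5$ from \cite[Proposition 2.5(1)]{degensphermod} to conclude that the five weights in \eqref{eq:13} are exactly the $\Tad$-weights.

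First I would apply Proposition~\ref{prop:lambdacodim1} to check that $\lambda_1, \lambda_2, \lambda_3, \lambda_4, \lambda_5$ all have codimension one, whereas $\lambda_6 = \om_4'$ fails the codimension-one criterion when $n \ge 5$, since in that case $\alpha_4'^\vee$ pairs nontrivially only with $\lambda_6$. Were it not for this deviation, the argument of Proposition~\ref{prop:case7} would complete the proof; so controlling the way $\lambda_6$ can occur in $\beta$ is the real content.

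Next, I would verify the identities
\begin{align*}
\alpha_1 &= \lambda_1 + \lambda_4 - \lambda_5, \\
\alpha_2 &= -\lambda_1 + \lambda_2 - \lambda_3 - \lambda_4 + \lambda_5, \\
\alpha_1' &= \lambda_1 - \lambda_2 - \lambda_4 + \lambda_5, \\
\alpha_2' &= \lambda_2 + \lambda_3 - \lambda_5, \\
\alpha_3' &= -\lambda_1 - \lambda_2 + \lambda_4 + \lambda_5 - \lambda_6
\end{align*}
by direct expansion in the fundamental weights. Since no $\lambda_i$ involves $\om_j'$ for $j \ge 5$, the remaining simple roots $\alpha_4',\ldots,\alpha_{n-1}'$ do not belong to $\<E\>_\ZZ$. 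By Proposition~\ref{prop:propsVggweights} the weight $\beta$ lies in $\<E\>_\ZZ \cap \<\sr\>_\NN$, so its unique expansion in simple roots with non-negative coefficients takes the form $\beta = A_1 \alpha_1 + A_2 \alpha_2 + A_1' \alpha_1' + A_2' \alpha_2' + A_3' \alpha_3'$ with all $A$'s in $\NN$. Reading off the coefficient of $\lambda_6$ from the above identities yields $a_{\lambda_6} = -A_3' \le 0$, so $\lambda_6$ never has a positive coefficient in $\beta$.

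The remainder is routine: Proposition~\ref{prop:Tadweightnotantidom} provides some $\lambda \in E$ with positive coefficient, which by the sign analysis must be one of $\lambda_1, \ldots, \lambda_5$, and these are all of codimension one, so Proposition~\ref{prop:simprootweight} forces $\beta$ to be a simple root with a one-dimensional weight space. Combined with the enumeration above, $\beta \in \{\alpha_1, \alpha_2, \alpha_1', \alpha_2', \alpha_3'\}$, and the a priori bound $\dim \tg\ms^G \ge d_W = 5$ matches this, completing the proof. The main obstacle I anticipate is the lattice computation of the third paragraph: one must be confident that the five explicit identities, together with the absence of $\om_j'$ for $j \ge 5$ in every basic weight, really yield the containment $\<E\>_\ZZ \cap \<\sr\>_\NN \inn \<\alpha_1, \alpha_2, \alpha_1', \alpha_2', \alpha_3'\>_\NN$ needed to rule out $\lambda_6$.
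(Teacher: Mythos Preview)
Your strategy is the paper's strategy, and your argument is complete and correct for $G=\overline{G}$, for odd $n$, and for $n=4$. The gap is precisely the one you flag at the end: the containment $\<E\>_\ZZ \cap \<\sr\>_\NN \inn \<\alpha_1, \alpha_2, \alpha_1', \alpha_2', \alpha_3'\>_\NN$ fails when $G=\overline{G}'=\Sp(4)\times\SL(n)$ with $n$ even and $n>4$. The step ``$\alpha_4',\ldots,\alpha_{n-1}'\notin\<E\>_\ZZ$, hence every $\beta\in\<E\>_\ZZ\cap\<\sr\>_\NN$ has vanishing $\alpha_j'$-coefficient for $j\ge 4$'' is a non-sequitur: individual simple roots being outside $\<E\>_\ZZ$ does not prevent an $\NN$-combination of them from lying in $\<E\>_\ZZ$. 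Concretely, for $\SL(n)$ the weight lattice has index $n$ over the root lattice, and when $n$ is even the element
\[
\gamma=(n-4)\alpha_4'+(n-5)\alpha_5'+\ldots+\alpha_{n-1}'=(n-3)\lambda_6-\tfrac{n-4}{2}\bigl[\lambda_4+\lambda_5-\lambda_1-\lambda_2+\lambda_3\bigr]
\]
belongs to $\<E\>_\ZZ\cap\<\sr\>_\NN$ but not to $\<\alpha_1,\alpha_2,\alpha_1',\alpha_2',\alpha_3'\>_\ZZ$; its $\lambda_6$-coefficient is $n-3>0$, so your conclusion $a_{\lambda_6}\le 0$ breaks down. (For $G=\overline{G}=\Sp(4)\times\GL(n)$ your argument does go through, since there the span of $\om_1',\ldots,\om_4'$ meets the root lattice of $\GL(n)$ exactly in $\<\alpha_1',\alpha_2',\alpha_3'\>_\ZZ$.)

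The paper closes this gap by first observing that $\beta\in\sr+(R^+\cup\{0\})$ forces the height of $\beta$ to be small, which for $n>6$ already excludes any nonzero multiple of $\gamma$ from appearing; for $n=6$ three explicit candidates $\beta_1,\beta_2,\beta_3$ involving $\gamma$ remain, and these are eliminated using Proposition~\ref{prop:Kostant} (the sum of positive coefficients $a_\lambda$ is at most $2$) and a direct application of Proposition~\ref{prop:propsVggweights}(\ref{item:betaminalpharoot}). So your outline needs an additional paragraph handling the case $G=\overline{G}'$, $n$ even, $n>4$, along these lines.
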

\begin{proof}
Let $\beta$ be a $\Tad$-weight in  $\tg\ms^G$.
It follows from Proposition~\ref{prop:Tadweightnotantidom} that at
least one $\lambda \in E$ has a positive coefficient in the
expression of $\beta$ as a $\ZZ$-linear combination of elements of
$E$. Note that all elements of $E$ except $\lambda_6$ have codimension
$1$. In particular, if $\lambda \neq \lambda_6$, then it follows
from Proposition~\ref{prop:simprootweight} that $\beta$ is a simple
root belonging to the set~\eqref{eq:13}, and that its weight space has
dimension one. 

Consequently, to prove the proposition, what remains is to show
that $\lambda_6$ cannot be the only element of $E$ which has a
positive coefficient in the expression of $\beta$ as a linear
combination of the elements of $E$. 

Note that $\overline{G}' = \Sp(4)\times \SL(n)$ and therefore that $\dim \overline{G}/\overline{G}' = 1$.
For $n=4$ the only connected group between $\overline{G}'$ and $\overline{G}$, for which $W$ is spherical, is $G=\overline{G}$. On the other hand, if $n>4$, then there are two such groups: $G=\overline{G}$ and $G=\overline{G}'$; cf.\ Remark~\ref{rem:sphericalforsubgroup}.
Straightforward computations show that 
\begin{equation} \label{eq:case8lat}
\<E\>_{\ZZ} \cap \Lambda_R =
\begin{cases} 
\<\alpha_1, \alpha_2, \alpha_1', \alpha_2',\alpha_3'\>_{\ZZ}
& \text{if $G = \overline{G}$ or $n$ is odd or $n=4$};\\
\<\alpha_1, \alpha_2, \alpha_1', \alpha_2',\alpha_3', \gamma\>_{\ZZ}
&\text{if $G = \overline{G}'$ and $n$ is even and $n>4$}
\end{cases}
\end{equation}
where 
\begin{equation} \label{eq:case8gamma}
\gamma= (n-4)\alpha_4'+(n-5)\alpha_5' + \ldots +2\alpha'_{n-2}
+ \alpha'_{n-1}. 
\end{equation}
Let $K$ be the basis of $\<E\>_{\ZZ} \cap \Lambda_R$ in equation~(\ref{eq:case8lat}). 
Since $\beta \in \<\sr\>_{\NN}$ and all elements of $K$ contain a simple root in their support, which is not in the support of any other element of $K$, it follows that $\beta \in \<K\>_{\NN}$. 
We have the following equalities:
\begin{align*}
\alpha_1 &= \lambda_1+\lambda_4-\lambda_5;\\
\alpha_2 & = \lambda_2 + \lambda_5 - \lambda_3-\lambda_1-\lambda_4; \\
\alpha'_1 &= \lambda_1 + \lambda_5 -\lambda_4 - \lambda_2; \\
\alpha'_2 &= \lambda_2 + \lambda_3 - \lambda_5; \\
\alpha'_3 & = \lambda_4+ \lambda_5-\lambda_6 - \lambda_1-\lambda_2; \\
\gamma &= (n-3)\lambda_6 - \frac{n-4}{2}[\lambda_4 + \lambda_5 - \lambda_1 - \lambda_2 + \lambda_3].
\end{align*}
As one easily sees, $\gamma$ is the only element of $K$ in which $\lambda_6$ has a positive coefficient. Consequently, the Proposition follows from equation~(\ref{eq:case8lat}) for $G=\overline{G}$, for odd $n$ and for $n=4$.

We now assume that $G=\overline{G}'$, that $n$ is even and at least $6$ and that $\lambda_6$ has a positive coefficient in $\beta$. We will come to a contradiction. Our assumptions imply that $\gamma$ has a positive coefficient in the expression of $\beta$ as an $\NN$-linear combination of the elements of $K$. Recall from Proposition~\ref{prop:propsVggweights} that $\beta$ is the sum of a simple root and an element of $\pr \cup \{0\}$. By equation (\ref{eq:case8gamma}) this is only possible if $n=6$ and if $\beta$ is one of the following three elements of $\<K\>_{\NN}$:
\begin{align*}
\beta_1 &:= \alpha'_1 + \alpha'_2 + \alpha'_3 + \gamma;\\
\beta_2 &:= \alpha'_2 + \alpha'_3 + \gamma; \\
\beta_3 &:= \alpha'_3 + \gamma.
\end{align*}
Since $\beta_1 = \lambda_1 + 2\lambda_6 - \lambda_4$ and $\beta_2 = \lambda_2 + 2\lambda_6 -\lambda_5$, it follows that $\beta$ cannot be either of them by Proposition~\ref{prop:Kostant}. 
Finally, $\beta = \beta_3$ is not possible because  if $v \in V^{\beta_3}$ with $0\neq [v] \in \Vgg$, then it follows from Proposition~\ref{prop:propsVggweights} that $X_{\alpha'_4}\cdot v$ is a nonzero element in $\<X_{-(\beta_3 - \alpha'_4)} \cdot x_0\>_{\C}$, since $\alpha'_4$ is the only simple root such that $\beta_3-\alpha'_4 \in R^+\cup\{0\}$.  Since  $X_{-(\beta_3 - \alpha'_4)} \cdot x_0$ has nonzero projection on $V(\lambda_4)$, so does $v$, but $\beta_3$ does not occur as a $\Tad$-weight in $V(\lambda_4)$ as follows immediately from the well known list of $T$-weights in $V(\omega_3')$. This completes the proof.
\end{proof}

\subsection{(K9) The modules $(\SO(n) \times \CC^{\times}, \CC^n)$ with $3
  \leq n$} \label{subsec:case9}

For these modules
\begin{align*}
&E =  \{\om_1 + \epsilon, 2\epsilon\};\\
&d_W = 1.
\end{align*}

\begin{prop} \label{prop:case9}
The $\Tad$-module $\Vggp$ is one-dimensional. Its weight is
\begin{align*}
&&2\alpha_1 + 2\alpha_2 + \ldots + 2\alpha_{(n/2)-2} + \alpha_{(n/2)-1}
+ \alpha_{n/2} &&\text{if $n$ is even};\\
&&2\alpha_1 + 2\alpha_2 + \ldots + 2\alpha_{(n-1)/2} &&\text{if $n$ is odd}.
\end{align*}
In particular, $\dim \Vggp = d_W$. Consequently,  $\dim T_{X_0}\tM^G_{\wm} = d_W$.
\end{prop}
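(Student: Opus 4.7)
The plan is to bypass most of the machinery of Section~\ref{sec:crit-extens} by exploiting the extreme smallness of $V$.

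First, a dimension count: since $V(\om_1+\eps)$ is (a twist of) the defining representation of $\SO(n)$ and $V(2\eps)$ is one-dimensional, we have $\dim V = n+1$. Lemma~\ref{lem:dimx0} gives $\dim \fg\cdot x_0 = \dim W = n$, so the entire quotient $\Vg$ is one-dimensional. Hence $\dim \Vggp \leq 1$, and together with the a priori inequality $\dim \Vggp \geq \dim T_{X_0}\tM^G_{\wm} \geq d_W = 1$ (coming from \cite[Proposition 2.5(1) and Corollary 2.14]{degensphermod}) this forces $\dim \Vggp = \dim T_{X_0}\tM^G_{\wm} = 1$, and moreover $\Vggp = \Vg$.

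Next, I would determine the unique $\Tad$-weight $\beta$ of $\Vggp$. By Proposition~\ref{prop:propsVggweights}, $\beta$ lies in $\<\sr\>_\NN \cap \<E\>_\ZZ \cap \Lambda_R$. A short calculation with $\<E\>_\ZZ = \ZZ(\om_1+\eps) + \ZZ(2\eps)$ shows that, for an element $a(\om_1+\eps)+b(2\eps)$ to lie in $\Lambda_R$, the $\eps$-component must vanish, which forces $a$ to be even; conversely every even multiple of $\om_1$ lies in $\Lambda_R$ in both types $\ssB$ and $\ssD$. Hence $\<E\>_\ZZ \cap \Lambda_R = \ZZ\cdot 2\om_1$, so $\beta = 2k\om_1$ for some $k \geq 1$. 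The expansion $2k\om_1 = 2k(\om_1+\eps) - k(2\eps)$ has a unique positive coefficient equal to $2k$, so Proposition~\ref{prop:Kostant} forces $2k \leq 2$, and we conclude $k=1$, $\beta = 2\om_1$.

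Finally, I would convert $2\om_1$ into the simple-root basis using the standard formulas for the first fundamental weight: in type $\ssB_\ell$ with $n = 2\ell+1$ one has $\om_1 = \alpha_1 + \alpha_2 + \cdots + \alpha_\ell$, yielding $2\om_1 = 2\alpha_1 + \cdots + 2\alpha_{(n-1)/2}$; in type $\ssD_\ell$ with $n = 2\ell$ one has $\om_1 = \alpha_1 + \cdots + \alpha_{\ell-2} + \tfrac{1}{2}(\alpha_{\ell-1}+\alpha_\ell)$, yielding $2\om_1 = 2\alpha_1 + \cdots + 2\alpha_{(n/2)-2} + \alpha_{(n/2)-1} + \alpha_{n/2}$, matching the statement of the proposition. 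The whole argument reduces to a dimension count combined with an integrality constraint, and I do not foresee any serious obstacle; the only step requiring a bit of care is the explicit description of the root lattices of $\SO(2\ell+1)$ and $\SO(2\ell)$ in terms of $\om_1$.
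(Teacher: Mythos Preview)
Your dimension count is identical to the paper's, and your conclusion that $\Vggp = \Vg$ is one-dimensional is reached exactly as in the paper.

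Where you diverge is in pinning down the $\Tad$-weight. The paper argues directly: since $V(2\eps)$ is the line $\<v_{2\eps}\>_{\CC}\subset \ft\cdot x_0 \subset \fg\cdot x_0$, the quotient $\Vg$ is represented by a vector of $V(\om_1+\eps)$, and one reads off its $\Tad$-weight from the explicit $T$-weight decomposition of $\CC^n\otimes\CC_\eps$ (together with \cite[Lemma~2.16(4)]{degensphermod}). You instead use an integrality/boundedness argument: the constraint $\beta\in\<E\>_\ZZ\cap\Lambda_R$ forces $\beta\in\ZZ\cdot 2\om_1$, and then Proposition~\ref{prop:Kostant} (which you may apply because you have already shown $\Vggp=\tg\ms$) forces the multiple to be $1$. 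Both routes are short; the paper's is a concrete weight computation, while yours trades that computation for an appeal to Kostant's bound. Note that your use of Proposition~\ref{prop:Kostant} is legitimate only because the one-dimensionality has already identified $\Vggp$ with $\tg\ms$; had the quotient been larger, that proposition would not directly constrain weights of $\Vggp$.
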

\begin{proof}
Observe that $\dim V = \dim W + 1$, since $V(2 \epsilon)$ is
one-dimensional. Since $\dim W = \dim \fg \cdot x_0$ (by Lemma~\ref{lem:dimx0}), this implies
that $\dim \Vg =1$. Since $d_W=1$, this implies that $\dim \Vggp =
1$ by \cite[Proposition 2.5(1) and Corollary 2.14]{degensphermod}. 

We now find the $\Tad$-weight of $\Vggp$. Using the well-known
$T$-weight space decomposition of the $G$-module $V(\om_1 + \epsilon) \isom \CC^{n}
\otimes \CC_{\epsilon}$, the fact that $V(2\epsilon) \subset \fg \cdot
x_0$ and \cite[Lemma 2.16(4)]{degensphermod}, one readily checks that
the one-dimensional $\Tad$-module $\Vg$ has the $\Tad$-weight given in
the proposition.
\end{proof}

\subsection{(K10) The module $(\Spin(10)\times \CC^{\times}, \CC^{16})$} \label{subsec:case10}
Here
\begin{align*}
&E =  \{\om_5 + \epsilon, \om_1 + 2\epsilon\};\\
&d_W = 1.
\end{align*}

\begin{prop} \label{prop:case10}
The $\Tad$-module $\Vggp$ is one-dimensional. Its weight is
\begin{align*}
\alpha_2+2\alpha_3 + \alpha_4 + 2\alpha_5.
\end{align*}
In particular, $\dim \Vggp = d_W$. Consequently,  $\dim T_{X_0}\tM^G_{\wm} = d_W.$ 
\end{prop}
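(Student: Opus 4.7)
My plan is to prove the equality $\dim \Vggp = 1$ with weight $\beta := \alpha_2+2\alpha_3+\alpha_4+2\alpha_5$ by first establishing the upper bound $\dim \Vggp \le 1$ via a weight analysis and then combining it with the a priori lower bound $\dim \Vggp \ge d_W = 1$ obtained, as in the argument for (K9), from \cite[Proposition 2.5(1) and Corollary 2.14]{degensphermod}.

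The first step would be to show that the only possible $\Tad$-weight in $\Vggp$ is $\beta$. Expressing everything in the standard $\varepsilon$-basis of $\ssD_5$ (so $\omega_1=\varepsilon_1$, $\omega_5=\tfrac{1}{2}\sum_{i=1}^{5}\varepsilon_i$, $\alpha_i = \varepsilon_i-\varepsilon_{i+1}$ for $i\le 4$, and $\alpha_5 = \varepsilon_4+\varepsilon_5$), I would compute
\[
\beta \;=\; \varepsilon_2+\varepsilon_3+\varepsilon_4+\varepsilon_5 \;=\; 2\omega_5 - \omega_1 \;=\; 2(\omega_5+\varepsilon)-(\omega_1+2\varepsilon)
\]
and verify $\<E\>_{\Z} \cap \Lambda_R = \Z\cdot\beta$: indeed, if $\mu=a(\omega_5+\varepsilon)+b(\omega_1+2\varepsilon) \in \<E\>_{\Z}$ lies in the root lattice, the vanishing of the $\varepsilon$-component forces $a=-2b$, giving $\mu=-b\beta$. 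Proposition~\ref{prop:propsVggweights}(b) implies that every $\Tad$-weight $\mu$ in $\Vgg$ lies in $\<E\>_{\Z}\cap\<\sr\>_{\NN}$, hence in $\NN_{>0}\cdot\beta$. Applying Proposition~\ref{prop:Kostant} to the expression $k\beta = 2k(\omega_5+\varepsilon) - k(\omega_1+2\varepsilon)$ forces $2k\le 2$, so $k=1$ and $\beta$ is the unique candidate.

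Next, I would bound the multiplicity of the $\Tad$-weight $\beta$ inside $\Vg$. The corresponding $T$-weights are $(\omega_5+\varepsilon)-\beta = \tfrac{1}{2}(\varepsilon_1-\varepsilon_2-\varepsilon_3-\varepsilon_4-\varepsilon_5)+\varepsilon$ in $V(\omega_5+\varepsilon)$, which occurs in the half-spin module with multiplicity one (its sign pattern has an even number of minus signs), and $(\omega_1+2\varepsilon)-\beta=\varepsilon_1-\varepsilon_2-\varepsilon_3-\varepsilon_4-\varepsilon_5+2\varepsilon$ in $V(\omega_1+2\varepsilon)\cong\CC^{10}$, which is not among the $T$-weights $\pm\varepsilon_i+2\varepsilon$ of that module. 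Hence $\dim V^{\beta}=1$. Since $\beta$ is not a root of $G$ (the roots of $\ssD_5$ all have the form $\pm\varepsilon_i\pm\varepsilon_j$), the subspace $\fg\cdot x_0$ contains no vector of $\Tad$-weight $\beta$, and the image of $V^{\beta}$ in $\Vg$ stays one-dimensional. Combined with the lower bound, this forces $\dim \Vggp = 1$ and the unique $\Tad$-weight to be $\beta$.

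I do not anticipate a serious obstacle here: the argument is a clean weight-lattice calculation once one recognizes $\beta = 2\omega_5-\omega_1$. The key ingredients---Proposition~\ref{prop:propsVggweights}(b) and Proposition~\ref{prop:Kostant}---shrink the pool of candidate weights to a single $\beta$, and the multiplicity computation just uses the well-known weight spaces of the half-spin and vector representations of $\Spin(10)$. The mildly delicate point is the passage from ``$\beta$-weight space in $\Vg$ is one-dimensional'' to ``$\Vggp$ is one-dimensional'', which proceeds by squeezing between the upper bound just obtained and the a priori lower bound $d_W=1$.
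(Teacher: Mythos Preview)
Your argument correctly proves $\dim T_{X_0}\tM^G_{\wm}=d_W$, and even $\dim\Vgg=1$ with the stated weight, but it does not quite establish the full claim that $\dim\Vggp=1$. The issue is that the lattice constraint you invoke, Proposition~\ref{prop:propsVggweights}(\ref{item:betalattice}), is stated for $\Vgg$ with respect to the group $G$ and its set $E=\{\omega_5+\varepsilon,\;\omega_1+2\varepsilon\}$; that is what yields $\<E\>_{\ZZ}\cap\Lambda_R=\ZZ\beta$. For $\Vggp=(V/\fg\cdot x_0)^{G'_{x_0}}$ the analogous constraint is obtained by applying the same proposition with $G$ replaced by $G'=\Spin(10)$ and $E$ replaced by $p(E)=\{\omega_5,\omega_1\}$, and then one only gets $\beta\in p(\<E\>_{\ZZ})\cap\Lambda_R$, which is the rank~$2$ lattice $\<\beta_1,\beta_2\>_{\ZZ}$ with $\beta_1=\beta$ and $\beta_2=2\alpha_1+2\alpha_2+2\alpha_3+\alpha_4+\alpha_5=2\omega_1$. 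So a priori $\Vggp$ could also carry the weight $\beta_2$, and your computation of the $\beta$-weight space in $V/\fg\cdot x_0$ does not rule that out. (Your appeal to Proposition~\ref{prop:Kostant} has the same defect: that result concerns $\tg\ms$, not $\Vggp$; in fact it is not needed even for $\Vgg$, since your Step~2 already shows no $k\beta$ with $k\ge 2$ occurs as a $\Tad$-weight in $V$.)

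The paper closes this gap differently: it first obtains $\dim\Vggp=d_W$ directly by citing \cite[Corollary~2.27]{degensphermod}, which applies because $p(\wm)=\<\omega_5,\omega_1\>_{\NN}$ is $G'$-saturated; and then, to identify the weight, it works with the larger lattice $p(\<E\>_{\ZZ})\cap\Lambda_R=\<\beta_1,\beta_2\>_{\ZZ}$, bounds the candidates using $\beta\in\sr+(R^+\cup\{0\})$, and excludes $\beta_2$ by a contradiction argument via Proposition~\ref{prop:propsVggweights}. Your approach is more elementary for the part it does prove (the multiplicity computation in the half-spin and vector modules is clean and avoids the case analysis on $\beta_2$), and it already suffices for Theorem~\ref{thm:cbc}; but to get the full statement about $\Vggp$ you would still need either the saturation argument or a separate exclusion of $\beta_2$ from $\Vggp$.
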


\begin{proof}
Recall that $p$ is the projection from the weight lattice $\wl$ of $G$ to the weight lattice of $G'=\Spin(10)$.
We first observe that $W = V(\om_5+\epsilon)$ is spherical for $G' =
\Spin(10)$ (cf.\ Remark~\ref{rem:sphericalforsubgroup}) and that its weight monoid $p(\wm)$ is
$G'$-saturated. By \cite[Corollary 2.27]{degensphermod} it follows
that $\dim \Vggp = d_W$. 

While we do not need it for Theorem~\ref{thm:cbc}, we give a proof of the claim that the $\Tad$-weight of $\Vggp$ is $\alpha_2+2\alpha_3 +
\alpha_4 + 2\alpha_5$. A straightforward calculation shows that
\begin{align}
p(\<E\>_{\ZZ}) \cap \Lambda_R &= \<2 \alpha_1 + \alpha_2 - \alpha_5, -2\alpha_1 + 2\alpha_3 + \alpha_4 + 3 \alpha_5\>_{\ZZ}\\
 &= \<\beta_1, \beta_2\>_{\ZZ}. \label{eq:case10rl}
\end{align}
where $\beta_1 = \alpha_2 + 2\alpha_3 + \alpha_4 + 2\alpha_5$ and $\beta_2 =  2\alpha_1 + 2\alpha_2 + 2\alpha_3 + \alpha_4 + \alpha_5$. 
If $\beta$ is a $\Tad$-weight occurring in $\Vggp$, then, by Proposition~\ref{prop:propsVggweights}, 
\begin{eqnarray}
\beta &\in & \<\sr\>_{\NN} \cap p(\<E\>_{\ZZ}) = \<\sr\>_{\NN} \cap \<\beta_1, \beta_2\>_{\ZZ}; \text{ and} \label{eq:c10_1} \\ 
\beta &\in & \sr + (R^{+} \cup\{0\}) \label{eq:c10_2}
\end{eqnarray}
In the root system of type $\ssD_5$, if an element of $\sr + (R^{+} \cup\{0\})$ is written as a linear combination of the simple roots, then none of the coefficients are greater than $3$. Consequently, \eqref{eq:c10_1} and \eqref{eq:c10_2} imply that there exists $a,b \in \Z$ such that $\beta = a\beta_1 + b\beta_2$ and
\begin{align}
3 \geq  2b  &\geq   0   \label{eq:c10_3} \\
3 \geq  2a+2b  &\geq  0  \label{eq:c10_4} \\
3 \geq 2a+b &\geq 0 \label{eq:c10_5}
\end{align}
It follows from \eqref{eq:c10_3} that $b \in \{0,1\}$. If $b=0$, then it follows from \eqref{eq:c10_4} that $a \in \{0,1\}$. If $b=1$, then it follows from \eqref{eq:c10_5} that $a\in \{0,1\}$, and then \eqref{eq:c10_4} implies that $a=0$. Since $\beta \neq 0$, we have shown that $\beta=\beta_1$ or $\beta=\beta_2$. 

To finish the proof, we have to show that $\beta_2$ cannot occur as a $\Tad$-weight in $\Vggp$. To get a contradiction, suppose that $v \in V^{\beta_2}$ such that $0 \neq [v] \in \Vggp$. Since $\alpha=\alpha_1$ is the only simple root such that $\beta_2 -\alpha \in R^+\cup \{0\}$, it follows from Proposition~\ref{prop:propsVggweights} that 
\begin{equation}
X_{\alpha_1} \cdot v \in \<X_{-(\beta_2-\alpha_1)}\cdot x_0\>_{\C} \setminus\{0\}\label{eq:c10_6}
\end{equation}
Since $\<(\beta_2-\alpha_1)^{\vee},\om_1\> \neq 0$ and $\<(\beta_2-\alpha_1)^{\vee},\om_5\> \neq 0$, the vector $X_{-(\beta_2-\alpha_1)}\cdot x_0$ has nonzero projection on both irreducible components of $V$.
This is in contradiction with \eqref{eq:c10_6}, since the $\Tad$-weight $\beta$ does not occur in $V(\om_5)$. 
This finishes the proof. 
\end{proof}

\begin{remark}
The fact that the $\Tad$-weight of $\Vggp$ is $\alpha_2+2\alpha_3 +
\alpha_4 + 2\alpha_5$, which is equal to $\epsilon_2 + \epsilon_3 +
\epsilon_4 + \epsilon_5$, can also be deduced from the description of the little
Weyl group of the module $W^*$ given in \cite{knop-rmks} (see~\cite[Remark 2.8]{degensphermod} for some context.)
\end{remark}

\subsection{(K11) The module $(\Spin(7)\times \CC^{\times}, \CC^{8})$} \label{subsec:case11}
Here
\begin{align*}
&E =  \{\om_3 + \epsilon, 2\epsilon\};\\
&d_W = 1.
\end{align*}

\begin{prop} \label{prop:case11}
The $\Tad$-module $\Vggp$ is one-dimensional. Its weight is
\begin{align*}
\alpha_1+2\alpha_2+3\alpha_3.
\end{align*}
In particular, $\dim \Vggp = d_W$. Consequently,  $\dim T_{X_0}\tM^G_{\wm} = d_W$
\end{prop}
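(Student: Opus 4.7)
The plan is to follow closely the template set by Proposition~\ref{prop:case9}: the one-dimensionality of $V(2\epsilon)$ reduces the dimension count to a one-line estimate, and a short computation in the weight lattice then determines the unique $\Tad$-weight of $\Vggp$. A preliminary observation is that $G = \overline{G}$ is the only connected subgroup with $\overline{G}' \subseteq G \subseteq \overline{G}$ for which $(G,W)$ is spherical: the generic $\Spin(7)$-orbit in the spin representation $\CC^8$ has codimension one, so $\Spin(7)$ acting alone on $\CC^8$ is not spherical. In particular it suffices to treat $G = \overline{G}$.

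For the dimension count, I would note that $\dim V = \dim W + 1 = 9$, since $V(2\epsilon)$ is one-dimensional, and invoke Lemma~\ref{lem:dimx0} to obtain $\dim \fg \cdot x_0 = \dim W = 8$, so $\dim \Vg = 1$. Combining this with \cite[Proposition 2.5(1)]{degensphermod} and the injection $T_{X_0}\tM^G_{\wm} \hookrightarrow \Vgg \subseteq \Vggp$ from \cite[Corollary 2.14]{degensphermod} yields the chain
\[
1 = d_W \le \dim T_{X_0}\tM^G_{\wm} \le \dim \Vggp \le \dim \Vg = 1,
\]
which immediately collapses to equality.

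To identify the $\Tad$-weight $\beta$ of the one-dimensional space $\Vggp$, I would invoke Proposition~\ref{prop:propsVggweights}: $\beta$ is a nonzero element of $\<\sr\>_\NN \cap \<E\>_\ZZ$ satisfying $\beta - \alpha \in R^+ \cup \{0\}$ for some simple root $\alpha$. Using $\omega_3 = \tfrac{1}{2}(\epsilon_1+\epsilon_2+\epsilon_3)$ together with the fact that the root lattice of $\Spin(7) \times \CC^\times$ is $\ZZ\epsilon_1 \oplus \ZZ\epsilon_2 \oplus \ZZ\epsilon_3$ (the central $\CC^\times$ contributes no roots), a short computation gives
\[
\<E\>_\ZZ \cap \Lambda_R \;=\; \ZZ\cdot 2\omega_3 \;=\; \ZZ\cdot(\alpha_1+2\alpha_2+3\alpha_3).
\]
Thus $\beta$ is a positive integer multiple of $\alpha_1 + 2\alpha_2 + 3\alpha_3$, and since no positive root of $B_3$ has a coefficient on $\alpha_2$ exceeding $2$, the multiple must be $1$. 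No serious obstacle is anticipated; once the sphericity constraint has pinned down $G$, both steps are essentially routine combinatorics.
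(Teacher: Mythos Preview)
Your proof is correct. The dimension count is identical to the paper's (which simply says ``exactly like in the proof of Proposition~\ref{prop:case9}''), and the preliminary observation about $G=\overline{G}$ is harmless but unnecessary: the chain $d_W \le \dim T_{X_0}\tM^G_{\wm} \le \dim \Vgg \le \dim \Vggp \le \dim \Vg = 1$ already handles every admissible $G$.

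Where you diverge from the paper is in the identification of the $\Tad$-weight. The paper finds it by inspecting the $T$-weight decomposition of the spin representation $V(\omega_3+\epsilon)$ directly (computable by hand or via \emph{LiE}), using that $V(2\epsilon)\subset\fg\cdot x_0$ and \cite[Lemma~2.16(4)]{degensphermod} to read off the $\Tad$-weight of the one remaining class in $\Vg$. You instead use the lattice constraint of Proposition~\ref{prop:propsVggweights}: $\beta\in\<\sr\>_\NN\cap\<E\>_\ZZ$ together with $\beta\in\sr+(R^+\cup\{0\})$ forces $\beta=\alpha_1+2\alpha_2+3\alpha_3$, since $\<E\>_\ZZ\cap\Lambda_R=\ZZ\cdot 2\omega_3$ and no root of $B_3$ has $\alpha_2$-coefficient larger than $2$. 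Your route avoids any knowledge of the weight multiplicities in the $8$-dimensional spin module and is purely combinatorial; the paper's route is more direct but asks you to know (or compute) that weight structure. Both are short and valid.
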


\begin{proof}
The proof that $\dim \Vggp = 1$ is exactly like in the proof of Proposition~\ref{prop:case9}.
We now find the $\Tad$-weight of $\Vggp$, also like in the proof of
Proposition~\ref{prop:case9}.  Using the $T$-weight space decomposition of
the $G$-module $V(\om_3 + \epsilon)$ (which can be computed by hand or
with \emph{LiE}~\cite{LiE1992}), the fact that $V(2\epsilon) \subset \fg \cdot
x_0$ and \cite[Lemma 2.16(4)]{degensphermod}, one readily checks that
the one-dimensional $\Tad$-module $\Vg$ has the $\Tad$-weight given in
the proposition.
\end{proof}

\subsection{(K12) The module $(\Spin(9) \times \CC^{\times}, \CC^{16})$} \label{subsec:case12}
Here 
\begin{align*}
&E =  \{\om_4 + \epsilon, \om_1 + 2\epsilon, 2\epsilon\};\\
&d_W = 2.
\end{align*}

\begin{prop}  \label{prop:case12}
The $\Tad$-module $\Vggp$ is multiplicity-free and has $\Tad$-weight set 
\[\{\alpha_1 + \alpha_2 + \alpha_3 + \alpha_4, \alpha_2 + 2\alpha_3 + 3\alpha_4\}.\]
In particular, $\dim \Vggp = d_W$. Consequently,  $\dim T_{X_0}\tM^G_{\wm} = d_W$
\end{prop}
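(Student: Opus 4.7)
My plan is to follow the strategy used in the preceding cases of Knop's List: enumerate the candidate $\Tad$-weights in $\Vggp$ by means of Proposition~\ref{prop:propsVggweights}, rule out the extraneous ones by explicit calculation, and then combine with the a priori bound $\dim T_{X_0}\tM^G_{\wm} \geq d_W$ from \cite[Proposition 2.5(1)]{degensphermod} together with the embedding $T_{X_0}\tM^G_{\wm} \hookrightarrow \Vggp$ of \eqref{eq:14}. Since $V(2\eps)$ is one-dimensional and is contained in $\ft\cdot x_0$, the analysis is parallel for both admissible groups $G=\overline{G}$ and $G=\overline{G}'=\Spin(9)$: Proposition~\ref{prop:propsVggweights}(b) forces every $\Tad$-weight $\beta$ in $\Vggp$ to lie in $\<\sr\>_\NN \cap \<E\>_\ZZ \cap \Lambda_R$, and a short computation in the weight lattice of $\ssB_4$ identifies this intersection as the rank-two lattice generated by
\[
\beta_1 := \alpha_1+\alpha_2+\alpha_3+\alpha_4 \quad \text{and} \quad \beta_1' := \alpha_2+2\alpha_3+3\alpha_4.
\]

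Writing $\beta = p\beta_1 + q\beta_1'$ and imposing both $\beta\in\<\sr\>_\NN$ and the requirement from Proposition~\ref{prop:propsVggweights}(a) that $\beta-\alpha\in R^+\cup\{0\}$ for some simple root $\alpha$---which, since the highest root of $\ssB_4$ is $\alpha_1+2\alpha_2+2\alpha_3+2\alpha_4$, bounds the coefficients of $\alpha_1,\alpha_2,\alpha_3,\alpha_4$ in $\beta$ by $2,3,3,3$ respectively---an elementary case analysis in the parameters $p,q$ leaves exactly three candidates: $\beta_1$, $\beta_1'$, and $2\beta_1 = 2\om_1$. I note that, by Proposition~\ref{prop:lambdacodim1}, neither $\om_4+\eps$ nor $\om_1+2\eps$ is of codimension one (no other element of $E$ pairs nontrivially with $\alpha_4^\vee$ or $\alpha_1^\vee$), so Proposition~\ref{prop:simprootweight} is not directly available and the elimination of $2\om_1$ must be done by hand.

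The crux of the proof is exactly this elimination. One verifies that $V^{2\om_1}$ is one-dimensional and is contained entirely in the summand $V(\om_1+2\eps)$. If $v\in V^{2\om_1}$ with $0\neq [v]\in\Vggp$, then, since $\alpha_1$ is the only simple root for which $2\om_1-\alpha_1=\om_2$ is a positive root, Proposition~\ref{prop:propsVggweights}(a),(c) force $X_{\alpha_1}\cdot v$ to be a nonzero scalar multiple of $X_{-\om_2}\cdot x_0$. But $X_{\alpha_1}\cdot v\in V(\om_1+2\eps)$, whereas $\om_4 - \om_2 = \tfrac{1}{2}(-1,-1,1,1)$ is still a weight of the spin representation, so $X_{-\om_2}\cdot x_0$ has a nonzero projection onto $V(\om_4+\eps)$, yielding a contradiction.

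For the two surviving candidates I will verify multiplicity one by a direct computation of $V^\beta/(V^\beta\cap\fg\cdot x_0)$. The weight $\beta_1'$ is not a root, so its $\Tad$-weight space in $\fg\cdot x_0$ vanishes, while $V^{\beta_1'}$ itself is one-dimensional (lying inside $V(\om_4+\eps)$) and therefore injects into $\Vg$. On the other hand $V^{\beta_1}$ is two-dimensional with one contribution from each of $V(\om_4+\eps)$ and $V(\om_1+2\eps)$, but $X_{-\beta_1}\cdot x_0$ is a nonzero element of $V^{\beta_1}\cap\fg\cdot x_0$ with nonzero projection onto both of these summands, so the image of $V^{\beta_1}$ in $\Vg$ is one-dimensional. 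These two upper bounds, combined with the a priori lower bound $\dim T_{X_0}\tM^G_{\wm}\geq d_W=2$ and the inclusion $T_{X_0}\tM^G_{\wm}\hookrightarrow\Vggp$, force all three dimensions to coincide and complete the proof.
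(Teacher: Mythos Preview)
Your proof is correct and follows the same overall strategy as the paper: compute the lattice $\<E'\>_\ZZ\cap\Lambda_R=\<\beta_1,\beta_1'\>_\ZZ$, use the bound $\beta\in\sr+(R^+\cup\{0\})$ to reduce to the three candidates $\beta_1,\beta_1',2\beta_1$, and eliminate $2\beta_1$ by the same contradiction argument (the projection of $X_{-\om_2}\cdot x_0$ onto $V(\om_4+\eps)$ is nonzero while the $\Tad$-weight $2\om_1$ does not occur in $V(\om_4+\eps)$).

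The one genuine difference is how you establish multiplicity one. The paper observes that $\<\om_1,\om_4\>_\NN$ is free and $G'$-saturated, and then invokes \cite[Theorem~3.10]{bravi&cupit} to conclude that $\Vpggp$ is multiplicity-free. You instead compute directly: $V^{\beta_1'}$ is a line in $V(\om_4+\eps)$ and $\beta_1'=\eps_2+\eps_3+\eps_4$ is not a root, so $V^{\beta_1'}\cap\fg\cdot x_0=0$; and $V^{\beta_1}$ is two-dimensional with $V^{\beta_1}\cap\fg\cdot x_0=\<X_{-\beta_1}\cdot x_0\>_\CC$ one-dimensional. Your route is more elementary and self-contained (it avoids the Bravi--Cupit black box), at the cost of a couple of explicit weight-space checks; the paper's route is shorter but relies on an external theorem. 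Both give the same upper bound $\dim\Vggp\le 2$, which combined with $\dim T_{X_0}\tM^G_\wm\ge d_W=2$ finishes the argument.
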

\begin{proof}
Observe that $G' = \Spin(9)$. Consider the $G'$-module $V':= V(\om_1) \oplus V(\om_4)$ and its element $x_0' = v_{\om_1} + v_{\om_4}$. Observe that $G'_{x_0}=G'_{x_0'}$. Since $V(2 \epsilon)$ is one-dimensional, we have that
$\Vpggp \isom \Vggp$ as $\Tad$-modules.  Put $E' = p(E) = \{\om_1, \om_4\}$. 

Let $\beta$ be a $\Tad$-weight of $\Vpggp$. Then, by Proposition~\ref{prop:propsVggweights}, 
\begin{align}
\beta & \in \<E'\>_{\ZZ} \cap \<\sr\>_{\NN} \label{eq:c12_1}\\
\beta &\in \sr +( R^+ \cup \{0\}) \label{eq:c12_2}
\end{align}
A straightforward computation shows that 
\begin{equation}
\<E'\>_{\ZZ} \cap \<\sr\>_{\Z} = \<\beta_1, \beta_2\>_{\Z}, \label{eq:c12_3}
\end{equation}
where $\beta_1 = \alpha_1 + \alpha_2 +\alpha_3 + \alpha_4$ and $\beta_2 = \alpha_2 + 2\alpha_3 + 3\alpha_4$. 
The explicit description of $R^+$ for
$\Spin(9)$ (see, e.g. \cite[Planche II]{bourbaki-geadl47}) shows that \eqref{eq:c12_2} implies that if 
$\beta$ is written as a linear combination of the simple roots, then the coefficient of $\alpha_1$ is at most $2$ and that of the other simple roots is at most $3$. Combined with \eqref{eq:c12_1} and \eqref{eq:c12_3} this implies that there exist $a,b \in \Z$ such that $\beta = a\beta_1 + b\beta_2$ and
\begin{align*}
2\geq a &\geq 0; \text{ and}\\
3\geq a+3b &\geq 0
\end{align*}
This system implies that $(a,b) \in \{(0,0),(1,0), (2,0), (0,1)\}$. 
Since $\beta \neq 0$, we have shown that
\begin{equation} \label{eq:2}
\beta \in \{\alpha_1 + \alpha_2 +
\alpha_3 + \alpha_4, 2(\alpha_1 + \alpha_2 + \alpha_3 + \alpha_4),
\alpha_2 + 2\alpha_3 + 3\alpha_4\}.
\end{equation}

We claim that $\beta_3:=2(\alpha_1 + \alpha_2 + \alpha_3 + \alpha_4)$ does not
occur as a $\Tad$-weight in $\Vpggp$. We will argue by
contradiction; assume $v \in V'$ is a $\Tad$-eigenvector of weight
$\beta_3$ such that $[v]$ is nonzero in $\Vpggp$. Using the explicit description
of $R^+$ once more, one readily checks that $\alpha=\alpha_1$ is the
only simple root $\alpha$ such that $\beta_3- \alpha \in R^{+}\cup
\{0\}$. By Proposition~\ref{prop:propsVggweights} this implies that
\begin{equation}\label{eq:1}
X_{\alpha_1} \cdot v \in \<X_{-(\beta_3 - \alpha_1)}\cdot x'_0\>_{\CC} \setminus\{0\}.
\end{equation} 
Because $\<(\beta_3 - \alpha_1)^{\vee}, \om_1\> \neq 0$ and
$\<(\beta_3 - \alpha_1)^{\vee}, \om_4\> \neq 0$, we have that
$X_{-(\beta_3 - \alpha_1)} x'_0$ has nonzero projection to both
summands $V(\om_1)$ and $V(\om_4)$ of $V'$. On the other hand, the
following computation in \emph{LiE} shows that $\beta_3$ does not
occur as a $\Tad$-weight in $V(\om_4)$. 
\begin{quote}
\begin{verbatim}
setdefault(B4)
omega4=[0,0,0,1]
beta3=[2,0,0,0]
Demazure(omega4)|(omega4-beta3)
-- output: 0
\end{verbatim}
\end{quote}
This implies that $v$ is in
the kernel of the projection onto the summand $V(\om_4)$ of $V'$,
which is in contradiction with equation~(\ref{eq:1}). This proves the
claim. 

Since the monoid $\<E'\>_{\NN}$ is free and $G'$-saturated, we know
that the $\Tad$-module $\Vpggp$ is multiplicity-free by \cite[Theorem
3.10]{bravi&cupit}. Equation~(\ref{eq:2}) and the claim above then
imply that $\dim \Vpggp \leq 2$. Since $d_W=2$ this proves the
proposition, because $\tg\ms^G \inn \Vgg \inn \Vpggp$ and $\dim \tg\ms^G \ge d_W$, by \cite[Proposition 2.5(1)]{degensphermod}. 
\end{proof}

\subsection{(K13) The module $(\ssG_2 \times \CC^{\times}, \CC^7)$} \label{subsec:case13}
Here
\begin{align*}
&E =  \{\om_1 + \epsilon, 2\epsilon\};\\
&d_W = 1.
\end{align*}

\begin{prop} \label{prop:case13}
The $\Tad$-module $\Vggp$ is one-dimensional. Its weight is
\begin{align*}
4\alpha_1 + 2\alpha_2.
\end{align*}
In particular, $\dim \Vggp = d_W$. Consequently,  $\dim T_{X_0}\tM^G_{\wm} = d_W$
\end{prop}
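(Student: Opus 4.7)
The plan is to argue in direct parallel to Proposition~\ref{prop:case11} for family (K11). First, I would establish $\dim \Vggp = 1$. Since $V(2\epsilon)$ is one-dimensional, $\dim V = \dim W + 1$, and Lemma~\ref{lem:dimx0} gives $\dim \fg \cdot x_0 = \dim W$; therefore $\dim \Vg = 1$, and a fortiori $\dim \Vggp \leq 1$. The a priori inequality $\dim \Vggp \geq \dim T_{X_0}\ms^G \geq d_W = 1$ coming from \cite[Proposition 2.5(1) and Corollary 2.14]{degensphermod} then forces $\dim \Vggp = 1 = d_W$, which in turn gives $\dim T_{X_0}\tM^G_{\wm} = d_W$.

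To identify the $\Tad$-weight of the one-dimensional space $\Vggp$, I would first observe that $V(2\epsilon) \inn \ft \cdot x_0 \inn \fg \cdot x_0$ by Remark~\ref{rem:propsX0}(\ref{item:t0x0}), since $V(2\epsilon)$ is one-dimensional. Hence the projection induces a canonical isomorphism $\Vg \isom V(\om_1 + \epsilon)/(\text{image of } \fg \cdot x_0)$. The seven $T$-weights of the fundamental $G_2$-module $V(\om_1)$ are $0$ together with the six short roots $\pm\alpha_1,\pm(\alpha_1+\alpha_2),\pm(2\alpha_1+\alpha_2)$, each with multiplicity one. Using $\fg \cdot x_0 = \ft \cdot x_0 \oplus \fu^- \cdot x_0$, I would check that of the six positive roots $\beta$, only $\beta = \alpha_2$ fails to contribute a nonzero vector $X_{-\beta} \cdot v_{\om_1+\epsilon}$ in $V(\om_1+\epsilon)$, because $\om_1 - \alpha_2 = 2\alpha_1$ is not a weight of $V(\om_1)$. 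Together with $v_{\om_1+\epsilon}$ itself, this covers six of the seven $T$-weights of $V(\om_1+\epsilon)$, leaving the lowest weight $-\om_1-\epsilon$ as the unique $T$-weight that survives in the quotient $\Vg$. Under the normalization $\alpha(t,v)=\lambda(t)t^{-1}v$ of \eqref{eq:alphadef}, its $\Tad$-weight equals $(\om_1+\epsilon)-(-\om_1-\epsilon) = 2\om_1 + 2\epsilon$; since $\Tad = T/Z(G)$ kills the central $\CC^{\times}$-factor and therefore annihilates $\epsilon$, this reduces to $2\om_1 = 4\alpha_1 + 2\alpha_2$, as claimed.

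I do not expect a genuine obstacle in this case: the argument is essentially a verbatim adaptation of (K9) and (K11), with the concrete combinatorics of the short root system of $G_2$ as the only new input. If anything, the mildly delicate point is only the verification that exactly one positive root ($\alpha_2$) fails the weight-space criterion, which is a one-line check against the list of short roots of $G_2$.
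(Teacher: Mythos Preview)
Your approach is correct and is exactly the route the paper takes (the paper's proof is just ``Same argument as for Proposition~\ref{prop:case11}'', which in turn points back to Proposition~\ref{prop:case9}). One small slip: the lowest $T$-weight of $V(\om_1+\epsilon)$ is $-\om_1+\epsilon$, not $-\om_1-\epsilon$, since the $\CC^{\times}$-factor acts by the character $\epsilon$ on \emph{every} vector of $V(\om_1+\epsilon)$. With the correct sign the $\Tad$-weight comes out directly as $(\om_1+\epsilon)-(-\om_1+\epsilon)=2\om_1=4\alpha_1+2\alpha_2$, and no ad hoc ``$\Tad$ kills $\epsilon$'' step is needed; indeed the $\Tad$-weight $\lambda-\mu$ lies in the root lattice automatically, so the appearance of a $2\epsilon$ term was already a signal that something had gone wrong upstream.
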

\begin{proof}
Same argument as for Proposition~\ref{prop:case11}.
\end{proof}

\subsection{(K14) The module $(\ssE_6 \times \CC^{\times}, \CC^{27})$} \label{subsec:case14}
Here 
\begin{align*}
&E =  \{\om_1 + \epsilon, \om_6 + 2\epsilon, 3 \epsilon\};\\
&d_W = 2.
\end{align*}

\begin{prop}
The $\Tad$-module $\Vggp$ is multiplicity-free and has $\Tad$-weight set 
\[\{\alpha_2 +
\alpha_3 + 2\alpha_4 + 2\alpha_5 + 2\alpha_6, 2\alpha_1
+ \alpha_2 + 2\alpha_3 + 2\alpha_4 + \alpha_5\}.\]
In particular, $\dim \Vggp = d_W$. Consequently,  $\dim T_{X_0}\tM^G_{\wm} = d_W$
\end{prop}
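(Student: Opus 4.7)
The plan is to follow the strategy of Proposition~\ref{prop:case12}. Since $V(3\epsilon)$ is one-dimensional, I first reduce to the $G' = \ssE_6$-module $V' := V(\om_1) \oplus V(\om_6)$ with $x_0' := v_{\om_1} + v_{\om_6}$. Observing that $G'_{x_0} = G'_{x_0'}$, this gives a $\Tad$-equivariant isomorphism $\Vggp \isom \Vpggp$. I set $E' := p(E) = \{\om_1, \om_6\}$ and note that $\<E'\>_{\NN}$ is free and $G'$-saturated, so the multiplicity-freeness of $\Vpggp$ will follow from \cite[Theorem 3.10]{bravi&cupit} once a dimension bound is obtained.

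Let $\beta$ be a $\Tad$-weight of $\Vpggp$. By Proposition~\ref{prop:propsVggweights} we have $\beta \in \<E'\>_{\ZZ} \cap \<\sr\>_{\NN}$ and $\beta \in \sr + (R^+ \cup \{0\})$. Expressing $\om_1$ and $\om_6$ as $\QQ$-linear combinations of the simple roots of $\ssE_6$, a direct computation should give
\begin{equation*}
\<E'\>_{\ZZ} \cap \<\sr\>_{\ZZ} = \<\beta_1, \beta_2\>_{\ZZ}
\end{equation*}
where $\beta_1 = -\om_1 + 2\om_6 = \alpha_2+\alpha_3+2\alpha_4+2\alpha_5+2\alpha_6$ and $\beta_2 = 2\om_1 - \om_6 = 2\alpha_1+\alpha_2+2\alpha_3+2\alpha_4+\alpha_5$ are the two weights appearing in the statement. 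Writing $\beta = a\beta_1 + b\beta_2$ and reading off the coefficients of $\alpha_1$ (namely $2b$) and $\alpha_6$ (namely $2a$) gives $a, b \geq 0$.

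The key step is then to enumerate the admissible pairs $(a,b)$. The highest root of $\ssE_6$ being $\om_2 = \alpha_1 + 2\alpha_2 + 2\alpha_3 + 3\alpha_4 + 2\alpha_5 + \alpha_6$, the condition $\beta \in \sr + (R^+\cup\{0\})$ bounds each simple-root coefficient of $\beta$ by the corresponding coefficient of $\om_2$ plus one. Applied to $\alpha_1$ and $\alpha_6$ this yields $a \leq 1$ and $b \leq 1$, leaving $(0,0)$, $(1,0)$, $(0,1)$, $(1,1)$. The trivial case is excluded since $\beta \neq 0$, and for $(a,b) = (1,1)$ the sum $\beta_1 + \beta_2$ has coefficient $3$ at $\alpha_3$ and $4$ at $\alpha_4$; checking each possible subtraction $\beta - \alpha_i$ shows that at least one coefficient always exceeds the corresponding coefficient of the highest root, so $\beta_1+\beta_2 \notin \sr + (R^+\cup\{0\})$. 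Thus $\beta \in \{\beta_1, \beta_2\}$.

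To conclude, \cite[Theorem 3.10]{bravi&cupit} applied to the free, $G'$-saturated monoid $\<E'\>_{\NN}$ shows that $\Vpggp$ is multiplicity-free, hence $\dim \Vpggp \leq 2$. The a priori bound $\dim T_{X_0}\tM^G_{\wm} \geq d_W = 2$ from \cite[Proposition 2.5(1)]{degensphermod} combined with the inclusions $T_{X_0}\tM^G_{\wm} \inn \Vgg \inn \Vpggp$ then forces all these dimensions to equal $2$, and the $\Tad$-weight set of $T_{X_0}\tM^G_{\wm}$ is exactly $\{\beta_1, \beta_2\}$. The main obstacle I anticipate is the bookkeeping in excluding the candidate $(a,b) = (1,1)$; in contrast with the K12 case, the highest-root estimate suffices here and no auxiliary weight-multiplicity computation (e.g.\ in \emph{LiE}) should be needed.
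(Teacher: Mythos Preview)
Your proof is correct and follows the same overall strategy as the paper: reduce to the $G'=\ssE_6$-module $V'=V(\om_1)\oplus V(\om_6)$, compute $\<E'\>_{\ZZ}\cap\Lambda_R=\<\beta_1,\beta_2\>_{\ZZ}$, pass to $\<\beta_1,\beta_2\>_{\NN}$ via the support argument, invoke \cite[Theorem~3.10]{bravi&cupit} for multiplicity-freeness, and conclude with the a~priori bound $\dim\tg\ms^G\ge d_W$.

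The one genuine difference is in how the set $\<\beta_1,\beta_2\>_{\NN}$ is cut down to $\{\beta_1,\beta_2\}$. The paper additionally invokes \cite[Theorem~3.1]{bravi&cupit}, which says that every $\Tad$-weight of $\Vpggp$ appears in Table~1 of \loccit; since no entry of that table supported on a subdiagram of $\ssE_6$ has a simple-root coefficient exceeding~$2$, this immediately rules out $2\beta_1$, $2\beta_2$ and $\beta_1+\beta_2$. You instead work only with the constraint $\beta\in\sr+(R^+\cup\{0\})$ from Proposition~\ref{prop:propsVggweights}, which is exactly the K12 template rather than the K14 one. This forces you to eliminate $(a,b)=(1,1)$ by hand, but your check is correct: $\beta_1+\beta_2=2\alpha_1+2\alpha_2+3\alpha_3+4\alpha_4+3\alpha_5+2\alpha_6$, and subtracting any simple root leaves either the $\alpha_1$-coefficient equal to~$2$ or the $\alpha_4$-coefficient equal to~$4$, both of which exceed the corresponding coefficients $1$ and~$3$ of the highest root. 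Your route trades one external reference (Theorem~3.1 and Table~1 of \cite{bravi&cupit}) for a short explicit verification; neither approach is materially longer than the other.
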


\begin{proof}
Note that $G' = \ssE_6$. Consider the $G'$-module $V' := V(\om_1) \oplus V(\om_6)$ and its element $x_0' = v_{\om_1} + v_{\om_6}$. 
Since $V(3\epsilon) \inn \fg\cdot x_0$, we have that
$\Vpggp \isom \Vggp$ as $\Tad$-modules. Observe that $G'_{x_0}=G'_{x_0'}$,

The monoid $p(\<E\>_{\NN}) = \<\om_1, \om_6\>_{\NN}$ is free and $G'$-saturated. By \cite[Theorems 3.1 and 3.10]{bravi&cupit}, $\Vpggp$ is multiplicity-free and its $\Tad$-weights belong to Table 1 in \cite[page 2810]{bravi&cupit}. By Proposition~\ref{prop:propsVggweights} the $\Tad$-weights of $\Vpggp$ also belong to $\Lambda_R \cap \<\om_1, \om_6\>_{\ZZ}$.
A straightforward computation shows that
\begin{equation} \label{eq:4}
\Lambda_R \cap \<\om_1, \om_6\>_{\ZZ} = 
\<\alpha_2 + \alpha_3 + 2\alpha_4 + 2\alpha_5 + 2\alpha_6 , 2\alpha_1
+ \alpha_2 + 2\alpha_3 + 2\alpha_4 + \alpha_5\>_{\ZZ}
\end{equation}
Observe that the support of each of the two generators of $\Lambda_R \cap
\<\om_1, \om_6\>_{\ZZ}$ in equation~(\ref{eq:4}) contains a simple
root not in the support of the other generator. Because the
$\Tad$-weights of $\Vpggp$ belong to $\<\sr\>_{\NN}$, it follows that they belong to $\<\alpha_2 + \alpha_3 + 2\alpha_4 + 2\alpha_5 + 2\alpha_6 , 2\alpha_1
+ \alpha_2 + 2\alpha_3 + 2\alpha_4 + \alpha_5\>_{\NN}$. Because none
of the $\Tad$-weights in \cite[Table 1]{bravi&cupit} supported on a
subdiagram of $\ssE_6$ has a coefficient greater than $2$, it follows
that the $\Tad$-weights of $\Vpggp$ are a subset of $\{\alpha_2 +
\alpha_3 + 2\alpha_4 + 2\alpha_5 + 2\alpha_6, 2\alpha_1
+ \alpha_2 + 2\alpha_3 + 2\alpha_4 + \alpha_5\}$. Since $d_W=2$, this proves the proposition. 
\end{proof}

\subsection{(K19) The module $(\Sp(2n) \times \CC^{\times} \times
  \CC^{\times}, \CC^{2n}\oplus \CC^{2n})$ with $2\leq n$} \label{subsec:case19}
  
For these modules,
\begin{align*}
&E =  \{\lambda_1, \lambda_2, \lambda_3, \lambda_4\};\\
&d_W = 2,
\end{align*}
where
\begin{align*}\lambda_1& :=\om_1 + \epsilon;& \lambda_2&:=\om_1 + \varepsilon';\\ \lambda_3&:=\om_2 + \varepsilon +
\varepsilon';& \lambda_4&:=\varepsilon +\varepsilon'.
\end{align*}  

Note that $G=\overline{G}$ is the only connected group between $\overline{G}'$ and $\overline{G}$ for which this module is spherical, cf.\ Remark~\ref{rem:sphericalforsubgroup}. Therefore, we can assume throughout this section that $G=\overline{G}= \Sp(2n) \times \CC^{\times} \times \CC^{\times}$. 

\begin{prop} \label{prop:case19}
The $\Tad$-module $\tg\ms^G$ is multiplicity-free and has $\Tad$-weight set 
\[\{\alpha_1, \alpha_1 + \gamma\},\]
where $\gamma = 2(\alpha_2+\alpha_3+\ldots+\alpha_{n-1}) + \alpha_n$ if $n>2$ and
$\gamma = \alpha_2$ if $n=2$. 
In particular, $\dim \tg\ms^G = d_W$.
\end{prop}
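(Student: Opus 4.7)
The plan is to follow the strategy of the proofs of Propositions~\ref{prop:case6} and~\ref{prop:case8}. Writing $\beta = a_1\lambda_1 + a_2\lambda_2 + a_3\lambda_3 + a_4\lambda_4$ for a $\Tad$-weight $\beta$ in $\tg\ms^G$, a first observation is that $\lambda_1, \lambda_2, \lambda_4$ are of codimension one by Proposition~\ref{prop:lambdacodim1}, while $\lambda_3$ is not. Since $\beta$ lies in the root lattice of $\Sp(2n)$ (which is also the root lattice of $G$), matching the $\varepsilon$ and $\varepsilon'$ coefficients forces $a_1 = a_2$ and $a_4 = -a_1 - a_3$. Substituting gives the expression $\beta = 2a_1\om_1 + a_3\om_2 = (2a_1 + a_3)\alpha_1 + (a_1 + a_3)\gamma$, using the identities $2\om_1 = 2\alpha_1 + \gamma$ and $\om_2 = \alpha_1 + \gamma$ in the root lattice of $\Sp(2n)$.

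The codimension-one elements are exploited through Proposition~\ref{prop:simprootweight}. Since no simple root pairs nontrivially with $\lambda_4$, the case $a_4 > 0$ is impossible, whence $a_4 \leq 0$, equivalently $a_1 + a_3 \geq 0$. If $a_1 > 0$ (so also $a_2 > 0$), the proposition forces $\beta = \alpha_1$ and identifies $(\tg\ms^G)^{\alpha_1}$ as the line spanned by the section induced by $[X_{-\alpha_1} v_{\lambda_1}]$. In the remaining case $a_1 \leq 0$, Proposition~\ref{prop:Tadweightnotantidom} forces $a_3 > 0$, and Proposition~\ref{prop:Kostant} yields $a_3 \leq 2$; a short enumeration using $\beta \in \<\sr\>_{\NN}$ leaves exactly three candidates, namely $\beta \in \{\gamma,\; \alpha_1 + \gamma,\; 2\alpha_1 + 2\gamma\}$.

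To rule out $\beta = \gamma = 2\varepsilon_2$ (as an element of the root lattice of $\Sp(2n)$ expressed in the $\varepsilon_i$ basis on page~\pageref{sp2ndata}) and $\beta = 2\alpha_1 + 2\gamma = 2\om_2$, I use the explicit description of $V(\om_1)$, $V(\om_2)$, and the $\sp(2n)$-root operators recalled on page~\pageref{sp2ndata}. A weight-space count shows that in each case $V^{\beta}$ is one-dimensional and supported entirely in $V(\lambda_3)$. For $\beta = \gamma$ the spanning vector coincides with $X_{-\gamma}\cdot v_{\lambda_3}$, which lies in $\fg\cdot x_0$, so $\Vgg^{\gamma} = 0$. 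For $\beta = 2\om_2$ the spanning vector is the lowest weight vector of $V(\lambda_3)$, and a direct computation shows $X_{\alpha_2}v$ is a nonzero vector entirely concentrated in $V(\lambda_3)$. This contradicts Proposition~\ref{prop:propsVggweights}(d) when $n \geq 3$ (since $2\om_2 - \alpha_2$ is not a root) and contradicts Proposition~\ref{prop:propsVggweights}(c) when $n = 2$ (since $X_{-(2\om_2 - \alpha_2)}\cdot x_0$ has nonzero projection onto $V(\lambda_1)$ and $V(\lambda_2)$, so cannot be proportional to $X_{\alpha_2}v$).

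The main obstacle is bounding the multiplicity of $\beta = \alpha_1 + \gamma = \om_2$ by one, since Proposition~\ref{prop:simprootweight} does not apply here (only $\lambda_3$, which is not of codimension one, has a positive coefficient in $\om_2$). Mimicking the argument of Claim C in the proof of Proposition~\ref{prop:case6}, I will define the subspace $Z \subset V^{\om_2}$ of vectors $v$ satisfying the compatibility conditions $X_{\alpha_1} v \in \<X_{-(\om_2 - \alpha_1)}\cdot x_0\>_{\CC}$, $X_{\alpha_2} v \in \<X_{-(\om_2 - \alpha_2)}\cdot x_0\>_{\CC}$, and $X_{\alpha_i} v = 0$ for $i \geq 3$ demanded by Proposition~\ref{prop:propsVggweights}. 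Parameterizing $V^{\om_2}$ by the one-dimensional $e_{-2}$-components in $V(\lambda_1)$ and $V(\lambda_2)$ together with the $(n-1)$-dimensional zero weight space of $V(\om_2)$ inside $V(\lambda_3)$, explicit evaluation of the root operators reduces these constraints to $\dim Z \leq 2$. Since $X_{-\om_2}\cdot x_0$ is a nonzero element of $Z \cap \fg\cdot x_0$, we conclude $\dim \Vgg^{\om_2} \leq \dim Z - 1 \leq 1$. Combined with the a priori lower bound $\dim \tg\ms^G \geq d_W = 2$ from \cite[Proposition 2.5(1)]{degensphermod} and the multiplicity one established for $\alpha_1$, this forces $\dim (\tg\ms^G)^{\om_2} = 1$ and completes the proof.
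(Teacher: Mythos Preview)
Your proof is correct and follows essentially the same strategy as the paper's proof: identify which $\lambda_i$ are of codimension one, use Proposition~\ref{prop:simprootweight} to handle the case where one of these has positive coefficient, enumerate the remaining candidate weights, rule out $\gamma$ and $2\alpha_1+2\gamma$ by direct inspection of weight spaces and Proposition~\ref{prop:propsVggweights}, and bound the multiplicity of $\alpha_1+\gamma$ via the subspace $Z$ and the root-operator computation. The only differences are cosmetic (you invoke Proposition~\ref{prop:Kostant} earlier to truncate the enumeration, use Proposition~\ref{prop:simprootweight} applied to $\lambda_4$ to obtain $a_4\le 0$, and work directly inside $V(\omega_2)$ rather than in $\wedge^2\CC^{2n}$ when bounding $\dim Z$); the substance is the same.
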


\begin{proof}
This proof is similar to that of Proposition~\ref{prop:case6}.
Let $\beta$ be a $\Tad$-weight in $\tg\ms^G$. By Proposition~\ref{prop:propsVggweights}, we know that $\beta \in \<\sr\>_{\NN} \cap \<E\>_Z$. A straightforward computation shows that $\Lambda_R \cap \<E\>_{\ZZ} = \<\alpha_1, \gamma\>_{\ZZ}$. Since $\alpha_1$ is not in the support of $\gamma$ and $\alpha_2$ is in the support of $\gamma$ but not in the support of $\alpha_1$, it follows that 
\begin{equation}
\beta \in \<\alpha_1, \gamma\>_{\NN}. \label{eq:beta19}
\end{equation}

By Proposition~\ref{prop:Tadweightnotantidom}, at least one element of $\{\lambda_1, \lambda_2, \lambda_3\}$ must have a positive coefficient in the expression of $\beta$ as a linear combination of the elements of $E$. Since $\lambda_1$ and $\lambda_2$ have codimension $1$, it follows from Proposition~\ref{prop:simprootweight} that if one of them has a positive coefficient, then $\beta = \alpha_1$ and $\beta$ has multiplicity one in $\tg\ms^G$. To finish the proof it is therefore enough to show the following four claims: 
\begin{description}
\item[Claim A] if $\lambda_3$ is the only element of $E$ which has a
positive coefficient in the expression of $\beta$ as a linear combination of the elements of $E$, then $\beta \in \{\alpha_1 + \gamma, \gamma\}$ if $n>2$ and $\beta \in \{\alpha_1 + \gamma, \gamma, 2\alpha_1 + 2\gamma\}$ if $n=2$. 
\item[Claim B] the $\Tad$-weight $\beta = \gamma$ does not occur in $\Vgg$, and therefore also not in the subspace $\tg \ms^G$.
\item[Claim C] suppose $n=2$; the $\Tad$-weight $\beta=2\alpha_1+2\gamma$ does not occur in $\Vgg$, and therefore also not in the supspace $\tg\ms^G$. 
\item[Claim D] the $\Tad$-weight $\beta= \alpha_1 + \gamma$ has multiplicity at most one in $\tg\ms^G$. 
\end{description}
To prove Claim A, we first observe that
\begin{align*}
\alpha_1 &= \lambda_1 + \lambda_2 - \lambda_3;\\
\gamma &= 2\lambda_3 - \lambda_1 - \lambda_2 -\lambda_4.
\end{align*}
It follows from \eqref{eq:beta19} and the hypothesis of Claim A, that there exist $A,B \in \NN$ with $B>0$ and $B\geq A$ such that
\begin{equation}
\beta = A\alpha_1 + B\gamma. \label{eq:beta19bis}
\end{equation}
For $n>2$ the only $\beta$ as in \eqref{eq:beta19bis} that satisfy Proposition~\ref{prop:propsVggweights}(\ref{item:betasum}) are $\gamma$ and $\alpha_1 + \gamma$. For $n=2$, there are three additional such $\beta$, namely $\beta = 2\gamma = 2\alpha_2$, $\beta=\alpha_1+2\gamma=\alpha_1+2\alpha_2$ and $\beta= 2\alpha_1 + 2\alpha_2$. Proposition~\ref{prop:Kostant} tells us that $\beta=2\alpha_2$ and $\beta=\alpha_1+2\alpha_2$ cannot occur as a $\Tad$-weight in $\tg\ms^G$. This finishes the proof of Claim A.

We proceed to Claim B. Let $\beta = \gamma$.  Observe that the $\Tad$-weight $\gamma$ does not occur in the $G$-modules $V(\lambda_1)$, $V(\lambda_2)$ and $V(\lambda_4)$. It occurs in $V(\lambda_3)$ with multiplicity one: the $\Tad$-weight space in $V(\lambda_3)$ of weight $\beta$ is spanned by $X_{-\beta} v_{\lambda_3}$. It follows that $\beta$ occurs with multiplicity one in $V$ and that its weight space is a subspace of $\fg\cdot x_0$. This proves Claim B. 

We move to Claim C. Let $n=2$ and $\beta = 2\alpha_1 + 2\alpha_2$. One deduces from the well-known decompositions into $T$-weight spaces of $V(\om_1)$ and $V(\om_2)$ that the $\Tad$-weight space in $V$ of $\Tad$-weight $\beta$ is a
line $\CC v$ in $V(\lambda_3) \inn V$. We prove Claim C
by contradiction. Indeed, since
$\alpha_2$ is the only simple root such that $\beta - \alpha_2 \in
R^+\cup \{0\}$, we have that  $0\neq [v] \in \Vgg$ would, by Proposition~\ref{prop:propsVggweights}, imply
that $X_{\alpha_2}\cdot v$ is a nonzero element of $\<X_{-(\beta -
  \alpha_2)}\cdot x_0\>_{\CC}$. This is absurd since $X_{-(\beta -
  \alpha_2)}\cdot x_0$ has nonzero projection onto the components $V(\lambda_1)$ and $V(\lambda_2)$ of $V$. Claim C is proved.
 
Finally, we show Claim D. We fix $\beta = \alpha_1 + \gamma$. We will show that the $\Tad$-weight $\beta$ has multiplicity at most one in $\Vgg$. Since $\tg\ms^G \inn \Vgg$, this implies Claim D. First off, we observe that the $\Tad$-weight $\beta$ can only occur in $V(\lambda_1)$, $V(\lambda_2)$ or $V(\lambda_3)$. Let $Z$ be the subspace of $V(\lambda_1) \oplus V(\lambda_2) \oplus V(\lambda_3)$ consisting of $\Tad$-eigenvectors $v$ of $\Tad$-weight $\beta$ that satisfy the following three conditions:

\begin{align}
&X_{\alpha_1} \cdot v \in \<X_{-(\beta - \alpha_1)}x_0\>_{\CC}; \label{eq:20c1} \\
&X_{\alpha_2} \cdot v \in \<X_{-(\beta - \alpha_2)}x_0\>_{\CC}; \text{ and}\label{eq:20c2} \\
&X_{\alpha_k} \cdot v = 0 \text{ for all $k \in \{1,2,\ldots,n\} \setminus \{1,2\}$}. \label{eq:20c3}
\end{align}
By Proposition~\ref{prop:propsVggweights}, every $v \in V^{\beta}$ with $0\neq [v] \in \Vgg$ satisfies \eqref{eq:20c1}, \eqref{eq:20c2} and \eqref{eq:20c3}. 
To show Claim D it is therefore enough to prove that 
\begin{equation}
\dim Z \leq 2, \label{eq:dimZ20leq2}
\end{equation}
since the nonzero vector $X_{-\beta}\cdot x_0$ belongs to $\fg\cdot x_0 \cap Z$. 

To prove the inequality (\ref{eq:dimZ20leq2}) we will make use of the explicit description of $\sp(2n)$ and its root operators given in the proof of \cite[Theorem 2.4.1]{goodman-wallach-gtm}, as well as the notations therein like we did in the proof of Proposition~\ref{prop:case6}; see page~\pageref{sp2ndata}.

Note that 
$$
\beta = \alpha_1 + \gamma = \om_2 = \eps_1 + \eps_2.
$$
We now identify the weight spaces of this $\Tad$-weight in the representations $V(\om_1)$ and $V(\om_2)$ of $\Sp(2n)$. 
A vector in $V(\om_1)$ has $\Tad$-weight $\beta$ if and only if it has $T$-weight $\om_1 - \beta =  - \eps_2$. We identify $V(\om_1)$ with the standard representation $\CC^{2n}$ of $\Sp(2n)$, which has $e_1$ as a highest weight vector. Then the $T$-weight space of weight $- \eps_2$ is the line spanned by
$e_{-2}.$  

A vector in $V(\om_2)$ has $\Tad$-weight $\beta$ if and only if it has $T$-weight $\om_2-\beta=0$. As is well-known, $V(\om_2)$ is the irreducible component of the $\Sp(2n)$-module $\wedge^2 \CC^{2n}$ generated by the highest weight vector $e_1\wedge e_2$. In the larger module $\wedge^2 \CC^{2n}$ the $T$-weight space of weight $0$ is spanned by the following vectors
\begin{equation} \label{eq:20Tweightspom2}
e_1 \wedge e_{-1}, e_2 \wedge e_{-2}, \ldots ,e_n \wedge e_{-n}.
\end{equation} 

It follows from the previous paragraph that if $v\in Z$, then there exist $A, B \in \CC$ and  $C_1, C_2, \ldots , C_n \in \CC$ such that
\begin{equation}
v = A(e_{-2} \otimes v_{\varepsilon}) + B(e_{-2} \otimes v_{\varepsilon'})  
+ \sum_{k=1}^n C_k (e_k \wedge e_{-k} \otimes v_{\varepsilon+\varepsilon'}). 
\end{equation}   
Straightforward computations using the root operators show that conditions (\ref{eq:20c1}), (\ref{eq:20c2}) and (\ref{eq:20c3}) imply that when $n>2$, 
\begin{align}
&A = B = C_3-C_2; \text{ and} \label{eq:20cvinZ1}\\
&C_3 = C_4 = \ldots = C_n,\label{eq:20cvinZ2}
\end{align}
and that when $n=2$,
\begin{align}
&A = B. \label{eq:20cvinZ1nis2}
\end{align}
Either way, this implies that $\dim Z\leq 3$. Note that the vector $v_1 \otimes v_{\varepsilon+\varepsilon'} \in \wedge^2\CC^{2n} \otimes V(\varepsilon+\varepsilon')$, where 
$$v_1 = \sum_{k=1}^n e_k \wedge e_{-k},$$ satisfies the equations (\ref{eq:20cvinZ1}) and (\ref{eq:20cvinZ2}). It is straightforward to check that $v_1$ is a highest weight vector. It follows that  $v_1$ is an element of the $\Sp(2n)$-stable complement to $V(\om_2)$ in $\wedge^2 \CC^{2n}$. Consequently, the line spanned by $v_1 \otimes v_{\varepsilon+\varepsilon'}$ is not contained in $Z$, and $\dim Z \leq 2$. This proves Claim D, and the proposition.
\end{proof}

\begin{remark}
Proceeding as in the proof of \cite[Proposition
5.14]{degensphermod}, one can show that in fact $\Vggp$ is a multiplicity-free $\Tad$-module with the same $\Tad$-weight set as $T_{X_0}\ms$.  
\end{remark}

\subsection{(K20) The modules $((\Sp(2n)\times \CC^{\times}) \times \GL(2), (\CC^{2n}\otimes \CC^2)\oplus \CC^2)$ with $2 \leq n$.}  

For these modules,
\begin{align*}
&E =  \{\lambda_1, \lambda_2, \lambda_3, \lambda_4, \lambda_5\};\\
&d_W = 3,
\end{align*}
where
\begin{align*}\lambda_1& :=\om_1';& \lambda_2&:=\om_1 + \varepsilon + \om_1';& \lambda_3&:=\om_1+\varepsilon +\om_2';\\ \lambda_4&:=\om_2 + 2\varepsilon + \om_2';& \lambda_5&:=2\varepsilon + \om_2'.
\end{align*}
We remark that $G = \overline{G}$ is the only connected group between $\overline{G}'$ and $\overline{G}$ for which these modules are spherical, cf.\ Remark~\ref{rem:sphericalforsubgroup}. Therefore, we assume $G = \overline{G} = \Sp(2n) \times \CC^{\times} \times \GL(2)$ throughout this section.

In this section we will prove the following proposition.
\begin{prop} \label{prop:case20}
The $\Tad$-module  $T_{X_0}\tM^G_{\wm}$ is multiplicity-free. Its $\Tad$-weight set is
\begin{equation}
\{\alpha_1, \alpha_1+\gamma, \alpha_1'\}, \label{eq:case20weights}
\end{equation}
where $\gamma_1= \alpha_2$ if $n=2$ and $\gamma = 2(\alpha_2+\alpha_3+\ldots+\alpha_{n-1}) + \alpha_n$ if $n>2$. 
In particular, 
$\dim T_{X_0}\tM^G_{\wm} = d_W$.
\end{prop}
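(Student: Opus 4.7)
The plan is to follow the structure of Proposition~\ref{prop:case19} closely. Using Proposition~\ref{prop:lambdacodim1}, a direct check shows that $\lambda_1,\lambda_2,\lambda_3,\lambda_5$ have codimension one and $\lambda_4$ does not, because for every simple root $\alpha\in\{\alpha_2,\ldots,\alpha_n\}$ the only element of $E$ with $\<\alpha^\vee,\cdot\>\ne 0$ is $\lambda_4$. A direct linear-algebra computation also yields
\[
\<E\>_\Z\cap\Lambda_R \;=\; \<\alpha_1,\,\alpha_1',\,\gamma\>_\Z,
\]
so any $\Tad$-weight $\beta$ of $\tg\ms^G$ can be written as $\beta=A\alpha_1+B\alpha_1'+C\gamma$ with $A,B,C\in\NN$ (using $\beta\in\<\sr\>_\NN$).

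Write $\beta=\sum_\lambda a_\lambda\lambda$. By Proposition~\ref{prop:Tadweightnotantidom} some $a_\lambda>0$. If the corresponding $\lambda$ is of codimension one, Proposition~\ref{prop:simprootweight} forces $\beta$ to be a simple root of $G$ in $\<E\>_\Z$ paired nontrivially against $\lambda$. For $n>2$ the only such simple roots are $\alpha_1$ and $\alpha_1'$; for $n=2$ one additionally has $\alpha_2=\gamma$, but every codim-one $\lambda$ with positive coefficient in $\alpha_2$ satisfies $\<\alpha_2^\vee,\lambda\>=0$, so Proposition~\ref{prop:simprootweight} excludes it. This branch therefore contributes only $\alpha_1$ and $\alpha_1'$, each with multiplicity one.

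The remaining branch is that only $\lambda_4$ has positive coefficient. The conversion formulas
\[
a_1=-A+B+C,\;\; a_2=A+B-C,\;\; a_3=A-B-C,\;\; a_4=-A+2C,\;\; a_5=-C,
\]
together with $a_1,a_2,a_3,a_5\le 0<a_4$ and $A,B,C\ge 0$, force $A=B+C$, $B=0$ and $C\ge 1$; hence $\beta=C(\alpha_1+\gamma)$. Proposition~\ref{prop:Kostant} bounds $a_4=C\le 2$, and for $n>2$ the bound $2C\le 3$ coming from $\beta\in\sr+(R^+\cup\{0\})$ (the coefficient of $\alpha_2$ in $\beta$ being $2C$) pins $C=1$. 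For $n=2$ the extra candidate $\beta=2(\alpha_1+\gamma)=2e_1+2e_2$ survives a priori; I would rule it out directly by observing that the $\Tad$-weight space in $V$ is the single line $\<e_{-1}\wedge e_{-2}\otimes v_\varepsilon^2\otimes v_{\om_2'}\>_\C\inn V(\lambda_4)$, and the image of that vector under $X_{\alpha_2}$ is not a scalar multiple of $X_{-2e_1}\cdot x_0$, since the latter has nonzero components in $V(\lambda_2)\oplus V(\lambda_3)$, violating Proposition~\ref{prop:propsVggweights}(c).

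The main obstacle is to show that $\beta=\alpha_1+\gamma=e_1+e_2$ occurs with multiplicity at most one in $\tg\ms^G$. I would mimic Claim~D in the proof of Proposition~\ref{prop:case19}: the $\Tad$-weight space $V^\beta$ is $(n+1)$-dimensional; parametrize a general element as
\(v=a u_1+b u_2+\sum_{k=1}^n c_k w_k\)
with $u_1=e_{-2}\otimes v_\varepsilon\otimes v_{\om_1'}$, $u_2=e_{-2}\otimes v_\varepsilon\otimes v_{\om_2'}$ and $w_k=e_k\wedge e_{-k}\otimes v_\varepsilon^2\otimes v_{\om_2'}$. Applying Proposition~\ref{prop:propsVggweights}(c),(d) for each simple root via the explicit $\sp(2n)$-action recalled on page~\pageref{sp2ndata} should yield the equations $a=b$, $c_3=c_2+a$ and $c_3=c_4=\cdots=c_n$, cutting the subspace of admissible $v$ down to dimension at most three. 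One of those directions corresponds to $v_1\otimes v_\varepsilon^2\otimes v_{\om_2'}$ with $v_1=\sum_k e_k\wedge e_{-k}$, which is a highest weight vector spanning the $\Sp(2n)$-stable complement of $V(\om_2)$ inside $\wedge^2\C^{2n}$ and so does not correspond to any element of $V(\lambda_4)\inn V$; hence the effective dimension is at most two. Quotienting by the one-dimensional $\fg\cdot x_0\cap V^\beta=\<X_{-e_1-e_2}\cdot x_0\>_\C$ gives multiplicity at most one in $\Vgg$, and combining with the a priori bound $\dim\tg\ms^G\ge d_W=3$ of \cite[Proposition 2.5(1)]{degensphermod} finishes the proof.
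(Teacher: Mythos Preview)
Your proof is correct and follows essentially the same approach as the paper: identify the codimension-one elements of $E$, invoke Proposition~\ref{prop:simprootweight} for that branch, reduce the remaining ``only $\lambda_4$ positive'' branch to $\beta\in\<\alpha_1+\gamma\>_{\NN}$ via the lattice computation and inequalities, exclude $2(\alpha_1+\gamma)$ for $n=2$ by the same argument as Claim~C of Proposition~\ref{prop:case19}, and bound the multiplicity of $\alpha_1+\gamma$ exactly as in Claim~D of Proposition~\ref{prop:case19}. The only cosmetic slips are that your ``$(n+1)$-dimensional'' count refers to $V^\beta\subset V$ while your parametrization with $n+2$ scalars lives in the larger space using $\wedge^2\CC^{2n}$, and that for $n=2$ the relation $c_3=c_2+a$ should be omitted (leaving only $a=b$, as in the paper's treatment of K19); neither affects the argument.
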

\begin{proof}
The argument is very similar to that of Proposition~\ref{prop:case6}. 
Let $\beta$ be a $\Tad$-weight in $\tg\ms^G$. 
It follows from Proposition~\ref{prop:Tadweightnotantidom} that at
least one $\lambda \in E$ has a positive coefficient in the
expression of $\beta$ as a $\ZZ$-linear combination of elements of
$E$. Note that all elements of $E$ except $\lambda_4$ have codimension
$1$. In particular, if $\lambda \neq \lambda_4$, then it follows
from Proposition~\ref{prop:simprootweight} that $\beta$ is a simple
root belonging to the set~\eqref{eq:case20weights}, and that its weight space has
dimension one. Consequently, to prove the proposition, what remains is to show
the following two claims: 
\begin{description}
\item[Claim A] if $\lambda_4$ is the only element of $E$ which has a
positive coefficient in the expression of $\beta$, then $\beta= \alpha_1 + \gamma$. 
\item[Claim B] the $\Tad$-weight $\beta= \alpha_1 + \gamma$ has multiplicity at most one in $\tg\ms^G$. 
\end{description}

We begin with Claim A. Recall from Proposition~\ref{prop:propsVggweights} that $\beta \in \<E\>_{\ZZ} \cap \<\Pi\>_{\NN}$. Straightforward computations show that
\begin{equation} \label{eq:case20lat}
\<E\>_{\ZZ} \cap \Lambda_R =
\<\alpha_1, \alpha_1',\gamma\>_{\ZZ}
\end{equation}
and that
\begin{align*}
\alpha_1 &= \lambda_2 + \lambda_3 - \lambda_1 - \lambda_4; \\
\alpha'_1 &= \lambda_1 + \lambda_2 - \lambda_3;\\
\gamma &= 2\lambda_4 + \lambda_1 - \lambda_2 - \lambda_3 - \lambda_5.
\end{align*}
Let $K$ be the basis of $\<E\>_{\ZZ} \cap \Lambda_R$ given in equation~\eqref{eq:case20lat}. 
Since $\beta \in \<\sr\>_{\NN}$ and all elements of $K$ contain a simple root in their support, which is not in the support of any other element of $K$, it follows that $\beta \in \<K\>_{\NN}$. Therefore, there exist $A,B, C \in \NN$ such that 
\begin{equation}
\beta = A \alpha_1 + B \alpha'_1 + C \gamma. 
\end{equation}
From the hypothesis of Claim A, it follows that
\begin{equation}
\begin{cases}
-A+B+C\leq 0;\\
A+B-C\leq 0;\\ 
A-B-C\leq 0.
 \label{eq:c20ineqs}
\end{cases}
\end{equation}
Adding the first two inequalities in~\eqref{eq:c20ineqs} yields that $B=0$. After substituting $B=0$, the first two inequalities yield that $A=C$. It follows that $\beta \in \<\alpha_1 + \gamma\>_{\NN}$. Using that $\beta$ is the sum of a simple root and an element of $\pr \cup \{0\}$ (see Proposition~\ref{prop:propsVggweights}) it follows that $\beta = \alpha_1 + \gamma$ when $n>2$ and that $\beta \in \{\alpha_1 + \gamma, 2\alpha_1 + 2\gamma\}$ when $n=2$. With an argument like that for Claim C in the proof of Proposition~\ref{prop:case19}, one shows that $2\alpha_1 + 2\gamma$ cannot occur as a $\Tad$-weight in $\Vgg$ when $n=2$.  This proves Claim A.

The argument for Claim B is the same as that for Claim D in Proposition~\ref{prop:case19} above.
\end{proof}

\subsection{(K22) The modules $((\Sp(2m) \times \CC^{\times}) \times \SL(2) \times \GL(n),
  (\CC^{2m} \otimes \CC^2) \oplus (\CC^2 \otimes \CC^n))$ with $2 \leq
  m,n$}
  
For these modules,
\begin{align*}
&E =  \{\lambda_1, \lambda_2, \lambda_3, \lambda_4, \lambda_5, \lambda_6\};\\
&d_W = 4,
\end{align*}
where
\begin{align*}\lambda_1& :=\om_1 + \varepsilon + \om';& \lambda_2&:=\om' + \om_1'';& \lambda_3&:=\om_1+\varepsilon +\om_1'';\\ \lambda_4&:=\om_2 + 2\varepsilon;& \lambda_5&:= \om_2'' ; & \lambda_6&:=2\varepsilon.
\end{align*}

In this section we will prove the following proposition.
\begin{prop} \label{prop:case22}
The $\Tad$-module  $T_{X_0}\tM^G_{\wm}$ is multiplicity-free. Its $\Tad$-weight set is
\begin{equation}
\{\alpha_1, \alpha_1+\gamma,\alpha', \alpha_1''\}, \label{eq:case22weights}
\end{equation}
where $\gamma= \alpha_2$ if $m=2$ and $\gamma = 2(\alpha_2+\alpha_3+\ldots+\alpha_{m-1}) + \alpha_m$ if $m>2$. 
In particular, 
$\dim T_{X_0}\tM^G_{\wm} = d_W$.
\end{prop}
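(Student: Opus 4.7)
The plan is to follow the strategy of Proposition~\ref{prop:case19} (case K19) and Proposition~\ref{prop:case20} (case K20). Let $\beta$ be a $\Tad$-weight in $\tg\ms^G$. By Proposition~\ref{prop:lambdacodim1} each of $\lambda_1,\lambda_2,\lambda_3$ is of codimension one, whereas $\lambda_4$ is not (the simple root $\alpha_2$ appears in the support of no other element of $E$). By Proposition~\ref{prop:Tadweightnotantidom} at least one $\lambda\in E$ has a positive coefficient in the $\ZZ$-expression of $\beta$ in terms of $E$; if such a $\lambda$ lies in $\{\lambda_1,\lambda_2,\lambda_3\}$, Proposition~\ref{prop:simprootweight} forces $\beta$ to be one of $\alpha_1,\alpha',\alpha_1''$, with one-dimensional weight space spanned by $X_{-\beta}\cdot v_\lambda$. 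Combined with the a priori bound $\dim\tg\ms^G\ge d_W=4$ from \cite[Proposition 2.5(1)]{degensphermod}, it remains to establish three claims parallel to those of Proposition~\ref{prop:case19}:

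\emph{Claim A}: If none of $\lambda_1,\lambda_2,\lambda_3$ has a positive coefficient in $\beta$, then $\beta=\alpha_1+\gamma$ when $m\ge 3$, and $\beta\in\{\alpha_1+\gamma,\,2\alpha_1+2\gamma\}$ when $m=2$. \emph{Claim B}: When $m=2$, the $\Tad$-weight $2\alpha_1+2\gamma$ does not occur in $\Vgg$. \emph{Claim C}: The $\Tad$-weight $\alpha_1+\gamma$ has multiplicity at most one in $\tg\ms^G$.

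For Claim A, I would first verify directly that
\[\alpha_1=\lambda_1+\lambda_3-\lambda_2-\lambda_4,\quad \alpha'=\lambda_1+\lambda_2-\lambda_3,\quad \alpha_1''=-\lambda_1+\lambda_2+\lambda_3-\lambda_5,\quad \alpha_1+\gamma=\lambda_4-\lambda_6,\]
and check that these four roots form a $\ZZ$-basis of $\<E\>_{\ZZ}\cap\Lambda_R$. Since each generator contains in its simple-root support a simple root absent from the supports of the other three, Proposition~\ref{prop:propsVggweights} together with equation~\eqref{eq:betasumm} forces $\beta=A\alpha_1+B\alpha'+C\alpha_1''+D(\alpha_1+\gamma)$ with $A,B,C,D\in\NN$. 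Re-expanding in $E$-coordinates and imposing nonpositivity of the coefficients of $\lambda_1,\lambda_2,\lambda_3$ yields the linear system whose only solution is $A=B=C=0$. Proposition~\ref{prop:Kostant} then bounds $D\le 2$, and Proposition~\ref{prop:propsVggweights}(\ref{item:betasum}), combined with the fact that for $m\ge 3$ no root of $\Sp(2m)$ has $\alpha_2$-coefficient greater than $2$, rules out $D=2$ unless $m=2$. Claim B follows from the observation that $V^{2\alpha_1+2\gamma}$ is a line inside $V(\lambda_4)$ whereas $X_{-(2\alpha_1+\alpha_2)}\cdot x_0$ has nonzero projection onto $V(\lambda_1)$ and $V(\lambda_3)$, which contradicts Proposition~\ref{prop:propsVggweights}(\ref{item:betaminalpharoot}) applied to the unique simple root $\alpha_2$ with $\beta-\alpha\in R^+$.

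The main obstacle is Claim C, but it reduces almost verbatim to Claim D in the proof of Proposition~\ref{prop:case19}. Indeed $\alpha_1+\gamma=\om_2$ is a fundamental weight of $\Sp(2m)$ alone, so $V^{\alpha_1+\gamma}\subseteq V(\lambda_1)\oplus V(\lambda_3)\oplus V(\lambda_4)$, whose $\Sp(2m)$-components are $V(\om_1),V(\om_1),V(\om_2)$ respectively---exactly matching the $\Sp$-content of the three summands appearing in Claim D of Proposition~\ref{prop:case19}. Defining $Z$ to be the subspace of $V^{\alpha_1+\gamma}\cap(V(\lambda_1)\oplus V(\lambda_3)\oplus V(\lambda_4))$ satisfying the three eigenvalue conditions of Proposition~\ref{prop:propsVggweights}(\ref{item:betaminalpharoot}) and~(\ref{item:betaminalphanotroot}) for $\alpha_1$, $\alpha_2$, and $\alpha_k$ ($k\notin\{1,2\}$), parametrizing a general element via scalars $A,B\in\CC$ on the two $V(\om_1)$-summands and $C_1,\dots,C_m\in\CC$ on the zero-weight basis $\{e_k\wedge e_{-k}\}_{k=1}^m$ of $\wedge^2\CC^{2m}\supseteq V(\om_2)$, and applying the $\sp(2m)$-root operators recalled on page~\pageref{sp2ndata}, produces essentially the same linear system as in Proposition~\ref{prop:case19}, whose solution space has dimension at most $3$. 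The highest-weight vector $v_1=\sum_{k=1}^m e_k\wedge e_{-k}$ of the $\Sp(2m)$-stable complement to $V(\om_2)$ in $\wedge^2\CC^{2m}$ contributes one direction of this solution space that is not contained in $Z$, giving $\dim Z\le 2$. Since $X_{-(\alpha_1+\gamma)}\cdot x_0$ is a nonzero element of $\fg\cdot x_0\cap Z$, the image of $Z$ in $\Vgg$ has dimension at most one, establishing Claim C.
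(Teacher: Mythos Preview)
Your argument follows the paper's approach closely and is essentially correct, but there is one small slip in Claim~A. You assert that in your basis $\{\alpha_1,\alpha',\alpha_1'',\alpha_1+\gamma\}$ ``each generator contains in its simple-root support a simple root absent from the supports of the other three,'' and hence $A,B,C,D\in\NN$. This fails for the generator $\alpha_1$: its support is $\{\alpha_1\}$, and $\alpha_1$ also lies in the support of $\alpha_1+\gamma$. Consequently the nonnegativity of the $\alpha_1$-coefficient of $\beta$ only gives $A+D\ge 0$, not $A\ge 0$; for instance $\beta=\gamma$ (which lies in $\<\Pi\>_{\NN}\cap\<E\>_{\ZZ}$) has $A=-1$, $D=1$ in your basis.

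Fortunately this does not damage the conclusion. With only $B,C,D\ge 0$, the three inequalities on the $\lambda_1,\lambda_2,\lambda_3$-coefficients still force $B=0$, then $A=C$, then $2A\le 0$, and since $C\ge 0$ and $A=C$ one gets $A=C=0$. So Claim~A survives. The paper avoids the issue by using the basis $K=\{\alpha_1,\gamma,\alpha',\alpha_1''\}$ instead of $\{\alpha_1,\alpha_1+\gamma,\alpha',\alpha_1''\}$; for the paper's $K$ the ``unique simple root in the support'' property genuinely holds, so all four coefficients are nonnegative from the outset. Otherwise your Claims~B and~C match the paper's treatment (the paper likewise reduces the multiplicity bound for $\alpha_1+\gamma$ to the computation already carried out for K19).
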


\begin{proof}
The argument is very similar to that of Propositions \ref{prop:case6}, \ref{prop:case19} and \ref{prop:case20}. 
Let $\beta$ be a $\Tad$-weight in $\tg\ms^G$. 
It follows from Proposition~\ref{prop:Tadweightnotantidom} that at
least one $\lambda \in E$ has a positive coefficient in the
expression of $\beta$ as a $\ZZ$-linear combination of elements of
$E$. Note that all elements of $E$ except $\lambda_4$ and $\lambda_5$ have codimension
$1$. In particular, if $\lambda \not\in \{\lambda_4,\lambda_5\}$, then it follows
from Proposition~\ref{prop:simprootweight} that $\beta$ is a simple
root belonging to the set~\eqref{eq:case22weights}, and that its weight space has
dimension one. 

Consequently, to prove the proposition, what remains is to show
the following two claims: 
\begin{description}
\item[Claim A] if $\lambda_4$ or $\lambda_5$ are the only elements of $E$ which have a
positive coefficient in the expression of $\beta$, then $\beta= \alpha_1 + \gamma$. 
\item[Claim B] the $\Tad$-weight $\beta= \alpha_1 + \gamma$ has multiplicity at most one in $\tg\ms^G$. 
\end{description}

We begin with Claim A. Recall from Proposition~\ref{prop:propsVggweights} that $\beta \in \<E\>_{\ZZ} \cap \<\Pi\>_{\NN}$. Consequently $\beta \in p(\<E\>_{\ZZ}) \cap \Lambda_R$. Straightforward computations show that
\begin{equation} \label{eq:case22lat}
p(\<E\>_{\ZZ}) \cap \Lambda_R =
\<\alpha_1,\alpha' \alpha_1'',\gamma\>_{\ZZ}
\end{equation}
and that
\begin{align*}
\alpha_1 &= \lambda_1 + \lambda_3 - \lambda_2 - \lambda_4; \\
\alpha' &= \lambda_1 + \lambda_2 - \lambda_3;\\
\alpha_1''&= \lambda_2 + \lambda_3 - \lambda_1 - \lambda_5;\\
\gamma &= 2\lambda_4 + \lambda_2 - \lambda_1 - \lambda_3 - \lambda_6.
\end{align*}
Let $K$ be the basis of $p(\<E\>_{\ZZ}) \cap \Lambda_R$ given in equation~\eqref{eq:case22lat}. 
Since $\beta \in \<\sr\>_{\NN}$ and all elements of $K$ contain a simple root in their support, which is not in the support of any other element of $K$, it follows that $\beta \in \<K\>_{\NN}$. Therefore, there exist $A_1, A_2, A_3, A_4 \in \NN$ such that 
\begin{equation}
\beta = A_1 \alpha_1 + A_2 \gamma + A_3 \alpha' + A_4 \alpha_1''. 
\end{equation}
From the hypothesis of Claim A, it follows that
\begin{equation}
\begin{cases}
A_1-A_2 + A_3 - A_4\leq 0\\
-A_1+A_2+A_3 +A_4\leq 0\\ 
A_1-A_2-A_3+A_4\leq 0
 \label{eq:c22ineqs}
\end{cases}
\end{equation}
Adding the last two inequalities in~\eqref{eq:c22ineqs} yields that $A_4=0$. After substituting $A_4=0$, the first two inequalities yield that $A_3=0$ and then that $A_1=A_2$. It follows that $\beta \in \<\alpha_1 + \gamma\>_{\NN}$. Using that $\beta$ is the sum of a simple root and an element of $\pr \cup \{0\}$ (see Proposition~\ref{prop:propsVggweights}) it follows that $\beta = \alpha_1 + \gamma$  when $m>2$ and that $\beta \in \{\alpha_1 + \gamma, 2\alpha_1 + 2\gamma\}$ when $m=2$. With an argument like that for Claim C in the proof of Proposition~\ref{prop:case19}, one shows that $2\alpha_1 + 2\gamma$ cannot occur as a $\Tad$-weight in $\Vgg$ when $m=2$. This proves Claim A.

We now proceed to Claim B. We fix $\beta = \alpha_1 + \gamma$. We will show that the $\Tad$-weight $\beta$ has multiplicity at most one in $\Vgg$. Since $\tg\ms^G \inn \Vgg$, this implies Claim B. First off, we note that the $\Tad$-weight $\beta$ only occurs in $V(\lambda_1)$, $V(\lambda_3)$ and in $V(\lambda_4)$, since $\beta$ belongs to the root lattice of $\Sp(2n)$. Let $Z$ be the subspace of $V(\lambda_1) \oplus V(\lambda_3) \oplus V(\lambda_4)$ consisting of $\Tad$-eigenvectors $v$ of $\Tad$-weight $\beta$ that satisfy the following three conditions:

\begin{align}
&X_{\alpha_1} \cdot v \in \<X_{-(\beta - \alpha_1)}x_0\>_{\CC}; \label{eq:22c1} \\
&X_{\alpha_2} \cdot v \in \<X_{-(\beta - \alpha_2)}x_0\>_{\CC}; \text{ and}\label{eq:22c2} \\
&X_{\alpha_k} \cdot v = 0 \text{ for all $k \in \{1,2,\ldots,n\} \setminus \{1,2\}$}. \label{eq:22c3}
\end{align}
By Proposition~\ref{prop:propsVggweights}, every $v \in V^{\beta}$ with $0\neq [v] \in \Vgg$ satisfies \eqref{eq:22c1}, \eqref{eq:22c2} and \eqref{eq:22c3}.  
To show Claim B it is therefore enough to prove that 
\begin{equation}
\dim Z \leq 2, \label{eq:dimZ22leq2}
\end{equation}
since the nonzero vector $X_{-\beta}\cdot x_0$ belongs to $\fg\cdot x_0 \cap Z$. The proof of (\ref{eq:dimZ22leq2}) is the same as that of (\ref{eq:dimZ20leq2}). 
\end{proof}

\subsection{(K23) The modules $((\Sp(2m) \times \CC^{\times}) \times \SL(2) \times (\Sp(2n)
  \times \CC^{\times}),
  (\CC^{2m} \otimes \CC^2) \oplus (\CC^2 \otimes \CC^{2n}))$ with $2 \leq
  m,n$}

For these modules,
\begin{align*}
&E =  \{\lambda_1, \lambda_2, \lambda_3, \lambda_4, \lambda_5, \lambda_6, \lambda_7\};\\
&d_W = 5,
\end{align*}
where
\begin{align*}\lambda_1& :=\om_1 + \varepsilon + \om';& \lambda_2&:=\om' + \om_1'' + \varepsilon';& \lambda_3&:=\om_1+\varepsilon +\om_1'' + \varepsilon';& \lambda_4&:=\om_2 + 2\varepsilon;\\ \lambda_5&:= \om_2''+2\varepsilon' ; & \lambda_6&:=2\varepsilon; & \lambda_7 &:= 2\varepsilon'.
\end{align*}

We remark that $G = \overline{G}$ is the only connected group between $\overline{G}'$ and $\overline{G}$ for which these modules are spherical, cf.\ Remark~\ref{rem:sphericalforsubgroup}.  Therefore, we assume $G = \overline{G} = (\Sp(2m) \times \CC^{\times}) \times \SL(2) \times (\Sp(2n)
  \times \CC^{\times})$ throughout this section.

In this section we will prove the following proposition.
\begin{prop} \label{prop:case23}
The $\Tad$-module  $T_{X_0}\tM^G_{\wm}$ is multiplicity-free. Its $\Tad$-weight set is
\begin{equation}
\{\alpha_1, \alpha_1+\gamma,\alpha', \alpha_1'', \alpha_1''+\gamma''\}, \label{eq:case23weights}
\end{equation}
where 
\begin{align*}
\gamma&= \begin{cases} \alpha_2 &\text{if $m=2$};
\\ 2(\alpha_2+\alpha_3+\ldots+\alpha_{m-1}) + \alpha_m &\text{if $m>2$};
\end{cases}\\
\gamma''&= \begin{cases} \alpha''_2 &\text{if $n=2$};
\\ 2(\alpha''_2+\alpha''_3+\ldots+\alpha''_{n-1}) + \alpha''_n &\text{if $n>2$}.
\end{cases}
\end{align*}
In particular, 
$\dim T_{X_0}\tM^G_{\wm} = d_W$.
\end{prop}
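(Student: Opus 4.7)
The plan is to follow the structure of the proofs of Propositions~\ref{prop:case20} and~\ref{prop:case22} (cases K20 and K22), adapted to the doubly symmetric setting of K23. First, Proposition~\ref{prop:lambdacodim1} shows that $\lambda_1, \lambda_2, \lambda_3, \lambda_6, \lambda_7$ have codimension one while $\lambda_4, \lambda_5$ do not; for $\lambda_6, \lambda_7$ the criterion is vacuous as they are central characters. Let $\beta$ be a $\Tad$-weight in $T_{X_0}\tM^G_\wm$. By Proposition~\ref{prop:Tadweightnotantidom} some $\lambda_i$ has positive coefficient in $\beta$. If $i \in \{1, 2, 3\}$ then Proposition~\ref{prop:simprootweight} forces $\beta$ to be a simple root pairing nontrivially with $\lambda_i$, so $\beta \in \{\alpha_1, \alpha', \alpha_1''\}$ with one-dimensional weight space. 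The cases $i \in \{6, 7\}$ do not occur, because they would require $\<\beta^\vee, \lambda_i\> \neq 0$ for a simple root $\beta$, contradicting the centrality of $\lambda_i$.

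It remains to handle the case where only $\lambda_4$ and/or $\lambda_5$ have positive coefficient in $\beta$. Claim~A asserts that in this situation $\beta \in \{\alpha_1 + \gamma, \alpha_1'' + \gamma''\}$. To prove it, I would compute, in the style of Proposition~\ref{prop:case22}, the expressions
\begin{align*}
\alpha_1 &= \lambda_1 + \lambda_3 - \lambda_2 - \lambda_4, & \alpha' &= \lambda_1 + \lambda_2 - \lambda_3,\\
\alpha_1'' &= \lambda_2 + \lambda_3 - \lambda_1 - \lambda_5, & \gamma &= 2\lambda_4 + \lambda_2 - \lambda_1 - \lambda_3 - \lambda_6,\\
\gamma'' &= 2\lambda_5 + \lambda_1 - \lambda_2 - \lambda_3 - \lambda_7.
\end{align*}
Since each of $\alpha_1, \gamma, \alpha', \alpha_1'', \gamma''$ contains a ``private'' simple root and $\beta \in \<\sr\>_\NN$ by Proposition~\ref{prop:propsVggweights}(a), $\beta$ can be written as $\beta = A_1 \alpha_1 + A_2 \gamma + A_3 \alpha' + A_4 \alpha_1'' + A_5 \gamma''$ with $A_i \in \NN$. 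The hypothesis that $\lambda_1, \lambda_2, \lambda_3$ have nonpositive coefficient in $\beta$ yields three linear inequalities which, added pairwise, force $A_3 = 0$ and $A_2 - A_1 = A_5 - A_4 =: a \geq 0$, hence $\beta = A_1(\alpha_1 + \gamma) + A_4(\alpha_1'' + \gamma'') + a(\gamma + \gamma'')$. Now Proposition~\ref{prop:Kostant} bounds the sum of positive $\lambda_i$-coefficients of $\beta$ by $2$, giving $A_1 + A_4 + 4a \leq 2$ and in particular $a = 0$; moreover Proposition~\ref{prop:propsVggweights}(a) forces $\beta - \alpha \in R^+ \cup \{0\}$ for some simple $\alpha$, and since positive roots of $G$ are supported in a single simple factor this excludes $A_1 = A_4 = 1$. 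One concludes $\beta \in \{\alpha_1 + \gamma, \alpha_1'' + \gamma''\}$ when $m, n > 2$; the additional candidate $2(\alpha_1 + \gamma)$ for $m = 2$ (resp.\ $2(\alpha_1'' + \gamma'')$ for $n = 2$) is excluded by the explicit weight-space computation used in Claim~C of Proposition~\ref{prop:case19}.

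For Claim~B (multiplicity at most one of each of $\alpha_1 + \gamma$ and $\alpha_1'' + \gamma''$ in $\Vgg$), the $m \leftrightarrow n$ symmetry reduces to $\beta = \alpha_1 + \gamma = \om_2$, which is handled exactly as in Claim~B of Proposition~\ref{prop:case22}: the weight $\beta$ appears only in $V(\lambda_1), V(\lambda_3), V(\lambda_4)$ since it is supported on the $\Sp(2m)$ factor; one defines the subspace $Z$ of $\Tad$-eigenvectors satisfying the analogues of conditions~\eqref{eq:22c1}--\eqref{eq:22c3}; shows $\dim Z \leq 2$ using the explicit $\sp(2m)$ root-operator formulas recorded on page~\pageref{sp2ndata}; and concludes via $X_{-\beta} \cdot x_0 \in \fg \cdot x_0 \cap Z$. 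Combined with the simple-root contributions, this gives $\dim \tg\ms^G \leq 5$, and the lower bound $\dim \tg\ms^G \geq d_W = 5$ from \cite[Proposition~2.5(1)]{degensphermod} completes the proof. The main obstacle will be the bookkeeping in Claim~A: systematically eliminating the ``mixed'' configurations with $a > 0$ or with both $A_1, A_4 > 0$, a new difficulty compared to K20 and K22 where only a single non-codimension-one weight was present.
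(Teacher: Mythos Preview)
Your proposal is correct and follows essentially the same route as the paper's proof: identify the codimension-one weights, use Proposition~\ref{prop:simprootweight} to reduce to Claims~A and~B, carry out the same lattice/inequality analysis (the paper uses the letter $C$ where you use $a$, and reaches the same list $\{\beta_1,\ldots,\beta_5\}$), exclude the mixed and doubled candidates via $\sr+(\pr\cup\{0\})$ and the Claim~C argument of Proposition~\ref{prop:case19}, and handle Claim~B by the same $\dim Z\le 2$ computation as in K20/K22. The only cosmetic difference is that you spell out explicitly why positive coefficients at $\lambda_6,\lambda_7$ are impossible, whereas the paper absorbs this into the blanket application of Proposition~\ref{prop:simprootweight}.
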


\begin{proof}
The argument is very similar to that of Propositions \ref{prop:case6}, \ref{prop:case20} and \ref{prop:case22}. 
Let $\beta$ be a $\Tad$-weight in $\tg\ms^G$. 
It follows from Proposition~\ref{prop:Tadweightnotantidom} that at
least one $\lambda \in E$ has a positive coefficient in the
expression of $\beta$ as a $\ZZ$-linear combination of elements of
$E$. Note that all elements of $E$ except $\lambda_4$ and $\lambda_5$ have codimension
$1$. In particular, if $\lambda \not\in \{\lambda_4,\lambda_5\}$, then it follows
from Proposition~\ref{prop:simprootweight} that $\beta$ is a simple
root belonging to the set~\eqref{eq:case22weights}, and that its weight space has
dimension one. 

Consequently, to prove the proposition, what remains is to show
the following two claims: 
\begin{description}
\item[Claim A] if $\lambda_4$ or $\lambda_5$ are the only element of $E$ which have a
positive coefficient in the expression of $\beta$, then $\beta= \alpha_1 + \gamma$ or $\beta= \alpha_1''+\gamma''$. 
\item[Claim B] the $\Tad$-weights $\beta=\alpha_1 + \gamma$ and $\beta''=\alpha_1''+\gamma_1''$ have multiplicity at most one in $\tg\ms^G$. 
\end{description}

We begin with Claim A. Recall from Proposition~\ref{prop:propsVggweights} that $\beta \in \<E\>_{\ZZ} \cap \<\Pi\>_{\NN}$. Straightforward computations show that
\begin{equation} \label{eq:case23lat}
\<E\>_{\ZZ} \cap \Lambda_R =
\<\alpha_1,\gamma,\alpha', \alpha_1'',\gamma''\>_{\ZZ}
\end{equation}
and that
\begin{align*}
\alpha_1 &= \lambda_1 + \lambda_3 - \lambda_2 - \lambda_4; \\
\alpha' &= \lambda_1 + \lambda_2 - \lambda_3;\\
\alpha_1''&= \lambda_2 + \lambda_3 - \lambda_1 - \lambda_5;\\
\gamma &= 2\lambda_4 + \lambda_2 - \lambda_1 - \lambda_3 - \lambda_6;\\
\gamma''&= 2\lambda_5+\lambda_1 - \lambda_2 - \lambda_3-\lambda_7.
\end{align*}
Let $K$ be the basis of $\<E\>_{\ZZ} \cap \Lambda_R$ given in equation~\eqref{eq:case23lat}. 
Since $\beta \in \<\sr\>_{\NN}$ and all elements of $K$ contain a simple root in their support, which is not in the support of any other element of $K$, it follows that $\beta \in \<K\>_{\NN}$. Therefore, there exist $A_1, A_2, A_3, A_4, A_5 \in \NN$ such that 
\begin{equation}
\beta = A_1 \alpha_1 + A_2 \gamma + A_3 \alpha' + A_4 \alpha_1''+A_5 \gamma''. 
\end{equation}
From the hypothesis of Claim A, it follows that
\begin{equation}
\begin{cases}
A_1-A_2 + A_3 - A_4+A_5\leq 0;\\
-A_1+A_2+A_3 +A_4-A_5\leq 0;\\ 
A_1-A_2-A_3+A_4-A_5\leq 0.
 \label{eq:c23ineqs}
\end{cases}
\end{equation}
Adding the first two inequalities in~\eqref{eq:c23ineqs} yields that $A_3=0$. Adding the first and the third inequality tells us that $A_1 \leq A_2$, while adding the second and third gives $A_4\leq A_5$. Moreover, after substituting $A_3=0$, the first two inequalities also give us that $A_2-A_1 = A_5-A_4$. Put $C := A_2 - A_1 = A_5 - A_4$. 
Then $C \in \NN$ and 
\begin{align}
\beta&= A_1\alpha_1 + (A_1 + C)\gamma + A_4 \alpha_1'' + (A_4+C)\gamma''\\
&= (A_1+2C)\lambda_4 +(A_4+2C)\lambda_5 - 2C\lambda_3 - (A_1+C)\lambda_6 - (A_4+C)\lambda_7. \label{eq:beta23}
\end{align}
By Proposition~\ref{prop:Kostant}, it follows from (\ref{eq:beta23}) that $A_1+A_4+4C \leq 2$. This implies that $C=0$. The inequality $A_1+A_4 \leq 2$ has five solutions in $\NN \times \NN$. This implies that $\beta \in \{\beta_1,\beta_2,\ldots,\beta_5\}$ where $\beta_1=\alpha_1+\gamma$, $\beta_2=\alpha''_1+\gamma''$, $\beta_1=\alpha_1+\gamma+\alpha''_1+\gamma''$, $\beta_4=2\alpha_1+2\gamma$ and $\beta_5=2\alpha''_1+2\gamma''$.
We cannot have $\beta = \beta_3$ because $\beta_3$ does not belong to $\sr + (\pr \cup \{0\})$. If $m>2$, then $\beta \neq \beta_4$ for the same reason. If $m=2$, then an argument like that for Claim C in the proof of Proposition~\ref{prop:case19} shows that $\beta\neq \beta_4$. If $n>2$, then $\beta \neq \beta_5$ because $\beta_5 \notin \sr + (\pr \cup \{0\})$. If $n=2$, then $\beta \neq \beta_5$ by an argument like that for Claim C in the proof of Proposition~\ref{prop:case19}. This proves Claim A.  

The argument for Claim B is the same as that for Claim B in the proof of Proposition~\ref{prop:case20}, except that one has to go through it twice: first for $\beta = \alpha_1 + \gamma$ and then for $\beta'' = \alpha_1'' + \gamma''$. This finishes the proof.
\end{proof}

\subsection{(K24) The module $(\Spin(8) \times \CC^{\times} \times
  \CC^{\times}, \CC^{8}_+\oplus \CC^8_-)$} \label{subsec:case24}
Here 
\begin{align*}
&E =  \{\om_3 + \epsilon, \om_4 + \epsilon', \om_1 + \epsilon +
\epsilon', 2\epsilon, 2\epsilon'\};\\
&d_W = 3.
\end{align*}

\begin{prop}
The $\Tad$-module $\Vggp$ is multiplicity-free and has $\Tad$-weight set 
\[\{\alpha_1 + \alpha_2 + \alpha_3, \alpha_1+\alpha_2 + \alpha_4, \alpha_2 + \alpha_3 + \alpha_4\}.\]
In particular, $\dim \Vggp = d_W$. Consequently,  $\dim T_{X_0}\tM^G_{\wm} = d_W$
\end{prop}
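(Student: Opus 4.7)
My plan follows the same general pattern as the proof of Proposition~\ref{prop:case12}. Since $V(2\epsilon)$ and $V(2\epsilon')$ are one-dimensional and $G'$-trivial, I would first replace $V$ by $V' := V(\om_3) \oplus V(\om_4) \oplus V(\om_1)$ and $x_0$ by $x_0' := v_{\om_3} + v_{\om_4} + v_{\om_1}$; one readily checks that $G'_{x_0} = G'_{x_0'}$ and that $\Vpggp \cong \Vggp$ as $\Tad$-modules. The projected weight monoid $p(\<E\>_{\NN}) = \<\om_1, \om_3, \om_4\>_{\NN}$ is free and $G'$-saturated (a dominant weight of $\Spin(8)$ lies in $\<\om_1, \om_3, \om_4\>_{\ZZ}$ if and only if its $\om_2$-coordinate vanishes), so \cite[Theorems 3.1 and 3.10]{bravi&cupit} guarantee that $\Vpggp$ is a multiplicity-free $\Tad$-module. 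In view of the inclusion $\tg\ms^G \inn \Vgg \inn \Vpggp$ and the a priori estimate $\dim \tg\ms^G \geq d_W = 3$ of \cite[Proposition 2.5(1)]{degensphermod}, it is enough to show that the $\Tad$-weight set of $\Vpggp$ is exactly $\{\beta_1, \beta_2, \beta_3\}$, where $\beta_1 := \alpha_1 + \alpha_2 + \alpha_3$, $\beta_2 := \alpha_1 + \alpha_2 + \alpha_4$ and $\beta_3 := \alpha_2 + \alpha_3 + \alpha_4$.

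A direct calculation using the $\ssD_4$ tables in \cite{bourbaki-geadl47} gives the identities $\beta_1 = \om_1 + \om_3 - \om_4$, $\beta_2 = \om_1 - \om_3 + \om_4$, $\beta_3 = -\om_1 + \om_3 + \om_4$, together with $\Lambda_R \cap \<\om_1, \om_3, \om_4\>_{\ZZ} = \<\beta_1, \beta_2, \beta_3\>_{\ZZ}$. Let $\beta = a\beta_1 + b\beta_2 + c\beta_3$ be a $\Tad$-weight of $\Vpggp$. By Proposition~\ref{prop:propsVggweights}(a), the weight $\beta$ is a nonzero element of $\<\sr\>_{\NN}$ and satisfies $\beta - \alpha \in R^+ \cup \{0\}$ for some simple root $\alpha$; since the highest root of $\ssD_4$ is $\theta = \alpha_1 + 2\alpha_2 + \alpha_3 + \alpha_4$, this bounds the coefficient of $\beta$ on each simple root and yields the constraints $0\leq a+b \leq 2$, $0 \leq a+c\leq 2$, $0\leq b+c\leq 2$ and $0\leq a+b+c\leq 3$. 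A short enumeration lists the integer solutions; expressing each candidate $\beta$ in the basis $E$ and then applying Proposition~\ref{prop:Kostant} eliminates all except the six weights $\beta_1, \beta_2, \beta_3, \beta_1+\beta_2, \beta_1+\beta_3, \beta_2+\beta_3$, the last three of which equal $2\om_1, 2\om_3, 2\om_4$.

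The main difficulty is ruling out the three doubled weights, and by the triality automorphism of $\Spin(8)$ (which cyclically permutes $\om_1, \om_3, \om_4$) it suffices to treat $\beta = 2\om_1$. A quick inspection of $T$-weights shows that $\om_3 - 2\om_1$ and $\om_4 - 2\om_1$ are not weights of $V(\om_3)$ and $V(\om_4)$ respectively (their $\epsilon_1$-coordinates equal $-3/2$, not $\pm 1/2$), so the $\Tad$-weight space of weight $2\om_1$ in $V'$ is the one-dimensional subspace of $V(\om_1)$ spanned by the $T$-weight vector $v$ of weight $-\om_1$. Among the simple roots, only $\alpha_1$ satisfies $2\om_1 - \alpha \in R^+ \cup \{0\}$, with $2\om_1 - \alpha_1 = \theta$. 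If $[v]$ were a nonzero element of $\Vpggp$, then Proposition~\ref{prop:propsVggweights}(a) and (c) would force $X_{\alpha_1} \cdot v$ to be a nonzero element of $\<X_{-\theta} \cdot x_0'\>_{\CC}$. But $X_{\alpha_1}\cdot v$ remains inside $V(\om_1)$, whereas $X_{-\theta}\cdot x_0'$ has nonzero projection onto $V(\om_4)$ since $\om_4 - \theta$ is a $T$-weight of the half-spin representation $V(\om_4)$. This contradiction excludes $\beta = 2\om_1$, and together with triality finishes the argument.
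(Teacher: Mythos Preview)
Your overall strategy mirrors the paper's closely---pass to $V' = V(\om_1)\oplus V(\om_3)\oplus V(\om_4)$, invoke \cite[Theorems 3.1 and 3.10]{bravi&cupit} for multiplicity-freeness, cut down the list of possible $\Tad$-weights, and rule out $2\om_1,2\om_3,2\om_4$ by the root-operator argument. The root-operator step for the doubled weights is correct and essentially identical to the paper's.

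There is, however, a genuine gap in the middle. You invoke Proposition~\ref{prop:Kostant} to eliminate the seven surplus candidates (those of the form $2\beta_i$, $\beta_1+\beta_2+\beta_3$, and $\beta_i+\beta_j-\beta_k$), but that proposition constrains the $\Tad$-weights of $\tg\ms$, not of $\Vpggp$. Since the statement you are proving asserts that $\Vggp\cong\Vpggp$ has \emph{exactly} the three-element weight set, Kostant is not available at this point. As written, your argument shows $\dim\tg\ms^G=d_W$, but not that $\Vggp$ is three-dimensional with the stated weights.

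The paper sidesteps this by using the second half of \cite[Theorem~3.1]{bravi&cupit}: for a free $G'$-saturated monoid, every $\Tad$-weight of $\Vpggp$ already lies in their Table~1, which lists only spherical roots of $G'$. Intersecting Table~1 with the lattice condition $2b=a+c+d$ (equivalently $\beta\in\<\beta_1,\beta_2,\beta_3\>_{\ZZ}$) leaves exactly six survivors, namely $\beta_1,\beta_2,\beta_3,2\om_1,2\om_3,2\om_4$, and then the root-operator argument finishes. Your argument is easily repaired along these lines. Alternatively, you can avoid Table~1 and handle the extra candidates directly: the four candidates $2\beta_1,2\beta_2,2\beta_3,\beta_1+\beta_2+\beta_3$ already fail the sharper condition $\beta\in\sr+(R^+\cup\{0\})$ (not just the coefficient bounds you extracted from it), while for $\beta=2\alpha_1+\alpha_2$ and its triality images one checks in a line that $\lambda-\beta$ is not a $T$-weight of $V(\lambda)$ for any $\lambda\in\{\om_1,\om_3,\om_4\}$, so $V^{\beta}=0$ and there is nothing to exclude.
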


\begin{proof}
Note that $G' = \Spin(8)$. Consider the $G'$-module $V' := V(\om_1)
\oplus V(\om_3) \oplus V(\om_4)$ and its element $x_0' = v_{\om_1} +
v_{\om_3} + v_{\om_4}$. 
Since $V(2\epsilon)$ and $V(2\epsilon')$ are subspaces of $\fg\cdot x_0$, we have that
$\Vpggp \isom \Vggp$ as $\Tad$-modules. Observe that $G'_{x_0} = G'_{x'_0}$. 

The monoid $p(\<E\>_{\NN}) = \<\om_1, \om_3\, \om_4\>_{\NN}$ is free and $G'$-saturated. By \cite[Theorems 3.1 and 3.10]{bravi&cupit}, $\Vpggp$ is multiplicity-free and its $\Tad$-weights belong to Table 1 in \cite[page 2810]{bravi&cupit}. By Proposition~\ref{prop:propsVggweights}, the $\Tad$-weights of $\Vpggp$ also belong to $\Lambda_R \cap \<\om_1, \om_3,\om_4\>_{\ZZ}$.
A straightforward computation shows that
\begin{equation} \label{eq:5}
\Lambda_R \cap \<\om_1, \om_3, \om_4\>_{\ZZ} = 
\{a\alpha_1 + b \alpha_2 + c \alpha_3 + d\alpha_4 \mid a,b,c,d \in \ZZ
\text{ and }2b = a+c+d \}.
\end{equation}

Let $\gamma$ be a $\Tad$-weight of $\Vpggp$. It follows from
equation~(\ref{eq:5}) that
\begin{equation} \label{eq:6}
|\mathrm{supp}(\gamma)| \ge 2 \text{ and } \alpha_2 \in \mathrm{supp}(\gamma).
\end{equation}
There are twelve $\Tad$-weights in \cite[Table 1, page
2810]{bravi&cupit} that satisfy (\ref{eq:6}). Six of them are
\[
\alpha_1+ \alpha_2, \alpha_2 + \alpha_3, \alpha_2 + \alpha_4,
\alpha_1+ 2\alpha_2 + \alpha_3, \alpha_1 + 2\alpha_2 + \alpha_4,
2\alpha_2 + \alpha_3 +\alpha_4,
\]
but $\gamma$ cannot be among these since they do not belong to
$\Lambda_R \cap \<\om_1, \om_3, \om_4\>_{\ZZ}$ by equation~(\ref{eq:5}). 

Three more $\Tad$-weights in \cite[Table 1]{bravi&cupit} that satisfy (\ref{eq:6}) are
\begin{align*}
\gamma_1 &:= 2\alpha_1 + 2\alpha_2 + \alpha_3 + \alpha_4 = 2\om_1\\
\gamma_3 &:= \alpha_1 + 2\alpha_2 + 2\alpha_3 + \alpha_4 = 2\om_3 \\
\gamma_4&:= \alpha_1 + 2\alpha_2 + \alpha_3 + 2\alpha_4 = 2\om_4
\end{align*}
We claim that none of them is a  $\Tad$-weight in $\Vpggp$. Let $i \in
\{1,3,4\}$.  We will argue by
contradiction that $\gamma_i$ is not a $\Tad$-weight in $\Vpggp$.
Assume $v \in V'$ is a $\Tad$-eigenvector of weight $\gamma_i$ such
that $[v]$ is nonzero in $\Vpggp$. 
Note that $\alpha_i$ is the only simple root $\beta$
such that $\gamma_i-\beta$ is in $R^+ \cup \{0\}$. 
By Proposition~\ref{prop:propsVggweights} this implies that
\begin{equation}\label{eq:7}
X_{\alpha_i} \cdot v \in \<X_{-(\gamma_i - \alpha_i)} x'_0\>_{\CC} \setminus\{0\}.
\end{equation} 
Because $\<(\gamma_i - \alpha_i)^{\vee}, \cdot\>$ is nonzero on $\om_1,
\om_3$ and $\om_4$, we have that
$X_{-(\gamma_i - \alpha_i)} x'_0$ has nonzero projection on the three
summands $V(\om_1)$, $V(\om_3)$ and $V(\om_4)$ of $V'$. On the other
hand, 
one checks with \emph{LiE} that $\gamma_i$ does not
occur as a $\Tad$-weight in all three components of $V'$ (see the
proof of Proposition~\ref{prop:case12} for the code of a similar compuation in \emph{LiE}).
This contradiction with equation~(\ref{eq:7}) proves the
claim. 

The remaining three $\Tad$-weights in \cite[Table 1]{bravi&cupit} that
satisfy equation~(\ref{eq:6}) are the three weights listed in the
proposition. Since $d_W = 3$ this proves the proposition. 
\end{proof}

\section* {APPENDIX:  Computing the invariants of the modules in the family K5}
During the work for the present paper, we also developed a different technique
which explicitly computes the invariants in $\Vgg$. The main idea is to use theoretical and elementary 
arguments to  reduce the problem to the
study of the smallest case for the parameter $n$  (or the parameters $(n,m)$),
and then do a direct computation for the smallest case.
In this appendix we present the method for the K5 family.  The method 
works equally well for the study of the remaining infinite families.

\subsection {Notation and Generalities about $\Sp_{2n}$}  \label{sec!notationsforsp2n}
To accommodate the computational and explicit nature of this appendix, the notation used here is different from that used in the rest of the paper. 
Assume $n \geq 1$ is a positive integer. Consider the vector space
$\CC^{2n}$ with basis $e_1, \dots , e_{2n}$. We also set
$f_i = e_{2n+1-i}$ for $1 \leq i \leq n$.

We define a nondegenerate
skewsymmetric bilinear form $\Omega : \CC^{2n} \times  \CC^{2n} \to  \CC$
by
\[
   \Omega (e_i, e_j) = \Omega (f_i, f_j)  = 0,  \quad   \quad  \Omega (e_i, f_j) = \delta_{ij}, 
    \quad     \quad   \Omega (f_i, e_j) = - \delta_{ij}
\]
for $1 \leq i,j \leq n$, where $\delta_{ij}$ denotes the Kronecker delta function. 
By definition $\Sp_{2n}$ consists of the linear automorphisms $g$ of $\CC^{2n}$ which
have the property $\Omega (g(v), g(w)) = \Omega (v, w)$ for all $v,w \in \CC^{2n}$. 

We denote by  $\sp_{2n}$ the Lie algebra of $\Sp_{2n}$. According to 
 \cite[p.72, Eq (2.8)]{goodman-wallach-gtm}
it has a basis
\[
   \{ a_{ij}, \; 	b_{kl}, \;  c_{kl} : 1 \leq i, j \leq n, \; 1 \leq k \leq l  \leq n \}
\]
defined as follows: $ a_{ij} = ( e_j \mapsto e_i, f_i \mapsto -f_j)$ where the notation
means that $ a_{ij} (e_j) = e_i, a_{ij} (f_i) = -f_j$, 
$a_{ij} (e_t) = 0$  if $ 1 \leq t \leq n$ and $t \not= j$,  and
$a_{ij} (f_t) = 0$ if $ 1 \leq t \leq n$ and $t \not= i$. With the same notational
convention $ b_{kl} = ( e_l \mapsto f_k, e_k \mapsto f_l)$  and
$ c_{kl} = ( f_l \mapsto e_k, f_k \mapsto e_l)$.

\subsection {Notation for the K5  example}

 By definition, for  $n \geq 2$,   K5 with parameter $n$, or more simply
K5($n$),  is the  K5 family in  List~(\ref{KnopL})  with group $\Sp_{2n} \times
 \GL_2$ and $W = \CC^{2n} \otimes \CC^2$.
    Fix  $n \geq 2$.    Set   $G  =   \Sp_{2n} \times  \GL_2$.

    We denote by    $\epsilon_1, \dots  , \epsilon_n$    the standard basis of the weight lattice of  $\Sp_{2n}$ and by
                  $\epsilon_1',  \epsilon_2'$    the standard
                             basis of the weight lattice of  $\GL_2$.
      For $1 \leq i \leq n$  we denote by   $\omega_i$  the $i$-th fundamental weight of $\Sp_{2n}$,
      and for $1 \leq i \leq 2$ we denote   by   $\omega_i'$  the $i$-th fundamental weight of $\GL_2$.
     We set:   
\[
          V_1' =  V(\omega_1) \otimes  V(\omega_1') ,  \quad    
           V_2'  =  \wedge^2 \CC^{2n}  \otimes  V(\omega_2'), \quad 
          V_3'  =    V(\omega_2'),  \quad
          V' = \oplus _{i=1}^3  V_i'. 
\]
We also set  
      \[  V_2  =   V(\omega_2)  \otimes  V(\omega_2') \subset  V_2',  \quad V_i = V_i' \;   \text{ for } i \not= 2,  \;
          \text { and} \;  \;  V  = \oplus _{i=1}^3  V_i.
     \]

  We define, for $1 \leq i \leq 3$, dominant weights $\lambda_i$ of $G$
  by $V_i = V(\lambda_i)$. Hence $\lambda_1 = \omega_1 + \omega_1'$,
  $\lambda_2 = \omega_2 + \omega_2'$, $\lambda_3 =  \omega_2'$.
We denote by $T$ the diagonal maximal torus of $G$.
We define an action    $\rhoprime :  T \times V' \to V'$ by   
    $ \rhoprime (t,\sum_{i=1}^3 w_i) = \sum_{i=1}^3 \lambda_i(t)  t^{-1} \cdot w_i$,  
for $t \in T$ and $w_i \in V_i'$ for $1 \leq i \leq 3$.   It is clear that for $t \in T$ and $w \in V$ we have
$ \rhoprime (t,w)  = \alpha (t,w)$, where $\alpha: T \times V \to V$ is 
the action  defined in   \cite  [Definition 2.11]{degensphermod}.

   For the $\CC^{2n}$ that $\Sp_{2n}$ acts we fix a basis $e_1, \dots , e_{2n}$ and define $\Omega, f_i, 
   a_{ij}, b_{kl}, c_{kl}$ as in Subsection~\ref{sec!notationsforsp2n}. 
   For the $\CC^2$ that $\GL_2$ acts we fix a basis $g_1,g_2$ and 
   define a basis  $\{ d_{pq} : 1 \leq p, q \leq 2 \}$ of $\gl_2$ by 
   $d_{pq}(e_a) =  e_p$ if $q = a$ and $0$ otherwise.  
   Then the set 
  \[
   \{ a_{ij}, \;  b_{kl}, \;  c_{kl},  \; d_{pq} : 1 \leq i,j \leq n, \; 1 \leq k \leq l \leq n,
       \;  1 \leq p, q \leq 2 \}
\]
is a basis of $\fg$ which we call the standard basis.

We set   $\Aset = \{ 1,2,  2n-1, 2n \}$ and
$ \;  H = \sum_{i=1}^n e_i \wedge f_i  \in  \wedge^2 \CC^{2n}, \quad 
     H_{s} = \sum_{i=1}^2 e_i \wedge f_i  \in  \wedge^2 \CC^{2n}, \quad
      x_0  =  e_1  \otimes g_1 +    e_1 \wedge e_2 \otimes g_1 \wedge g_2    + 
              g_1 \wedge g_2. \;$
For $1 \leq i \leq 3$ we denote by $v_i$ the component of $x_0$ that is in $V_i$.
For example, $v_3 = g_1 \wedge g_2$.

We say that $v \in V'$ is an invariant if $[v] \in \Vggpr$.
For  $1 \leq i \leq 2n$ we define that the index of $e_i$ is $i$. 
Since $f_i = e_{2n+1-i}$,  we also define that the index of $f_i$ is $2n+1-i$.  
We denote by $X_0 \subset V$
the Zariski closure of the $G$-orbit of $x_0$.

We fix the following basis of $V'$   which we call the monomial basis: \\
   $\phantom{===} e_i  \otimes g_p  :  1 \leq i \leq 2n ,   1 \leq p \leq 2,   \quad 
     e_i \wedge e_j  \otimes g_1 \wedge g_2 :  1 \leq i < j \leq 2n , \quad   g_1 \wedge g_2$. \\
For $v \in V'$, the terms of $v$ are by definition the 
nonzero monomial terms of the (unique) expression of $v$ 
as a linear combination of the monomial basis. For example, the terms of
$ 2 e_2 \otimes g_1 + 7 g_1 \wedge g_2$ are 
$2e_2 \otimes g_1$ and $7 g_1 \wedge g_2$.

We denote  by  $V_a'$,  the linear span of the subset of the monomial 
basis of $V'$ where all indices appearing for $e_i$
are  in $\Aset$. In more detail,   $V_a'$ is the linear span of       \\
   $\phantom{=} e_i  \otimes g_p  :  i \in \Aset, \;  1 \leq p \leq 2 , \quad
    e_i \wedge e_j  \otimes g_1 \wedge g_2  :  1 \leq i < j \leq 2n ,\;  \{ i, j \} \subset \Aset,  \quad g_1 \wedge g_2$. \\
We denote by   $V_b'$   the linear span of the remaining elements of the 
monomial basis of $V'$.

We define a Lie subalgebra $Z_a  \subset \fg$ and a vector subspace $Z_b \subset \fg$ such that, as vector
space, $\fg$ is the direct sum of $Z_a$ and $Z_b$. Namely, we set $Z_a$ to be the linear span of
\[
   \{ a_{ij}, \; b_{kl} \;,  \; c_{kl},  \; d_{ij} : \quad 1 \leq i,j \leq 2, \; 1 \leq k \leq l \leq 2 \}
\]  
and  $Z_b$ to be the linear span of the remaining elements of the standard 
basis of $\fg$. Clearly $Z_a$ is in a natural way isomorphic to $\sp_4 \oplus \gl_2$
which is the Lie algebra  of the group  $\Sp_4 \times \GL_2$ of the
case K5($2$).

An easy direct computation proves the following proposition. 

\begin {prop}    \label {prop!aboutZaVasubsetVaForK5}
  We have   $Z_a \cdot V_a' \subset V_a' , \; Z_a \cdot V_b' \subset V_b'$ and
                  $Z_b \cdot V_a' \subset V_b'$. 
                  As a corollary,  $Z_a \cdot x_0  \subset  V_a'$ and $Z_b \cdot x_0  \subset  V_b'$.
\end {prop}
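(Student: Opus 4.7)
The plan is to reduce everything to an explicit check of how each standard-basis generator of $\fg$ acts on the single vectors $e_k$ and $g_p$. Since $V_1', V_2', V_3'$ are built as tensor and wedge products of $\CC^{2n}$ and $\CC^2$, the $\fg$-action on any monomial is determined by the Leibniz rule applied factor-by-factor. Consequently it suffices to track how each generator of $Z_a$ and $Z_b$ transforms each $e_k$ (for $1 \le k \le 2n$), since the $g_p$ do not enter into the index conditions defining $V_a'$ and $V_b'$.

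The key observation is that $\Aset = \{1, 2, 2n-1, 2n\}$ is exactly the set of indices of the four vectors $e_1, e_2, f_2 = e_{2n-1}, f_1 = e_{2n}$. Inspecting the formulas of Subsection~\ref{sec!notationsforsp2n}, every generator $a_{ij}, b_{kl}, c_{kl}$ of $Z_a$ (for which all subscripts lie in $\{1,2\}$) sends each $e_k$ with $k \in \Aset$ into the linear span of $\{e_{k'} : k' \in \Aset\}$, and annihilates each $e_k$ with $k \notin \Aset$; the generators $d_{pq}$ of $Z_a$ act only on $g_1, g_2$. The first two inclusions then follow at once from the Leibniz rule: for $Z_a \cdot V_a' \subset V_a'$, the image of a monomial whose $e$-indices all lie in $\Aset$ still has all $e$-indices in $\Aset$; for $Z_a \cdot V_b' \subset V_b'$, a monomial in $V_b'$ contains some factor $e_k$ with $k \notin \Aset$ which the Leibniz expansion leaves untouched, so that factor survives in every nonzero term.

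The remaining inclusion $Z_b \cdot V_a' \subset V_b'$ requires a short case analysis on the standard-basis generators of $Z_b$. For $a_{ij}$ with $\{i,j\} \not\subset \{1,2\}$, one checks: if $i,j>2$ the operator kills every $e_k$ with $k \in \Aset$; if $j \in \{1,2\}$ and $i>2$, then $a_{ij} \cdot e_j = e_i$ with $i \notin \Aset$ and all other $e_k$ with $k \in \Aset$ are killed; if $i \in \{1,2\}$ and $j>2$, then $a_{ij}$ acts nontrivially on $\{e_k : k \in \Aset\}$ only via $e_{2n+1-i} = f_i$, producing $-e_{2n+1-j}$ where $2n+1-j \le 2n-2 \notin \Aset$. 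A parallel analysis for $b_{kl}, c_{kl} \in Z_b$ (where the constraint $k \le l$ forces either both indices to exceed $2$, or $k \in \{1,2\}$ and $l > 2$) gives the same conclusion: each generator of $Z_b$ sends each $e_k$ with $k \in \Aset$ either to zero or to $\pm e_{k'}$ with $k' \notin \Aset$. By the Leibniz rule, the action of any element of $Z_b$ on a $V_a'$-monomial then produces a sum of monomials each of which has acquired an $e$-index outside $\Aset$, hence lies in $V_b'$.

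The corollary is immediate: since $x_0 = e_1 \otimes g_1 + e_1 \wedge e_2 \otimes g_1 \wedge g_2 + g_1 \wedge g_2$ has all $e$-indices in $\Aset$ (so $x_0 \in V_a'$), the inclusions $Z_a \cdot x_0 \subset V_a'$ and $Z_b \cdot x_0 \subset V_b'$ follow from the three displayed ones. No real obstacle arises beyond careful bookkeeping of which index each operator touches.
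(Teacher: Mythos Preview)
Your proof is correct and is precisely the ``easy direct computation'' that the paper invokes without giving details; you have simply written out the index bookkeeping that the paper leaves to the reader. One minor phrasing issue: in the argument for $Z_a \cdot V_b' \subset V_b'$, the Leibniz expansion does have a term where $z$ acts on the factor $e_k$ with $k \notin \Aset$, but that term vanishes by the annihilation property you established just before---so the conclusion stands.
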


\begin {cor}    \label {cor!firstcorollary23ForK5}
   Assume   $v \in \fg \cdot x_0$.   Write    $   v = v_a + v_b$ 
    with   $v_a \in V_a',  v_b \in V_b'$.   Then there exist 
    $z_a \in Z_a$   and   $z_b \in Z_b$   with 
                   $v_a = z_a \cdot x_0$   and    $v_b = z_b \cdot x_0$.
\end {cor}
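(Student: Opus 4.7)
The plan is to reduce the corollary directly to Proposition~\ref{prop!aboutZaVasubsetVaForK5} together with the uniqueness of the decomposition $V' = V_a' \oplus V_b'$. Since $v \in \fg \cdot x_0$, I would start by picking some $z \in \fg$ with $v = z \cdot x_0$. The vector space direct sum decomposition $\fg = Z_a \oplus Z_b$ produces a unique expression $z = z_a + z_b$ with $z_a \in Z_a$ and $z_b \in Z_b$, so we get
\[
v = z \cdot x_0 = z_a \cdot x_0 + z_b \cdot x_0.
\]

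By the second assertion of Proposition~\ref{prop!aboutZaVasubsetVaForK5}, we have $z_a \cdot x_0 \in V_a'$ and $z_b \cdot x_0 \in V_b'$. Combining this with the hypothesis $v = v_a + v_b$ with $v_a \in V_a'$ and $v_b \in V_b'$, and using that the sum $V' = V_a' \oplus V_b'$ is direct (which is immediate from the fact that $V_a'$ and $V_b'$ are spanned by disjoint subsets of the monomial basis of $V'$), I would conclude that $v_a = z_a \cdot x_0$ and $v_b = z_b \cdot x_0$. There is no real obstacle here: the entire content of the corollary is the compatibility of the decomposition $\fg = Z_a \oplus Z_b$ with the decomposition $V' = V_a' \oplus V_b'$ via the action on $x_0$, which is precisely what Proposition~\ref{prop!aboutZaVasubsetVaForK5} provides.
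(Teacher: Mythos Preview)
Your proof is correct and follows essentially the same approach as the paper: pick $z \in \fg$ with $v = z \cdot x_0$, decompose $z = z_a + z_b$ along $\fg = Z_a \oplus Z_b$, apply Proposition~\ref{prop!aboutZaVasubsetVaForK5} to get $z_a \cdot x_0 \in V_a'$ and $z_b \cdot x_0 \in V_b'$, and then use the directness of $V' = V_a' \oplus V_b'$ to match components. The paper phrases the last step as $V_a' \cap V_b' = \{0\}$, which is equivalent to what you wrote.
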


\begin {proof}
     There exists $z \in \fg$ with  $v = z \cdot x_0$.   Write $z = z_a + z_b$   with
      $z_a \in Z_a$   and   $z_b \in Z_b$, then $v = z_a \cdot x_0 +  z_b \cdot x_0$.
   Using Proposition~\ref  {prop!aboutZaVasubsetVaForK5}  $z_a \cdot x_0 \in V_a'$  and   $z_b \cdot x_0 \in V_b'$.  
   Since  $V_a' \cap V_b'  = \{ 0 \}$ we get
               $v_a = z_a \cdot x_0$   and    $v_b = z_b \cdot x_0$.  
\end{proof}

We denote by  $ \fg_{x_0}$ the Lie algebra of the stabilizer $G_{x_0}$ of the point $x_0$.
It will be computed in  Proposition~\ref{prop!comput_of_lieGx0ForK5}.

\begin {cor}    \label{cor!aboufFvaForK5}
Assume $v \in V'$  is an invariant.
     Write  $v  =  v_a + v_b$   with  $v_a \in V_a'$ and  $v_b \in V_b'$. 
     Set   $F = Z_a \cap \fg_{x_0}$. Then
                           $F \cdot v_a  \subset Z_a \cdot x_0$.
\end {cor}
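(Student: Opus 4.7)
The plan is to use the hypothesis that $v$ is an invariant together with the direct-sum decomposition $V' = V_a' \oplus V_b'$ and the preservation of this decomposition by $Z_a$ (established in Proposition~\ref{prop!aboutZaVasubsetVaForK5}). The key input is Corollary~\ref{cor!firstcorollary23ForK5}, which says that whenever an element of $\fg \cdot x_0$ is split into its $V_a'$- and $V_b'$-parts, each part can separately be realized by acting on $x_0$ with an element of $Z_a$, respectively $Z_b$.

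First I would fix an arbitrary $z \in F = Z_a \cap \fg_{x_0}$. Since $z$ lies in $\fg_{x_0}$ and $[v]$ is a $G_{x_0}$-invariant class in $V'/\fg\cdot x_0$, differentiating yields $z \cdot v \in \fg \cdot x_0$. Writing $z \cdot v = z \cdot v_a + z \cdot v_b$, Proposition~\ref{prop!aboutZaVasubsetVaForK5} applied to $z \in Z_a$ gives that $z \cdot v_a \in V_a'$ and $z \cdot v_b \in V_b'$. This is precisely the decomposition of $z \cdot v$ with respect to the direct sum $V' = V_a' \oplus V_b'$.

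Then I would invoke Corollary~\ref{cor!firstcorollary23ForK5} with the element $z \cdot v \in \fg \cdot x_0$ and its decomposition into $V_a'$- and $V_b'$-components. That corollary produces $z_a \in Z_a$ with $z \cdot v_a = z_a \cdot x_0$, so $z \cdot v_a \in Z_a \cdot x_0$. As $z \in F$ was arbitrary, this shows $F \cdot v_a \subset Z_a \cdot x_0$, which is the desired inclusion.

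There is essentially no obstacle here: the argument is a direct assembly of the stability property of Proposition~\ref{prop!aboutZaVasubsetVaForK5} and the splitting statement of Corollary~\ref{cor!firstcorollary23ForK5}. The only subtlety worth highlighting is that one must indeed use the full $\fg_{x_0}$-invariance of $[v]$ (not just, say, an invariance under some smaller subalgebra), but this is immediate from the definition of an invariant. No explicit computation with the basis $\{a_{ij}, b_{kl}, c_{kl}, d_{pq}\}$ is required at this step; such computations will only enter later, when $F$ and $Z_a \cdot x_0$ need to be made explicit in the subsequent reduction to the case K5($2$).
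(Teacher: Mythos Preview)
Your proposal is correct and follows essentially the same approach as the paper's own proof: pick $z\in F$, use that $v$ is an invariant to get $z\cdot v\in\fg\cdot x_0$, split using Proposition~\ref{prop!aboutZaVasubsetVaForK5}, and conclude via Corollary~\ref{cor!firstcorollary23ForK5}. The only cosmetic difference is that the paper singles out the trivial case $n=2$ (where $V_b'=0$ so $v=v_a$), but your argument already covers this uniformly.
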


\begin {proof}       If $n = 2$ then $v = v_a$ and the result is obvious.   Assume $n \geq 3$
  and let $z \in F$.   Since $F \subset \fg_{x_0}$  and $v$ is an invariant 
    we have  $z \cdot v \in \fg \cdot  x_0$. Hence $z \cdot v_a + z \cdot v_b  \in \fg \cdot x_0$.
          Since $z \in Z_a$,  by Proposition~\ref {prop!aboutZaVasubsetVaForK5}   $z \cdot v_a \in V_a'$ and  $z \cdot v_b \in V_b'$.
          Hence    $z \cdot v_a + z \cdot v_b$    is the decomposition of  $z \cdot v$  with components in  
          $V_a'$ and $V_b'$.
          Using Corollary~\ref {cor!firstcorollary23ForK5} we have that    $z \cdot v_a \in  Z_a \cdot x_0$.  
\end {proof}

Direct computations give the following two propositions.     

\begin {prop}    \label{prop!comput_of_Gx0ForK5}
The stabilizer subgroup  $G_{x_0}$ is equal to the set  of   $(h_1,h_2)  \in \Sp_{2n} \times \GL_2 $
which have the property that there exists  $b \in \CC^* $ and $a_1, a_2 \in \CC$ 
such that   \\   
  $   \phantom{=}  h_1(e_1) = be_1, \quad   h_1(e_2) = b^{-1} e_2 + a_1 e_1,  \quad 
        h_2(g_1) = b^{-1}g_1, \quad   h_2(g_2) = bg_2 + a_2 g_1$.  
\end {prop}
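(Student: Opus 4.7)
\medskip
The plan is straightforward: the representation $V'$ decomposes as the direct sum of three pairwise non-isomorphic irreducible $G$-submodules $V_1', V_2', V_3'$, and $x_0 = v_1 + v_2 + v_3$ with $v_i \in V_i \subset V_i'$. Hence a pair $(h_1,h_2) \in \Sp_{2n} \times \GL_2$ stabilizes $x_0$ if and only if it stabilizes each $v_i$ separately, so the three resulting equations decouple and can be analyzed one at a time.

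\medskip
First I would use $v_3 = g_1 \wedge g_2$: since $V_3 = V(\omega_2')$ is one-dimensional, $\Sp_{2n}$ acts trivially on it while $h_2 \in \GL_2$ acts by $\det(h_2)$, so stabilization of $v_3$ is equivalent to $\det(h_2) = 1$. Next, the fact that $v_1 = e_1 \otimes g_1$ is a pure tensor forces $h_1(e_1)$ to be a scalar multiple of $e_1$ and $h_2(g_1)$ to be the reciprocally-scaled multiple of $g_1$; writing this scalar as $b \in \CC^*$ gives $h_1(e_1) = b e_1$ and $h_2(g_1) = b^{-1} g_1$. Finally, stabilization of $v_2$, combined with $\det(h_2) = 1$, reduces to $h_1(e_1) \wedge h_1(e_2) = e_1 \wedge e_2$, which together with $h_1(e_1) = be_1$ yields $h_1(e_2) = b^{-1} e_2 + a_1 e_1$ for a unique $a_1 \in \CC$. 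An analogous $2 \times 2$ matrix computation, using $h_2(g_1) = b^{-1} g_1$ and $\det(h_2)=1$, forces $h_2(g_2) = b g_2 + a_2 g_1$ for some $a_2 \in \CC$.

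\medskip
For the converse inclusion, I would observe that $x_0$ only depends on the restriction of $h_1$ to $\langle e_1, e_2 \rangle$ and of $h_2$ to $\langle g_1, g_2 \rangle$, so any $(h_1, h_2)$ acting as prescribed automatically fixes $x_0$. It remains to verify that the partial data specifying $h_1$ on $\langle e_1, e_2 \rangle$ always extends to an element of $\Sp_{2n}$; this is routine because $\langle e_1, e_2 \rangle$ is an $\Omega$-isotropic subspace, so one can always complete it to a symplectic basis and prescribe a compatible action on the remaining basis vectors. There is no significant obstacle in this proof: every step is a direct linear-algebra calculation, and the principal value of the approach is the preliminary reduction to three decoupled equations afforded by the multiplicity-free decomposition of $V'$.
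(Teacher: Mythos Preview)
Your argument is correct and is exactly the sort of direct computation the paper alludes to (the paper gives no proof beyond the sentence ``Direct computations give the following two propositions''). Two small remarks: first, $V_2' = \wedge^2\CC^{2n}\otimes V(\omega_2')$ is not irreducible as a $G$-module (it contains a copy of $V(\omega_2')$), so the reason the stabilizer equation decouples into three independent equations is simply that each $V_i'$ is $G$-stable, not that the decomposition is multiplicity-free; second, the extension of $h_1$ from $\langle e_1,e_2\rangle$ to all of $\CC^{2n}$ is not needed for the statement as written, since the proposition only characterizes which pairs $(h_1,h_2)$ \emph{already in} $\Sp_{2n}\times\GL_2$ lie in $G_{x_0}$, and you have already checked that any such pair satisfying the four displayed equations fixes $x_0$.
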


\begin {prop}    \label{prop!comput_of_lieGx0ForK5}
The following set         
 \[
    \{   a_{12}  \}  \; \cup \;  \{ a_{ij} : \;  1 \leq i \leq n, \;  3 \leq j \leq n \} \;  \cup 
     \{  b_{ij} :  3 \leq i \leq j \leq n \}  \; \cup 
 \]
 \[
     \{  c_{ij}:  1 \leq i \leq j \leq n \} \;  \cup \;  
     \{     d_{12} \}   \cup \{  d_{11} - a_{11} - d_{22} + a_{22} \} 
\]
   is a basis for  $\fg_{x_0}$. 
\end {prop}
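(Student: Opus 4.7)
The plan is to differentiate the description of the stabilizer group $G_{x_0}$ given in Proposition~\ref{prop!comput_of_Gx0ForK5} to obtain a linear-algebraic description of $\fg_{x_0}$, and then match the cardinality of the listed set against the codimension of $G \cdot x_0$ in $G$.

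First I would differentiate at $t=0$ a smooth curve $(h_1(t), h_2(t))$ in $G_{x_0}$ with $(h_1(0), h_2(0)) = (\mathrm{Id}, \mathrm{Id})$. Setting $\mu = b'(0)$, $\nu_1 = a_1'(0)$, $\nu_2 = a_2'(0)$ yields the following description: $\fg_{x_0}$ consists of those $(X_1, X_2) \in \sp_{2n} \oplus \gl_2$ for which there exist $\mu, \nu_1, \nu_2 \in \CC$ with $X_1(e_1) = \mu e_1$, $X_1(e_2) = -\mu e_2 + \nu_1 e_1$, $X_2(g_1) = -\mu g_1$, and $X_2(g_2) = \mu g_2 + \nu_2 g_1$ (and no further constraints on the action of $X_1$ on the remaining basis vectors beyond those imposed by $\sp_{2n}$).

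Next I would verify, by direct computation on the standard basis of $\fg$, that each element in the proposed list lies in $\fg_{x_0}$: the element $a_{12}$ realizes $(\mu,\nu_1,\nu_2) = (0,1,0)$; the elements $a_{ij}$ with $3 \le j \le n$, the $b_{ij}$ with $3 \leq i \leq j$, and all $c_{ij}$ annihilate both $e_1$ and $e_2$, thus realize $(0,0,0)$; $d_{12}$ realizes $(0,0,1)$; and $d_{11} - a_{11} - d_{22} + a_{22}$ realizes $(-1,0,0)$, as one checks by evaluating on $e_1, e_2, g_1, g_2$. Linear independence of the list is immediate: all entries except the last are distinct members of the standard basis of $\fg$, while the last is a nontrivial combination whose support (namely $a_{11}, a_{22}, d_{11}, d_{22}$) is disjoint from that of the other entries.

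Finally I would conclude by a dimension count. By Lemma~\ref{lem:dimx0} applied to the spherical module $W = \CC^{2n} \otimes \CC^2$ of dimension $4n$, we have $\dim \fg \cdot x_0 = 4n$, so $\dim \fg_{x_0} = \dim G - 4n = (2n^2 + n + 4) - 4n = 2n^2 - 3n + 4$. Counting the listed elements gives
\[
1 + n(n-2) + \tfrac{(n-1)(n-2)}{2} + \tfrac{n(n+1)}{2} + 1 + 1 = 2n^2 - 3n + 4,
\]
matching $\dim \fg_{x_0}$; combined with containment in $\fg_{x_0}$ and linear independence, this forces the list to be a basis. No serious obstacle is anticipated --- the hardest part is merely the careful bookkeeping of the dimension count and of the various cases in the verification step.
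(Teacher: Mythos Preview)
Your argument is correct. The paper itself offers no detailed proof---it simply says ``direct computations give the following two propositions''---and your writeup supplies exactly such a direct computation: you verify that each listed element annihilates $x_0$ (equivalently, satisfies the conditions obtained by differentiating Proposition~\ref{prop!comput_of_Gx0ForK5}), note linear independence, and close with a dimension count. Your use of Lemma~\ref{lem:dimx0} to obtain $\dim \fg\cdot x_0 = \dim W = 4n$ is a clean shortcut that the paper does not spell out but which is entirely in its spirit.
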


\begin {lemma}  \label{lem!compatibilityofLieGinvar_forK5}
   Assume $v \in V'$ is an invariant for K5($n$). 
  Write $v = v_a + v_b$ with $v_a \in V_a'$ and $v_b \in V_b'$.
Then     $[v_a] \in (V'/\fg \cdot x_0)^{\fg_{x_0}}$ for K5($2$).
\end {lemma}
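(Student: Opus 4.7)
The plan is to show that, under an appropriate identification, the data for K5($2$) sits naturally inside the data for K5($n$) as the ``$a$-parts,'' so that the lemma becomes a direct consequence of Corollary~\ref{cor!aboufFvaForK5}. First I would identify the $4$-dimensional subspace $\langle e_1, e_2, f_2, f_1 \rangle \subset \CC^{2n}$, equipped with the restriction of $\Omega$, with the defining representation of the $\Sp_4$ appearing in K5($2$). Under this identification $V_a'$ matches the $V'$ of K5($2$) term by term: the basis elements $e_i \otimes g_p$ with $i \in \Aset$ form the K5($2$) version of $V(\omega_1) \otimes V(\omega_1')$; the basis elements $e_i \wedge e_j \otimes g_1 \wedge g_2$ with $\{i,j\} \subset \Aset$ form $\wedge^2 \CC^4 \otimes V(\omega_2')$; and $g_1 \wedge g_2$ remains $V(\omega_2')$. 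By construction $x_0$ already lies in $V_a'$ and becomes the ``$x_0$'' of K5($2$).

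Second, I would note that $Z_a$ is naturally isomorphic to $\sp_4 \oplus \gl_2$, which is the Lie algebra of the group of K5($2$), and that by Proposition~\ref{prop!aboutZaVasubsetVaForK5} it acts on $V_a'$. A direct check shows that this action coincides with the standard action of the K5($2$) Lie algebra on its $V'$.

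Third, I would verify that, under these identifications, the stabilizer Lie algebra $\fg_{x_0}$ of K5($2$) equals $F = Z_a \cap \fg_{x_0}$. Specializing the basis in Proposition~\ref{prop!comput_of_lieGx0ForK5} to $n=2$ yields exactly
\[
\{a_{12},\ c_{11},\ c_{12},\ c_{22},\ d_{12},\ d_{11} - a_{11} - d_{22} + a_{22}\},
\]
and intersecting the general-$n$ basis of $\fg_{x_0}$ with $Z_a$ yields the same set. Hence $F$ is precisely $\fg_{x_0}$ for K5($2$).

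With these identifications, the statement $[v_a] \in (V'/\fg \cdot x_0)^{\fg_{x_0}}$ for K5($2$) becomes the assertion $F \cdot v_a \subset Z_a \cdot x_0$, which is exactly Corollary~\ref{cor!aboufFvaForK5}. The main obstacle is purely notational: keeping straight which symbols refer to K5($n$) data and which to K5($2$) data, and carefully matching the two structures. The Lie-theoretic content has already been isolated in Proposition~\ref{prop!aboutZaVasubsetVaForK5} and Corollary~\ref{cor!aboufFvaForK5}.
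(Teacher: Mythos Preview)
Your proposal is correct and follows essentially the same route as the paper: the paper's proof simply says that Proposition~\ref{prop!comput_of_lieGx0ForK5} gives the needed compatibility of $\fg_{x_0}$ as $n$ varies, and then invokes Corollary~\ref{cor!aboufFvaForK5}. You have spelled out explicitly what that compatibility means---identifying $V_a'$ with the $V'$ of K5($2$), $Z_a$ with $\sp_4 \oplus \gl_2$, and checking that $F = Z_a \cap \fg_{x_0}$ coincides with the K5($2$) stabilizer Lie algebra---which is exactly the content the paper leaves implicit.
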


\begin {proof} 
Using  Proposition~\ref{prop!comput_of_lieGx0ForK5},  
which implies compatibility of $\fg_{x_0}$ for
K5($n$) as $n$ varies, the result
follows by  Corollary~\ref{cor!aboufFvaForK5}.
\end {proof}

\begin {prop}  \label {prop!about_int_depend_of_weightsForK5}
   Assume $v \in V'$  is a $\rhoprime$-weight vector such that 
   $[v] \in \Vggpr$ and $[v] \not= 0$.  Denote by $\beta$ the
   $\rhoprime$-weight of $v$. Then $\beta$ is a linear combination
   with integer coefficients of the $\lambda_i$. 
\end {prop}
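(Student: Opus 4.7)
The plan is to use the $G_{x_0}$-invariance of $[v]$ together with the explicit form of the $\rhoprime$-action to force $\beta$ to vanish on a specific subgroup of $T$, and then to translate this vanishing into an integer combination of the $\lambda_i$. First I would introduce the subgroup $T_0 \subset T$ defined as the common kernel of the characters $\lambda_1, \lambda_2, \lambda_3$. The key observation is that $T_0 \subset G_{x_0}$: indeed $x_0 = v_1 + v_2 + v_3$, where each $v_i \in V_i$ is a highest weight vector of $G$-weight $\lambda_i$, so for $t \in T_0$ one has $t \cdot v_i = \lambda_i(t) v_i = v_i$, whence $t \cdot x_0 = x_0$.

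Next I would analyze how $v$ transforms under $T_0$. Decomposing $v = v_{(1)} + v_{(2)} + v_{(3)}$ with $v_{(i)} \in V_i'$, and noting that each $V_i'$ is stable under both $T$ and $\rhoprime$, each nonzero $v_{(i)}$ is itself a $\rhoprime$-weight vector of weight $\beta$. Using the defining formula $\rhoprime(t,w) = \lambda_i(t)\, t^{-1} \cdot w$ for $w \in V_i'$, one checks that a $\rhoprime$-weight vector in $V_i'$ of weight $\beta$ is precisely a $T$-weight vector of ordinary $T$-weight $\lambda_i - \beta$. Therefore, for $t \in T_0$, $t \cdot v_{(i)} = (\lambda_i - \beta)(t)\, v_{(i)} = \beta(t)^{-1} v_{(i)}$, and summing over $i$ yields $t \cdot v = \beta(t)^{-1} v$. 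Since $[v]$ is $G_{x_0}$-invariant, $t \cdot v - v \in \fg \cdot x_0$, so $(\beta(t)^{-1} - 1)\, v \in \fg \cdot x_0$; because $[v] \neq 0$ in $V'/\fg \cdot x_0$, this forces $\beta(t) = 1$ for every $t \in T_0$.

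Finally I would convert this vanishing into the required integrality statement. The conclusion $\beta|_{T_0} = 0$ means that $\beta$ lies in the saturation of $\langle \lambda_1, \lambda_2, \lambda_3 \rangle_{\Z}$ inside the character lattice $\Lambda$ of $T$. A short explicit computation, writing each $\lambda_i$ in coordinates with respect to the basis $\epsilon_1, \ldots, \epsilon_n, \epsilon_1', \epsilon_2'$ of $\Lambda$ and row-reducing the resulting $3 \times (n+2)$ integer matrix, shows that this sublattice is already saturated in $\Lambda$, so $\beta \in \langle \lambda_1, \lambda_2, \lambda_3 \rangle_{\Z}$, as required. The substantive step is the second one, where the interaction between the $\rhoprime$-weight and the ordinary $T$-weight produces the character identity on $T_0$; both the inclusion $T_0 \subset G_{x_0}$ and the final saturation check are routine.
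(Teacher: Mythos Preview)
Your argument is correct and is essentially a self-contained unpacking of the paper's own proof, which simply notes that each component $w_i\in V_i'$ has $T$-weight $\lambda_i-\beta$ and then defers to \cite[Lemma~2.17(c)]{degensphermod}. The route via the subgroup $T_0=\bigcap_i\ker\lambda_i\subset G_{x_0}$, the identity $t\cdot v=\beta(t)^{-1}v$ for $t\in T_0$, and the saturation check for $\langle\lambda_1,\lambda_2,\lambda_3\rangle_{\ZZ}$ is exactly the mechanism behind that cited lemma, so the two proofs coincide in substance; yours has the advantage of being fully spelled out in the present setting with $V'$ in place of $V$.
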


\begin {proof}         Write $v = w_1 + w_2 + w_3$ with $w_j \in V_j'$.
Since $v \not= 0$ there exists $i$ with $1 \leq i \leq 3$  such that $w_i \not= 0$.
Then  the arguments in the proof of \cite[Lemma 2.17(c)]{degensphermod}
also work here, taking into account
that by the definition of $\rhoprime$ 
we have that $w_i$ is a $T$-weight vector with $T$-weight equal to 
$\lambda_i - \beta$.
\end {proof}

\subsection   {Analysis of  the invariants of K5 with a high index}
    
Assume $n \geq 2$ and we are in  K5($n$) case. 
Proposition~\ref{prop!keyproposforhighinvar_ForK5}
will give a strong restriction on the 
invariants $v$ with  $v \notin V_a'$.
Recall $H = \sum_{i=1}^n e_i \wedge f_i$.
We set  $\gamma_2 = \epsilon_1 + \epsilon_2$ and $q^{(2)} = H \otimes g_1 \wedge g_2$.

\begin {remark}   \label{rem!aboutmithatareinvariants_ForK5}
 We have computed $G_{x_0}$  in  
 Proposition~\ref{prop!comput_of_Gx0ForK5}.
 A small computation shows that
$q^{(2)}$  is an  invariant.
\end {remark}

\begin {lemma}  \label{lem!restriction_of_weights_no1_ForK5}
Assume $v \in V'$  such that $0 \not= [v] \in \Vggpr$. Assume
$v$ is a $\rhoprime$-weight vector. 
Denote by $\beta$ the  $\rhoprime$-weight of $v$.
Assume  $w$ is an element of the  monomial basis of $V'$ such that
$w \in V_b'$  and  a nonzero multiple of $w$ is a term of $v$.
Then  $n \geq 3, \beta = \gamma_2,  w \in  V_2'$ and  
there exists $i$ with $3 \leq i \leq n$ such that $w = e_i \wedge f_i \otimes g_1 \wedge g_2$.
\end {lemma}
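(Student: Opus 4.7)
The plan is to constrain the weight $\beta$ using Proposition~\ref{prop!about_int_depend_of_weightsForK5} and then eliminate, case by case, each monomial basis element of $V_b'$ that could play the role of $w$.

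First, I would observe that if $n=2$ then $\Aset = \{1,2,3,4\}$ exhausts $\{1,\dots,2n\}$, so $V_b' = \{0\}$, contradicting the existence of $w$; hence $n \geq 3$. For general $n \geq 3$, writing
\[
    \lambda_1 = \epsilon_1 + \epsilon_1', \qquad
    \lambda_2 = \epsilon_1 + \epsilon_2 + \epsilon_1' + \epsilon_2', \qquad
    \lambda_3 = \epsilon_1' + \epsilon_2'
\]
in the basis $\{\epsilon_1, \dots, \epsilon_n, \epsilon_1', \epsilon_2'\}$ of the weight lattice of $G$, Proposition~\ref{prop!about_int_depend_of_weightsForK5} forces the coefficient of $\epsilon_l$ in $\beta$ to vanish for every $l \geq 3$. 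This is the key restriction I would use to eliminate possibilities for $w$.

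Next, I would write $v = v_1 + v_2 + v_3$ with $v_k \in V_k'$. Each $v_k$ is a $T$-weight vector of weight $\lambda_k - \beta$, so any monomial term of $v$ lying in $V_k'$ itself has $T$-weight $\lambda_k - \beta$. The monomial basis elements of $V_b'$ are either of the form $w = e_j \otimes g_p$ with $3 \leq j \leq 2n-2$ (these lie in $V_1'$), or of the form $w = e_i \wedge e_j \otimes g_1 \wedge g_2$ with $\{i,j\} \not\subset \Aset$ (these lie in $V_2'$); the element $g_1 \wedge g_2 \in V_3'$ already belongs to $V_a'$. For each such monomial I would compute its $T$-weight directly, using that $e_i$ has $T$-weight $\epsilon_i$ for $1 \leq i \leq n$ and that $f_i = e_{2n+1-i}$ has $T$-weight $-\epsilon_i$, and then solve for $\beta$.

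In case (i), one finds $\beta = \epsilon_1 \mp \epsilon_l$ up to a possible $\epsilon_1' - \epsilon_2'$ correction, with $3 \leq l \leq n$, which has a nonzero $\epsilon_l$-coefficient and therefore violates the constraint. In case (ii), I would split $e_i \wedge e_j$ according to how many of $i,j$ lie in $\{1,\dots,n\}$, giving four subcases: $e_i \wedge e_j$ with $i < j \leq n$, the diagonal $e_i \wedge f_i$, the off-diagonal $e_i \wedge f_k$ with $i \neq k$, and $f_k \wedge f_l$ with both indices exceeding $n$. A short calculation shows that only the diagonal subcase survives: its $T$-weight is $\omega_2'$, giving $\beta = \lambda_2 - \omega_2' = \omega_2 = \gamma_2$, and the condition $\{i, 2n+1-i\} \not\subset \Aset$ translates exactly to $3 \leq i \leq n$. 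The only nontrivial aspect of the argument is the bookkeeping of upper/lower-half index conventions and of which pairs $\{i,j\}$ are contained in $\Aset$.
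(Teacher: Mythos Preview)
Your proof is correct and follows essentially the same approach as the paper's: both arguments use Proposition~\ref{prop!about_int_depend_of_weightsForK5} to force $\beta$ into the $\ZZ$-span of $\epsilon_1,\epsilon_2,\epsilon_1',\epsilon_2'$ (equivalently, to kill the $\epsilon_l$-coefficients for $l\geq 3$), and then run the same case analysis on the monomial $w$ according to which component $V_k'$ it lies in and the form of its $\Sp_{2n}$-part. Your treatment is slightly more explicit about the $n\geq 3$ conclusion and the subcase bookkeeping, but there is no substantive difference in method.
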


\begin {proof} 
Denote by $\Gamma$ the $\ZZ$-span of the weights   $\epsilon_1,  \epsilon_2, \epsilon_1', \epsilon_2'$.
We will use that by Proposition~\ref{prop!about_int_depend_of_weightsForK5}  $\beta \in  \Gamma$.
The assumption $w \in V_b'$ implies that there exists $j$ with $3 \leq j \leq 2n-2$
such that $e_j$ appears in $w$.   If $w \in V_1'$ then $w = e_j \otimes g_p$ with
$1 \leq p \leq 2$ which implies that $\beta \notin \Gamma$, a contradiction. 
Hence $w \in V_2'$, so  $w =e_p \wedge e_q \otimes g_1 \wedge g_2$ for
some $1 \leq p < q  \leq n$, or $w =e_p \wedge f_q \otimes g_1 \wedge g_2$
for some $1 \leq p,q \leq n$, or   $w =f_p \wedge f_q \otimes g_1 \wedge g_2$
for some $1 \leq p < q  \leq n$.
The first and the third cases are impossible, since then $\beta \notin \Gamma$.
The second case is possible if and only if $p=q$.
\end {proof}

We need the following lemma, which restricts further 
the candidate invariants.

\begin {lemma}   \label {lem!first_exclusion_forV2_ForK5}
Assume $c_t \in \CC$,  for $ 1 \leq t \leq n$.
Set  $ z  =   (\sum_{t=1}^n c_t e_t \wedge f_t) \otimes   g_1 \wedge g_2$.
Assume $v \in V'$ is an invariant  which is also a 
$\rhoprime$-weight vector with $\rhoprime$-weight $\gamma_2$
such that, for all $1 \leq t \leq n$ and $d \in \CC^*$,  we have that
$de_t \wedge f_t  \otimes  g_1 \wedge g_2$ is not  a term of  $v - z$.
Then   $c_i = c_j$  for all  $3 \leq i < j \leq n$.
\end {lemma}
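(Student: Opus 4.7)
The plan is to exploit the invariance of $v$ under the operators $a_{ij}$ with $3 \le i \neq j \le n$, which Proposition~\ref{prop!comput_of_lieGx0ForK5} places in $\fg_{x_0}$. The idea is that, because $a_{ij}$ acts within $V_2'$ on a vector whose $V_2'$-part is forced to equal $z$, the $V_2'$-component of $a_{ij} \cdot v$ has a simple form linear in $c_j - c_i$; combined with a sharp description of $\fg \cdot x_0 \cap V_2'$, this rules out nonzero values of $c_j - c_i$.

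First I would pin down the $V_2'$-component of $v$. Since $v$ is a $\rhoprime$-weight vector of weight $\gamma_2 = \epsilon_1 + \epsilon_2$, each of its components in $V_i'$ is as well. Translating to ordinary $T$-weights via $\rhoprime(t,w) = \lambda_i(t)\, t^{-1} w$ on $V_i'$, one sees that the $\gamma_2$-weight subspace of $V_2' = \wedge^2 \CC^{2n} \otimes V(\omega_2')$ is exactly the span of $e_t \wedge f_t \otimes g_1 \wedge g_2$ for $1 \le t \le n$. By hypothesis, no multiple $d\, e_t \wedge f_t \otimes g_1 \wedge g_2$ is a term of $v - z$; consequently the $V_2'$-component of $v$ must coincide with $z$.

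Next, fix $3 \le i \neq j \le n$. Since $a_{ij} \in \fg_{x_0}$, we have $a_{ij} \cdot v \in \fg \cdot x_0$. Because $a_{ij} \in \sp_{2n}$ preserves the decomposition $V' = V_1' \oplus V_2' \oplus V_3'$, the $V_2'$-component of $a_{ij} \cdot v$ equals $a_{ij}(z)$. Using $a_{ij}(e_t) = \delta_{j,t}\, e_i$ and $a_{ij}(f_t) = -\delta_{i,t}\, f_j$, a short computation gives
\[
a_{ij}(z) = (c_j - c_i)\, e_i \wedge f_j \otimes g_1 \wedge g_2.
\]

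To conclude, I would compare this with the $V_2'$-part of $\fg \cdot x_0$. Since $\fg$ preserves the irreducible $G$-summand $V_2$, the $V_2'$-component of any element of $\fg \cdot x_0$ lies in $\fg \cdot v_2 = \sp_{2n} \cdot (e_1 \wedge e_2) \otimes \CC(g_1 \wedge g_2) + \CC\, v_2$. The Leibniz rule forces $\sp_{2n} \cdot (e_1 \wedge e_2) \subset e_1 \wedge \CC^{2n} + e_2 \wedge \CC^{2n}$, and since $i, j \ge 3$ the vector $e_i \wedge f_j$ does not lie in this subspace. Hence $c_j - c_i = 0$ for all such $i, j$, yielding the claim. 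The main delicate point is the precise identification of $\fg \cdot x_0 \cap V_2'$: once one notices that every element there must involve $e_1$ or $e_2$ as one of its two indices, the exclusion step is immediate.
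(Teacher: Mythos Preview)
Your argument is correct and follows essentially the same route as the paper: act by $a_{ij}$ with $3\le i\neq j\le n$ (which lies in $\fg_{x_0}$), compute that the resulting contribution is $(c_j-c_i)\,e_i\wedge f_j\otimes g_1\wedge g_2$, and observe that no element of $\fg\cdot x_0$ can have a term with two ``high'' indices. The only organizational difference is that you pin down the $V_2'$-component of $v$ directly by the $\rhoprime$-weight computation and then project everything to $V_2'$, whereas the paper instead invokes the preceding lemma to control the terms of $v-z$ and argues at the level of individual monomial terms of $a_{ij}\cdot v$; both lead to the same exclusion step.
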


\begin {proof}   
   Fix  $i,j$  with  $3 \leq i < j \leq n$.   We assume $c_i \not= c_j$
   and we will get a contradiction.      Set $ w  = e_i \wedge f_j  \otimes g_1 \wedge g_2$.
     
    We act by  the element  
    $a_{ij} =   ( e_j \mapsto e_i ,  f_i \mapsto -f_j)$  which, by 
    Proposition~\ref{prop!comput_of_lieGx0ForK5},  is in   $\fg_{x_0}$.
    We have  $a_{ij} \cdot z = (c_i-c_j)w$.
    Lemma~\ref{lem!restriction_of_weights_no1_ForK5}, which restricts the
    terms that can appear in $v$, implies that no nonzero muliple
    of $w$ can appear as a term of $a_{ij} \cdot (v-z)$.  
    Hence   $(c_i-c_j)  w $
    appears  as a term  of  $a_{ij} \cdot v$.  Since $v$ is an invariant,
    we get that  $a_{ij} \cdot v \in \fg \cdot x_0$, hence 
    there exists $z \in \fg$ such that $(c_i-c_j)  w $ is a term of 
    $z \cdot x_0$.  Since $x_0$ has all indices less or equal to $2$ 
    it follows that each term of $z \cdot x_0$ can have at most one index $\geq 3$.
    This is a contradiction, since $w$ has two distinct indices, namely  
    $i,2n+1-j$, greater or equal than $3$.
\end {proof}

The following proposition is an important step for  the
reduction of the problem of 
invariants for the case K5($n$) to the case K5($2$).

\begin {prop}    \label{prop!keyproposforhighinvar_ForK5}
Assume $v \in V'$  such that $0 \not= [v] \in \Vggpr$. Assume
$v$ is a $\rhoprime$-weight vector. 
Denote by $\beta$ the  $\rhoprime$-weight of $v$. Write $v=v_a+v_b$
with $v_a \in V_a'$ and $v_b \in V_b'$.  Assume $v_b \not= 0$.
Then $\beta = \gamma_2$  and   there exists $c \in \CC^*$ 
such that  $v - cq^{(2)}$ is an invariant contained in
$V_a'$.  
\end {prop}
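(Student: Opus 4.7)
The proof will be a direct synthesis of Lemma~\ref{lem!restriction_of_weights_no1_ForK5} and Lemma~\ref{lem!first_exclusion_forV2_ForK5}, together with Remark~\ref{rem!aboutmithatareinvariants_ForK5}. The plan is as follows. Since $v_b \neq 0$, at least one element $w$ of the monomial basis of $V'$ with $w \in V_b'$ occurs with nonzero coefficient in $v$. Apply Lemma~\ref{lem!restriction_of_weights_no1_ForK5} to this $w$: it forces $n \geq 3$ and $\beta = \gamma_2$, and furthermore every monomial basis element from $V_b'$ that appears as a term of $v$ must be of the form $e_i \wedge f_i \otimes g_1 \wedge g_2$ with $3 \leq i \leq n$. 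This already yields the first assertion $\beta = \gamma_2$ and lets us write
\[
v_b \;=\; \sum_{i=3}^{n} c_i \, e_i \wedge f_i \otimes g_1 \wedge g_2
\]
for some $c_3, \ldots, c_n \in \CC$ that are not all zero.

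Next, I would arrange the hypothesis of Lemma~\ref{lem!first_exclusion_forV2_ForK5}. Let $c_1, c_2 \in \CC$ denote the coefficients in $v$ (equivalently in $v_a$) of $e_1 \wedge f_1 \otimes g_1 \wedge g_2$ and $e_2 \wedge f_2 \otimes g_1 \wedge g_2$, respectively, and put
\[
z \;=\; \sum_{t=1}^{n} c_t\, e_t \wedge f_t \otimes g_1 \wedge g_2.
\]
By construction, for every $t \in \{1, \ldots, n\}$ and every $d \in \CC^{\ast}$, $d\,e_t \wedge f_t \otimes g_1 \wedge g_2$ is not a term of $v - z$. Since $v$ is a $\rhoprime$-weight vector of weight $\gamma_2$, Lemma~\ref{lem!first_exclusion_forV2_ForK5} applies and forces $c_3 = c_4 = \cdots = c_n$; call this common value $c$. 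Since $v_b \neq 0$ we must have $c \neq 0$.

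Finally, write $q^{(2)} = \sum_{i=1}^{n} e_i \wedge f_i \otimes g_1 \wedge g_2$, and observe that the two terms with $i \in \{1,2\}$ lie in $V_a'$ (because their indices belong to $\Aset$) while the terms with $3 \leq i \leq n$ lie in $V_b'$. Therefore the $V_b'$-component of $cq^{(2)}$ is exactly $v_b$, so $v - cq^{(2)} \in V_a'$. Since $q^{(2)}$ is an invariant by Remark~\ref{rem!aboutmithatareinvariants_ForK5} and $v$ is an invariant by hypothesis, and since the space of invariants is a linear subspace, $v - cq^{(2)}$ is also an invariant, completing the proof.

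The bulk of the work has been performed in the two earlier lemmas; the only real subtlety to watch here is bookkeeping the decomposition of $q^{(2)}$ into its $V_a'$ and $V_b'$ components and verifying that absorbing $c_1, c_2$ into the auxiliary vector $z$ correctly sets up the hypothesis of Lemma~\ref{lem!first_exclusion_forV2_ForK5}. No further casework or computation with root operators is anticipated.
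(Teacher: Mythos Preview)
Your proof is correct and follows essentially the same approach as the paper's own proof: invoke Lemma~\ref{lem!restriction_of_weights_no1_ForK5} to obtain $\beta=\gamma_2$ and the shape of $v_b$, then Lemma~\ref{lem!first_exclusion_forV2_ForK5} to force $c_3=\cdots=c_n$, and finally Remark~\ref{rem!aboutmithatareinvariants_ForK5} to conclude. The paper's proof is terser, but your expanded bookkeeping (in particular the explicit construction of $z$ to meet the hypothesis of Lemma~\ref{lem!first_exclusion_forV2_ForK5} and the verification that the $V_b'$-component of $cq^{(2)}$ matches $v_b$) is exactly what that terse argument is implicitly doing.
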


\begin {proof} 
 Lemma~\ref{lem!restriction_of_weights_no1_ForK5}
implies that $\beta = \gamma_2$.  
Combining Lemmas~\ref{lem!restriction_of_weights_no1_ForK5} and
\ref{lem!first_exclusion_forV2_ForK5} 
it follows that there exist $c \in \CC^*$ such that 
$v - c q^{(2)} \in V_a'$.  
By  Remark~\ref{rem!aboutmithatareinvariants_ForK5} $q^{(2)}$ 
is an invariant, hence 
$v - cq^{(2)}$ is an invariant contained in $V_a'$.
\end {proof}

\subsection   {The invariants of K5}

Recall $H_{s} = \sum_{i=1}^2 e_i \wedge f_i$ and $\gamma_2 = \epsilon_1 + \epsilon_2$.
We define the following $\rhoprime$-weights and weight vectors in $V'$:
We set $\gamma_1 =  \epsilon_1- \epsilon_2 + \epsilon_1' - \epsilon_2'$,
and   $r_{1} =  e_2 \otimes  g_2$.  We set  $r_{2,1} = H_{s} \otimes g_1 \wedge g_2$ and
$r_{2,2} =  e_1 \wedge f_1  \otimes g_1 \wedge g_2$. Then $r_{1}$ is a 
$\rhoprime$-weight vector with $\rhoprime$-weight $\gamma_1$ and 
$r_{2,1}$ and $r_{2,2}$ are $\rhoprime$-weight vectors with $\rhoprime$-weight $\gamma_2$.

An easy direct computation gives the following proposition.

\begin {prop}  \label{prop!invariants_forn2_ForK5}
 Assume we are in K5(2).  Then  $\Vggpr  =  (V'/\fg \cdot x_0)^{\fg_{x_0}}$
and the classes in $V'/\fg \cdot x_0$ of the elements 
\[
    r_{1},\; \; r_{2,1}, \; \; r_{2,2}
\] 
is a basis for the vector space $\Vggpr$.
\end {prop}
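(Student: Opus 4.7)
The plan is to prove this by finite-dimensional linear algebra in the $15$-dimensional space $V'$ for K5($2$).

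First, I would observe that $G_{x_0}$ is connected, via the explicit parameterization $(b,a_1,a_2) \in \CC^\times \times \CC \times \CC \leftrightarrow (h_1,h_2) \in G_{x_0}$ from Proposition~\ref{prop!comput_of_Gx0ForK5}, which identifies $G_{x_0}$ with an irreducible affine variety. Since $x_0$ is fixed by $G_{x_0}$, the subspace $\fg \cdot x_0$ is $G_{x_0}$-stable, so the standard equivalence between $H$-invariants and $\Lie(H)$-invariants for a connected algebraic group $H$ acting on a finite-dimensional representation yields $\Vggpr = (V'/\fg \cdot x_0)^{\fg_{x_0}}$.

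Next I would verify directly that $[r_1], [r_{2,1}], [r_{2,2}]$ are $\fg_{x_0}$-invariant. For $n=2$ the basis of $\fg_{x_0}$ in Proposition~\ref{prop!comput_of_lieGx0ForK5} specializes to the six elements $a_{12}$, $c_{11}, c_{12}, c_{22}$, $d_{12}$, and $d_{11}-a_{11}-d_{22}+a_{22}$. For each such $z$ and each $r_i$ one computes $z \cdot r_i$ explicitly (most vanish for trivial reasons of weight or support) and exhibits a specific element of $\fg$ whose action on $x_0$ produces the same vector; this is a finite, elementary bookkeeping of at most $18$ small calculations.

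For linear independence modulo $\fg \cdot x_0$, note that $\fg \cdot x_0$ is $\rhoprime$-stable, so the weight-space decomposition descends to the quotient. Since $r_1$ has $\rhoprime$-weight $\gamma_1 \neq \gamma_2$, it suffices to analyze the two weight spaces separately. A direct calculation shows that the $\gamma_2$-weight space of $V'$ has basis $\{f_2 \otimes g_1,\ e_1 \wedge f_1 \otimes g_1 \wedge g_2,\ e_2 \wedge f_2 \otimes g_1 \wedge g_2\}$ and that the only element of the standard basis of $\fg$ carrying $T$-weight $-\gamma_2$ is $b_{12}$, so $\fg \cdot x_0$ contributes exactly the one-dimensional line $\langle b_{12} \cdot x_0 \rangle$; one then verifies that $[r_{2,1}]$ and $[r_{2,2}]$ remain linearly independent in the resulting 2-dimensional quotient. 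A similar, easier analysis of the $\gamma_1$-weight space shows $[r_1] \neq 0$.

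The main step, and the principal obstacle, is the upper bound $\dim \Vggpr \leq 3$. I would enumerate all $\rhoprime$-weights $\beta$ that can carry a nonzero invariant, using Proposition~\ref{prop!about_int_depend_of_weightsForK5} (so $\beta \in \langle\lambda_1,\lambda_2,\lambda_3\rangle_{\ZZ}$) together with the fact that $\beta \in \langle\Pi\rangle_{\NN}$, which follows by the same arguments as Proposition~\ref{prop:propsVggweights} applied to $V'$. A short lattice computation in the K5($2$) weight lattice shows that the candidates are precisely nonnegative integer combinations $m_1 \gamma_1 + m_2 \gamma_2$. All such combinations with $m_1 + m_2 \geq 2$ can then be ruled out by arguments parallel to Claims B and C in the proof of Proposition~\ref{prop:case19} (using Proposition~\ref{prop:Kostant} and Proposition~\ref{prop:propsVggweights}(c) to derive contradictions from the structure of the weight space in $V'$). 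The only surviving weights $\gamma_1$ and $\gamma_2$ have just been shown to contribute $1$ and $2$ invariants respectively, giving the required bound of $3$.
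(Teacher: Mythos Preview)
Your argument is essentially correct, but the paper's own proof is nothing more than the single sentence ``An easy direct computation gives the following proposition.'' Since $V'$ is $15$-dimensional for $n=2$ and $\dim\fg_{x_0}=3$ (so $\fg\cdot x_0$ has dimension $11$), the quotient $V'/\fg\cdot x_0$ is only $4$-dimensional; the intended proof is to write down the action of the six basis elements of $\fg_{x_0}$ on this $4$-dimensional space and read off the $3$-dimensional invariant subspace by hand. Your connectedness argument, the verification of invariance of the $r_i$, and the linear-independence analysis are all fine.

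The difference lies in how you obtain the upper bound $\dim\Vggpr\leq 3$. You argue weight by weight, which is the method the paper uses for the \emph{general} $n$ cases in Section~\ref{sec:cases}; for the fixed small case here the paper simply does the linear algebra directly. Your route works too: among the candidates $m_1\gamma_1+m_2\gamma_2$ with $m_1+m_2\geq 2$, only $\gamma_1+\gamma_2$ and $2\gamma_2$ actually occur as $\rhoprime$-weights in $V'$ (each with a one-dimensional weight space, spanned by $f_1\otimes g_2$ and $f_1\wedge f_2\otimes g_1\wedge g_2$ respectively), and both are excluded by the argument of Proposition~\ref{prop:propsVggweights}(\ref{item:betaminalpharoot}) exactly as in Claims~B and~C of Proposition~\ref{prop:case19}. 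One correction, though: your parenthetical invokes Proposition~\ref{prop:Kostant}, but that result is about $T_{X_0}\ms$, a proper subspace of $\Vgg\inn\Vggpr$, so it does not apply here. Fortunately you do not need it; the direct weight-space contradictions already suffice. Your approach is more conceptual and closer in spirit to the general-$n$ arguments; the paper's is simply the faster path for this one finite check.
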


\begin {prop}  \label{prop!invariants_for_general_n_ForK5}
Assume $n \geq 2$ and we are in K5($n$).
Then the classes in  $V'/\fg \cdot x_0$ of the elements 
\[
    r_{1},\; \;  q^{(2)}, \; \; r_{2,2}
\] 
is a basis for the vector space $\Vggpr$.
\end{prop}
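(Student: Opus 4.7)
The plan is to prove the proposition in three stages: verifying that the three proposed classes are invariants, showing they are linearly independent in $V'/\fg \cdot x_0$, and showing they span $\Vggpr$. The first stage is straightforward: $[q^{(2)}]$ is an invariant by Remark~\ref{rem!aboutmithatareinvariants_ForK5}, while $[r_1]$ and $[r_{2,2}]$ can be verified directly against the explicit basis for $\fg_{x_0}$ given in Proposition~\ref{prop!comput_of_lieGx0ForK5}. For the second stage, since $r_1$ has $\rhoprime$-weight $\gamma_1$ while $q^{(2)}$ and $r_{2,2}$ both have $\rhoprime$-weight $\gamma_2$, it suffices to check that $[q^{(2)}]$ and $[r_{2,2}]$ are independent; this follows from the observation that $\fg \cdot x_0$ intersected with the $\gamma_2$-weight subspace of $V'$ is the one-dimensional line spanned by $b_{12}\cdot x_0$, which has a nontrivial $V_1'$ component, whereas $q^{(2)}$ and $r_{2,2}$ both lie entirely in $V_2'$.

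The main stage is spanning. Given an invariant $v$, which by $\rhoprime$-weight decomposition I may assume is a $\rhoprime$-weight vector, I write $v = v_a + v_b$ with $v_a \in V_a'$ and $v_b \in V_b'$. If $v_b \neq 0$, then Proposition~\ref{prop!keyproposforhighinvar_ForK5} supplies $c \in \CC^*$ such that $v - cq^{(2)}$ is an invariant contained in $V_a'$; after this reduction I may assume $v \in V_a'$. Next, Lemma~\ref{lem!compatibilityofLieGinvar_forK5} says that $[v_a]$ is an invariant in the K5(2) setting, and Proposition~\ref{prop!invariants_forn2_ForK5} then yields scalars $c_1, c_2, c_3$ with $v - c_1 r_1 - c_2 r_{2,1} - c_3 r_{2,2} \in Z_a \cdot x_0$. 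Since $Z_a \cdot x_0 \subset \fg \cdot x_0$ in K5($n$), this gives $v \equiv c\, q^{(2)} + c_1 r_1 + c_2 r_{2,1} + c_3 r_{2,2}$ modulo $\fg \cdot x_0$.

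The principal obstacle is then to eliminate the $r_{2,1}$ term. For $n = 2$ this is automatic since $H = H_s$ forces $r_{2,1} = q^{(2)}$. For $n \geq 3$, the plan is to show by direct computation that $[r_{2,1}]$ is \emph{not} $\fg_{x_0}$-fixed in the K5($n$) setting: one verifies that $a_{23} \cdot r_{2,1} = -e_2 \wedge f_3 \otimes g_1 \wedge g_2$ cannot lie in $\fg \cdot x_0$, because the only element of $\fg$ of the required $T$-weight is $b_{13}$, and $b_{13}\cdot x_0$ carries a nonzero $V_1'$ summand that no other basis element of $\fg$ can cancel. Since $v$, $q^{(2)}$, $r_1$, and $r_{2,2}$ all define $\fg_{x_0}$-fixed classes, this forces $c_2 = 0$, giving $v \equiv c\, q^{(2)} + c_1 r_1 + c_3 r_{2,2}$ modulo $\fg \cdot x_0$, which completes the argument. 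The crux is precisely this final non-invariance check: the K5(2) basis element $r_{2,1}$ does not translate directly to a K5($n$) invariant once $n \geq 3$, and $q^{(2)}$ genuinely replaces it.
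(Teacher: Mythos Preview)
Your proof is correct and follows essentially the same strategy as the paper's: reduce to $V_a'$ via Proposition~\ref{prop!keyproposforhighinvar_ForK5}, invoke the K5($2$) classification to express the reduced invariant as a combination of $r_1$, $r_{2,1}$, $r_{2,2}$ modulo $Z_a\cdot x_0$, and then eliminate the $r_{2,1}$ coefficient by acting with $a_{23}$ and checking that $e_2\wedge f_3\otimes g_1\wedge g_2\notin\fg\cdot x_0$ via the $b_{13}\cdot x_0$ computation. The only differences are organizational: the paper treats the weight cases $\gamma_1$ and $\gamma_2$ separately, while you handle them uniformly; and you spell out the linear independence check, which the paper leaves implicit. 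One small point: you verify $G_{x_0}$-invariance of $r_1$ and $r_{2,2}$ only at the Lie algebra level, which suffices because the explicit description in Proposition~\ref{prop!comput_of_Gx0ForK5} shows $G_{x_0}$ is connected, but this deserves a word.
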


\begin {proof}
  If $n=2$ then the result follows from Proposition~\ref{prop!invariants_forn2_ForK5}.
   So we assume that $n \geq 3$.

 We have computed $G_{x_0}$  in  
  Proposition~\ref{prop!comput_of_Gx0ForK5}.
  A small calculation  shows that the vectors in the statement 
  of the present proposition are indeed invariants for K5($n$).

   We will use that if an invariant $v$ for K5($n$) is an element of $V_a'$ then, 
   by  Lemma~\ref{lem!compatibilityofLieGinvar_forK5},  
   $[v] \in (V'/\fg \cdot x_0)^{\fg_{x_0}} \;$ for K5($2$), 
    hence  by  Proposition~\ref{prop!invariants_forn2_ForK5}
    $v$  is an invariant for K5($2$).

   Assume $v \in V'$ is an invariant which is also a $\rhoprime$-weight 
   vector with $\rhoprime$-weight $\beta$. Write $v = v_a + v_b$ with
   $v_a \in V_a'$ and $v_b \in V_b'$. 
   We will show that $[v]$ is in the linear subspace of
   $V'/\fg \cdot x_0$ spanned by the classes of the elements in
    the statement of the present proposition.  Since these classes
    are linearly independent it will then follow that 
    the classes are a basis for $\Vggpr$.

   We first prove that there exists $i$ with $1 \leq i \leq 2$ such that
   $\beta = \gamma_i$. If $v_b \not= 0$ it follows from 
   Proposition~\ref{prop!keyproposforhighinvar_ForK5} that 
   $\beta = \gamma_2$. If $v_b = 0$ then 
   $v$ is an invariant for K5($2$), hence the
   existence of $i$ such that  $\beta = \gamma_i$ follows from
   Proposition~\ref{prop!invariants_forn2_ForK5}.

   Assume  $\beta = \gamma_1$.
   By Proposition~\ref{prop!keyproposforhighinvar_ForK5}
   $v_b = 0$ and hence $v$ is an invariant
   for the case K5($2$). 
   By Proposition~\ref{prop!invariants_forn2_ForK5}
   the  vector subspace  of $\Vggpr$  consisting of vectors
   with $\rhoprime$-weight  $\gamma_1$ is $1$-dimensional 
   spanned  by  $[r_1]$. Hence $[v]$ is in the linear span of $[r_1]$.

   Assume $\beta = \gamma_2$. 
   It follows from Proposition~\ref{prop!keyproposforhighinvar_ForK5}
   that  there exists $c_1 \in \CC$  such that  $v - c_1q^{(2)}$ is an invariant contained in
   $V_a'$.  Consequently, by Proposition~\ref{prop!invariants_forn2_ForK5} there
   exist $c_2, c_3 \in \CC$ with
   \begin{equation}    \label{eqn!forvc1c3912}
       v - c_1 q^{(2)} - (  c_2 r_{2,1} +  c_3 r_{2,2})  \in  \fg \cdot x_0.
   \end{equation}
 We will show $c_2 = 0$.  Assume $c_2 \not= 0$ and we will get a contradiction.
 We act by the element   $a_{23} =  (e_3 \mapsto e_2, f_2 \mapsto -f_3) \in \fg_{x_0}$.
 Since for all $b \in \fg_{x_0}$ we have $b \cdot (\fg \cdot x_0) \subset (\fg \cdot x_0)$,
 we get $a_{23} \cdot (\fg \cdot x_0) \subset  \fg \cdot x_0$.
 Since $v, q^{(2)}$ are invariants, we have 
 that $a_{23} \cdot (v - c_1q^{(2)}) \in \fg \cdot x_0$.
Hence, Equation~(\ref{eqn!forvc1c3912}) implies 
$ a_{23} \cdot (  c_2 r_{2,1} +  c_3 r_{2,2}) \in \fg \cdot x_0$. 
Since $r_{2,2}$ is an invariant and $c_2 \not= 0$ it follows that 
$a_{23} \cdot r_{2,1} \in  \fg \cdot x_0$, 
hence  $ e_2 \wedge f_3  \otimes  g_1 \wedge g_2   \in \fg \cdot x_0$.    
 This is a contradiction, since by Lemma~\ref {lem:image_of_Vbeta} (for $t=1$) 
   the set of  nonzero elements of $\fg \cdot x_0$ with $\rhoprime$-weight 
   $ \epsilon_2 + \epsilon_3$ is equal to $ \; 
 \{ d (b_{13} \cdot x_0) :       d \in \CC^* \} =
   \{ d ( f_3 \otimes g_1   -  e_2 \wedge f_3  \otimes  g_1 \wedge g_2 ) :
       d \in \CC^* \}.$    
\end {proof}

\begin {prop}  \label{prop!invariants_inV_for_general_n_ForK5}
Assume $n \geq 2$ and we are in K5($n$).
Then the classes in  $V/\fg \cdot x_0$ of the following elements of $V$
\[
     r_1,   \; \; \; \;  n r_{2,2}- q^{(2)}
\]
is a basis for the vector space $\Vgg$.
\end {prop}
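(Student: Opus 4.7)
The plan is to deduce the basis of $\Vgg$ directly from the basis of $\Vggpr$ furnished by the previous proposition, by means of a $G$-equivariant direct sum decomposition of $V'$.

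First I would observe that $\wedge^2 \CC^{2n} = V(\omega_2) \oplus \CC H$ as an $\Sp_{2n}$-module, where $\CC H$ is the trivial component (the kernel of the symplectic contraction being $V(\omega_2)$). Tensoring with $V(\omega_2')$ gives $V_2' = V_2 \oplus M$, where $M := \CC H \otimes V(\omega_2')$ is $G$-stable. Since $V_1, V_3$ and $M$ are pairwise disjoint summands in $V'$, we obtain the $G$-module decomposition $V' = V \oplus M$. Because $\fg \cdot x_0 \subset V$, passing to the quotient and then to $G_{x_0}$-invariants yields
\[
      \Vggpr = \Vgg \oplus M^{G_{x_0}}.
\]

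Next I would decompose the three basis elements of $\Vggpr$ provided by Proposition~\ref{prop!invariants_for_general_n_ForK5} according to $V' = V \oplus M$. Clearly $r_1 = e_2 \otimes g_2 \in V_1 \subset V$ and $q^{(2)} = H \otimes g_1 \wedge g_2 \in M$. For $r_{2,2}$, write $e_1 \wedge f_1 = \bigl(e_1 \wedge f_1 - \tfrac{1}{n}H\bigr) + \tfrac{1}{n}H$; a direct computation of the symplectic contraction shows that $e_1 \wedge f_1 - \tfrac{1}{n}H \in V(\omega_2)$. Consequently the $V$-component of $r_{2,2}$ lies in $V_2$ while its $M$-component equals $\tfrac{1}{n}q^{(2)}$. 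Equivalently,
\[
     n r_{2,2} - q^{(2)} \;=\; \bigl((n-1)e_1 \wedge f_1 - \textstyle\sum_{i=2}^n e_i \wedge f_i\bigr)\otimes g_1 \wedge g_2 \;\in\; V_2 \subset V.
\]

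Finally I would combine these facts. An arbitrary element $a[r_1] + b[q^{(2)}] + c[r_{2,2}]$ of $\Vggpr$ lies in $\Vgg$ if and only if its $M$-component vanishes, i.e., $b + c/n = 0$. Hence $\Vgg$ is the two-dimensional subspace spanned by $[r_1]$ (set $a=1$, $b=c=0$) and $[nr_{2,2} - q^{(2)}]$ (set $c = n$, $b = -1$, $a = 0$). Linear independence of these two classes in $\Vgg$ is immediate from their linear independence in $\Vggpr$ under the basis of the previous proposition. Since both representatives lie in $V$, they give the desired basis of $\Vgg$.

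The only mildly delicate step is the decomposition $\wedge^2 \CC^{2n} = V(\omega_2) \oplus \CC H$ and the resulting identification of $M$-components via the symplectic contraction; the remaining arguments are routine linear algebra using the already-established basis of $\Vggpr$.
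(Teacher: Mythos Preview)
Your proof is correct and follows essentially the same approach as the paper: both use the identification of $V(\omega_2)$ as the kernel of the symplectic contraction $\wedge^2\CC^{2n}\to\CC$ (equivalently, the $\Sp_{2n}$-splitting $\wedge^2\CC^{2n}=V(\omega_2)\oplus\CC H$) together with the basis of $\Vggpr$ from the preceding proposition. The paper simply writes down an explicit basis of $V(\omega_2)$ and declares the remainder ``an easy computation''; your use of the $G$-equivariant decomposition $V'=V\oplus M$ and the resulting splitting $\Vggpr=\Vgg\oplus M^{G_{x_0}}$ is a clean and transparent way to carry out exactly that computation.
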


\begin{proof}   
     By   \cite[Theorem 17.5]{fulton-harris}
    $V(\omega_2) \subset \wedge ^2  \CC^{2n}$ is the 
     kernel of  the linear map $ \wedge^2 \CC^{2n} \to \CC$
     uniquely specified  by  $  u_1  \wedge  u_2   \mapsto  \Omega (u_1, u_2)$ for all $u_1, u_2 \in \CC^{2n}$.
     Hence a basis of $V(\omega_2)$ is
     \[
         e_i \wedge e_j,   \;   \;   \;   \;   f_i \wedge f_j,   \;   \;   \;   \; e_a \wedge f_b,  \; \; \; \;   e_k \wedge f_k - e_1 \wedge f_1      
     \]
     with indices  $ 1 \leq i < j \leq n$,   $\; 2 \leq k \leq n $,    $ \; 1 \leq a ,b \leq n$    with  $a \not= b$.

    Using Proposition~\ref{prop!invariants_for_general_n_ForK5}, the result follows by an easy computation. 
\end {proof}

\begin {prop}  \label{prop!invariants_inV_that_extend_ForK5}
Assume $n \geq 2$ and we are in K5($n$).
Then the  vector subspace of  $\Vgg$   consisting of the vectors 
with the property that the section in $H^0 (G \cdot x_0, \shN_{X_0})^G$ they induce
extends to $X_0$ is equal to $\Vgg$.  
\end{prop}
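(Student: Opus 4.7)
The plan is to apply the extension criterion of Corollary~\ref{cor:extending-sections} to each of the two basis vectors of $\Vgg$ produced by Proposition~\ref{prop!invariants_inV_for_general_n_ForK5}, namely $[r_1]$ and $[nr_{2,2}-q^{(2)}]$. Because $\rhoprime$ restricted to $V$ coincides with the action $\alpha$ of \cite[Definition~2.11]{degensphermod}, these classes are $\Tad$-eigenvectors of weights $\gamma_1 = \epsilon_1 - \epsilon_2 + \epsilon_1' - \epsilon_2'$ and $\gamma_2 = \epsilon_1 + \epsilon_2$, respectively.

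First I would identify the codimension-one elements of $E=\{\lambda_1,\lambda_2,\lambda_3\}$ by means of Proposition~\ref{prop:lambdacodim1}. The weight $\lambda_3 = \omega_2'$ is the determinant character of $\GL_2$ and is annihilated by every simple coroot, so the criterion is vacuously satisfied and $\lambda_3$ is of codimension one. By contrast, the simple root $\alpha_1$ of $\Sp_{2n}$ pairs nontrivially with $\lambda_1$ but with neither $\lambda_2$ nor $\lambda_3$, so $\lambda_1$ is not of codimension one; the analogous observation for the simple root $\alpha_n$ of $\Sp_{2n}$ (which pairs only with $\lambda_2$) rules out $\lambda_2$ as well. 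Thus $\lambda_3$ is the unique codimension-one element of $E$.

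Next I would write $\gamma_1$ and $\gamma_2$ as $\ZZ$-linear combinations of $\lambda_1,\lambda_2,\lambda_3$. Using $\lambda_1 = \epsilon_1 + \epsilon_1'$, $\lambda_2 = \epsilon_1 + \epsilon_2 + \epsilon_1' + \epsilon_2'$ and $\lambda_3 = \epsilon_1' + \epsilon_2'$, a direct calculation gives
\[
\gamma_1 = 2\lambda_1 - \lambda_2, \qquad \gamma_2 = \lambda_2 - \lambda_3,
\]
so in each case the coefficient of $\lambda_3$ is nonpositive, in particular $\leq 1$.

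Corollary~\ref{cor:extending-sections} (equivalently, case~A of Theorem~\ref{thm:extension} combined with Proposition~\ref{prop:extendcodim1}) now implies that the $G$-equivariant sections of $\shN_{X_0|V}$ over $G\cdot x_0$ induced by $[r_1]$ and $[nr_{2,2}-q^{(2)}]$ both extend to $X_0$. Since the set of classes in $\Vgg$ whose induced section extends is a linear subspace and contains the basis $\{[r_1],[nr_{2,2}-q^{(2)}]\}$ of $\Vgg$, it equals $\Vgg$. There is no substantive obstacle here; the only genuine input is the identification of the codimension-one element of $E$, after which the result is an immediate application of the extension criterion.
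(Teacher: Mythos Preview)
Your proof is correct and follows essentially the same route as the paper's: identify which $\lambda_i$ have codimension one, compute the coefficients of those $\lambda_i$ in $\gamma_1=2\lambda_1-\lambda_2$ and $\gamma_2=\lambda_2-\lambda_3$, and invoke Corollary~\ref{cor:extending-sections}. There is one small slip: the simple root $\alpha_n$ of $\Sp_{2n}$ pairs with $\omega_n$, not with $\omega_2$, so for $n>2$ it is zero on every $\lambda_i$ and does not witness that $\lambda_2$ fails to be of codimension one; the root you want is $\alpha_2$, which satisfies $\langle\alpha_2^\vee,\lambda_2\rangle=1$ and $\langle\alpha_2^\vee,\lambda_1\rangle=\langle\alpha_2^\vee,\lambda_3\rangle=0$ for all $n\ge 2$.
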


\begin{proof}  
 It is enough to show that for each of the
$2$ basis elements
in the statement  of Proposition~\ref{prop!invariants_inV_for_general_n_ForK5}
the induced section  in $H^0 (G \cdot x_0, \shN_{X_0})^G$
extends to $X_0$.   Recall $x_0 = \sum_{i=1}^3 v_i$ with $v_i \in V_i$.
  For $1 \leq i \leq 3$ we set  $ w_i = x_0 - v_i$.  
Using \cite[Proposition~3.1]{degensphermod} we have that, for $i=1,2$,  the codimension in $X_0$ 
of the    $G$-orbit of $w_i$   is $\geq 2$. 

We have
$\gamma_1 = 2 \lambda_1 - \lambda_2$. It follows 
from Corollary~\ref{cor:extending-sections}  that   the equivariant section in
$H^0( G \cdot x_0, \shN_{X_0})^G$  defined by $r_1$  extends to $X_0$.  

We have  $\gamma_2 = \lambda_2 - \lambda_3$.
It follows  from Corollary~\ref{cor:extending-sections}  that   the equivariant section in
$H^0( G \cdot x_0, \shN_{X_0})^G$  defined by $nr_{2,2}-q^{(2)}$  extends to $X_0$.
\end{proof}

\def\cprime{$'$} \def\cprime{$'$} \def\cprime{$'$} \def\cprime{$'$}
  \def\cprime{$'$}
\providecommand{\bysame}{\leavevmode\hbox to3em{\hrulefill}\thinspace}
\providecommand{\MR}{\relax\ifhmode\unskip\space\fi MR }
\providecommand{\MRhref}[2]{%
  \href{http://www.ams.org/mathscinet-getitem?mr=#1}{#2}
}
\providecommand{\href}[2]{#2}

\end{document}